\numberwithin{equation}{section}
\newtheorem{theorem}{Theorem}[section]
\newtheorem{lemma}{Lemma}[section]
\newtheorem{corollary}{Corollary}[section]
\newenvironment{proof}{\noindent{\bf Proof:}}{\hfill\fbox{}\vspace*{1mm}}
\begin{document}
	
	\title{Boundedness of Multiparameter Forelli-Rudin Type Operators on Product $L^p$ Spaces over Tubular Domains}

	\author{ Lvchang Li, Yuheng Liang, Haichou Li*}
	\author{Lvchang Li \thanks
		{College of Mathematics and Informatics,
			South China Agricultural University,
			Guangzhou,
			510640,
			China
			Email: 20222115006@stu.scau.edu.cn.},\
		Yuheng Liang\thanks{College of Mathematics and Informatics,
			South China Agricultural University,
			Guangzhou,
			510640,
			China
			Email: 20232115004@stu.scau.edu.cn.},\
		Haichou Li* \thanks{ Corresponding author,
			College of Mathematics and Informatics,
			South China Agricultural University,
			Guangzhou,
			510640,
			China
			Email: hcl2016@scau.edu.cn.  Li is supported by NSF of China (Grant No. 11901205). }
		\thanks{This work was supported by the NSF of China (Grant No. 12326407, and 12071155)}
	}
	
	\date{}
	\maketitle
	\begin{center}
		\begin{minipage}{120mm}
			\begin{center}{\bf Abstract}\end{center}
			{In this paper, we introduce and study two classes of multiparameter Forelli-Rudin type operators from $L^{\vec{p}}\left(T_B\times T_B, dV_{\alpha_1}\times dV_{\alpha_2}\right)$ to $L^{\vec{q}}\left(T_B\times T_B, dV_{\beta_1}\times dV_{\beta_2}\right)$, especially on their boundedness, where $L^{\vec{p}}\left(T_B\times T_B, dV_{\alpha_1}\times dV_{\alpha_2}\right)$ and $L^{\vec{q}}\left(T_B\times T_B, dV_{\beta_1}\times dV_{\beta_2}\right)$ are both weighted Lebesgue spaces over the Cartesian product of two tubular domains $T_B\times T_B$, with mixed-norm and appropriate weights. We completely characterize the boundedness of these two operators when $1\le \vec{p}\le \vec{q}<\infty$. Moreover, we provide the necessary and sufficient condition of the case that $\vec{q}=(\infty,\infty)$. As an application, we obtain the boundedness of three common classes of integral operators, including the weighted multiparameter Bergman-type projection and the weighted multiparameter Berezin-type transform.}		
			
			{\bf Key words}:\ \ Forelli-Rudin type operators; Mixed-norm Lebesgue space; tubular domain
		\end{minipage}
	\end{center}
	
	\maketitle

\section{Introduction}
\ \ \ \
The study of the analytic properties of  integral operators has always been a very active problem. In recent years, many scholars have studied its boundedness, compactness and other analytical properties, and obtained a series of important results, which can be referred to \cite{Book B C}. The Forelli-Rudin operator was first introduced by Forelli and Rudin \cite{F R} in 1974 as a special class of integral operators. They play an important role in the theory of function spaces, especially in the study of Bergman spaces and Hardy spaces. Initially, this research mainly focused on the operator theory of Bergman space and Hardy space on the unit disk and the unit ball. Subsequently, it is gradually extended to a wider range of function spaces, such as weighted Bergman space and Sobolev space. Researchers have found that the behavior of the Forelli-Rudin operator in these spaces can help to understand more general function space theory.

Kurens and Zhu \cite{F R X} introduced the Forelli-Rudin type integral operators in 2006, which are defined as follows. For any given $a, b, c\in \mathbb{R}$,
$$
T_{a,\ b,\ c}\ f\left( z \right) =\left( 1-\left| z \right|^2 \right) ^a\int_{\mathbb{B}_n}{\frac{\left( 1-\left| u \right|^2 \right) ^b}{\left( 1-\left< z,u \right> \right) ^c}f\left( u \right) dV\left( u \right)}
$$
and
$$
S_{a,\ b,\ c}\ f\left( z \right) =\left( 1-\left| z \right|^2 \right) ^a\int_{\mathbb{B}_n}{\frac{\left( 1-\left| u \right|^2 \right) ^b}{\left| 1-\left< z,u \right> \right|^c}f\left( u \right) dV\left( u \right)}.
$$

The above operators are defined on the unit ball. Stein \cite{Stein 1} first proved that the operator $T_{0,\ 0,\ n+1}$ is bounded on $L^p\left( \mathbb{B}_n \right)$, where $ 1 < p <\infty$. Forelli-Rudin \cite{F R} proved that $T_{0, \sigma +it,n+1+\sigma +it}$ is bounded on $L^p\left( \mathbb{B}_n \right)$ if and only if $(\sigma+1)p>1$, where $ 1 < p < \infty, \sigma > -1, t \in \mathbb{R}$, $i$ is an imaginary unit. Zhu \cite[Theorem 3.11]{zhu2007operator} gave the necessary and sufficient conditions for $T_{a,\ b,\ c}$ and $S_{a,\ b,\ c}$ to be bounded on $L^p\left( \mathbb{D},dA_{\alpha} \right) $, where $1 < p < \infty,\ dA_{\alpha}(z) = (\alpha+1) \left (1-\left| z \right|^2 \right)^{\alpha}dV(z), \alpha > -1$. Later, this conclusion was extended to the high-dimensional case by Kures and Zhu \cite{F R X}. For more results in this direction, please refer to \cite{F R jin1,F R jin2,Zhao 1,Zhao 2}. In addition, it also includes the study of Forelli-Rudin operators on special domains (such as unbounded domains), revealing the behavior of operators in different geometric backgrounds. For example, Cheng et al. \cite{Cheng} gave the boundedness of $T_{0,\ 0,\ c}$ from $L^p$ to $L^ q$ when $(p, q)\in [1,\infty]\times[1,\infty]$. In \cite{Liu 2}, Liu et al. generalized the results of Cheng et al.to the Siegel upper half space, and discussed the $L^p$-$L^q$ boundedness of the operator $T_{0,\ 0,\ c}$. In \cite{Zhou 1}, Zhou et al. gave the necessary and sufficient conditions for the $L_{\alpha}^p$-$L_{\beta}^q$ boundedness of Forelli-Rudin type integral operators on Siegel upper half space, and gave the connection between the unit ball and the boundedness of Forelli-Rudin type integral operators on Siegel upper half space. For more results in the Siegel upper half space, please refer to \cite{Liu 1,Wang}.

The mixed norm Lebesgue space $L^{\vec{p}}$ as a natural generalization of the classical Lebesgue space $L^p$ was first introduced by Benedek \cite{mixed norm} in 1961. Its definition and properties combine multiple Lebesgue integrals in different directions, which can better describe the behavior of multidimensional functions. Mixed norm Lebesgue spaces are increasingly showing their importance in many analysis and application fields, especially in multivariate function analysis and partial differential equation research. By combining Lebesgue integrals in different directions, the mixed norm Lebesgue space can better describe and deal with the behaviors of complex functions. After that, many scholars have studied it. The research on this aspect can refer to the relevant literature \cite{mixed norm 1}. There is also progress in the study of Forelli-Rudin type operators on Lebesgue spaces with mixed norm. Recently, Huang et al. \cite{M Z} studied the boundedness of multi-parameter Forelli-Rudin type operators on weighted Lebesgue spaces $L_{\vec{\alpha}}^{\vec{p}}\left( \mathbb{B}_n\times \mathbb{B}_n \right) $ with mixed norm on the Cartesian product of two unit balls, and gave a series of conclusions. The object of this paper is the boundedness of multi-parameter Forelli-Rudin type operators on the weighted Lebesgue space $L_{\vec{\alpha}}^{\vec{p}}\left( T_B\times T_B \right) $ with mixed norm on the Cartesian product of two tubular domains. It also includes the boundedness of multi-parameter Forelli-Rudin type operators on $L_{\vec{\alpha}}^{\vec{p}}\left( T_B\times T_B \right) $ when $\vec{q}:=(\infty,\infty)$. The detailed conclusions are shown in section 6 of this paper.

In addition, multiparameter theory is also an important branch of mathematics. It provides a basis for dealing with complex mathematical problems involving multiple variables or parameters, and plays an important role in harmonic analysis, complex analysis, partial differential equations and other fields. For example, the singular integral operator studied by many scholars in harmonic analysis, the product singular integral studied by R.Fefferman, Journé and Stein et al. \cite{R. Fefferman,J. Journé}. Specifically, in 1982, R.Fefferman and Stein \cite{Stein 1} extended the convolution Calderón–Zygmund operator to the two-parameter case and obtained the boundedness on the product Lebesgue space. Gundy and Stein \cite{Gundy} first introduced the product Hardy space $H^p\left(\mathbb{R}^n\times\mathbb{R}^m\right)$. Subsequently, Chang and R.Fefferman \cite{R. Fefferman 2,R. Fefferman 3} established the Calderón–Zygmund decomposition theory of the product space and derived a series of conclusions. For more applications of multi-parameters in the above fields, see reference \cite{Han Y S,Stein 3,Stein 4,Stein 2}.

The rest of this article is organized as follows. In the second section, we will explain the basic concepts and symbolic terms. In the third section, we give some basic lemmas, including Schur test, which is an important tool to verify the boundedness of integral operators and will run through the full text. In the fourth section, we will give the necessary conditions for the boundedness of the operator $T_{\vec{a},\ \vec{b},\ \vec{c}}$, and get a series of conclusions, see lemma \ref {lem b1} to lemma \ref{lem b13}. In the fifth section, we give the sufficient conditions for the boundedness of the operator $S_{\vec{a},\ \vec{b},\ \vec{c}}$, see Lemma \ref{lem 1 1} to Lemma \ref{lem 1 13}. Since the boundedness of the operator $S_{\vec{a},\ \vec{b},\ \vec{c}}$ will inevitably lead to the boundedness of the operator $T_{\vec{a},\ \vec{b},\ \vec{c}}$, in the sixth section, we link the above lemmas and give our main theorems \ref{th 1} to \ref{th 13} in this paper. In the last section, we apply the obtained theorem to three kinds of integral operators, including the famous Bergman-type projection and Berezin-type transformation, and obtain the boundedness of three kinds of integral operators.

\section{Preliminaries}
\ \ \ \
Let $\mathbb{C}^n$ be the $n$-dimensional complex Euclidean space. For any two points $z=\left( z_1,\cdots ,z_n \right) $ and $w=\left( w_1,\cdots ,w_n \right) $ in $\mathbb{C}^n,$ we write\[z\cdot \bar{w}:=z_1\bar{w}_1+\cdots +z_n\bar{w}_n,\] \[z'^2=z'\cdot z'=z_{1}^{2}+z_{2}^{2}+\cdots +z_{n}^{2}\] and \[\left| z \right|:=\sqrt{z\cdot \bar{z}}=\sqrt{\left| z_1 \right|^2+\cdots +\left| z_n \right|^2}.\]  

The set $T_B=\left\{ z=x+iy,x\in \mathbb{R}^n,y\in B \right\}$ is a tube domain in an n-dimensional complex space $\mathbb{C}^n$, where\[B=\left\{ \left( y',y_n \right) \in \mathbb{C}^n\left| y'^2<y_n \right. \right\},y'=\left( y_1,y_2,\cdots ,y_{n-1} \right) \in \mathbb{C}^{n-1}, y_n \in \mathbb{C}.\] 

For any given $\vec{p}:=\left( p_1,p_2 \right) \in \left( 0,\infty \right) \times \left( 0,\infty \right) $ and $\vec{\alpha}:=\left( \alpha _1,\alpha _2 \right) \in \left( -1,\infty \right) \times \left( -1,\infty \right)$,
we define the Lebesgue space $L_{\vec{\alpha}}^{\vec{p}}\left( T_B\times T_B \right)$ with weighted mixed-norm on $T_B\times T_B$. It consists of all Lebesgue measurable functions $f$ on $T_B\times T_B$ such that the norm
$$
\lVert f \rVert _{\vec{p},\vec{\alpha}}=\left\{ \int_{T_B}{\left( \int_{T_B}{\left| f\left( z,w \right) \right|^{p_1}dV_{\alpha _1}\left( z \right)} \right) ^{\frac{p_2}{p_1}}dV_{\alpha _2}\left( w \right)} \right\} ^{\frac{1}{p_2}}
$$
is finite, where for any $i\in\{1,2\}$, $dV_{\alpha _i}\left( z \right) =\rho(z)^{\alpha _i}dV\left( z \right)$, with $\rho(z):= y_n-\left| y^{\prime} \right|^2$ and $dV$ denoting the Lebesgue volume measure on $\mathbb{C}^n$. 

In particular, when $p_1=p_2=p$ and $\alpha_1=\alpha_2=\alpha$, the space $L_{\vec{\alpha}}^{\vec{p}}\left( T_B\times T_B \right) $ goes back to the weighted Lebesgue space $L_{\alpha}^p\left( T_B\times T_B \right) $.

Similarly, we define the space of all essentially bounded functions on $T_B\times T_B$, denoted as $L^{\infty}\left( T_B\times T_B \right)$.

Same as the well-known classical $L^p$ space, the weighted mixed-norm Lebesgue space $L_{\vec{\alpha}}^{\vec{p}}\left( T_B\times T_B \right) $ that we define is a Banach space under the norm $\lVert \cdot \rVert _{\vec{p},\vec{\alpha}}$ when $\vec{p}=\left(p_1,p_2\right)\in \left[ 1,\infty \right) \times \left[ 1,\infty \right) $.
\\

Next, we introduce two important integral operators $T_{\vec{a},\ \vec{b},\ \vec{c}}$ and $S_{\vec{a},\ \vec{b},\ \vec{c}}$ studied in this paper.

For any $\vec{a}:=\left( a_1,a_2 \right) ,\vec{b}:=\left( b_1,b_2 \right) ,\vec{c}:=\left( c_1,c_2 \right) \in \mathbb{R}^2$, two classes of integral operators are defined by 
$$
T_{\vec{a},\ \vec{b},\ \vec{c}}\ f\left( z,w \right) =\rho \left( z \right) ^{a_1}\rho \left( w \right) ^{a_2}\int_{T_B}{\int_{T_B}{\frac{\rho \left( u \right) ^{b_1}\rho \left( \eta \right) ^{b_2}}{\rho \left( z,u \right) ^{c_1}\rho \left( w,\eta \right) ^{c_2}}f\left( u,\eta \right) d V\left( u \right)}d V\left( \eta \right)}
$$
and
$$
S_{\vec{a},\ \vec{b},\ \vec{c}}\ f\left( z,w \right) =\rho \left( z \right) ^{a_1}\rho \left( w \right) ^{a_2}\int_{T_B}{\int_{T_B}{\frac{\rho \left( u \right) ^{b_1}\rho \left( \eta \right) ^{b_2}}{\left| \rho \left( z,u \right) \right|^{c_1}\left| \rho \left( w,\eta \right) \right|^{c_2}}f\left( u,\eta \right) dV\left( u \right)}dV\left( \eta \right)},
$$
where  
$$
\rho \left( z,u \right): =\frac{1}{4}\left( \left( z'-\overline{u'} \right) ^2-2i\left( z_n-\overline{u_n} \right) \right).
$$

In this paper, we stipulate that the mixed norm Lebesgue space $L^{\vec{p}}$ is denoted by $L^\infty$ when only two indexes are infinite, and the rest are denoted by $L^{\vec{p}}$. 

For the convenience of writing, we write $p_-:=\min \left\{ p_1,p_2 \right\}$, $p_+:=max\left\{p_1,p_2\right\}$, $q_-:=min\left\{q_1,q_2\right\}$ and $q_+:=max\left\{q_1,q_2\right\}$.

Throughout the paper we use $C$ to denote positive constants whose value may change from line to line but does not depend on the functions being considered. The notation $A\lesssim B$ means that there is a positive constant $C$ such that $A\le CB$, and the notation $A\simeq B$ means that $A\lesssim B$ and $B\lesssim A$.

\section{Basic lemmas}
\ \ \ \ 
In this section, we will introduce several key lemmas, which will play an important role in the proof of the main theorems.

The following lemmas \ref{lem jifendengshi} and \ref{lem qu bian liang} are derived from \cite{jiaxin2023bergman} and \cite{Li} respectively, which play an important role in integral estimation.

\begin{lemma}\label{lem jifendengshi}
	Let $r, s>0, t>-1$ and $r+s-t>n+1$, then 
	\[  \int_{T_{B}}\frac{{\rho (w)}^{t}}{{\rho (z,w)}^{r}{\rho (w,u)}^{s}}dV(w)=\frac{C_{1}(n,r,s,t)}{\rho (z,u)^{r+s-t-n-1}}        \]
	for all $z,u \in T_{B}$, where
	$$C_{1}(n,r,s,t)=\frac{2^{n+1}{\pi}^{n}\Gamma (1+t)\Gamma (r+s-t-n-1)}{\Gamma (r)\Gamma (s)}.$$
	In particular, let $s,t\in \mathbb{R}$, if $t>-1,s-t>n+1$, then \[\int_{T_B}{\frac{\rho \left( w \right) ^t}{\left| \rho \left( z,w \right) \right|^s}}dV\left( w \right) =\frac{C_1\left( n,s,t \right)}{\rho \left( z \right) ^{s-t-n-1}}.\] Otherwise, the above equation is infinity.
\end{lemma}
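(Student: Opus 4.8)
\emph{Plan.} I would first prove the displayed equality and then read off the ``in particular'' clause. Note that $\rho(z,w)$ is holomorphic in $z$ and anti-holomorphic in $w$, and inspecting its definition shows $\overline{\rho(z,w)}=\rho(w,z)$ (conjugation turns $(z'-\overline{w'})^{2}$ into $(w'-\overline{z'})^{2}$ and $-2i(z_n-\overline{w_n})$ into $-2i(w_n-\overline{z_n})$, while $\tfrac14$ is real); setting $w=z$ and using $z-\bar z=2i\,\mathrm{Im}\,z$ gives $\rho(z,z)=\mathrm{Im}(z_n)-|\mathrm{Im}(z')|^{2}=\rho(z)$. Hence $|\rho(z,w)|^{s}=\bigl(\rho(z,w)\rho(w,z)\bigr)^{s/2}$, and applying the displayed identity with $(r,s,u)$ replaced by $\bigl(\tfrac s2,\tfrac s2,z\bigr)$ — whose hypotheses, $\tfrac s2>0$ and $s-t>n+1$, follow from the assumed $t>-1$ and $s-t>n+1$ — yields
\[
\int_{T_B}\frac{\rho(w)^{t}}{|\rho(z,w)|^{s}}\,dV(w)=\frac{C_1\bigl(n,\tfrac s2,\tfrac s2,t\bigr)}{\rho(z)^{s-t-n-1}},\qquad C_1(n,s,t):=C_1\bigl(n,\tfrac s2,\tfrac s2,t\bigr),
\]
while for $s-t\le n+1$ the integrand is not integrable near $\rho=0$, giving $+\infty$.

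\emph{Reduction to a base point.} Put $\mathbf e:=(0,\dots,0,i)$, so $\rho(\mathbf e)=1$. Let $G$ be the group of affine self-maps of $T_B$ generated by the translations $z\mapsto z+a$ ($a\in\mathbb R^{n}$), the dilations $\delta_\lambda:(z',z_n)\mapsto(\lambda z',\lambda^{2}z_n)$ ($\lambda>0$), the rotations $(z',z_n)\mapsto(Oz',z_n)$ ($O\in O(n-1)$), and the shears $(z',z_n)\mapsto\bigl(z'+i\gamma,\,z_n+2\gamma\cdot z'+i|\gamma|^{2}\bigr)$ ($\gamma\in\mathbb R^{n-1}$). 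One checks directly that each generator is a biholomorphism of $T_B$, that $G$ acts transitively (a dilation followed by a shear followed by a translation carries $\mathbf e$ to any prescribed point of $T_B$), and that for every $\varphi\in G$ there is a constant $c_\varphi>0$ with $\rho(\varphi z,\varphi w)=c_\varphi^{2}\rho(z,w)$ — hence $\rho(\varphi z)=c_\varphi^{2}\rho(z)$ — and real Jacobian equal to $c_\varphi^{2(n+1)}$; indeed $c_\varphi=1$ on the translations, rotations and shears, while $c_\varphi=\lambda$ on $\delta_\lambda$, whose complex Jacobian determinant is $\lambda^{n+1}$. Writing $\tau:=r+s-t-n-1$ and $I(z,u):=\int_{T_B}\rho(w)^{t}\rho(z,w)^{-r}\rho(w,u)^{-s}\,dV(w)$ and choosing $\varphi\in G$ with $\varphi(\mathbf e)=u$, the change of variables $w=\varphi(\omega)$ — together with $z=\varphi(\varphi^{-1}z)$ — multiplies $\rho(w)^{t}$, $dV(w)$, $\rho(z,w)^{-r}$ and $\rho(w,u)^{-s}$ by $c_\varphi^{2t}$, $c_\varphi^{2(n+1)}$, $c_\varphi^{-2r}$ and $c_\varphi^{-2s}$ respectively; since also $\rho(z,u)=c_\varphi^{2}\rho(\varphi^{-1}z,\mathbf e)$, this gives $I(z,u)\,\rho(z,u)^{\tau}=I(\varphi^{-1}z,\mathbf e)\,\rho(\varphi^{-1}z,\mathbf e)^{\tau}$. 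Thus it remains to prove $I(\zeta,\mathbf e)=C_1(n,r,s,t)\,\rho(\zeta,\mathbf e)^{-\tau}$ for all $\zeta\in T_B$.

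\emph{The base-point integral, and the main difficulty.} This last step carries the content. The biholomorphism $\Psi(z)=\bigl(z',\,z_n+\tfrac i2 (z')^{2}\bigr)$ has complex Jacobian determinant $1$, fixes $\mathbf e$, maps $T_B$ onto the Siegel domain $U=\{w:\mathrm{Im}\,w_n>\tfrac12|w'|^{2}\}$, and carries $\rho(z)$ to $\mathrm{Im}\,w_n-\tfrac12|w'|^{2}$ and $\rho(z,u)$ to $\rho_U(w,v):=\tfrac1{2i}(w_n-\overline{v_n})-\tfrac12 w'\cdot\overline{v'}$; so it is enough to prove the corresponding identity on $U$. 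There one inserts the Gindikin--Laplace representations $\rho_U(\cdot,\cdot)^{-r}=\Gamma(r)^{-1}\int_0^{\infty}\lambda^{r-1}e^{-\lambda\rho_U(\cdot,\cdot)}\,d\lambda$ and $\rho_U(\cdot,\cdot)^{-s}=\Gamma(s)^{-1}\int_0^{\infty}\mu^{s-1}e^{-\mu\rho_U(\cdot,\cdot)}\,d\mu$ and carries out the $w$-integral: integrating in $\mathrm{Re}\,w_n$ produces a delta forcing $\mu=\lambda$, the Gaussian integral in $w'$ and the integral of the weight $(\mathrm{Im}\,w_n-\tfrac12|w'|^{2})^{t}$ in $\mathrm{Im}\,w_n$ contribute $2^{n+1}\pi^{n}\Gamma(1+t)$ together with a factor $\lambda^{-(t+n)}$, and the remaining Laplace integral closes everything:
\[
I(\zeta,\mathbf e)=\frac{2^{n+1}\pi^{n}\Gamma(1+t)}{\Gamma(r)\Gamma(s)}\int_0^{\infty}\lambda^{\tau-1}e^{-\lambda\rho(\zeta,\mathbf e)}\,d\lambda=\frac{2^{n+1}\pi^{n}\Gamma(1+t)\Gamma(\tau)}{\Gamma(r)\Gamma(s)}\cdot\frac{1}{\rho(\zeta,\mathbf e)^{\tau}}=\frac{C_1(n,r,s,t)}{\rho(\zeta,\mathbf e)^{\tau}}.
\]
The main obstacle is exactly this computation: beyond tracking the constants, one must justify the interchanges of integration, which is legitimate on the sub-range of $(r,s,t)$ where the triple integral converges absolutely (Tonelli), after which the identity extends to the whole connected open region $\{r,s>0,\,t>-1,\,r+s-t>n+1\}$ by analytic continuation in $(r,s,t)$, both sides being holomorphic there. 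As a check on the constant, when $r=n+1+t$ the function $w\mapsto\rho(w,\zeta)^{-r}$ equals $C_1(n,r,s,t)$ times the weighted Bergman kernel $K_t(\cdot,\zeta)$ of $A^2_t(T_B)$, so $I(\zeta,\mathbf e)=\langle\rho(\cdot,\mathbf e)^{-s},\rho(\cdot,\zeta)^{-r}\rangle_{A^2_t}$ collapses by the reproducing property to $C_1(n,r,s,t)\,\rho(\zeta,\mathbf e)^{-s}$, consistent with $\tau=s$.
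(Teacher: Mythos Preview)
The paper does not give a proof of this lemma; it merely attributes it to \cite{jiaxin2023bergman}. So there is nothing in the paper to compare against, and your proposal is in effect supplying what the authors omit.

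Your outline is sound and follows a standard route: reduce by the affine automorphism group of $T_B$ to a single base point, pass via the biholomorphism $\Psi$ to the Siegel domain, and evaluate there by the Gindikin--Laplace integral representation. The verifications you sketch (that $\overline{\rho(z,w)}=\rho(w,z)$, $\rho(z,z)=\rho(z)$, the transformation rules $\rho(\varphi z,\varphi w)=c_\varphi^{2}\rho(z,w)$ for each generator, and that $\Psi$ carries $\rho$ to $\rho_U$ with unit Jacobian) are all correct. The derivation of the ``in particular'' clause from the main identity by taking $r=s=\sigma/2$ and $u=z$ is also correct, and your observation that $\mathrm{Re}\,\rho_U(w,v)>0$ on $U\times U$ (needed for the Laplace representation and the principal-branch powers) goes through by AM--GM exactly as you indicate.

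Two small points. First, in the divergence clause you write that for $s-t\le n+1$ ``the integrand is not integrable near $\rho=0$''; this is the wrong end. Failure of $t>-1$ gives non-integrability near the boundary $\{\rho=0\}$, while failure of $s-t>n+1$ gives divergence at infinity (by the dilation scaling, the mass on the shell $\rho(w)\sim R$ behaves like $R^{\,t-s+n+1}\,dR/R$). Second, the ``integrating in $\mathrm{Re}\,w_n$ produces a delta'' step is heuristic as written; your proposed remedy (Tonelli on a sub-range of parameters, then analytic continuation in $(r,s,t)$) is the right way to make it rigorous, but you should say explicitly which sub-range gives absolute convergence of the triple integral and check that both sides are holomorphic on the full connected region before invoking the identity theorem.
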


\begin{lemma}\label{lem qu bian liang}
	For any $z,w\in {T_{B}}$, we have 
	$$
	2\left| \rho \left( z,w \right) \right|\ge \max \left\{ \rho \left( z \right) ,\rho \left( w \right) \right\}.
	$$
\end{lemma}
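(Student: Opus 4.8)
The plan is to reduce the stated inequality to an explicit algebraic identity for the real part of $\rho(z,w)$, after which the estimate is immediate. First I would write $z=x+iy$ and $w=s+it$ with $x=(x',x_n)$, $y=(y',y_n)$, $s=(s',s_n)$, $t=(t',t_n)$, where $x,y,s,t$ are real (recall the imaginary-part vectors $y,t$ lie in the cone $B$, so in particular $(y')^2=|y'|^2$). Since $z'-\overline{w'}=(x'-s')+i(y'+t')$ and $z_n-\overline{w_n}=(x_n-s_n)+i(y_n+t_n)$, expanding
\[
\rho(z,w)=\frac14\Bigl((z'-\overline{w'})^2-2i(z_n-\overline{w_n})\Bigr)
\]
into real and imaginary parts and keeping only the real part yields
\[
4\operatorname{Re}\rho(z,w)=|x'-s'|^2-|y'+t'|^2+2(y_n+t_n).
\]

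Next I would rewrite this in terms of $\rho(z)=y_n-|y'|^2$ and $\rho(w)=t_n-|t'|^2$. Substituting $y_n=\rho(z)+|y'|^2$ and $t_n=\rho(w)+|t'|^2$, the combination $-|y'+t'|^2+2(y_n+t_n)$ collapses, after cancelling the cross term $-2\,y'\cdot t'$ against $|y'|^2+|t'|^2$, to $2\rho(z)+2\rho(w)+|y'-t'|^2$. Hence
\[
4\operatorname{Re}\rho(z,w)=|x'-s'|^2+|y'-t'|^2+2\rho(z)+2\rho(w),
\]
which is the heart of the argument.

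Finally, since $z,w\in T_B$ forces $\rho(z),\rho(w)>0$ and the two squared norms are nonnegative, this identity gives $4\operatorname{Re}\rho(z,w)\ge 2\rho(z)+2\rho(w)\ge 2\max\{\rho(z),\rho(w)\}$, i.e. $2\operatorname{Re}\rho(z,w)\ge\max\{\rho(z),\rho(w)\}$. Combining this with the trivial bound $|\rho(z,w)|\ge\operatorname{Re}\rho(z,w)\;(>0)$ yields $2|\rho(z,w)|\ge 2\operatorname{Re}\rho(z,w)\ge\max\{\rho(z),\rho(w)\}$, as claimed. There is no conceptual obstacle in this lemma: the only care required is the bookkeeping in the expansion and attentiveness to which coordinates are real; once the displayed identity for $4\operatorname{Re}\rho(z,w)$ is established, the conclusion is a one-line positivity estimate.
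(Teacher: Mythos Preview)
Your proof is correct. The paper does not actually prove this lemma; it merely cites the reference \cite{Li} and states the result, so there is no in-paper argument to compare against. Your approach---computing the real part of $\rho(z,w)$ explicitly and obtaining the identity
\[
4\operatorname{Re}\rho(z,w)=|x'-s'|^2+|y'-t'|^2+2\rho(z)+2\rho(w),
\]
then reading off the inequality from positivity---is the standard and essentially the only natural route, and is almost certainly what the cited reference does as well.
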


The following lemma \ref{dui ou} is an important result in real analysis. Detailed proof can refer to \cite{Real}.

\begin{lemma}\label{dui ou}
	Suppose $(X,d\mu)$ is a $\sigma$-finite measure space, $1\le p<\infty$, and $1/p+1/q=1$. Let $G$ be a complex-valued function defined on $X\times X$ and $T$ be the integral operator defined by
	$$
	Tf\left( x \right) =\int_X{G\left( x,y \right) f\left( y \right) d\mu \left( y \right) .}
	$$
	If the operator $T$ is bounded on $L^p(X,d\mu)$, then its adjoint $T^*$ is the integral oparator 
	$$
	T^*f\left( x \right) =\int_X{\overline{G\left( y,x \right) }f\left( y \right) d\mu \left( y \right)}
	$$
	on $L^q(X,d\mu)$.
\end{lemma}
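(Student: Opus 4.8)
The plan is to verify directly, via Fubini's theorem and the characterization of bounded operators between dual spaces, that the formally adjoint kernel $\overline{G(y,x)}$ defines the Hilbert-space-style adjoint $T^{*}$ acting on $L^{q}(X,d\mu)$. First I would recall that since $(X,d\mu)$ is $\sigma$-finite and $1\le p<\infty$, the dual of $L^{p}(X,d\mu)$ is isometrically $L^{q}(X,d\mu)$ under the pairing $\langle f,g\rangle=\int_{X} f(x)\overline{g(x)}\,d\mu(x)$. Hence, if $T$ is bounded on $L^{p}$, its Banach-space adjoint $T^{*}$ is a well-defined bounded operator on $L^{q}$, characterized uniquely by the identity $\langle Tf,g\rangle=\langle f,T^{*}g\rangle$ for all $f\in L^{p}$, $g\in L^{q}$. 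The content of the lemma is that this abstract $T^{*}$ coincides with the concrete integral operator with kernel $\overline{G(y,x)}$.

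The key computation is then the following. For $f\in L^{p}(X,d\mu)$ and $g\in L^{q}(X,d\mu)$, write out
\[
\langle Tf,g\rangle=\int_{X}\left(\int_{X}G(x,y)f(y)\,d\mu(y)\right)\overline{g(x)}\,d\mu(x),
\]
and apply Fubini's theorem to interchange the order of integration, obtaining
\[
\langle Tf,g\rangle=\int_{X}f(y)\left(\int_{X}G(x,y)\overline{g(x)}\,d\mu(x)\right)d\mu(y)
=\int_{X}f(y)\,\overline{\left(\int_{X}\overline{G(x,y)}\,g(x)\,d\mu(x)\right)}\,d\mu(y).
\]
The inner integral on the right is exactly $\left(\int_{X}\overline{G(x,y)}g(x)\,d\mu(x)\right)$ evaluated at the point $y$, i.e. the integral operator with kernel $\overline{G(y,x)}$ (after renaming variables) applied to $g$. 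Thus $\langle Tf,g\rangle=\langle f,Sg\rangle$ where $Sg(y)=\int_{X}\overline{G(x,y)}g(x)\,d\mu(x)$. By uniqueness of the adjoint on the reflexive-type duality $L^{p}$--$L^{q}$ (more precisely, since $L^{p}$ norms are recovered by testing against $L^{q}$ and vice versa when $1\le p<\infty$), this forces $T^{*}=S$, which is the claimed formula.

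The main obstacle — and the only place genuine care is needed — is justifying the application of Fubini's theorem: one must exhibit an absolutely convergent double integral. I would handle this by a standard density/truncation argument: first establish the identity for $f$ and $g$ in a dense subclass for which $\iint |G(x,y)||f(y)||g(x)|\,d\mu(y)\,d\mu(x)<\infty$ (e.g. simple functions supported on sets of finite measure, using $\sigma$-finiteness), where Fubini applies unconditionally; then extend to general $f\in L^{p}$, $g\in L^{q}$ by continuity, using the boundedness of $T$ on $L^{p}$ (hence of the pairing $\langle Tf,g\rangle$) on one side and the boundedness of $S$ on $L^{q}$, which one gets from the abstract adjoint, on the other. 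Since this is a classical result quoted from \cite{Real}, in the write-up I would present the Fubini computation as the heart of the argument and relegate the measure-theoretic justification to a remark, noting that the $\sigma$-finiteness hypothesis is precisely what makes the truncation argument work.
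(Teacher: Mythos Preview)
The paper does not actually prove this lemma: it simply remarks that it is ``an important result in real analysis'' and refers the reader to \cite{Real} (Halmos--Sunder) for the details. Your proposal supplies precisely the standard argument one finds in such references---identify $(L^{p})^{*}$ with $L^{q}$ via $\sigma$-finiteness, compute $\langle Tf,g\rangle$ by Fubini, and read off the adjoint kernel---and it is correct, including your observation that the only genuine technical point is the Fubini justification via a density/truncation argument.
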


The following lemma \ref{dui ou you jie} is a further conclusion on the basis of lemma \ref{dui ou}.
\begin{lemma}\label{dui ou you jie}
	Let $\vec{p}:=\left( p_1,p_2 \right) \in \left[ 1,\infty \right]\times\left[ 1,\infty \right]$ and $\vec{q}:=\left( q_1,q_2 \right) \in \left[ 1,\infty \right]\times\left[ 1,\infty \right]$. If the integral operator $T_{\vec{a},\ \vec{b},\ \vec{c}}$ is bounded from $L_{\vec{\alpha}}^{\vec{p}}\left( T_B\times T_B \right) $ to $L_{\vec{\beta}}^{\vec{q}}\left( T_B\times T_B \right) $, then its adjoint operator $T_{\vec{a},\ \vec{b},\ \vec{c}}^*$ defined by setting 
	$$
	T_{\vec{a},\ \vec{b},\ \vec{c}}^{*}\ f\left( z,w \right) =\rho \left( z \right) ^{b_1-\alpha _1}\rho \left( w \right) ^{b_2-\alpha _2}\int_{T_B}{\int_{T_B}{\frac{\rho \left( u \right) ^{\beta _1+a_1}\rho \left( \eta \right) ^{\beta _2+a_2}}{\rho \left( z,u \right) ^{c_1}\rho \left( w,\eta \right) ^{c_2}}f\left( u,\eta \right) dV\left( u \right)}dV\left( \eta \right)}
	$$
	is bounded from $L_{\vec{\beta}}^{\vec{q}^{\prime}}\left( T_B\times T_B \right)$ to $L_{\vec{\alpha}}^{\vec{p}^{\prime}}\left( T_B\times T_B \right)$.
\end{lemma}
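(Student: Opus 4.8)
The plan is to reduce the claim to Lemma \ref{dui ou} by a standard duality computation for mixed-norm spaces, handling the two integration variables one factor at a time. First I would recall that the dual of $L_{\vec\alpha}^{\vec p}(T_B\times T_B)$ is $L_{\vec\alpha}^{\vec p^{\,\prime}}(T_B\times T_B)$ (for $\vec p\in[1,\infty)\times[1,\infty)$; the endpoint cases with an $\infty$ component need the usual care but are covered by the cited real-analysis reference), via the pairing
$$
\langle f,g\rangle_{\vec\alpha}=\int_{T_B}\int_{T_B} f(z,w)\,\overline{g(z,w)}\,dV_{\alpha_1}(z)\,dV_{\alpha_2}(w).
$$
So $T_{\vec a,\vec b,\vec c}$ bounded from $L_{\vec\alpha}^{\vec p}$ to $L_{\vec\beta}^{\vec q}$ is equivalent, by the definition of adjoint with respect to these pairings, to the formal adjoint being bounded from $L_{\vec\beta}^{\vec q^{\,\prime}}$ to $L_{\vec\alpha}^{\vec p^{\,\prime}}$, and it remains only to identify that formal adjoint explicitly with the operator in the statement.

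Next I would carry out the kernel bookkeeping. Writing $T_{\vec a,\vec b,\vec c}f(z,w)=\int\int K(z,w;u,\eta)f(u,\eta)\,dV(u)\,dV(\eta)$ with
$$
K(z,w;u,\eta)=\frac{\rho(z)^{a_1}\rho(w)^{a_2}\rho(u)^{b_1}\rho(\eta)^{b_2}}{\rho(z,u)^{c_1}\rho(w,\eta)^{c_2}},
$$
I compute $\langle T_{\vec a,\vec b,\vec c}f,g\rangle_{\vec\beta}$ by Fubini and compare with $\langle f,T^{*}g\rangle_{\vec\alpha}$. The measures must be converted: on the output side the weights are $\rho^{\beta_i}$, on the input side $\rho^{\alpha_i}$, and the ambient integrals in the operator definitions are against bare $dV$. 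Collecting the powers of $\rho(z),\rho(w)$ (which carry a factor $\rho^{\beta_i}$ from $dV_{\beta_i}$ and must end up as $\rho^{\alpha_i}\times$(kernel)) and of $\rho(u),\rho(\eta)$ (which pick up $\rho^{\alpha_i}$ or $\rho^{-\alpha_i}$ to match $dV_{\alpha_i}$), one reads off exactly the exponents $b_1-\alpha_1,\ b_2-\alpha_2$ in front and $\beta_1+a_1,\ \beta_2+a_2$ inside, as in the statement; the denominator $\rho(z,u)^{c_1}\rho(w,\eta)^{c_2}$ is Hermitian in the relevant sense used in Lemma \ref{dui ou} (the $\rho(\cdot,\cdot)$ with bars appear in $S$, not $T$), so it transposes to the same form. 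I would also invoke Lemma \ref{lem jifendengshi} only implicitly — it is not needed here, but it guarantees the integrals are finite on a dense class so the formal manipulation is legitimate.

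The one genuine subtlety — the step I expect to be the main obstacle — is the mixed-norm duality itself: unlike the single-exponent case, one must iterate the $L^{p_1}$-$L^{p_1^{\prime}}$ duality in $z$ (for fixed $w$) and then the $L^{p_2}$-$L^{p_2^{\prime}}$ duality in $w$, and verify that the iterated dual pairing is the global pairing $\langle\cdot,\cdot\rangle_{\vec\alpha}$ above and that boundedness of $T$ is genuinely equivalent (not merely implied) to boundedness of $T^{*}$. For $\vec p,\vec q$ with finite components this is exactly the content of Lemma \ref{dui ou} applied twice (first fixing the second variable, treating the operator as acting on functions of the first variable with the second as a parameter, then the converse), together with the fact that $(L_{\vec\alpha}^{\vec p})^{*}=L_{\vec\alpha}^{\vec p^{\,\prime}}$ for mixed norms; when a component equals $\infty$ one uses instead that $T$ bounded into $L^{\infty}$ in that slot is tested against $L^{1}$ and the reflexivity failure is harmless because we only need one direction of the implication here. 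Once that reduction is in place, the rest is the bookkeeping of the previous paragraph, and the lemma follows. $\hfill\fbox{}$
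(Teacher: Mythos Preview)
Your proposal is correct and follows essentially the same route as the paper: identify the adjoint kernel via Fubini and the weight bookkeeping, then invoke mixed-norm duality to transfer boundedness. The paper's execution is more direct than your third paragraph suggests, though: rather than trying to iterate Lemma~\ref{dui ou} in each variable separately (which is awkward, since $T_{\vec a,\vec b,\vec c}$ does not split as a tensor of one-variable operators and Lemma~\ref{dui ou} as stated only addresses $L^p\to L^p$), the paper simply writes
\[
\lVert T_{\vec a,\vec b,\vec c}^{*}g\rVert_{\vec p^{\,\prime},\vec\alpha}
=\sup_{\lVert f\rVert_{\vec p,\vec\alpha}=1}\left|\int_{T_B}\int_{T_B}T_{\vec a,\vec b,\vec c}^{*}g\cdot\overline f\,dV_{\alpha_1}\,dV_{\alpha_2}\right|,
\]
applies Fubini once to rewrite the pairing as $\langle g,\,T_{\vec a,\vec b,\vec c}f\rangle_{\vec\beta}$, and finishes with the mixed-norm H\"older inequality and the assumed bound on $T_{\vec a,\vec b,\vec c}$. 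Your kernel computation in the second paragraph is exactly what is needed for the Fubini step, so you already have the full argument without the iteration detour.
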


\begin{proof}
	It is easy to get the definition of operator $T_{\vec{a},\ \vec{b},\ \vec{c}}^{*}$ from Lemma \ref{dui ou}.

	Let $g(z,w)\in L_{\vec{\alpha}}^{\vec{q}'}\left( T_B\times T_B \right)$,  it follows from the Fubini theorem and the Hölder inequality of the mixed norm that
	$$
	\begin{aligned}
	\lVert T_{\vec{a},\ \vec{b},\ \vec{c}}^{*}\ g (z,w)\rVert _{\vec{p}^{\prime},\vec{\alpha}}=&\mathop{\text{sup}}_{\lVert f \rVert _{\vec{p},\vec{\alpha}}=1}\left| \int_{T_B}{\int_{T_B}{T_{\vec{a},\ \vec{b},\ \vec{c}}^{*}\ g\left( z,w \right) \overline{f\left( z,w \right) }dV_{\alpha _1}\left( z \right)}dV_{\alpha _2}\left( w \right)} \right|\\	
	=&\mathop{\text{sup}}_{\lVert f \rVert _{\vec{p},\vec{\alpha}}=1}\left| \int_{T_B}\int_{T_B}\int_{T_B}\int_{T_B}\frac{\rho \left( u \right) ^{\beta _1+a_1}\rho \left( \eta \right) ^{\beta _2+a_2}}{\rho \left( z,u \right) ^{c_1}\rho \left( w,\eta \right) ^{c_2}} \right.\\
	&\left.\times g\left( u,\eta \right) dV\left( u \right)dV\left( \eta \right)\overline{f\left( z,w \right) }dV_{\alpha _1}\left( z \right)dV_{\alpha _2}\left( w \right) \right|\\
	=&\mathop{\text{sup}}_{\lVert f \rVert _{\vec{p},\vec{\alpha}}=1}\left| \int_{T_B}{\int_{T_B}{g\left( u,\eta \right) \overline{T_{\vec{a},\ \vec{b},\ \vec{c}}\ f\left( u,\eta \right) }dV_{\beta _1}\left( u \right)}dV_{\beta _2}\left( \eta \right)} \right|\\
	\le &\mathop{\text{sup}}_{\lVert f \rVert _{\vec{p},\vec{\alpha}}=1}\lVert T_{\vec{a},\ \vec{b},\ \vec{c}}\ f \rVert _{\vec{q},\vec{\beta}}\ \lVert g \rVert _{\vec{q}',\vec{\beta}}\\
	\lesssim &\lVert g \rVert _{\vec{q}',\vec{\beta}}	.
	\end{aligned}
	$$
		
	Therefore, the operator $T_{\vec{a},\ \vec{b},\ \vec{c}}^{*}$ is a bounded operator from $L_{\vec{\beta}}^{\vec{q}^{\prime}}\left( T_B\times T_B \right)$ to $L_{\vec{\alpha}}^{\vec{p}^{\prime}}\left( T_B\times T_B \right)$.	
\end{proof}

In order to prove the conclusion of the fifth section of this article, we introduce the following four lemmas about the Schur's test. All lemmas are derived from \cite{M Z}. Therefore,  we omit their proofs.

\begin{lemma}\label{S1}
	Let $\vec{\mu}:=\mu_1 \times \mu_2$ and $\vec{v}:=v_1 \times v_2$ be positive measures on the space $X \times X$ and, for $i \in\{1,2\}, K_i$ be nonnegative functions on $X \times X$. Let $T$ be an integral operator with kernel $K:=K_1 \cdot K_2$ defined by setting for any $(x, y) \in X \times X$,
	$$
	T f(x, y):=\int_X \int_X K_1(x, s) K_2(y, t) f(s, t) d \mu_1(s) d \mu_2(t) .
	$$
	Suppose $\vec{p}:=\left(p_1, p_2\right)\in (1,\infty)\times(1,\infty), \vec{q}:=\left(q_1, q_2\right) \in(1, \infty)\times(1,\infty)$ satisfying $1<p_{-} \leq p_{+} \leq q_{-}<\infty$, where $p_{+}:=\max \left\{p_1, p_2\right\}, p_{-}:=\min \left\{p_1, p_2\right\}$, and $q_{-}:=\min \left\{q_1, q_2\right\}$. Let $\gamma_i$ and $\delta_i$ be real numbers such that $\gamma_i+\delta_i=1$ for $i \in\{1,2\}$. If there exist two positive functions $h_1$ and $h_2$ defined on $X \times X$ with two positive constants $C_1$ and $C_2$ such that for almost all $(x, y) \in X \times X$
	\begin{equation}\label{3.1}
	\int_X\left (\int_X\left (K_1(x, s)\right)^{\gamma_1 p_1^{\prime}}\left (K_2(y, t)\right)^{\gamma_2 p_1^{\prime}}\left (h_1(s, t)\right)^{p_1^{\prime}} d \mu_1(s)\right)^{p_2^{\prime} / p_1^{\prime}} d \mu_2(t) \leq C_1\left (h_2(x, y)\right)^{p_2^{\prime}}
	\end{equation}
	and for almost all $(s, t) \in X \times X$,
	\begin{equation}\label{3.2}
	\int_X\left (\int_X\left (K_1(x, s)\right)^{\delta_1 q_1}\left (K_2(y, t)\right)^{\delta_2 q_1}\left (h_2(x, y)\right)^{q_1} d \nu_1(x)\right)^{q_2 / q_1} d v_2(y) \leq C_2\left (h_1(s, t)\right)^{q_2},
	\end{equation}	
	then $T: L_{\vec{\mu}}^{\vec{p}} \rightarrow L_{\vec{v}}^{\vec{q}}$ is bounded with $\|T\|_{L_{\vec{\mu}}^{\vec{p}} \rightarrow L_{\vec{v}}^{\vec{q}}} \leq C_1^{1 / p_2^{\prime}} C_2^{1 / q_2}$.
\end{lemma}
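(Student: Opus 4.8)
I would establish this by a Schur-type test adapted to the two-parameter mixed-norm setting: hypothesis \eqref{3.1} will control the ``input'' side after a Hölder splitting of the kernel, hypothesis \eqref{3.2} will control the ``output'' side after a Minkowski-type interchange of iterated norms, and the range restriction $1<p_-\le p_+\le q_-<\infty$ is exactly what makes both of these work (the lower bound guarantees $p_1',p_2'<\infty$ in the Hölder steps, while the upper bound $p_+\le q_-$ licenses the interchange).

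Since $K_1,K_2\ge 0$ we have $|Tf|\le T|f|$, so I may assume $f\ge 0$ and argue with values in $[0,\infty]$. Fix $(x,y)\in X\times X$. Using $\gamma_i+\delta_i=1$, write
\[
K_1(x,s)K_2(y,t)=\bigl[K_1(x,s)^{\gamma_1}K_2(y,t)^{\gamma_2}h_1(s,t)\bigr]\cdot\bigl[K_1(x,s)^{\delta_1}K_2(y,t)^{\delta_2}h_1(s,t)^{-1}\bigr],
\]
apply Hölder's inequality in the $s$-integral with exponents $(p_1,p_1')$ and then in the $t$-integral with exponents $(p_2,p_2')$, and use \eqref{3.1} to bound the mixed-norm factor coming from the first bracket by $C_1^{1/p_2'}h_2(x,y)$. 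This yields the pointwise estimate
\[
Tf(x,y)\ \le\ C_1^{1/p_2'}\,h_2(x,y)\,\bigl\|\,K_1(x,\cdot)^{\delta_1}K_2(y,\cdot)^{\delta_2}h_1^{-1}f\,\bigr\|_{L^{\vec{p}}_{\vec{\mu}}},
\]
where the right-hand norm is taken over the $(s,t)$-variables.

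It then remains to take the $L^{\vec{q}}_{\vec{v}}$-norm in $(x,y)$ of the right-hand side. Put $\Phi(x,y,s,t):=K_1(x,s)^{\delta_1}K_2(y,t)^{\delta_2}h_2(x,y)h_1(s,t)^{-1}f(s,t)$, so the estimate above reads $Tf(x,y)\le C_1^{1/p_2'}\|\Phi(x,y,\cdot,\cdot)\|_{L^{\vec{p}}_{\vec{\mu}}}$. The crucial step is the inequality
\[
\bigl\|\,\|\Phi(x,y,\cdot,\cdot)\|_{L^{\vec{p}}_{\vec{\mu}}}\,\bigr\|_{L^{\vec{q}}_{\vec{v}}}\ \le\ \bigl\|\,\|\Phi(\cdot,\cdot,s,t)\|_{L^{\vec{q}}_{\vec{v}}}\,\bigr\|_{L^{\vec{p}}_{\vec{\mu}}},
\]
obtained by four successive applications of Minkowski's integral inequality, moving the $s$- and $t$-norms (with exponents $p_1,p_2$) outward past the $x$- and $y$-norms (with exponents $q_1,q_2$); each swap is legitimate because $p_1,p_2\le q_1,q_2$. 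Finally, for fixed $(s,t)$ the factor $h_1(s,t)^{-1}f(s,t)$ is constant, so $\|\Phi(\cdot,\cdot,s,t)\|_{L^{\vec{q}}_{\vec{v}}}=h_1(s,t)^{-1}f(s,t)\,\|K_1(\cdot,s)^{\delta_1}K_2(\cdot,t)^{\delta_2}h_2\|_{L^{\vec{q}}_{\vec{v}}}\le C_2^{1/q_2}f(s,t)$ by \eqref{3.2} (the $h_1$'s cancel); taking the $L^{\vec{p}}_{\vec{\mu}}$-norm in $(s,t)$ gives $C_2^{1/q_2}\|f\|_{L^{\vec{p}}_{\vec{\mu}}}$, and assembling the displays yields $\|Tf\|_{L^{\vec{q}}_{\vec{v}}}\le C_1^{1/p_2'}C_2^{1/q_2}\|f\|_{L^{\vec{p}}_{\vec{\mu}}}$.

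The routine parts are the Tonelli/measurability bookkeeping that legitimizes these iterated integrals and norm rearrangements for nonnegative integrands, together with the precise tracking of exponents in the two Hölder applications. The one genuinely delicate point is the fourfold Minkowski interchange: the swaps must be carried out in an order such that at each stage the exponent being pushed inward is no larger than the one it passes, and this is exactly where the hypothesis $p_+\le q_-$ enters — if that inequality failed, the interchange, and hence the whole scheme, would collapse, which is why the lemma is stated only in this range.
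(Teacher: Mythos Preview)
Your argument is correct and is exactly the standard two-parameter Schur test: split the kernel via $\gamma_i+\delta_i=1$, apply H\"older twice to invoke \eqref{3.1} for the pointwise bound, then use the Minkowski interchange (licensed by $p_+\le q_-$) to push the $(s,t)$-norms outside and invoke \eqref{3.2}. The paper itself does not prove this lemma but simply cites \cite{M Z}, where the proof follows precisely the scheme you describe; so your proposal matches the intended argument.
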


\begin{lemma}\label{S2}
	Let $\vec{\mu}, \vec{v}$, the kernel $K$, and the operator $T$ be as in Lemma $\ref{S1}$. Suppose $\vec{q}:=\left(q_1, q_2\right) \in[1, \infty)\times[1, \infty)$. Let $\gamma_i$ and $\delta_i$ be two real numbers such that $\gamma_i+\delta_i=1$ for $i \in\{1,2\}$. If there exist two positive functions $h_1$ and $h_2$ defined on $X \times X$ with two positive constants $C_1$ and $C_2$ such that for almost all $(x, y) \in X \times X$,
	\begin{equation}\label{3.3}
	\underset{(s, t) \in X \times X}{\operatorname{ess} \sup }\left (K_1(x, s)\right)^{\gamma_1}\left (K_2(y, t)\right)^{\gamma_2} h_1(s, t) \leq C_1 h_2(x, y)
	\end{equation}	
	and, for almost all $(s, t) \in X \times X$,
	\begin{equation}\label{3.4}
	\int_X\left (\int_X\left (K_1(x, s)\right)^{\delta_1 q_1}\left (K_2(y, t)\right)^{\delta_2 q_1}\left (h_2(x, y)\right)^{q_1} d \nu_1(x)\right)^{q_2 / q_1} d \nu_2(y) \leq C_2\left (h_1(s, t)\right)^{q_2},
	\end{equation}
	then $T: L_{\vec{\mu}}^{\vec{1}} \rightarrow L_{\vec{v}}^{\vec{q}}$ is bounded with $\|T\|_{L_{\vec{\mu}}^{\vec{1}} \rightarrow L_{\vec{v}}^{\vec{q}}} \leq C_1 C_2^{1 / q_2}$.
\end{lemma}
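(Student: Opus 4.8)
The plan is to mirror the proof of Lemma~\ref{S1}, with the only essential change being that on the ``source'' side we replace an $L^{p'}$-integral estimate by an $L^\infty$-estimate, since the domain space is now $L^{\vec 1}_{\vec\mu}$ rather than $L^{\vec p}_{\vec\mu}$. First I would fix $f \in L^{\vec 1}_{\vec\mu}(X\times X)$ with $\|f\|_{\vec 1,\vec\mu}=1$ and write, using $\gamma_i+\delta_i=1$,
\[
|Tf(x,y)| \le \int_X\int_X \bigl(K_1(x,s)\bigr)^{\gamma_1}\bigl(K_2(y,t)\bigr)^{\gamma_2} h_1(s,t)\cdot \bigl(K_1(x,s)\bigr)^{\delta_1}\bigl(K_2(y,t)\bigr)^{\delta_2} h_1(s,t)^{-1}|f(s,t)|\, d\mu_1(s)\, d\mu_2(t).
\]
On the first factor I would apply the essential-supremum hypothesis \eqref{3.3} to pull out $C_1 h_2(x,y)$, leaving
\[
|Tf(x,y)| \le C_1 h_2(x,y)\int_X\int_X \bigl(K_1(x,s)\bigr)^{\delta_1}\bigl(K_2(y,t)\bigr)^{\delta_2} h_1(s,t)^{-1}|f(s,t)|\, d\mu_1(s)\, d\mu_2(t).
\]

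Next I would take the mixed $L^{\vec q}_{\vec v}$ norm of both sides. Writing $F(s,t):=h_1(s,t)^{-1}|f(s,t)|$, the inner integral is $\int_X\int_X (K_1(x,s))^{\delta_1}(K_2(y,t))^{\delta_2} F(s,t)\,d\mu_1(s)\,d\mu_2(t)$; applying Minkowski's integral inequality for the mixed norm (first in the $s,t$ variables, moving the $L^{\vec q}_{\vec v}(x,y)$ norm inside the double integral) I get
\[
\|Tf\|_{\vec q,\vec v} \le C_1 \int_X\int_X F(s,t)\,\Bigl\|\,h_2(\cdot,\cdot)\bigl(K_1(\cdot,s)\bigr)^{\delta_1}\bigl(K_2(\cdot,t)\bigr)^{\delta_2}\Bigr\|_{\vec q,\vec v}\, d\mu_1(s)\, d\mu_2(t).
\]
The inner $L^{\vec q}_{\vec v}$ norm is exactly the quantity controlled by hypothesis \eqref{3.4}, which says it is $\le C_2^{1/q_2} h_1(s,t)$. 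Substituting and recalling $F(s,t) = h_1(s,t)^{-1}|f(s,t)|$, the factors $h_1(s,t)$ cancel and I am left with
\[
\|Tf\|_{\vec q,\vec v} \le C_1 C_2^{1/q_2}\int_X\int_X |f(s,t)|\, d\mu_1(s)\, d\mu_2(t) = C_1 C_2^{1/q_2}\|f\|_{\vec 1,\vec\mu},
\]
which is the claimed bound.

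The main obstacle, and the one step worth spelling out carefully, is the justification of the Minkowski-type interchange for the mixed norm $\|\cdot\|_{\vec q,\vec v}$ when $\vec q \in [1,\infty)\times[1,\infty)$: one must check that for a nonnegative kernel $G(x,y;s,t)$ the inequality $\bigl\|\int\!\int G(\cdot,\cdot;s,t)\,d\mu_1(s)d\mu_2(t)\bigr\|_{\vec q,\vec v} \le \int\!\int \|G(\cdot,\cdot;s,t)\|_{\vec q,\vec v}\,d\mu_1(s)d\mu_2(t)$ holds. This follows by applying the ordinary Minkowski integral inequality twice — once for the inner $L^{q_1}(d\nu_1)$ norm and once for the outer $L^{q_2/q_1}(d\nu_2)$ norm (using $q_1\ge 1$ and $q_2\ge 1$, hence $q_2/q_1$ need not be $\ge 1$, so one applies Minkowski at the level of the $L^{q_2}$ norm before raising to the power, or equivalently uses that $\|\cdot\|_{\vec q,\vec v}$ is a genuine norm for $\vec q\in[1,\infty)^2$). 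Apart from this, the argument is a routine adaptation of Lemma~\ref{S1}; the endpoint cases where some $q_i=1$ or the measures fail to be $\sigma$-finite cause no trouble since all functions involved are nonnegative and Tonelli's theorem applies throughout.
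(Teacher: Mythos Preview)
Your argument is correct and is precisely the standard endpoint Schur test: factor the kernel via $\gamma_i+\delta_i=1$, use the $L^\infty$ hypothesis \eqref{3.3} to extract $C_1 h_2(x,y)$, then apply the Minkowski integral inequality for the mixed norm $\|\cdot\|_{\vec q,\vec\nu}$ and invoke \eqref{3.4}. The paper itself does not prove this lemma; it simply quotes it from \cite{M Z} and omits the proof, so there is nothing to compare beyond noting that your argument is the natural (and presumably the intended) one.

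One small clean-up: your parenthetical worry about the exponent $q_2/q_1$ is unnecessary and slightly misleading. You never need to apply Minkowski to an $L^{q_2/q_1}$ norm. The correct iteration is: first apply the ordinary Minkowski integral inequality in the $x$-variable for the $L^{q_1}(d\nu_1)$ norm (valid since $q_1\ge 1$), obtaining a function of $y$ that is itself an integral in $(s,t)$; then apply Minkowski again in the $y$-variable for the $L^{q_2}(d\nu_2)$ norm (valid since $q_2\ge 1$). Equivalently, as you say at the end, $\|\cdot\|_{\vec q,\vec\nu}$ is a genuine norm for $\vec q\in[1,\infty)^2$, and the vector-valued Minkowski inequality $\bigl\|\int G\,d\mu\bigr\|\le\int\|G\|\,d\mu$ holds for any norm. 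Either route avoids the ratio $q_2/q_1$ entirely.
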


\begin{lemma}\label{S3}
	Let $\vec{\mu}, \vec{v}$, the kernel $K$, and the operator $T$ be as in Lemma $\ref{S1}$. Suppose $\vec{p}=\left(p_1, 1\right)$ with $p_1 \in(1, \infty)$ and $\vec{q}:=\left(q_1, q_2\right) \in(1, \infty)\times(1, \infty)$ satisfying $1<p_1 \leq q_{-}<\infty$, where $q_{-}:=\min \left\{q_1, q_2\right\}$. Let $\gamma_i$ and $\delta_i$ be two real numbers such that $\gamma_i+\delta_i=1$ for $i \in\{1,2\}$. If there exist two positive functions $h_1$ and $h_2$ defined on $X \times X$ with two positive constants $C_1$ and $C_2$ such that for almost all $(x, y) \in X \times X$,
	\begin{equation}\label{3.5}
	\underset{t \in X}{\operatorname{ess} \sup } \int_X\left(K_1(x, s)\right)^{\gamma_1 p_1^{\prime}}\left(K_2(y, t)\right)^{\gamma_2 p_1^{\prime}}\left(h_1(s, t)\right)^{p_1^{\prime}} d \mu_1(s) \leq C_1\left (h_2(x, y)\right)^{p_1^{\prime}}
	\end{equation}
	and, for almost all $(s, t) \in X \times X$,
	\begin{equation}\label{3.6}
	\int_X\left (\int_X\left (K_1(x, s)\right)^{\delta_1 q_1}\left (K_2(y, t)\right)^{\delta_2 q_1}\left (h_2(x, y)\right)^{q_1} d \nu_1(x)\right)^{q_2 / q_1} d \nu_2(y) \leq C_2\left (h_1(s, t)\right)^{q_2},
	\end{equation}
	then $T: L_{\vec{\mu}}^{\vec{p}} \rightarrow L_{\vec{v}}^{\vec{q}}$ is bounded with $\|T\|_{L_{\vec{\mu}}^{\vec{p}} \rightarrow L_{\vec{v}}^{\vec{q}}} \leq C_1^{1 / p_1^{\prime}} C_2^{1 / q_2}$
\end{lemma}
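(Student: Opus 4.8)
The plan is to run a Schur-type test adapted to the mixed-norm setting and, in particular, to the degenerate exponent $\vec{p}=(p_1,1)$. The outer exponent $1$ of $\vec{p}$ has ``conjugate'' $\infty$ in the $t$-variable, which is exactly why an essential supremum over $t$ appears in \eqref{3.5}; the hypothesis $p_1>1$ is what will make the inner H\"older step legitimate, and the comparison $1<p_1\le q_-<\infty$ is what will permit the subsequent applications of Minkowski's integral inequality in the $\nu_1$- and $\nu_2$-variables.

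First I would reduce to $f\ge 0$ (the kernel $K_1K_2$ is nonnegative and $\|f\|_{L_{\vec{\mu}}^{\vec{p}}}$ depends only on $|f|$) and split, for $\mu_1\times\mu_2$-almost every $(s,t)$,
\[
K_1(x,s)K_2(y,t)f(s,t)=\Big[K_1(x,s)^{\gamma_1}K_2(y,t)^{\gamma_2}h_1(s,t)\Big]\cdot\Big[K_1(x,s)^{\delta_1}K_2(y,t)^{\delta_2}\frac{f(s,t)}{h_1(s,t)}\Big],
\]
using $\gamma_i+\delta_i=1$. For each fixed $t$ I would then apply H\"older's inequality in the $s$-integral with exponents $p_1'$ and $p_1$, and use \eqref{3.5} (with the essential supremum over $t$ occurring there) to bound the first resulting factor by $C_1^{1/p_1'}h_2(x,y)$. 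Integrating in $t$ — with no further inequality, since the outer exponent of $\vec{p}$ is $1$ — this gives the pointwise estimate
\[
Tf(x,y)\le C_1^{1/p_1'}\,h_2(x,y)\int_X K_2(y,t)^{\delta_2}\Big(\int_X K_1(x,s)^{\delta_1p_1}h_1(s,t)^{-p_1}f(s,t)^{p_1}\,d\mu_1(s)\Big)^{1/p_1}\,d\mu_2(t).
\]

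Next I would take the $\|\cdot\|_{L_{\vec{v}}^{\vec{q}}}$-norm of the right-hand side. Minkowski's integral inequality in the mixed-norm space $L_{\vec{v}}^{\vec{q}}$ (legitimate because $q_1,q_2\ge 1$) pulls the $t$-integral outside the norm. For each fixed $t$ the remaining function has $p_1$-th power equal to an $s$-integral, so, writing $\|g^{1/p_1}\|_{L_{\vec{v}}^{\vec{q}}}=\|g\|_{L_{\vec{v}}^{(q_1/p_1,\,q_2/p_1)}}^{1/p_1}$ and applying Minkowski's inequality once more in $L_{\vec{v}}^{(q_1/p_1,\,q_2/p_1)}$ (legitimate because $q_1/p_1\ge 1$ and $q_2/p_1\ge 1$, that is, because $p_1\le q_-$), I would reduce the estimate to bounding the kernel quantity
\[
\Big(\int_X\Big(\int_X K_1(x,s)^{\delta_1q_1}K_2(y,t)^{\delta_2q_1}h_2(x,y)^{q_1}\,d\nu_1(x)\Big)^{q_2/q_1}\,d\nu_2(y)\Big)^{1/q_2}\le C_2^{1/q_2}h_1(s,t),
\]
which is precisely \eqref{3.6}. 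The factor $h_1(s,t)$ then cancels the $h_1(s,t)^{-1}$ of the previous display, the surviving $s$- and $t$-integrals assemble into $\|f\|_{L_{\vec{\mu}}^{\vec{p}}}$ (again because the outer exponent of $\vec{p}$ equals $1$), and the constants multiply to $C_1^{1/p_1'}C_2^{1/q_2}$, which is the claimed bound for $\|T\|_{L_{\vec{\mu}}^{\vec{p}}\to L_{\vec{v}}^{\vec{q}}}$.

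I expect the only real obstacle to lie in this last step: one must apply Minkowski's inequality in the correct order and with the correct exponents — first with $\vec{q}$, to move the outer $t$-integral, then with $(q_1/p_1,q_2/p_1)$, to move the inner $s$-integral — so that what is left under the $\nu_1$- and $\nu_2$-integrals is exactly the left-hand side of \eqref{3.6}, with $K_1$, $K_2$, $h_2$ raised to the powers $\delta_1q_1$, $\delta_2q_1$, $q_1$. This is the one place where the full hypothesis $1<p_1\le q_-<\infty$ is genuinely used, in contrast with the $\vec{p}=\vec{1}$ case of Lemma \ref{S2} and the $1<p_-\le p_+\le q_-$ case of Lemma \ref{S1}.
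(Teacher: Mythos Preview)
Your argument is correct. The paper itself does not supply a proof of this lemma: it states that Lemmas \ref{S1}--\ref{S4} are taken from \cite{M Z} and explicitly omits their proofs. What you have written is exactly the standard mixed-norm Schur test specialized to $\vec{p}=(p_1,1)$ --- the H\"older split with exponents $p_1',p_1$ in the $s$-variable, the essential-supremum bound \eqref{3.5} replacing the outer integral (since $p_2'=\infty$), then two successive Minkowski steps in $L^{\vec{q}}_{\vec{v}}$ and in $L^{(q_1/p_1,\,q_2/p_1)}_{\vec{v}}$, the second of which requires precisely $p_1\le q_-$. The cancellation of $h_1$ and the recombination into $\|f\|_{L^{(p_1,1)}_{\vec{\mu}}}$ are handled correctly, and the constant $C_1^{1/p_1'}C_2^{1/q_2}$ falls out as stated. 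There is nothing to compare against in the paper, but your proof matches the approach one would find in the cited reference.
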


\begin{lemma}\label{S4}
	Let $\vec{\mu}, \vec{v}$, the kernel $K$, and the operator $T$ be as in Lemma $\ref{S1}$. Suppose $\vec{p}=\left(1, p_2\right)$ with $p_2 \in(1, \infty)$ and $\vec{q}:=\left(q_1, q_2\right) \in(1, \infty)\times(1, \infty)$ satisfying $1<p_2 \leq q_{-}<\infty$, where $q_{-}:=\min \left\{q_1, q_2\right\}$. Let $\gamma_i$ and $\delta_i$ be two real numbers such that $\gamma_i+\delta_i=1$ for $i \in\{1,2\}$. If there exist two positive functions $h_1$ and $h_2$ defined on $X \times X$ with two positive constants $C_1$ and $C_2$ such that for almost all $(x, y) \in X \times X$,
	\begin{equation}\label{3.7}
	\int_X\left (\underset{s \in X}{\operatorname{ess} \sup }\left (K_1(x, s)\right)^{\gamma_1}\left (K_2(y, t)\right)^{\gamma_2} h_1(s, t)\right)^{p_2^{\prime}} d \mu_2(t) \leq C_1\left (h_2(x, y)\right)^{p_2^{\prime}}
	\end{equation}
	and, for almost all $(s, t) \in X \times X$,
	\begin{equation}\label{3.8}
	\int_X\left (\int_X\left (K_1(x, s)\right)^{\delta_1 q_1}\left (K_2(y, t)\right)^{\delta_2 q_1}\left (h_2(x, y)\right)^{q_1} d v_1(x)\right)^{q_2 / q_1} d v_2(y) \leq C_2\left (h_1(s, t)\right)^{q_2},
	\end{equation}
	then $T: L_{\vec{\mu}}^{\vec{p}} \rightarrow L_{\vec{v}}^{\vec{q}}$ is bounded with $\|T\|_{L_{\vec{\mu}}^{\vec{p}} \rightarrow L_{\vec{v}}^{\vec{q}}} \leq C_1^{1 / p_2^{\prime}} C_2^{1 / q_2}$.
\end{lemma}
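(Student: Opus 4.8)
\medskip
\noindent\textbf{Proof proposal.}\quad
The plan is to run a Schur‑type argument tailored to the mixed exponent $\vec p=(1,p_2)$, in two stages: first I would extract a pointwise majorant for $|Tf(x,y)|$ by splitting the kernel and applying the mixed‑norm Hölder inequality in the integration variables $(s,t)$, which brings in hypothesis \eqref{3.7}; then I would estimate the $L^{\vec q}_{\vec v}$‑norm of that majorant by chaining two Minkowski‑type inequalities, which brings in hypothesis \eqref{3.8}. Since $|Tf(x,y)|\le T(|f|)(x,y)$ pointwise we may assume $f\ge0$, after which every integrand below is nonnegative and Tonelli's theorem justifies all interchanges of order of integration.

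For the pointwise stage, write, using $\gamma_i+\delta_i=1$,
\[
K_1(x,s)K_2(y,t)=\bigl[K_1(x,s)^{\gamma_1}K_2(y,t)^{\gamma_2}h_1(s,t)\bigr]\cdot\bigl[K_1(x,s)^{\delta_1}K_2(y,t)^{\delta_2}h_1(s,t)^{-1}\bigr].
\]
Multiplying the second bracket by $f(s,t)$ and applying Hölder's inequality for the mixed norm in $(s,t)$ with conjugate exponent pairs $(\infty,1)$ in $s$ and $(p_2',p_2)$ in $t$, the factor coming from the first bracket has $L^{(\infty,p_2')}_{\vec\mu}$‑norm in $(s,t)$ at most $C_1^{1/p_2'}h_2(x,y)$ by \eqref{3.7}, so that
\[
|Tf(x,y)|\le C_1^{1/p_2'}\,h_2(x,y)\,\Psi(x,y),\qquad \Psi(x,y):=\left(\int_X\Bigl(\int_X K_1(x,s)^{\delta_1}K_2(y,t)^{\delta_2}h_1(s,t)^{-1}f(s,t)\,d\mu_1(s)\Bigr)^{p_2}d\mu_2(t)\right)^{1/p_2}.
\]

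For the norm stage, set $u(x,y,t):=K_2(y,t)^{\delta_2}\int_X K_1(x,s)^{\delta_1}h_1(s,t)^{-1}f(s,t)\,d\mu_1(s)$, so that $h_2(x,y)\Psi(x,y)=\bigl(\int_X[h_2(x,y)u(x,y,t)]^{p_2}d\mu_2(t)\bigr)^{1/p_2}$. Since $p_2\le q_-=\min\{q_1,q_2\}$, both $q_1/p_2\ge1$ and $q_2/p_2\ge1$, so $L^{(q_1/p_2,\,q_2/p_2)}_{\vec v}$ is a genuine normed space; combined with the identity $\|H\|_{L^{\vec q}_{\vec v}}^{p_2}=\|H^{p_2}\|_{L^{(q_1/p_2,\,q_2/p_2)}_{\vec v}}$ and the Minkowski integral inequality in that space, this yields a generalized Minkowski inequality that pulls the $\mu_2$‑integral out of the $L^{\vec q}_{\vec v}$‑norm. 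For each fixed $t$, $h_2(x,y)u(x,y,t)$ is itself a $\mu_1$‑integral, so a second, ordinary application of Minkowski's integral inequality in $L^{\vec q}_{\vec v}$ pulls the norm inside; the kernel norm that then appears, $\|h_2(\cdot,\cdot)K_1(\cdot,s)^{\delta_1}K_2(\cdot,t)^{\delta_2}\|_{L^{\vec q}_{\vec v}}$, is at most $C_2^{1/q_2}h_1(s,t)$ by \eqref{3.8}, and this cancels the $h_1(s,t)^{-1}$ inside $u$. Collecting constants, what remains is $C_1^{1/p_2'}C_2^{1/q_2}\bigl(\int_X(\int_X f(s,t)\,d\mu_1(s))^{p_2}d\mu_2(t)\bigr)^{1/p_2}=C_1^{1/p_2'}C_2^{1/q_2}\|f\|_{\vec p,\vec\mu}$, which is the asserted bound.

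The step I expect to be the crux is ordering the two Minkowski‑type inequalities correctly. The $\mu_2$‑integral must be extracted \emph{first}, and this is exactly where $p_2\le q_-$ is indispensable: without it $L^{(q_1/p_2,\,q_2/p_2)}_{\vec v}$ need not be normed and the generalized Minkowski inequality fails. Only afterwards may the $\mu_1$‑integral be extracted; doing it in the opposite order would yield the bound $\int_X\|f(s,\cdot)\|_{L^{p_2}(\mu_2)}\,d\mu_1(s)$, which by Minkowski's inequality dominates $\|f\|_{\vec p,\vec\mu}$ (and is in general strictly larger), so it is too weak to conclude. The remaining ingredients — the mixed‑norm Hölder inequality, the admissibility of the interchanges (from nonnegativity), and the rescaling identity $\|H\|_{L^{\vec q}_{\vec v}}^{p_2}=\|H^{p_2}\|_{L^{(q_1/p_2,\,q_2/p_2)}_{\vec v}}$ — are routine, and the analogous scheme with the exponents shifted handles Lemmas \ref{S1}–\ref{S3} as well.
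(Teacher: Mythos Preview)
Your argument is correct: the mixed-norm H\"older step with exponents $(\infty,p_2')$ and $(1,p_2)$ reproduces \eqref{3.7}, and the two-stage Minkowski argument (first the generalized Minkowski in $L^{(q_1/p_2,\,q_2/p_2)}_{\vec v}$, valid because $p_2\le q_-$, then ordinary Minkowski in $L^{\vec q}_{\vec v}$) reproduces \eqref{3.8} and the stated constant. The paper itself does not prove this lemma---it simply refers to \cite{M Z} and omits the argument---so there is no in-paper proof to compare against, but your scheme is exactly the standard mixed-norm Schur test one expects from that reference.
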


Finally, we introduce a sufficient condition for the boundedness of an operator $S_{\vec{a},\ \vec{b},\ \vec{c}}$ when its codomain is $L^{\infty}\left( T_B\times T_B \right)$.

\begin{lemma}\label{ dao wu qiong }
	Let $\vec{\mu}:=\mu_1 \times \mu_2$ and $\vec{v}:=v_1 \times v_2$ be positive measures on the space $X \times X$ and, for $i \in\{1,2\}, K_i$ be nonnegative functions on $X \times X$. Let $T$ be an integral operator with kernel $K:=K_1 \cdot K_2$ defined by setting for any $(x, y) \in X \times X$,
	$$
	T f(x, y):=\int_X \int_X K_1(x, s) K_2(y, t) f(s, t) d \mu_1(s) d \mu_2(t) .
	$$
	Suppose $\vec{p}:=(p_1,p_2), \vec{q}:=(\infty, \infty)$ satisfying $1\le p_-\le p_+\le \infty$, if 
	\begin{equation}\label{3.9}
	\lVert K_1\left(x,\cdot \right) \cdot K_2\left(y,\cdot \right) \rVert _{L_{\vec{\mu}}^{\vec{p}^{\prime}}}
	\end{equation} 
	is uniformly bounded, then $T$ is bounded from $L_{\vec{\mu}}^{\vec{p}}$ to $L^{\infty}$. 
\end{lemma}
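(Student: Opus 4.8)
The plan is to reduce the claim to a single application of Hölder's inequality in the mixed-norm setting, using crucially that the target exponent $\vec{q}=(\infty,\infty)$ only demands a uniform pointwise estimate on $Tf$. First I would fix $f\in L_{\vec{\mu}}^{\vec{p}}$ with $\|f\|_{L_{\vec{\mu}}^{\vec{p}}}=1$ and estimate $|Tf(x,y)|$ for almost every $(x,y)\in X\times X$. Since $K_1,K_2\ge 0$,
\[
|Tf(x,y)|\le\int_X\int_X K_1(x,s)\,K_2(y,t)\,|f(s,t)|\,d\mu_1(s)\,d\mu_2(t).
\]
Writing $g_{x,y}(s,t):=K_1(x,s)K_2(y,t)$, the right-hand side equals $\int_X\int_X g_{x,y}(s,t)|f(s,t)|\,d\mu_1(s)\,d\mu_2(t)$.

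Next I would apply the Hölder inequality for mixed-norm spaces to the pair $g_{x,y}$ and $|f|$: performing the ordinary Hölder inequality first in the inner variable $s$ with the conjugate exponents $(p_1,p_1')$ and then in the outer variable $t$ with $(p_2,p_2')$ yields
\[
\int_X\int_X g_{x,y}(s,t)\,|f(s,t)|\,d\mu_1(s)\,d\mu_2(t)\le\|g_{x,y}\|_{L_{\vec{\mu}}^{\vec{p}^{\prime}}}\,\|f\|_{L_{\vec{\mu}}^{\vec{p}}}.
\]
The endpoint cases in which some $p_i\in\{1,\infty\}$ (so that $p_i'\in\{\infty,1\}$) are handled identically, with the corresponding Hölder step replaced by the trivial $L^1$--$L^\infty$ pairing. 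By the standing hypothesis that \eqref{3.9} is uniformly bounded, there is a constant $C$, independent of $(x,y)$, with $\|g_{x,y}\|_{L_{\vec{\mu}}^{\vec{p}^{\prime}}}=\|K_1(x,\cdot)\cdot K_2(y,\cdot)\|_{L_{\vec{\mu}}^{\vec{p}^{\prime}}}\le C$. Hence $|Tf(x,y)|\le C$ for almost every $(x,y)$; in particular the iterated integral defining $Tf$ converges absolutely almost everywhere, so $Tf$ is well defined.

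Finally, taking the essential supremum over $(x,y)\in X\times X$ gives $\|Tf\|_{L^{\infty}}\le C$, and by homogeneity $\|Tf\|_{L^{\infty}}\le C\|f\|_{L_{\vec{\mu}}^{\vec{p}}}$ for every $f\in L_{\vec{\mu}}^{\vec{p}}$; thus $T:L_{\vec{\mu}}^{\vec{p}}\to L^{\infty}$ is bounded, with operator norm controlled by the uniform bound for \eqref{3.9}. There is no genuine analytic obstacle here; the only step deserving care is the bookkeeping in the iterated Hölder inequality---ensuring the inner exponent $p_1$ is paired with $p_1'$ before the outer pairing of $p_2$ with $p_2'$, so that the order of integration matches the definition of $\|\cdot\|_{L_{\vec{\mu}}^{\vec{p}}}$---together with a correct treatment of the degenerate endpoint exponents.
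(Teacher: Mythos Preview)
Your proposal is correct and follows essentially the same approach as the paper: both arguments reduce the claim to the mixed-norm H\"older inequality $|Tf(x,y)|\le\|K_1(x,\cdot)K_2(y,\cdot)\|_{L_{\vec{\mu}}^{\vec{p}'}}\|f\|_{L_{\vec{\mu}}^{\vec{p}}}$ and then take the essential supremum. Your version simply spells out the iterated H\"older step and the endpoint cases in more detail than the paper's one-line proof.
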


\begin{proof}
	This result can be easily obtained from the following mixed norm Hölder inequality, which is as follows : 
	$$
	\left| Tf(x,y) \right|\le \left\| K_1\left( x,\cdot \right) \cdot K_2\left( y,\cdot \right) \right\| _{L_{\vec{\mu}}^{\vec{p}\prime}}\left\| f \right\| _{L_{\vec{\mu}}^{\vec{p}}}.
	$$
\end{proof}

\section{The Necessity for the Boundedness of $T_{\vec{a},\ \vec{b},\ \vec{c}}$}
\ \ \ \
In this section, we give the necessary conditions for the boundedness of the operator $T_{\vec{a},\ \vec{b},\ \vec{c}}$, and the conclusions are the following lemmas \ref{lem b1} to lemma \ref{lem b13}.
\begin{lemma}\label{lem b1}
	Let $1<p_1\le q_1<\infty$ and $1<p_2\le q_2<\infty$. If the operator $T_{\vec{a},\ \vec{b},\ \vec{c}}$ is bounded from $L_{\vec{\alpha}}^{\vec{p}}\left( T_B\times T_B \right) $ to $L_{\vec{\beta}}^{\vec{q}}\left( T_B\times T_B \right) $, then for any $i \in \{1,2\}$, 
	$$
	\begin{cases}
	-q_ia_i<\beta _i+1,\  \alpha _i+1<p_i\left( b_i+1 \right) ,\\
	c_i=n+1+a_i+b_i+\lambda_i,\\
	\end{cases}
	$$
	\\
	where 
	$$
	\lambda_i =\frac{n+1+\beta _i}{q_i}-\frac{n+1+\alpha _i}{p_i}.
	$$
\end{lemma}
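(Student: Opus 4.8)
The plan is to obtain the necessary conditions by testing the bounded operator $T_{\vec{a},\ \vec{b},\ \vec{c}}$ against two complementary families of functions: power functions of $\rho$ localized away from the boundary, and suitable test functions that force the correct integrability exponents. Throughout, I would exploit the product structure of the kernel, writing $T_{\vec{a},\ \vec{b},\ \vec{c}} f(z,w)$ with the two ``directions'' separated, so that testing on tensor-product functions $f(u,\eta)=f_1(u)f_2(\eta)$ reduces the mixed-norm estimate to a product of one-variable estimates, each of which is governed by the classical Forelli-Rudin theory on the single tube $T_B$ (via Lemma \ref{lem jifendengshi} and Lemma \ref{lem qu bian liang}).

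\emph{Step 1 (the exponent identity $c_i=n+1+a_i+b_i+\lambda_i$).} I would apply $T_{\vec{a},\ \vec{b},\ \vec{c}}$ to the dilation-type test functions adapted to the automorphisms of $T_B$ (translations $z\mapsto z+x_0$ with $x_0\in\mathbb{R}^n$ and the dilations $z\mapsto tz$ that scale $\rho$ by $t$), or more concretely to functions of the form $f_\varepsilon(u,\eta)=\rho(u)^{s_1}\rho(\eta)^{s_2}\chi_{\{\rho(u),\rho(\eta)>\varepsilon\}}$ and track how both sides scale as $\varepsilon\to 0$ or under the dilation. Using Lemma \ref{lem jifendengshi} to evaluate the inner integrals explicitly, the left-hand norm scales with some power of the dilation parameter built from $a_i,c_i,n$, while the $L^{\vec{p}}_{\vec\alpha}$-norm of $f_\varepsilon$ scales with a power built from $b_i,\alpha_i,p_i,n$; boundedness forces these powers to match, which yields exactly $c_i=n+1+a_i+b_i+\lambda_i$ with $\lambda_i$ as stated. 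The key is that the tube $T_B$ carries a transitive enough group action (translations plus one-parameter dilations) so that such a scaling argument pins down a single linear relation among the parameters in each direction.

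\emph{Step 2 (the inequalities $\alpha_i+1<p_i(b_i+1)$ and $-q_i a_i<\beta_i+1$).} For the condition $\alpha_i+1<p_i(b_i+1)$: this is exactly the requirement that $T_{\vec{a},\ \vec{b},\ \vec{c}}$ even be \emph{defined} on all of $L^{\vec p}_{\vec\alpha}$, i.e. that the defining integral converges for a generic $f\in L^{\vec p}_{\vec\alpha}$. I would test on a function that is $\rho(u)^{-(\alpha_1+1)/p_1}$ up to logarithmic corrections near the boundary in the first variable (and analogously in the second) — this sits right at the edge of $L^{p_i}_{\alpha_i}$ — and show that if $\alpha_i+1\ge p_i(b_i+1)$ the inner integral $\int_{T_B}\rho(u)^{b_i}|\rho(z,u)|^{-c_i}|f(u)|\,dV(u)$ diverges, so $T_{\vec{a},\ \vec{b},\ \vec{c}} f\equiv\infty$, contradicting boundedness. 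For $-q_i a_i<\beta_i+1$: here I would use Lemma \ref{dui ou you jie} to pass to the adjoint operator $T^*_{\vec{a},\ \vec{b},\ \vec{c}}$, which is bounded from $L^{\vec q'}_{\vec\beta}$ to $L^{\vec p'}_{\vec\alpha}$; its prefactor exponents are $b_i-\alpha_i$ and its ``weight'' exponents are $\beta_i+a_i$, so the analogue of the first inequality for $T^*$ reads $\beta_i+1<q_i'(\beta_i+a_i+1)$ in a form that, after simplification using $1/q_i+1/q_i'=1$, is equivalent to $-q_i a_i<\beta_i+1$. Alternatively, one can directly require that $T_{\vec{a},\ \vec{b},\ \vec{c}} f$ lie in $L^{q_i}_{\beta_i}$ near the boundary: since $T_{\vec{a},\ \vec{b},\ \vec{c}} f$ carries the factor $\rho(z)^{a_i}$, integrability of $\rho(z)^{q_i a_i}$ against $dV_{\beta_i}$ near the boundary of $T_B$ demands $q_i a_i+\beta_i>-1$, which is the claimed bound.

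The main obstacle I anticipate is \emph{Step 2}, specifically making the ``edge of $L^{p_i}$'' test-function argument rigorous on the unbounded tube $T_B$: on a bounded domain one localizes near a boundary point and the only issue is the local blow-up rate, but on $T_B$ one must simultaneously control the behavior as $\mathrm{Im}(z)$ escapes to infinity inside the cone $B$ and as $|\mathrm{Re}(z)|\to\infty$, so the test functions need a cutoff that tames the non-compact directions without disturbing the boundary scaling — and one must verify that the divergence of the inner integral genuinely occurs on a set of positive measure in $(z,w)$, not just at isolated points. I would handle this by choosing test functions supported in a fixed ``Carleson box'' of $T_B$ (a translate and dilate of a reference box), using the explicit constant in Lemma \ref{lem jifendengshi} to see precisely when the relevant integral transitions from finite to infinite, and appealing to Lemma \ref{lem qu bian liang} to get the matching lower bounds $|\rho(z,u)|\gtrsim\max\{\rho(z),\rho(u)\}$ needed for the divergence estimate. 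Once the single-variable obstructions are established, the mixed-norm (product) structure lets them be combined coordinatewise with no extra difficulty, since the norm $\|\cdot\|_{\vec p,\vec\alpha}$ of a tensor product factors.
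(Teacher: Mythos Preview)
Your overall strategy is sound and Step 2 essentially matches the paper: the paper also obtains $\alpha_i+1<p_i(b_i+1)$ from the requirement that the kernel $\rho(z)^{a_1}\rho(w)^{a_2}\rho(u)^{b_1-\alpha_1}\rho(\eta)^{b_2-\alpha_2}/(\rho(z,u)^{c_1}\rho(w,\eta)^{c_2})$ lie in $L^{\vec p'}_{\vec\alpha}$ (equivalently, by applying the adjoint $T^*$ to suitable test functions and invoking Lemma \ref{lem jifendengshi}), and then obtains $-q_ia_i<\beta_i+1$ by the symmetric argument with the roles of $T$ and $T^*$ interchanged. Your duality route via Lemma \ref{dui ou you jie} is exactly what the paper does.

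The genuine difference is in Step 1. Rather than a dilation/scaling argument with truncated powers $\rho(u)^{s_1}\rho(\eta)^{s_2}\chi_{\{\rho>\varepsilon\}}$, the paper tests on the explicit kernel-type family
\[
f_{\xi,\eta}(z,w)=\frac{\rho(\xi)^{n+1+b_1-(n+1+\alpha_1)/p_1}\,\rho(\eta)^{n+1+b_2-(n+1+\alpha_2)/p_2}}{\rho(z,\xi)^{n+1+b_1}\,\rho(w,\eta)^{n+1+b_2}},\qquad \xi,\eta\in T_B,
\]
which lies in $L^{\vec p}_{\vec\alpha}$ with norm bounded uniformly in $(\xi,\eta)$ by Lemma \ref{lem jifendengshi}. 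Because $T_{\vec a,\vec b,\vec c}f_{\xi,\eta}$ can be computed \emph{exactly} by the same lemma (it is again a power of $\rho(z,\xi)$ times $\rho(z)^{a_1}$, etc.), the uniform bound $\|T f_{\xi,\eta}\|_{\vec q,\vec\beta}\le C$ forces the exponents in the resulting $\rho(\xi),\rho(\eta)$ powers to vanish, yielding $c_i=n+1+a_i+b_i+\lambda_i$ directly. This sidesteps precisely the obstacle you flagged: your proposed test functions $\rho(u)^{s_i}\chi_{\{\rho>\varepsilon\}}$ are \emph{not} in $L^{p_i}_{\alpha_i}(T_B)$ (the region $\{\rho>\varepsilon\}$ has infinite measure), so the Carleson-box localization you describe becomes mandatory and then the action of $T$ on a localized function is no longer exactly computable. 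Your scaling approach can be made to work, but the paper's choice of reproducing-kernel--type test functions is both shorter and avoids all localization issues on the unbounded tube.
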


\begin{proof}
	By duality, the boundedness of $T_{\vec{a},\ \vec{b},\ \vec{c}}$ from $L_{\vec{\alpha}}^{\vec{p}}\left( T_B\times T_B \right) $ to $L_{\vec{\beta}}^{\vec{q}}\left( T_B\times T_B \right) $ implies the boundedness of $T_{\vec{a},\ \vec{b},\ \vec{c}}^{*}$ from $L_{\vec{\beta}}^{\vec{q}'}\left( T_B\times T_B \right) $ to $L_{\vec{\alpha}}^{\vec{p}'}\left( T_B\times T_B \right) $. 
	
	In order for $Tf$ to be well-defined, then for any $f\in L_{\vec{\alpha}}^{\vec{p}}\left( T_B\times T_B \right)$, the following formula must hold : 
	$$
	\frac{\rho(z)^{a_1}\rho \left( u \right) ^{b_1-\alpha _1}\rho(w)^{a_2}\rho \left( \eta \right) ^{b_2-\alpha _2}}{\rho \left( z,u \right) ^{c_1}\rho \left( w,\eta \right) ^{c_2}}\in L_{\vec{\alpha}}^{\vec{p}\prime}\left( T_B\times T_B \right),
	$$
	for any $z, \ w\in T_B$,

	that is 
	\begin{equation}\label{C1 t1}
	\rho(z)^{a_1}\left( \int_{T_B}{\frac{\rho \left( u \right) ^{p_{1}^{\prime}\left( b_1-\alpha _1 \right) +\alpha _1}}{\left| \rho \left( z,u \right) \right|^{c_1p_{1}^{\prime}}}dV\left( u \right)} \right) ^{\frac{1}{p_{1}^{\prime}}}
	\rho(w)^{a_2}\left( \int_{T_B}{\frac{\rho \left( \eta \right) ^{p_{2}^{\prime}\left( b_2-\alpha _2 \right) +\alpha _2}}{\left| \rho \left( w,\eta \right) \right|^{c_2p_{2}^{\prime}}}dV\left( \eta \right)} \right) ^{\frac{1}{p_{2}^{\prime}}}
	<\infty .	
	\end{equation}
	By Lemma \ref{lem jifendengshi}, (\ref{C1 t1}) is ture if and only if 
	$$
	\begin{cases}
	p_{i}^{\prime}\left( b_i-\alpha _i \right) +\alpha _i>-1, \\
	c_ip_{i}^{\prime}>n+1+p_{i}^{\prime}\left( b_i-\alpha _i \right) +\alpha _i\\
	\end{cases}
	$$
	for any $i\in \{1,2\}$.
	
	Summing up the two inequalities, we get $c_i>n/{p_i^{\prime}}>0$ for any $i\in \{1,2\}$.

	For $u>0$ and $\eta>0$, we put 
	$$
	f_{u,\eta}\left( z,w \right) =\frac{\rho \left( z \right) ^{t_1}\rho \left( w \right) ^{t_2}}{\rho \left( z,u\mathbf{i} \right) ^{s_1}\rho \left( w,\eta \mathbf{i} \right) ^{s_2}}, \ z,w\in T_B,
	$$
	where $s_1,s_2,t_1$ and $t_2$ are real parameters satisfying conditions
	\begin{equation}\label{b 11}
	\begin{cases}
	s_i>0,\\
	t_i>\max \left\{ -\frac{\beta _i+1}{q_{i}^{\prime}},-a_i-\beta _i-1 \right\} ,\\
	s_i-t_i>\max \left\{ \frac{n+1+\beta _i}{q_{i}^{\prime}},\beta _i+a_i-c_i+n+1 \right\} ,\\
	\end{cases}
	\end{equation}
	and $\mathbf{i}=\left(0^{\prime},i\right)$.
	
	By lemma \ref{lem jifendengshi}, condition \ref{b 11} makes $f_{u,\eta}\left( z,w \right)\in L_{\vec{\beta}}^{\vec{q}'}\left( T_B\times T_B \right)$, and there exists a constant $C$ such that 
	$$
	\lVert f_{u,\eta}\left( z,w \right) \rVert _{\vec{q}^{\prime},\vec{\beta}}=Cu^{\frac{n+1+\beta _1}{q_{1}^{\prime}}-\left( s_1-t_1 \right)}\eta ^{\frac{n+1+\beta _2}{q_{2}^{\prime}}-\left( s_2-t_2 \right)}.
	$$
	
	By Lemma \ref{lem jifendengshi}and \ref{dui ou you jie}, we have
	$$
	\begin{aligned}
	&T_{\vec{a},\ \vec{b},\ \vec{c}}^{*}\ f_{u,\eta}\left( z,w \right) \\
	&=\rho \left( z \right) ^{b_1-\alpha _1}\rho \left( w \right) ^{b_2-\alpha _2}\int_{T_B}{\int_{T_B}{\frac{\rho \left( l \right) ^{\beta _1+a_1}\rho \left( m \right) ^{\beta _2+a_2}}{\rho \left( z,l \right) ^{c_1}\rho \left( w,m \right) ^{c_2}}f_{u,\eta}\left( l,m \right) dV\left( l \right)}dV\left( m \right)}
	\\
	&=\rho \left( z \right) ^{b_1-\alpha _1}\rho \left( w \right) ^{b_2-\alpha _2}\int_{T_B}{\int_{T_B}{\frac{\rho \left( l \right) ^{\beta _1+a_1}\rho \left( m \right) ^{\beta _2+a_2}}{\rho \left( z,l \right) ^{c_1}\rho \left( w,m \right) ^{c_2}}\cdot \frac{\rho \left( l \right) ^{t_1}\rho \left( m \right) ^{t_2}}{\rho \left( l,u\mathbf{i} \right) ^{s_1}\rho \left( m,\eta \mathbf{i} \right) ^{s_2}}dV\left( l \right)}dV\left( m \right)}
	\\
	&=\rho \left( z \right) ^{b_1-\alpha _1}\rho \left( w \right) ^{b_2-\alpha _2}\int_{T_B}{\frac{\rho \left( l \right) ^{\beta _1+a_1+t_1}}{\rho \left( z,l \right) ^{c_1}\rho \left( l,u\mathbf{i} \right) ^{s_1}}dV\left( l \right)}\cdot \int_{T_B}{\frac{\rho \left( m \right) ^{\beta _2+a_2+t_2}}{\rho \left( w,m \right) ^{c_2}\rho \left( m,\eta \mathbf{i} \right) ^{s_2}}dV\left( m \right)}
	\\
	&=C\frac{\rho \left( z \right) ^{b_1-\alpha _1}}{\rho \left( z,u\mathbf{i} \right) ^{c_1+s_1-\left( n+1+\beta _1+a_1+t_1 \right)}}\cdot \frac{\rho \left( w \right) ^{b_2-\alpha _2}}{\rho \left( w,\eta \mathbf{i} \right) ^{c_2+s_2-\left( n+1+\beta _2+a_2+t_2 \right)}}.
	\end{aligned}
	$$
	It is easy to see that 
	$$
	\begin{aligned}
	\lVert T_{\vec{a},\ \vec{b},\ \vec{c}}^{*}\ f_{u,\eta}\left( z,w \right) \rVert _{\vec{p}^{\prime},\vec{\alpha}}&=C\left( \int_{T_B}{\left( \int_{T_B}{\left| T_{\vec{a},\ \vec{b},\ \vec{c}}^{*}\ f_{u,\eta}\left( z,w \right) \right|^{p_{1}^{\prime}}dV_{\alpha _1}\left( z \right)} \right) ^{\frac{p_{2}^{\prime}}{p_{1}^{\prime}}}dV_{\alpha _2}\left( w \right)} \right) ^{\frac{1}{p_{2}^{\prime}}}
	\\
	&=C\left( \int_{T_B}{\frac{\rho \left( z \right) ^{\left( b_1-\alpha _1 \right) p_{1}^{\prime}+\alpha _1}}{\rho \left( z,u\mathbf{i} \right) ^{\left( c_1+s_1-\left( \beta _1+t_1+a_1+n+1 \right) \right) p_{1}^{\prime}}}dV\left( z \right)} \right) ^{\frac{1}{p_{1}^{\prime}}}\\
	&\times\left( \int_{T_B}{\frac{\rho \left( w \right) ^{\left( b_2-\alpha _2 \right) p_{2}^{\prime}+\alpha _2}}{\rho \left( w,\eta \mathbf{i} \right) ^{\left( c_2+s_2-\left( \beta _2+t_2+a_2+n+1 \right) \right) p_{2}^{\prime}}}dV\left( w \right)} \right) ^{\frac{1}{p_{2}^{\prime}}}.
	\end{aligned}
	$$
	
	Since $T_{\vec{a},\ \vec{b},\ \vec{c}}^{*}\ f_{u,\eta}\left( z,w \right) \in L_{\vec{\alpha}}^{\vec{p}\prime}\left( T_B\times T_B \right)$, by Lemma \ref{lem jifendengshi} we have 
	$$
	\left( b_i-\alpha _i \right) p_{i}^{\prime}+\alpha _i>-1,
	$$
	that is 
	$$
	\alpha_i+1<p_i\left(b_i+1\right).
	$$
	
	Applying the above proof process to the operator $T_{\vec{a},\ \vec{b},\ \vec{c}}$, we can get 
	$$
	-q_ia_i<\beta _i+1
	$$ 
	for any $i\in \{1,2\}$.
	
	Next, we prove that for any $i\in \{1,2\}$, $c_i=n+b_i+1+\lambda_i$.
	
	For any $\xi,\ \eta \in T_B$, let
	$$
	f_{\xi ,\eta}\left( z,w \right) =\frac{\rho \left( \xi \right) ^{n+1+b_1-\left( n+1+\alpha _1 \right) /p_1}\rho \left( \eta \right) ^{n+1+b_2-\left( n+1+\alpha _2 \right) /p_2}}{\rho \left( z,\xi \right) ^{n+1+b_1}\rho \left( w,\eta \right) ^{n+1+b_2}}.
	$$
	Given $\alpha_1+1<p_1\left(b_1+1\right)$ and $\alpha_2+1<p_2\left(b_2+1\right)$, it follows that there exists a positive constant $C$ independent of $\xi$ and $\eta$ such that 
	$$
	\begin{aligned}
	\lVert f_{\xi ,\eta}\left( z,w \right) \rVert _{\vec{p},\vec{\alpha}}&=\left( \int_{T_B}{\left( \int_{T_B}{\left| f_{\xi ,\eta}\left( z,w \right) \right|^{p_1}dV_{\alpha _1}\left( z \right)} \right) ^{\frac{p_2}{p_1}}dV_{\alpha _2}\left( w \right)} \right) ^{\frac{1}{p_2}}
	\\
	&=\left( \int_{T_B}{\frac{\rho \left( \xi \right) ^{\left( n+1+b_1 \right) p_1-\left( n+1+\alpha _1 \right)}\rho \left( z \right) ^{\alpha _1}}{\left| \rho \left( z,\xi \right) \right|^{\left( n+1+b_1 \right) p_1}}dV\left( z \right)} \right) ^{\frac{1}{p_1}}\\
	&\times\left( \int_{T_B}{\frac{\rho \left( \eta \right) ^{\left( n+1+b_2 \right) p_2-\left( n+1+\alpha _2 \right)}\rho \left( w \right) ^{\alpha _2}}{\left| \rho \left( w,\eta \right) \right|^{\left( n+1+b_2 \right) p_2}}dV\left( w \right)} \right) ^{\frac{1}{p_2}}\\
	&\le C.
	\end{aligned}
	$$
	Notice that $c_i>0$ and $b_i> \left(\alpha_i+1\right)/p_i-1>-1$, by Lemma \ref{lem jifendengshi} we have 
	$$
	\begin{aligned}
	T_{\vec{a},\ \vec{b},\ \vec{c}}\ f_{\xi ,\eta}\left( z,w \right) &=\rho(z)^{a_1}\rho(w)^{a_2}\int_{T_B}{\int_{T_B}{\frac{\rho \left( l \right) ^{b_1}\rho \left( m \right) ^{b_2}}{\rho \left( z,l \right) ^{c_1}\rho \left( w,m \right) ^{c_2}}f_{\xi ,\eta}\left( l,m \right) dV\left( l \right)}dV\left( m \right)}
	\\
	&=\rho(z)^{a_1}\rho(w)^{a_2}\rho \left( \xi \right) ^{n+1+b_1-\left( n+1+\alpha _1 \right) /p_1}\rho \left( \eta \right) ^{n+1+b_2-\left( n+1+\alpha _2 \right) /p_2}\\
	&\times\int_{T_B}{\frac{\rho \left( l \right) ^{b_1}}{\rho \left( z,l \right) ^{c_1}\rho \left( l,\xi \right) ^{n+1+b_1}}dV\left( l \right)}\int_{T_B}{\frac{\rho \left( m \right) ^{b_2}}{\rho \left( w,m \right) ^{c_2}\rho \left( m,\eta \right) ^{n+1+b_2}}dV\left( m \right)}
	\\
	&=C\frac{\rho(z)^{a_1}\rho(w)^{a_2}\rho \left( \xi \right) ^{n+1+b_1-\left( n+1+\alpha _1 \right) /p_1}\rho \left( \eta \right) ^{n+1+b_2-\left( n+1+\alpha _2 \right) /p_2}}{\rho \left( z,\xi \right) ^{c_1}\rho \left( w,\eta \right) ^{c_2}}.
	\end{aligned}
	$$
	
	Since $T_{\vec{a},\ \vec{b},\ \vec{c}}\ f_{\xi ,\eta}\left( z,w \right)\in L_{\vec{\beta}}^{\vec{q}}\left( T_B\times T_B \right)$, there is a positive constant $C$ such that 
	$$
	\begin{aligned}
	\lVert T_{\vec{a},\ \vec{b},\ \vec{c}}\ f_{\xi ,\eta}\left( z,w \right) \rVert _{\vec{q},\vec{\beta}}&=\left( \int_{T_B}{\left( \int_{T_B}{\left| T_{\vec{a},\ \vec{b},\ \vec{c}}\ f_{\xi ,\eta}\left( z,w \right) \right|^{q_1}dV_{\beta _1}\left( z \right)} \right) ^{\frac{q_2}{q_1}}dV_{\beta _2}\left( w \right)} \right) ^{\frac{1}{q_2}}
	\\
	&=\rho \left( \xi \right) ^{n+1+b_1-\left( n+1+\alpha _1 \right) /p_1}\rho \left( \eta \right) ^{n+1+b_2-\left( n+1+\alpha _2 \right) /p_2}
	\\
	&\times \left( \int_{T_B}{\frac{\rho \left( z \right) ^{\beta _1+a_1q_1}}{\left| \rho \left( z,\xi \right) \right|^{c_1q_1}}dV\left( z \right)} \right) ^{\frac{1}{q_1}}\left( \int_{T_B}{\frac{\rho \left( w \right) ^{\beta _2+a_2q_2}}{\left| \rho \left( w,\eta \right) \right|^{c_2q_2}}dV\left( w \right)} \right) ^{\frac{1}{q_2}}\\
	&\le C.
	\end{aligned}
	$$
	By Lemma \ref{lem jifendengshi}, for any $i\in\{1,2\}$, we obtain
	$$
	n+1+b_i-\left( n+1+\alpha _i \right) /p_i=\left(c_iq_i-\left(n+1+\beta_i+a_iq_i\right)\right)/q_i
	$$
	which simplifies to
	$$
	c_i=n+b_i+1+\lambda_i.
	$$
\end{proof}

\begin{lemma}\label{lem b2}
	Let $1=p_1\le q_1<\infty$ and $1=p_2\le q_2<\infty$. If the operator $T_{\vec{a},\ \vec{b},\ \vec{c}}$ is bounded from $L_{\vec{\alpha}}^{\vec{1}}\left( T_B\times T_B \right) $ to $L_{\vec{\beta}}^{\vec{q}}\left( T_B\times T_B \right)$, 
	then for any $i\in\{1,2\}$,
	$$
	\begin{cases}
	-q_ia_i<\beta_i+1,\ \alpha _i<b_i,\\
	c_i=n+1+a_i+b_i+\lambda_i,\\
	\end{cases}
	$$
	where
	$$
	\lambda _i=\frac{n+1+\beta _i}{q_i}-\left( n+1+\alpha _i \right).
	$$
\end{lemma}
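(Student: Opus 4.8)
\noindent The plan is to reproduce the proof of Lemma~\ref{lem b1} in the endpoint case $\vec p=\vec 1$, where the dual exponents degenerate to $p_i'=\infty$ and $L_{\vec\alpha}^{\vec p^{\prime}}=L^\infty$. At this endpoint it is cleanest to replace the adjoint argument by the necessity half of the ``uniformly bounded columns'' criterion for operators \emph{out of} an $L^1$-space: if $T_{\vec{a},\ \vec{b},\ \vec{c}}$ is bounded from $L_{\vec\alpha}^{\vec 1}(T_B\times T_B)$ to $L_{\vec\beta}^{\vec q}(T_B\times T_B)$, then its kernel slices are uniformly bounded in the target norm. Writing $dV(u)=\rho(u)^{-\alpha_1}\,dV_{\alpha_1}(u)$ and likewise in $\eta$, we have $T_{\vec{a},\ \vec{b},\ \vec{c}}f(z,w)=\int_{T_B}\int_{T_B}\widetilde K(z,w;u,\eta)f(u,\eta)\,dV_{\alpha_1}(u)\,dV_{\alpha_2}(\eta)$ with
$$
\widetilde K(z,w;u,\eta):=\frac{\rho(z)^{a_1}\rho(w)^{a_2}\rho(u)^{b_1-\alpha_1}\rho(\eta)^{b_2-\alpha_2}}{\rho(z,u)^{c_1}\rho(w,\eta)^{c_2}},
$$
which by Lemma~\ref{lem qu bian liang} (so that $\rho(z,u)\neq0$ throughout $T_B\times T_B$) is jointly continuous. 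Testing $T_{\vec{a},\ \vec{b},\ \vec{c}}$ against nonnegative functions $f_j$ with $\lVert f_j\rVert_{\vec 1,\vec\alpha}=1$ supported in neighbourhoods of a fixed but arbitrary $(u,\eta)$ that shrink to $(u,\eta)$, and letting $j\to\infty$ with Fatou's lemma in the mixed norm, boundedness forces $\lVert\widetilde K(\cdot,\cdot;u,\eta)\rVert_{\vec q,\vec\beta}\le\lVert T_{\vec{a},\ \vec{b},\ \vec{c}}\rVert$ for \emph{every} $(u,\eta)\in T_B\times T_B$.

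The remainder is a computation with Lemma~\ref{lem jifendengshi}. Since $\widetilde K$ factorizes,
$$
\lVert\widetilde K(\cdot,\cdot;u,\eta)\rVert_{\vec q,\vec\beta}^{q_2}=\rho(u)^{(b_1-\alpha_1)q_2}\rho(\eta)^{(b_2-\alpha_2)q_2}\int_{T_B}\frac{\rho(w)^{a_2q_2+\beta_2}}{|\rho(w,\eta)|^{c_2q_2}}\left(\int_{T_B}\frac{\rho(z)^{a_1q_1+\beta_1}}{|\rho(z,u)|^{c_1q_1}}\,dV(z)\right)^{q_2/q_1}dV(w).
$$
Finiteness of the right-hand side together with the ``otherwise infinity'' clause of Lemma~\ref{lem jifendengshi} already forces, for $i\in\{1,2\}$,
$$
a_iq_i+\beta_i>-1\qquad\text{and}\qquad c_iq_i-(a_iq_i+\beta_i)>n+1,
$$
the first of which is precisely $-q_ia_i<\beta_i+1$. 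Under these constraints Lemma~\ref{lem jifendengshi} evaluates both integrals explicitly, so that with a constant $C>0$ independent of $(u,\eta)$,
$$
\lVert\widetilde K(\cdot,\cdot;u,\eta)\rVert_{\vec q,\vec\beta}=C\,\rho(u)^{(b_1-\alpha_1)-\left(c_1-a_1-\frac{n+1+\beta_1}{q_1}\right)}\,\rho(\eta)^{(b_2-\alpha_2)-\left(c_2-a_2-\frac{n+1+\beta_2}{q_2}\right)}.
$$
Because $\rho$ assumes every value in $(0,\infty)$ on $T_B$, the uniform bound from the previous paragraph is compatible with this identity only if both exponents vanish, i.e.\ $c_i=a_i+b_i-\alpha_i+\frac{n+1+\beta_i}{q_i}=n+1+a_i+b_i+\lambda_i$ for $i\in\{1,2\}$.

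It remains to get $\alpha_i<b_i$: substituting the value of $c_i$ just found into the a priori inequality $c_iq_i-(a_iq_i+\beta_i)>n+1$ collapses it to $q_i(b_i-\alpha_i)>0$, hence $\alpha_i<b_i$. I expect the one genuinely delicate point to be exactly this derivation of the \emph{strict} inequality: at $p_i=1$ the explicit test functions $f_{u,\eta}$ and $f_{\xi,\eta}$ of Lemma~\ref{lem b1} are no longer usable, since their membership in $L_{\vec\alpha}^{\vec 1}$ would itself presuppose $\alpha_i<b_i$; so strictness can only be recovered after the fact, from the convergence threshold ``$s-t>n+1$'' in Lemma~\ref{lem jifendengshi}. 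A minor technical point is the rigorous justification of the testing/limiting step above, which only needs joint continuity of $\widetilde K$ off the boundary (via Lemma~\ref{lem qu bian liang}) and the Fatou property of $\lVert\cdot\rVert_{\vec q,\vec\beta}$ when $\vec q\in[1,\infty)\times[1,\infty)$; both are routine.
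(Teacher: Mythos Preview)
Your proof is correct and takes a genuinely different route from the paper's. The paper argues by duality: it passes to the adjoint $T_{\vec a,\vec b,\vec c}^{*}$, which by Lemma~\ref{dui ou you jie} is bounded from $L_{\vec\beta}^{\vec q'}$ into $L^\infty$, and then applies it to the explicit test functions $f_{u,\eta}$ of Lemma~\ref{lem b1}; the finiteness of $\lVert T^{*}f_{u,\eta}\rVert_\infty$ combined with Lemma~\ref{lem qu bian liang} is used to force $b_i>\alpha_i$, after which a second family $f_{\xi,\eta}\in L_{\vec\alpha}^{\vec 1}$ (now available precisely because $b_i>\alpha_i$) is plugged into $T_{\vec a,\vec b,\vec c}$ to pin down $c_i$ and $-q_ia_i<\beta_i+1$. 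You instead work directly with $T_{\vec a,\vec b,\vec c}$ and the $L^1\to L^{\vec q}$ ``uniformly bounded columns'' principle, extracting all three conclusions from the single identity $\lVert\widetilde K(\cdot,\cdot;u,\eta)\rVert_{\vec q,\vec\beta}\le\lVert T_{\vec a,\vec b,\vec c}\rVert$. This is more economical: no adjoint, no second family of test functions, and the strict inequality $\alpha_i<b_i$ falls out transparently from the convergence threshold $c_iq_i-(a_iq_i+\beta_i)>n+1$ in Lemma~\ref{lem jifendengshi} after substituting the value of $c_i$. The paper's approach, by contrast, has the advantage of reusing verbatim the machinery already set up in Lemma~\ref{lem b1}, so it is structurally closer to the $p_i>1$ case; but its derivation of the \emph{strict} inequality $b_i>\alpha_i$ from the $L^\infty$ estimate on $T^{*}f_{u,\eta}$ is less explicit than yours.
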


\begin{proof}	
	Similar to the proof of Lemma \ref{lem b1}. 
	
	We have
	$$
	T_{\vec{a},\ \vec{b},\ \vec{c}}^{*}\ f_{u,\eta}\left( z,w \right)
	=C\frac{\rho \left( z \right) ^{b_1-\alpha _1}}{\rho \left( z,u\mathbf{i} \right) ^{c_1+s_1-\left( n+1+\beta _1+a_1+t_1 \right)}}\cdot \frac{\rho \left( w \right) ^{b_2-\alpha _2}}{\rho \left( w,\eta \mathbf{i} \right) ^{c_2+s_2-\left( n+1+\beta _2+a_2+t_2 \right)}}.
	$$
	It is easy to see that 
	\begin{equation}\label{b 21}
	\begin{aligned}
	\lVert T_{\vec{a},\ \vec{b},\ \vec{c}}^{*}\ f_{u,\eta}\left( z,w \right) \rVert _{\infty}=&C\underset{z\in T_B}{\text{ess}\ \text{sup}}\frac{\rho \left( z \right) ^{b_1-\alpha _1}}{\rho \left( z,u\mathbf{i} \right) ^{c_1+s_1-\left( \beta _1+t_1+a_1+n+1 \right)}}
	\\
	&\cdot \underset{w\in T_B}{\text{ess}\ \text{sup}}\frac{\rho \left( w \right) ^{b_2-\alpha _2}}{\rho \left( w,\eta \mathbf{i} \right) ^{c_2+s_2-\left( \beta _2+t_2+a_2+n+1 \right)}}.
	\end{aligned}
	\end{equation}
	
	Since $T_{\vec{a},\ \vec{b},\ \vec{c}}^{*}\ f_{u,\eta}\left( z,w \right) \in L^\infty\left( T_B\times T_B \right)$, the above equation is finite. According to the conditions satisfied by $s_i$ and $t_i$, it can be obtained that the power of the denominator of the above formula (\ref{b 21}) is greater than zero, and $\rho(z)$, $\rho(w)$ can be infinite on $T_B$. Therefore, combined with the lemma \ref{lem qu bian liang}, we obtain $b_1>\alpha_1$ and $b_2>\alpha_2$.
	
	Applying the above proof process to the operator $T_{\vec{a},\ \vec{b},\ \vec{c}}$, we can get 
	$$
	-q_ia_i<\beta _i+1
	$$ 
	for any $i\in \{1,2\}$.
	
	Next, we prove that for any $i\in \{1,2\}$, $c_i=n+b_i+1+\lambda_i$.
	
	For any $\xi,\ \eta \in T_B$, let
	$$
	f_{\xi ,\eta}\left( z,w \right) =\frac{\rho \left( \xi \right) ^{b_1-\alpha _1}\rho \left( \eta \right) ^{b_2-\alpha _2 }}{\rho \left( z,\xi \right) ^{n+1+b_1}\rho \left( w,\eta \right) ^{n+1+b_2}}.
	$$
	Given $\alpha_1<b_1$ and $\alpha_2< b_2$, we can easily see that there is a positive constant $C$ independent of $\xi$ and $\eta$ such that 
	$$
	\begin{aligned}
	\lVert f_{\xi ,\eta}\left( z,w \right) \rVert _{\vec{1},\vec{\alpha}}&=\int_{T_B} \int_{T_B}{\left| f_{\xi ,\eta}\left( z,w \right) \right|dV_{\alpha _1}(z) }dV_{\alpha _2}(w)	\\
	&=\int_{T_B}{\frac{\rho \left( \xi \right) ^{b_1-\alpha _1}\rho \left( z \right) ^{\alpha _1}}{\left| \rho \left( z,\xi \right) \right|^{ n+1+b_1}}dV\left( z \right)}
	\int_{T_B}{\frac{\rho \left( \eta \right) ^{b_2-\alpha _2}\rho \left( w \right) ^{\alpha _2}}{\left| \rho \left( w,\eta \right) \right|^{ n+1+b_2 }}dV\left( w \right)}\\
	&\le C
	\end{aligned}
	$$
	Notice that $c_i>0$ and $b_i>\alpha_i>-1$, by Lemma \ref{lem jifendengshi} we have 
	$$
	\begin{aligned}
	T_{\vec{a},\ \vec{b},\ \vec{c}}\ f_{\xi ,\eta}\left( z,w \right) &=\rho(z)^{a_1}\rho(w)^{a_2}\int_{T_B}{\int_{T_B}{\frac{\rho \left( l \right) ^{b_1}\rho \left( m \right) ^{b_2}}{\rho \left( z,l \right) ^{c_1}\rho \left( w,m \right) ^{c_2}}f_{\xi ,\eta}\left( l,m \right) dV\left( l \right)}dV\left( m \right)}
	\\
	&=\rho(z)^{a_1}\rho \left( \xi \right) ^{b_1-\alpha _1}\int_{T_B}{\frac{\rho \left( l \right) ^{b_1}}{\rho \left( z,l \right) ^{c_1}\rho \left( l,\xi \right) ^{n+1+b_1}}dV\left( l \right)}\\
	&\times\rho(w)^{a_2}\rho \left( \eta \right) ^{b_2-\alpha _2 }\int_{T_B}{\frac{\rho \left( m \right) ^{b_2}}{\rho \left( w,m \right) ^{c_2}\rho \left( m,\eta \right) ^{n+1+b_2}}dV\left( m \right)}
	\\
	&=C\frac{\rho(z)^{a_1}\rho(w)^{a_2}\rho \left( \xi \right) ^{b_1-\alpha _1}\rho \left( \eta \right) ^{b_2-\alpha _2}}{\rho \left( z,\xi \right) ^{c_1}\rho \left( w,\eta \right) ^{c_2}}.
	\end{aligned}
	$$

	Since $T_{\vec{a},\ \vec{b},\ \vec{c}}\ f_{\xi ,\eta}\left( z,w \right)\in L_{\vec{\beta}}^{\vec{q}}\left( T_B\times T_B \right)$, we know that there is a positive constant $C$ such that 
	$$
	\begin{aligned}
	\lVert T_{\vec{a},\ \vec{b},\ \vec{c}}\ f_{\xi ,\eta}\left( z,w \right) \rVert _{\vec{q},\vec{\beta}}&=\left( \int_{T_B}{\left( \int_{T_B}{\left| T_{\vec{a},\ \vec{b},\ \vec{c}}\ f_{\xi ,\eta}\left( z,w \right) \right|^{q_1}dV_{\beta _1}\left( z \right)} \right) ^{\frac{q_2}{q_1}}dV_{\beta _2}\left( w \right)} \right) ^{\frac{1}{q_2}}
	\\
	&=\rho \left( \xi \right) ^{b_1-\alpha _1}
	\left( \int_{T_B}{\frac{\rho \left( z \right) ^{a_1q_1+\beta _1}}{\left| \rho \left( z,\xi \right) \right|^{c_1q_1}}dV\left( z \right)} \right) ^{\frac{1}{q_1}}\\
	&\times\rho \left( \eta \right) ^{b_2-\alpha _2}\left( \int_{T_B}{\frac{\rho \left( w \right) ^{a_2q_2+\beta _2}}{\left| \rho \left( w,\eta \right) \right|^{c_2q_2}}dV\left( w \right)} \right) ^{\frac{1}{q_2}}\\
	&\le C.
	\end{aligned}
	$$
	By Lemma \ref{lem jifendengshi}, for any $i\in\{1,2\}$, we have
	$$
	b_i-\alpha _i=\left(c_iq_i-\left(n+1+a_iq_i+\beta_i\right)\right)/q_i,
	$$
	which simplifies to
	$$
	c_i=n+b_i+1+\lambda_i,
	$$
	where 
	$$
	\lambda _i=\frac{n+1+\beta _i}{q_i}-\left( n+1+\alpha _i \right).
	$$
\end{proof}

\begin{lemma}\label{lem b3}
	Let $1<p_1\le q_1<\infty$ and $1=p_2\le q_2<\infty$. If the operator $T_{\vec{a},\ \vec{b},\ \vec{c}}$ is bounded from $L_{\vec{\alpha}}^{\vec{p}}\left( T_B\times T_B \right) $ to $L_{\vec{\beta}}^{\vec{q}}\left( T_B\times T_B \right)$, 
	then for any $i\in\{1,2\}$,
	$$
	\begin{cases}
	\alpha _1+1<p_1\left( b_1+1 \right) ,\ \alpha _2<b_2,\\
	-q_ia_i<\beta_i+1,\ c_i=n+1+a_i+b_i+\lambda_i,\\
	\end{cases}
	$$
	where
	$$
	\begin{cases}
	\lambda _1=\frac{n+1+\beta _1}{q_1}-\frac{n+1+\alpha _1}{p_1},\\
	\lambda _2=\frac{n+1+\beta _2}{q_2}-\left( n+1+\alpha _2 \right).\\
	\end{cases}
	$$
\end{lemma}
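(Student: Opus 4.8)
The plan is to run the two coordinates essentially independently, importing the ``$p_i>1$'' treatment from Lemma \ref{lem b1} for the first coordinate and the ``$p_i=1$'' treatment from Lemma \ref{lem b2} for the second coordinate. This decoupling is legitimate because, with $\vec p=(p_1,1)$ (so $\vec p\,'=(p_1',\infty)$), the mixed norm $\lVert\cdot\rVert_{\vec p,\vec\alpha}$ factors as an $L^{p_1}_{\alpha_1}$-norm in $z$ followed by an $L^1_{\alpha_2}$-norm in $w$, and every test function used below is a product of a function of $z$ alone and a function of $w$ alone; hence each kernel estimate splits into two one-variable estimates handled by Lemma \ref{lem jifendengshi}.

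First I would pass to the adjoint: by Lemma \ref{dui ou you jie}, boundedness of $T_{\vec a,\ \vec b,\ \vec c}$ from $L_{\vec\alpha}^{\vec p}(T_B\times T_B)$ to $L_{\vec\beta}^{\vec q}(T_B\times T_B)$ gives boundedness of $T_{\vec a,\ \vec b,\ \vec c}^{*}$ from $L_{\vec\beta}^{\vec q\,'}(T_B\times T_B)$ to $L_{\vec\alpha}^{\vec p\,'}(T_B\times T_B)$ with $\vec p\,'=(p_1',\infty)$ and $\vec q\,'=(q_1',q_2')$. For $T_{\vec a,\ \vec b,\ \vec c}f$ to be well defined on $L_{\vec\alpha}^{\vec p}$ the kernel, as a function of $(u,\eta)$, must lie in $L_{\vec\alpha}^{\vec p\,'}$; since it factors, Lemma \ref{lem jifendengshi} in the first variable forces $p_1'(b_1-\alpha_1)+\alpha_1>-1$ and $c_1p_1'>n+1+p_1'(b_1-\alpha_1)+\alpha_1$, whence $c_1>n/p_1'>0$, while in the second variable the essential supremum is finite only if the power of the (unbounded on $T_B$) factor $\rho(\eta)$ is dominated by the decay supplied by $\rho(w,\eta)$, which via Lemma \ref{lem qu bian liang} is consistent with $b_2\ge\alpha_2$ and $c_2\ge0$; the sharp strict inequalities will come from the test functions.

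Next I would test. For $u,\eta>0$ take
$$
f_{u,\eta}(z,w)=\frac{\rho(z)^{t_1}\rho(w)^{t_2}}{\rho(z,u\mathbf i)^{s_1}\rho(w,\eta\mathbf i)^{s_2}},
$$
with $s_i,t_i$ satisfying the analogues of the conditions in \eqref{b 11} (specialized to the present $\vec q\,'$), so that $f_{u,\eta}\in L_{\vec\beta}^{\vec q\,'}$. Computing $T_{\vec a,\ \vec b,\ \vec c}^{*}f_{u,\eta}$ by two applications of Lemma \ref{lem jifendengshi} produces a product kernel; finiteness of its $L^{p_1'}_{\alpha_1}$-norm in $z$ requires $(b_1-\alpha_1)p_1'+\alpha_1>-1$, i.e. $\alpha_1+1<p_1(b_1+1)$ (as in Lemma \ref{lem b1}), and finiteness of its $L^\infty$-norm in $w$ requires, exactly as in Lemma \ref{lem b2}, that the exponent $b_2-\alpha_2$ of the factor $\rho(w)$ be nonnegative, so $\alpha_2<b_2$ (here one uses that $\rho(w)$ is unbounded on $T_B$ together with Lemma \ref{lem qu bian liang}). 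Running the identical computation with $T_{\vec a,\ \vec b,\ \vec c}$ in place of $T_{\vec a,\ \vec b,\ \vec c}^{*}$ gives $-q_ia_i<\beta_i+1$ for $i\in\{1,2\}$.

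Finally, to pin down $c_i$, for $\xi,\eta\in T_B$ I would use the product test function
$$
f_{\xi,\eta}(z,w)=\frac{\rho(\xi)^{\,n+1+b_1-(n+1+\alpha_1)/p_1}\,\rho(\eta)^{\,b_2-\alpha_2}}{\rho(z,\xi)^{n+1+b_1}\,\rho(w,\eta)^{n+1+b_2}},
$$
whose first factor lies in $L^{p_1}_{\alpha_1}$ because $\alpha_1+1<p_1(b_1+1)$ and whose second factor lies in $L^1_{\alpha_2}$ because $\alpha_2<b_2$, with $\lVert f_{\xi,\eta}\rVert_{\vec p,\vec\alpha}\le C$ uniformly in $\xi,\eta$. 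Two applications of Lemma \ref{lem jifendengshi} give that $T_{\vec a,\ \vec b,\ \vec c}f_{\xi,\eta}(z,w)$ is a constant multiple of $\rho(z)^{a_1}\rho(w)^{a_2}\rho(\xi)^{e_1}\rho(\eta)^{e_2}\rho(z,\xi)^{-c_1}\rho(w,\eta)^{-c_2}$ for the evident exponents $e_1,e_2$; imposing that this belongs to $L_{\vec\beta}^{\vec q}$ and using Lemma \ref{lem jifendengshi} once more in each variable forces, for $i=1$, $n+1+b_1-(n+1+\alpha_1)/p_1=(c_1q_1-(n+1+\beta_1+a_1q_1))/q_1$, and, for $i=2$, $b_2-\alpha_2=(c_2q_2-(n+1+\beta_2+a_2q_2))/q_2$, which simplify to $c_1=n+1+a_1+b_1+\lambda_1$ with $\lambda_1=\frac{n+1+\beta_1}{q_1}-\frac{n+1+\alpha_1}{p_1}$ and $c_2=n+1+a_2+b_2+\lambda_2$ with $\lambda_2=\frac{n+1+\beta_2}{q_2}-(n+1+\alpha_2)$. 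I do not expect a genuine obstacle; the only point requiring care is the second slot, where the $L^\infty$-norm replaces an integral by an essential supremum, so one must exploit the unboundedness of $\rho$ on $T_B$ (Lemma \ref{lem qu bian liang}) to recover the strict inequality $\alpha_2<b_2$ rather than a non-strict one, and one must check that the parameter ranges in \eqref{b 11} stay nonempty upon setting $p_2'=\infty$.
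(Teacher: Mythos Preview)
Your proposal is correct and follows essentially the same approach as the paper's own proof: the paper too passes to the adjoint via Lemma \ref{dui ou you jie}, tests with the same $f_{u,\eta}$ to extract $\alpha_1+1<p_1(b_1+1)$ from the $L^{p_1'}_{\alpha_1}$ integral and $\alpha_2<b_2$ from the $L^\infty$ essential supremum (using Lemma \ref{lem qu bian liang}), and then uses exactly your product test function $f_{\xi,\eta}$ to pin down the $c_i$. Your explicit framing of the argument as a hybrid of the Lemma \ref{lem b1} and Lemma \ref{lem b2} treatments, coordinate by coordinate, is precisely what the paper does implicitly.
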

\begin{proof}
	Similar to the proof of Lemma \ref{lem b1}. 
	We get 
	$$
	T_{\vec{a},\ \vec{b},\ \vec{c}}^{*}\ f_{u,\eta}\left( z,w \right)
	=C\frac{\rho \left( z \right) ^{b_1-\alpha _1}}{\rho \left( z,u\mathbf{i} \right) ^{c_1+s_1-\left( n+1+\beta _1+a_1+t_1 \right)}}\cdot \frac{\rho \left( w \right) ^{b_2-\alpha _2}}{\rho \left( w,\eta \mathbf{i} \right) ^{c_2+s_2-\left( n+1+\beta _2+a_2+t_2 \right)}}.
	$$
	It is easy to see that 
	$$
	\begin{aligned}
	\lVert T_{\vec{a},\ \vec{b},\ \vec{c}}^{*}\ f_{u,\eta}\left( z,w \right) \rVert _{\vec{p}^{\prime},\vec{\alpha}}&=C\underset{w\in T_B}{\text{ess}\ \text{sup}}\left( \int_{T_B}{\left| T_{\vec{a},\ \vec{b},\ \vec{c}}^{*}\ f_{u,\eta}\left( z,w \right) \right|^{p_{1}^{\prime}}dV_{\alpha _1}\left( z \right)} \right) ^{\frac{1}{p_{1}^{\prime}}}
	\\
	&\times\left( \int_{T_B}{\frac{\rho \left( z \right) ^{\left( b_1-\alpha _1 \right) p_{1}^{\prime}+\alpha _1}}{\rho \left( z,u\mathbf{i} \right) ^{\left( c_1+s_1-\left( \beta _1+t_1+a_1+n+1 \right) \right) p_{1}^{\prime}}}dV\left( z \right)} \right) ^{\frac{1}{p_{1}^{\prime}}}\\
	&=C\underset{w\in T_B}{\text{ess}\ \text{sup}}\frac{\rho \left( w \right) ^{b_2-\alpha _2}}{\rho \left( w,\eta \mathbf{i} \right) ^{c_2+s_2-\left( \beta _2+t_2+a_2+n+1 \right)}}.
	\end{aligned}
	$$
	
	Since $T_{\vec{a},\ \vec{b},\ \vec{c}}^{*}\ f_{u,\eta}\left( z,w \right) \in L_{\vec{\alpha}}^{\vec{p}^{\prime}}\left( T_B\times T_B \right)$, by Lemma \ref{lem jifendengshi} we have 
	$$
	\left( b_1-\alpha _1 \right) p_{1}^{\prime}+\alpha _1>-1,
	$$
	that is 
	$$
	\alpha_1+1<p_1\left(b_1+1\right).
	$$
	According to $s_2-t_2>\beta _2+a_2-c_2+n+1$, the power of the denominator of $\rho \left( w \right) ^{b_2-\alpha _2}/\rho \left( w,\eta \mathbf{i} \right) ^{c_2+s_2-\left( \beta _2+t_2+n+1 \right)}$ is greater than 0, and $\rho(w)$ can be infinite on $T_B$. Combined with Lemma \ref{lem qu bian liang}, the above finite means $b_2>\alpha_2$.	
	
	Applying the above proof process to the operator $T_{\vec{a},\ \vec{b},\ \vec{c}}$, we can get 
	$$
	-q_ia_i<\beta _i+1
	$$ 
	for any $i\in \{1,2\}$.
	
	Next, we prove that for any $i\in \{1,2\}$, $c_i=n+b_i+1+\lambda_i$.
	
	For any $\xi,\ \eta \in T_B$, let
	$$
	f_{\xi ,\eta}\left( z,w \right) =\frac{\rho \left( \xi \right) ^{n+1+b_1-\left( n+1+\alpha _1 \right) /p_1}\rho \left( \eta \right) ^{b_2-\alpha _2 }}{\rho \left( z,\xi \right) ^{n+1+b_1}\rho \left( w,\eta \right) ^{n+1+b_2}}.
	$$
	Given $\alpha_1+1<p_1\left(b_1+1\right)$ and $\alpha_2< b_2$, we can easily see that there is a positive constant $C$ independent of $\xi$ and $\eta$ such that 
	$$
	\begin{aligned}
	\lVert f_{\xi ,\eta}\left( z,w \right) \rVert _{\vec{p},\vec{\alpha}}&=\int_{T_B}{\left( \int_{T_B}{\left| f_{\xi ,\eta}\left( z,w \right) \right|^{p_1}dV_{\alpha _1}\left( z \right)} \right) ^{\frac{1}{p_1}}dV_{\alpha _2}\left( w \right)}
	\\
	&=\int_{T_B}{\frac{\rho \left( \xi \right) ^{\left( n+1+b_1 \right) p_1-\left( n+1+\alpha _1 \right)}\rho \left( z \right) ^{\alpha _1}}{\left| \rho \left( z,\xi \right) \right|^{\left( n+1+b_1 \right) p_1}}dV\left( z \right)}
	\int_{T_B}{\frac{\rho \left( \eta \right) ^{b_2-\alpha _2}\rho \left( w \right) ^{\alpha _2}}{\left| \rho \left( w,\eta \right) \right|^{ n+1+b_2 }}dV\left( w \right)}\\
	&\le C.
	\end{aligned}
	$$
	Notice that $c_i>0$, $b_1> \left(\alpha+1\right)/p-1>-1$ and $b_2>\alpha_2>-1$, by Lemma \ref{lem jifendengshi} we have 
	$$
	\begin{aligned}
	T_{\vec{a},\ \vec{b},\ \vec{c}}\ f_{\xi ,\eta}\left( z,w \right) &=\rho(z)^{a_1}\rho(w)^{a_2}\int_{T_B}{\int_{T_B}{\frac{\rho \left( l \right) ^{b_1}\rho \left( m \right) ^{b_2}}{\rho \left( z,l \right) ^{c_1}\rho \left( w,m \right) ^{c_2}}f_{\xi ,\eta}\left( l,m \right) dV\left( l \right)}dV\left( m \right)}
	\\
	&=C\frac{\rho(z)^{a_1}\rho(w)^{a_2}\rho \left( \xi \right) ^{n+1+b_1-\left( n+1+\alpha _1 \right) /p_1}\rho \left( \eta \right) ^{b_2-\alpha _2}}{\rho \left( z,\xi \right) ^{c_1}\rho \left( w,\eta \right) ^{c_2}}.
	\end{aligned}
	$$
	Since $T_{\vec{a},\ \vec{b},\ \vec{c}}\ f_{\xi ,\eta}\left( z,w \right)\in L_{\vec{\beta}}^{\vec{q}}\left( T_B\times T_B \right)$, we know that there is a positive constant $C$ such that 
	$$
	\begin{aligned}
	\lVert T_{\vec{a},\ \vec{b},\ \vec{c}}\ f_{\xi ,\eta}\left( z,w \right) \rVert _{\vec{q},\vec{\beta}}&=\left( \int_{T_B}{\left( \int_{T_B}{\left| T_{\vec{a},\ \vec{b},\ \vec{c}}\ f_{\xi ,\eta}\left( z,w \right) \right|^{q_1}dV_{\beta _1}\left( z \right)} \right) ^{\frac{q_2}{q_1}}dV_{\beta _2}\left( w \right)} \right) ^{\frac{1}{q_2}}
	\\
	&=\rho \left( \xi \right) ^{n+1+b_1-\left( n+1+\alpha _1 \right) /p_1}\rho \left( \eta \right) ^{b_2-\alpha _2}
	\\
	&\times \left( \int_{T_B}{\frac{\rho \left( z \right) ^{a_1q_1+\beta _1}}{\left| \rho \left( z,\xi \right) \right|^{c_1q_1}}dV\left( z \right)} \right) ^{\frac{1}{q_1}}\left( \int_{T_B}{\frac{\rho \left( w \right) ^{a_2q_2+\beta _2}}{\left| \rho \left( w,\eta \right) \right|^{c_2q_2}}dV\left( w \right)} \right) ^{\frac{1}{q_2}}\\
	&\le C.
	\end{aligned}
	$$
	Hence, by Lemma \ref{lem jifendengshi} we have
	$$
	n+1+b_1-\left( n+1+\alpha _1 \right) /p_1=\left(c_1q_1-\left(n+1+a_1q_1+\beta_1\right)\right)/q_1
	$$
	and
	$$
	b_2-\alpha _2=\left(c_2q_2-\left(n+1+a_2q_2+\beta_2\right)\right)/q_2,
	$$
	that is,
	$$
	c_1=n+1+a_1+b_1+\frac{n+1+\beta _1}{q_1}-\frac{n+1+\alpha _1}{p_1}
	$$
	and
	$$
	c_2=a_2+b_2-\alpha_2+\frac{n+1+\beta _2}{q_2}.
	$$
	
\end{proof}

\begin{lemma}\label{lem b4}
	Let $1=p_1\le q_1<\infty$ and $1<p_2\le q_2<\infty$. If the operator $T_{\vec{a},\ \vec{b},\ \vec{c}}$ is bounded from $L_{\vec{\alpha}}^{\vec{p}}\left( T_B\times T_B \right) $ to $L_{\vec{\beta}}^{\vec{q}}\left( T_B\times T_B \right)$, 
	then for any $i\in\{1,2\}$,
	$$
	\begin{cases}
	 \alpha _1<b_1,\ \alpha _2+1<p_2\left( b_2+1 \right) ,\\
	-q_ia_i<\beta_i+1,\ c_i=n+1+a_i+b_i+\lambda_i,\\
	\end{cases}
	$$
	where
	$$
	\begin{cases}
	\lambda _1=\frac{n+1+\beta _1}{q_1}-\left( n+1+\alpha _1 \right),\\
	\lambda _2=\frac{n+1+\beta _2}{q_2}-\frac{n+1+\alpha _2}{p_2}.\\
	\end{cases}
	$$
\end{lemma}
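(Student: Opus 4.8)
The plan is to follow the template of Lemmas \ref{lem b1}--\ref{lem b3}, arranging the two tensor factors so that the first one (where $p_1=1$) is treated by an essential-supremum argument, exactly as in Lemma \ref{lem b2}, and the second one (where $p_2>1$) by an integral argument, exactly as in Lemma \ref{lem b1}. By Lemma \ref{dui ou you jie}, boundedness of $T_{\vec a,\vec b,\vec c}\colon L_{\vec\alpha}^{\vec p}\to L_{\vec\beta}^{\vec q}$ forces $T_{\vec a,\vec b,\vec c}^{*}\colon L_{\vec\beta}^{\vec q'}\to L_{\vec\alpha}^{\vec p'}$ to be bounded, with $\vec p'=(\infty,p_2')$. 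First I would substitute the test functions $f_{u,\eta}(z,w)=\rho(z)^{t_1}\rho(w)^{t_2}\rho(z,u\mathbf i)^{-s_1}\rho(w,\eta\mathbf i)^{-s_2}$, with $s_i,t_i$ chosen as in \eqref{b 11} so that $f_{u,\eta}\in L_{\vec\beta}^{\vec q'}$, and evaluate $T_{\vec a,\vec b,\vec c}^{*}f_{u,\eta}$ in closed form by applying Lemma \ref{lem jifendengshi} to each variable separately, obtaining exactly the product
$$
T_{\vec a,\vec b,\vec c}^{*}f_{u,\eta}(z,w)=C\,\frac{\rho(z)^{b_1-\alpha_1}}{\rho(z,u\mathbf i)^{c_1+s_1-(n+1+\beta_1+a_1+t_1)}}\cdot\frac{\rho(w)^{b_2-\alpha_2}}{\rho(w,\eta\mathbf i)^{c_2+s_2-(n+1+\beta_2+a_2+t_2)}}
$$
that appears in Lemma \ref{lem b3}. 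Since $\vec p'=(\infty,p_2')$, its $L_{\vec\alpha}^{\vec p'}$-norm is an essential supremum in $z$ of an $L^{p_2'}(dV_{\alpha_2})$-integral in $w$; the exponents in the two denominators are positive by \eqref{b 11}, so finiteness of the essential supremum in $z$ --- using Lemma \ref{lem qu bian liang} and the fact that $\rho$ is unbounded on $T_B$, exactly as in Lemma \ref{lem b2} --- forces $\alpha_1<b_1$, while finiteness of the $L^{p_2'}$-integral in $w$ forces, via Lemma \ref{lem jifendengshi}, that $(b_2-\alpha_2)p_2'+\alpha_2>-1$, i.e.\ $\alpha_2+1<p_2(b_2+1)$. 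Running the same test-function computation with $T_{\vec a,\vec b,\vec c}$ itself (test functions in $L_{\vec\alpha}^{\vec p}$, image required in $L_{\vec\beta}^{\vec q}$) produces, after Lemma \ref{lem jifendengshi}, the factors $\rho(z)^{a_1q_1+\beta_1}$ and $\rho(w)^{a_2q_2+\beta_2}$, whose exponents must exceed $-1$; this yields $-q_ia_i<\beta_i+1$ for $i\in\{1,2\}$.

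For the two identities $c_i=n+1+a_i+b_i+\lambda_i$, I would use the extremal test function
$$
f_{\xi,\eta}(z,w)=\frac{\rho(\xi)^{b_1-\alpha_1}\,\rho(\eta)^{n+1+b_2-(n+1+\alpha_2)/p_2}}{\rho(z,\xi)^{n+1+b_1}\,\rho(w,\eta)^{n+1+b_2}},\qquad \xi,\eta\in T_B,
$$
which is the analogue of the one in Lemma \ref{lem b3} with the two factors interchanged. The inequalities $\alpha_1<b_1$ and $\alpha_2+1<p_2(b_2+1)$ just obtained, combined with Lemma \ref{lem jifendengshi}, make $\|f_{\xi,\eta}\|_{\vec p,\vec\alpha}$ a finite constant independent of $\xi,\eta$: the inner $L^1(dV_{\alpha_1})$-integral in $z$ cancels the $\rho(\xi)$-prefactor, and then the outer $L^{p_2}(dV_{\alpha_2})$-integral in $w$ cancels the $\rho(\eta)$-prefactor. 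Applying Lemma \ref{lem jifendengshi} twice (its hypotheses hold here, in particular $c_i>0$, as in Lemma \ref{lem b1}) gives
$$
T_{\vec a,\vec b,\vec c}f_{\xi,\eta}(z,w)=C\,\frac{\rho(z)^{a_1}\rho(w)^{a_2}\,\rho(\xi)^{b_1-\alpha_1}\,\rho(\eta)^{n+1+b_2-(n+1+\alpha_2)/p_2}}{\rho(z,\xi)^{c_1}\,\rho(w,\eta)^{c_2}}.
$$
Since $T_{\vec a,\vec b,\vec c}f_{\xi,\eta}\in L_{\vec\beta}^{\vec q}$, I would compute its norm via Lemma \ref{lem jifendengshi} --- the $z$-integral yielding a power $\rho(\xi)^{-(c_1q_1-a_1q_1-\beta_1-n-1)/q_1}$ and the $w$-integral a power $\rho(\eta)^{-(c_2q_2-a_2q_2-\beta_2-n-1)/q_2}$, both convergent thanks to $-q_ia_i<\beta_i+1$ --- and then use that, because $\rho(\xi)$ and $\rho(\eta)$ independently take every positive value, the total power of $\rho(\xi)$ and the total power of $\rho(\eta)$ in $\|T_{\vec a,\vec b,\vec c}f_{\xi,\eta}\|_{\vec q,\vec\beta}$ must each vanish. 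These two equations read
$$
b_1-\alpha_1=\frac{c_1q_1-a_1q_1-\beta_1-n-1}{q_1},\qquad n+1+b_2-\frac{n+1+\alpha_2}{p_2}=\frac{c_2q_2-a_2q_2-\beta_2-n-1}{q_2},
$$
and solving for $c_1,c_2$ gives $c_1=n+1+a_1+b_1+\lambda_1$ with $\lambda_1=\frac{n+1+\beta_1}{q_1}-(n+1+\alpha_1)$ and $c_2=n+1+a_2+b_2+\lambda_2$ with $\lambda_2=\frac{n+1+\beta_2}{q_2}-\frac{n+1+\alpha_2}{p_2}$, as claimed.

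The only feature not already present in Lemmas \ref{lem b1}--\ref{lem b3} is that the dual side here carries the genuinely mixed norm $L^{(\infty,p_2')}$, and I expect the main obstacle to be the careful treatment of the essential-supremum (first) variable: it is precisely there that one must combine the unboundedness of $\rho$ on the tube $T_B$ with the two-sided comparison of Lemma \ref{lem qu bian liang} to extract $\alpha_1<b_1$ (hence $c_1>0$). All the remaining steps are a routine transcription of the earlier proofs with the roles of the two factors exchanged.
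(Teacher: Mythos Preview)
Your proposal is correct and follows exactly the approach the paper indicates (the paper only says ``similar to the proof of Lemma \ref{lem b3}, which only needs to modify the corresponding norm appropriately''): swap the roles of the two factors so that the $p_1=1$ variable is handled by the essential-supremum argument of Lemma \ref{lem b2} and the $p_2>1$ variable by the integral argument of Lemma \ref{lem b1}, then use the hybrid test function $f_{\xi,\eta}$ to pin down $c_1,c_2$. One small slip: the $L_{\vec\alpha}^{(\infty,p_2')}$-norm is the outer $L^{p_2'}(dV_{\alpha_2})$-integral in $w$ of the inner essential supremum in $z$, not the other way around as you wrote; but since $T^*_{\vec a,\vec b,\vec c}f_{u,\eta}(z,w)$ factorizes as a product, the two orderings coincide and your conclusions are unaffected.
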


\begin{proof}
	The proof of  Lemma \ref{lem b4} is similar to the proof of Lemma \ref{lem b3}, which only needs to modify the corresponding norm appropriately, thus we omit its proof.
\end{proof}

\begin{lemma}\label{lem b5}
	Let $1<p_1<q_1=\infty$ and $1<p_2<q_2=\infty$. Suppose $T_{\vec{a},\ \vec{b},\ \vec{c}}$ is bounded from $L_{\vec{\alpha}}^{\vec{p}}\left( T_B\times T_B \right) $ to $L^{\infty}\left( T_B\times T_B \right)$, then for any $i\in\{1,2\}$,
	$$
	\begin{cases}
	a_i>0,\ \alpha _i+1<p_i(b_i+1),\\
	c_i=n+1+a_i+b_i+\lambda_i,\\
	\end{cases}
	$$
	where
	$$
	\lambda _i=-\frac{n+1+\alpha _i}{p_i}.
	$$
\end{lemma}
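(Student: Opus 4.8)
The plan is to adapt the argument of Lemma~\ref{lem b1} to the endpoint codomain $L^{\infty}$. For each $i\in\{1,2\}$ three things must be shown: the weight restriction $\alpha_i+1<p_i(b_i+1)$, the dimensional identity $c_i=n+1+a_i+b_i+\lambda_i$, and the positivity $a_i>0$. I would obtain the first from mere well-definedness of the operator, the second from a single test function, and the third by combining the two.

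First I would record what well-definedness of $T_{\vec{a},\ \vec{b},\ \vec{c}}f$ on $L_{\vec{\alpha}}^{\vec{p}}\left( T_B\times T_B \right)$ forces, exactly as in the opening step of the proof of Lemma~\ref{lem b1}: for the defining double integral to converge absolutely for every $f\in L_{\vec{\alpha}}^{\vec{p}}\left( T_B\times T_B \right)$ one needs, for each fixed $(z,w)$,
$$
(u,\eta)\longmapsto \frac{\rho(u)^{b_1-\alpha_1}\rho(\eta)^{b_2-\alpha_2}}{\left|\rho(z,u)\right|^{c_1}\left|\rho(w,\eta)\right|^{c_2}}\in L_{\vec{\alpha}}^{\vec{p}^{\prime}}\left( T_B\times T_B \right),
$$
and by Lemma~\ref{lem jifendengshi} this amounts to $p_i^{\prime}(b_i-\alpha_i)+\alpha_i>-1$ together with $c_ip_i^{\prime}>n+1+p_i^{\prime}(b_i-\alpha_i)+\alpha_i$ for $i\in\{1,2\}$. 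The first inequality is precisely $\alpha_i+1<p_i(b_i+1)$ (which in particular gives $b_i>-1$), and adding the two yields $c_i>n/p_i^{\prime}>0$.

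Next I would run the second test function from the proof of Lemma~\ref{lem b1}: for $\xi,\eta\in T_B$ set
$$
f_{\xi,\eta}(z,w)=\frac{\rho(\xi)^{d_1}\rho(\eta)^{d_2}}{\rho(z,\xi)^{n+1+b_1}\rho(w,\eta)^{n+1+b_2}},\qquad d_i:=n+1+b_i-\frac{n+1+\alpha_i}{p_i},
$$
the exponents $d_i$ being tuned so that the $\rho(\xi)$- and $\rho(\eta)$-powers cancel upon integration. Using $\alpha_i+1<p_i(b_i+1)$ and Lemma~\ref{lem jifendengshi} one checks $\lVert f_{\xi,\eta}\rVert_{\vec{p},\vec{\alpha}}\le C$ with $C$ independent of $\xi,\eta$, while a second application of Lemma~\ref{lem jifendengshi} (legitimate because $c_i>0$ and $b_i>-1$) gives
$$
T_{\vec{a},\ \vec{b},\ \vec{c}}f_{\xi,\eta}(z,w)=C\,\frac{\rho(z)^{a_1}\rho(w)^{a_2}\rho(\xi)^{d_1}\rho(\eta)^{d_2}}{\rho(z,\xi)^{c_1}\rho(w,\eta)^{c_2}}.
$$
Since $T_{\vec{a},\ \vec{b},\ \vec{c}}$ maps into $L^{\infty}$, this function is bounded, and evaluating it at the axis points $z=\xi=r\mathbf{i}$, $w=\eta=v\mathbf{i}$ with $\mathbf{i}=(0^{\prime},i)$ (so $\rho(r\mathbf{i})=r$ and $\rho(r\mathbf{i},r\mathbf{i})=r$) yields $r^{a_1+d_1-c_1}v^{a_2+d_2-c_2}\lesssim\lVert f_{\xi,\eta}\rVert_{\vec{p},\vec{\alpha}}\lesssim1$ for all $r,v>0$; letting $r$ and $v$ tend in turn to $0$ and to $\infty$ forces $a_i+d_i-c_i=0$, i.e. $c_i=a_i+d_i=n+1+a_i+b_i+\lambda_i$ with $\lambda_i=-(n+1+\alpha_i)/p_i$.

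Finally, inserting $c_i=n+1+a_i+b_i-(n+1+\alpha_i)/p_i$ into the well-definedness inequality $c_ip_i^{\prime}>n+1+p_i^{\prime}(b_i-\alpha_i)+\alpha_i$ and simplifying via $1/p_i+1/p_i^{\prime}=1$ collapses it to $n+1+a_i>n+1$, that is $a_i>0$, completing the proof. I expect the only genuinely fiddly points to be the bookkeeping behind the uniform bound $\lVert f_{\xi,\eta}\rVert_{\vec{p},\vec{\alpha}}\le C$ and the exact tracking of exponents when Lemma~\ref{lem jifendengshi} is applied to $T_{\vec{a},\ \vec{b},\ \vec{c}}f_{\xi,\eta}$; passing to the $L^{\infty}$ endpoint costs nothing, as it merely replaces the computation of a $\vec{q}$-norm by the evaluation of a supremum at conveniently chosen points.
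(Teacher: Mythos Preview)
Your proof is correct. The one place where it diverges from the paper is in how the dimensional identity $c_i=n+1+a_i+b_i+\lambda_i$ is extracted. You adapt the test-function argument of Lemma~\ref{lem b1}: you pick $f_{\xi,\eta}$, compute $T_{\vec a,\vec b,\vec c}f_{\xi,\eta}$ explicitly, and then evaluate at axis points to force the exponents to vanish. The paper instead exploits the $L^\infty$ target directly: boundedness into $L^\infty$ means the $L^{\vec p\,'}_{\vec\alpha}$ norm of the kernel $(u,\eta)\mapsto \rho(z)^{a_1}\rho(w)^{a_2}\rho(u)^{b_1-\alpha_1}\rho(\eta)^{b_2-\alpha_2}/(|\rho(z,u)|^{c_1}|\rho(w,\eta)|^{c_2})$ is bounded uniformly in $(z,w)$; Lemma~\ref{lem jifendengshi} evaluates that norm as $C\rho(z)^{e_1}\rho(w)^{e_2}$, and since $\rho$ ranges over $(0,\infty)$ on $T_B$ one gets $e_i=0$, which is exactly the identity. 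Your steps~1 and~3 (the well-definedness inequalities and the deduction $a_i>0$ from substituting the identity back into them) coincide with the paper's. The paper's route is a touch shorter because it avoids constructing and normalizing $f_{\xi,\eta}$, but your argument is more explicit about where the uniform bound comes from; both are valid.
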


\begin{proof}
	
	Since $T_{\vec{a},\ \vec{b},\ \vec{c}}$ is bounded, it follows that for any fixed $z,w\in T_B$, the integral  
	$$
	\begin{aligned}
	T_{\vec{a},\ \vec{b},\ \vec{c}}\,\,f\left( z,w \right) &=\rho(z)^{a_1}\rho(w)^{a_2}\int_{T_B}{\int_{T_B}{\frac{\rho \left( u \right) ^{b_1}\rho \left( \eta \right) ^{b_2}}{\rho \left( z,u \right) ^{c_1}\rho \left( w,\eta \right) ^{c_2}}f\left( u,\eta \right) dV\left( u \right)}dV\left( \eta \right)}
	\\
	&=\rho(z)^{a_1}\rho(w)^{a_2}\int_{T_B}{\int_{T_B}{\frac{\rho \left( u \right) ^{b_1-\alpha _1}\rho \left( \eta \right) ^{b_2-\alpha _2}}{\rho \left( z,u \right) ^{c_1}\rho \left( w,\eta \right) ^{c_2}}f\left( u,\eta \right) dV_{\alpha _1}\left( u \right)}dV_{\alpha _2}\left( \eta \right)}
	\end{aligned}
	$$
	is finte for each $f\in L_{\vec{\alpha}}^{\vec{p}}\left( T_B\times T_B \right).$
	
	By duality, we get that for any fixed $z, w\in T_B$,
	$$
	\frac{\rho(z)^{a_1}\rho \left( u \right) ^{b_1-\alpha _1}\rho(w)^{a_2}\rho \left( \eta \right) ^{b_2-\alpha _2}}{\rho \left( z,u \right) ^{c_1}\rho \left( w,\eta \right) ^{c_2}}\in L_{\vec{\alpha}}^{\vec{p}\prime}\left( T_B\times T_B \right).
	$$
	Therefore, by calculation, we have 
	$$
	\begin{aligned}
	&\rho(z)^{a_1}\rho(w)^{a_2}\left( \int_{T_B}{\frac{\rho \left( u \right) ^{p_{1}^{\prime}\left( b_1-\alpha _1 \right) +\alpha _1}}{\left| \rho \left( z,u \right) \right|^{c_1p_{1}^{\prime}}}dV\left( u \right)} \right) ^{\frac{1}{p_{1}^{\prime}}}\left( \int_{T_B}{\frac{\rho \left( \eta \right) ^{p_{2}^{\prime}\left( b_2-\alpha _2 \right) +\alpha _2}}{\left| \rho \left( w,\eta \right) \right|^{c_2p_{2}^{\prime}}}dV\left( \eta \right)} \right) ^{\frac{1}{p_{2}^{\prime}}}\\
	&\lesssim\frac{\rho \left( z \right) ^{a_1}\rho \left( w \right) ^{a2}}{\rho \left( z \right) ^{c_1+\alpha _1-b_1-\left( n+1+\alpha _1 \right) /p_{1}^{\prime}}\rho \left( w \right) ^{c_2+\alpha _2-b_2-\left( n+1+\alpha _2 \right) /p_{2}^{\prime}}}\\
	&<\infty.
	\end{aligned}
	$$
	
	By Lemma \ref{lem jifendengshi}, we know that for any $i\in \{1,2\}$,
	$$
	\begin{cases}
	p_{i}^{\prime}\left( b_i-\alpha _i \right) +\alpha _i>-1, \\
	c_ip_{i}^{\prime}>n+1+p_{i}^{\prime}\left( b_i-\alpha _i \right) +\alpha _i,\\
	\end{cases}
	$$
	that is 
	$$
	\begin{cases}
	\alpha _i+1<p_i(b_i+1),\\
	c_i>n+1+b_i-\frac{n+1+\alpha_i}{p_i}.\\
	\end{cases}
	$$
	
	By the arbitrariness of $z$ and $w$, we have
	$$
	\begin{cases}
	a_i>0,\\
	c_i=n+1+a_i+b_i-\frac{n+1+\alpha _i}{p_i}.\\
	\end{cases}
	$$
	
	This completes the proof of the lemma.
	
\end{proof}

\begin{lemma}\label{lem b6}
	Let $1=p_1<q_1=\infty$ and $1=p_2<q_2=\infty$. Suppose $T_{\vec{a},\ \vec{b},\ \vec{c}}$ is bounded from $L_{\vec{\alpha}}^{\vec{1}}\left( T_B\times T_B \right) $ to $L^{\infty}\left( T_B\times T_B \right)$, then for any $i\in\{1,2\}$,
	$$
	\begin{cases}
	a_i\ge0,\ b_i\ge\alpha _i,\\
	c_i=n+1+a_i+b_i+\lambda_i,\\
	\end{cases}
	$$
	where
	$$
	\lambda _i=-n-1-\alpha _i.
	$$
\end{lemma}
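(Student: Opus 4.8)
The plan is to follow the pattern of the proof of Lemma~\ref{lem b5} with $p_1=p_2=1$, so that the conjugate exponents are $p_1'=p_2'=\infty$ and every $L^{p_i'}$-integral appearing there becomes an essential supremum. Writing
$$
T_{\vec{a},\ \vec{b},\ \vec{c}}\,f(z,w)=\rho(z)^{a_1}\rho(w)^{a_2}\int_{T_B}\int_{T_B}\frac{\rho(u)^{b_1-\alpha_1}\rho(\eta)^{b_2-\alpha_2}}{\rho(z,u)^{c_1}\rho(w,\eta)^{c_2}}f(u,\eta)\,dV_{\alpha_1}(u)\,dV_{\alpha_2}(\eta),
$$
and observing that $L_{\vec{\alpha}}^{\vec{1}}(T_B\times T_B)$ coincides with $L^1$ of the product measure $dV_{\alpha_1}\times dV_{\alpha_2}$, I would use the $L^1$--$L^\infty$ Hölder inequality (the $\vec{q}=(\infty,\infty)$ instance exploited in Lemma~\ref{ dao wu qiong }) together with its sharpness: boundedness of $T_{\vec{a},\ \vec{b},\ \vec{c}}\colon L_{\vec{\alpha}}^{\vec{1}}\to L^{\infty}$ is equivalent to the uniform boundedness on $T_B\times T_B$ of
$$
\Phi(z,w):=\rho(z)^{a_1}\rho(w)^{a_2}\underset{(u,\eta)\in T_B\times T_B}{\operatorname{ess\,sup}}\ \frac{\rho(u)^{b_1-\alpha_1}\rho(\eta)^{b_2-\alpha_2}}{|\rho(z,u)|^{c_1}|\rho(w,\eta)|^{c_2}}.
$$
Since the essential supremum of a product of a function of $u$ with a function of $\eta$ factorizes, $\Phi(z,w)=\bigl(\rho(z)^{a_1}M_1(z)\bigr)\bigl(\rho(w)^{a_2}M_2(w)\bigr)$ with $M_i(z):=\operatorname{ess\,sup}_{u\in T_B}\rho(u)^{b_i-\alpha_i}/|\rho(z,u)|^{c_i}$, and the problem splits into analysing $\rho(z)^{a_i}M_i(z)$ for $i\in\{1,2\}$ separately.

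The heart of the argument is then a two-sided estimate for $M_i(z)$. I would first record that $\rho(z,z)=\rho(z)>0$ (a one-line computation from the definition of $\rho(\cdot,\cdot)$) and that, by Lemma~\ref{lem qu bian liang} together with continuity, $\rho(z,u)\neq0$ for every $u$ in the closure of $T_B$; hence $M_i(z)$ can be infinite only through the behaviour of the integrand at $\partial T_B$ or at infinity. Letting $u$ approach a boundary point $u_*\in\partial T_B$ gives $\rho(u)\to0$ while $|\rho(z,u)|\to|\rho(z,u_*)|\in(0,\infty)$, so $M_i(z)=\infty$ unless $b_i\ge\alpha_i$; taking $u=(0^{\prime},it)$ with $t\to\infty$ gives $\rho(u)=t$ and $|\rho(z,u)|\simeq t$, so the integrand is $\simeq t^{(b_i-\alpha_i)-c_i}$ and $M_i(z)=\infty$ unless $c_i\ge b_i-\alpha_i$. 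Conversely, once $b_i\ge\alpha_i$ and $c_i\ge b_i-\alpha_i$, splitting $|\rho(z,u)|^{c_i}=|\rho(z,u)|^{b_i-\alpha_i}\,|\rho(z,u)|^{c_i-(b_i-\alpha_i)}$ and applying Lemma~\ref{lem qu bian liang} in the two forms $2|\rho(z,u)|\ge\rho(u)$ on the first factor and $2|\rho(z,u)|\ge\rho(z)$ on the second (legitimate since both exponents are nonnegative) yields $M_i(z)\lesssim\rho(z)^{(b_i-\alpha_i)-c_i}$, while evaluating at $u=z$ and using $\rho(z,z)=\rho(z)$ gives the reverse inequality. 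Thus $M_i(z)\simeq\rho(z)^{(b_i-\alpha_i)-c_i}$, whence $\rho(z)^{a_i}M_i(z)\simeq\rho(z)^{a_i+b_i-\alpha_i-c_i}$. Because $\rho$ assumes every value of $(0,\infty)$ on $T_B$, uniform boundedness forces $a_i+b_i-\alpha_i-c_i=0$, that is $c_i=a_i+b_i-\alpha_i=n+1+a_i+b_i+\lambda_i$ with $\lambda_i=-n-1-\alpha_i$; combining $c_i=a_i+b_i-\alpha_i$ with $c_i\ge b_i-\alpha_i$ from the boundary/infinity tests then yields $a_i\ge0$. This gives all three asserted conditions.

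The step I expect to be the main obstacle is justifying the ``only if'' half of the Hölder equivalence in the first paragraph, since the kernel of $T_{\vec{a},\ \vec{b},\ \vec{c}}$ is complex-valued: to obtain $\|T_{\vec{a},\ \vec{b},\ \vec{c}}\|_{L_{\vec{\alpha}}^{\vec{1}}\to L^\infty}\gtrsim\sup_{z,w}\Phi(z,w)$ one must, near a near-extremal pair $(z_0,w_0)$, produce a unit-norm $f$ concentrated on a small set where $|\rho(z_0,u)^{-c_1}\rho(w_0,\eta)^{-c_2}|$ is close to its essential supremum and whose phase cancels that of the kernel at $(z_0,w_0)$. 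In the spirit of the proof of Lemma~\ref{lem b5}, this can also be sidestepped by first using well-definedness of $T_{\vec{a},\ \vec{b},\ \vec{c}}f$ to place the kernel, for each fixed $(z,w)$, in $L^{\infty}(T_B\times T_B)$ (the dual of $L_{\vec{\alpha}}^{\vec{1}}$), and then invoking the operator-norm bound to force $(z,w)\mapsto\Phi(z,w)$ to be bounded. Everything after that reduction is routine bookkeeping with Lemma~\ref{lem qu bian liang} and the dilation behaviour of $\rho$.
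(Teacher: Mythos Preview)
Your proposal is correct and follows essentially the same route as the paper's proof: reduce $L^{\vec 1}_{\vec\alpha}\to L^\infty$ boundedness to uniform boundedness in $(z,w)$ of the kernel's essential supremum, then extract the parameter constraints by testing the asymptotics as $\rho(u)\to 0$ and $\rho(u)\to\infty$ and invoking Lemma~\ref{lem qu bian liang}. Your treatment is in fact more explicit than the paper's (which refers back to Lemma~\ref{lem b2} and uses the slightly different test $u=(0',x_n+i)$, $|x_n|\to\infty$, to first isolate $c_i\ge 0$); your two-sided estimate $M_i(z)\simeq\rho(z)^{(b_i-\alpha_i)-c_i}$ packages the argument cleanly and subsumes that step, and your flagging of the complex-kernel issue in passing from operator boundedness to the pointwise bound on $\Phi$ is a subtlety the paper leaves implicit.
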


\begin{proof}
	Similar to the proof of Lemma \ref{lem b5}, by duality, we obtain that for any fixed $z, w\in T_B$,
	\begin{equation}\label{lem b6 t1}
	\mathop {\mathrm{ess}\ \mathrm{sup}}_{\left( u,\eta \right) \in T_B\times T_B}\frac{\rho \left( z \right) ^{a_1}\rho \left( u \right) ^{b_1-\alpha _1}\rho \left( w \right) ^{a_2}\rho \left( \eta \right) ^{b_2-\alpha _2}}{\left| \rho \left( z,u \right) \right|^{c_1}\left| \rho \left( w,\eta \right) \right|^{c_2}}
	<\infty.
	\end{equation}
	
	For any $i\in \{1,2\}$, we first prove that $ c_i \ge 0 $. Suppose $ c_i < 0 $. Then we take $z = w = (0', i) $, $ u = (0', x_n + i)$ and $ \eta = (0', y_n + i) $ such that $\left| x_n \right|\rightarrow \infty$ and $\left| y_n \right|\rightarrow \infty$ as $n\rightarrow \infty$. 
	Thus, for any fixed $z, w\in T_B$, we have  
	$$
	\mathop {\mathrm{ess}\ \mathrm{sup}}_{\left( u,\eta \right) \in T_B\times T_B}\frac{\rho \left( z \right) ^{a_1}\rho \left( u \right) ^{b_1-\alpha _1}\rho \left( w \right) ^{a_2}\rho \left( \eta \right) ^{b_2-a_2}}{\left| \rho \left( z,u \right) \right|^{c_1}\left| \rho \left( w,\eta \right) \right|^{c_2}}\gtrsim \left| x_n \right|^{-c_1}\left| y_n \right|^{-c_2}\rightarrow \infty ,
	$$
	which contradicts to (\ref{lem b6 t1}).
	
	Next, we prove that $b_i\ge \alpha_i$ and $c_i\ge b_i-\alpha_i+a_i$. Similar to the proof of the previous Lemma \ref{lem b2}, it is easy to get $b_i\ge \alpha_i$ and $c_i\ge b_i-\alpha_i$.
	From the arbitrariness of $z$ and $w$ in $T_B$, we get $c_i= b_i-\alpha_i+a_i$ for any $i\in\{1,2\}$.
\end{proof}

\begin{lemma}\label{lem b7}
	Let $1=p_1<q_1=\infty$ and $1<p_2<q_2=\infty$. Suppose $T_{\vec{a},\ \vec{b},\ \vec{c}}$ is bounded from $L_{\vec{\alpha}}^{\vec{p}}\left( T_B\times T_B \right) $ to $L^{\infty}\left( T_B\times T_B \right)$, then for any $i\in\{1,2\}$,
	$$
	\begin{cases}
	a_1\ge0,\ a_2>0,\\
	b_1\ge \alpha_1,\ p_2\left( b_2+1 \right) >\alpha _2+1,\\
	c_i=n+1+a_i+b_i+\lambda_i,\\
	\end{cases}
	$$
	where
	$$
	\begin{cases}
	\lambda _1=-n-1-\alpha _1,\\
	\lambda_2=-\frac{n+1+\alpha _2}{p_2}.\\
	\end{cases}
	$$		
\end{lemma}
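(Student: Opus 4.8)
The plan is to combine the two techniques already used for the $L^{\infty}$-target cases: the slot with index $1$ is treated exactly as in Lemma \ref{lem b6}, and the slot with index $p_2\in(1,\infty)$ is treated exactly as in Lemma \ref{lem b5}. Here $\vec{p}=(1,p_2)$, so the conjugate index is $\vec{p}^{\prime}=(\infty,p_2^{\prime})$ with $p_2^{\prime}=p_2/(p_2-1)$. First I would observe that, in order for $T_{\vec{a},\ \vec{b},\ \vec{c}}f(z,w)$ to be well defined for every $f\in L_{\vec{\alpha}}^{\vec{p}}(T_B\times T_B)$, the kernel
$$
K(z,u;w,\eta):=\frac{\rho(z)^{a_1}\rho(w)^{a_2}\rho(u)^{b_1-\alpha_1}\rho(\eta)^{b_2-\alpha_2}}{\rho(z,u)^{c_1}\rho(w,\eta)^{c_2}},
$$
regarded as a function of $(u,\eta)$, must lie in $L_{\vec{\alpha}}^{\vec{p}^{\prime}}(T_B\times T_B)$ for each fixed $z,w\in T_B$; moreover, since the target space is $L^{\infty}$, the mixed-norm Hölder inequality (the necessity counterpart of Lemma \ref{ dao wu qiong }) forces $\sup_{z,w\in T_B}\lVert K(z,\cdot;w,\cdot)\rVert_{\vec{p}^{\prime},\vec{\alpha}}<\infty$. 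Because $K$ is a product of a function of $(z,u)$ and a function of $(w,\eta)$, and because $\vec{p}^{\prime}=(\infty,p_2^{\prime})$ prescribes an essential supremum in $u$ followed by an $L^{p_2^{\prime}}$-integral in $\eta$, this splits into
$$
\sup_{z\in T_B}\rho(z)^{a_1}\operatorname*{ess\,sup}_{u\in T_B}\frac{\rho(u)^{b_1-\alpha_1}}{|\rho(z,u)|^{c_1}}<\infty
\qquad\text{and}\qquad
\sup_{w\in T_B}\rho(w)^{a_2}\left(\int_{T_B}\frac{\rho(\eta)^{p_2^{\prime}(b_2-\alpha_2)+\alpha_2}}{|\rho(w,\eta)|^{c_2p_2^{\prime}}}\,dV(\eta)\right)^{1/p_2^{\prime}}<\infty .
$$

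For the second of these I would argue as in the proof of Lemma \ref{lem b5}. By the second part of Lemma \ref{lem jifendengshi}, the inner integral is finite if and only if $p_2^{\prime}(b_2-\alpha_2)+\alpha_2>-1$, equivalently $\alpha_2+1<p_2(b_2+1)$, together with $c_2p_2^{\prime}-[p_2^{\prime}(b_2-\alpha_2)+\alpha_2]>n+1$, in which case it equals a constant times $\rho(w)^{-(c_2p_2^{\prime}-p_2^{\prime}(b_2-\alpha_2)-\alpha_2-n-1)}$. Hence the displayed expression in $w$ is comparable to $\rho(w)^{a_2-c_2+(b_2-\alpha_2)+(n+1+\alpha_2)/p_2^{\prime}}$, and since $\rho(w)$ exhausts $(0,\infty)$ its exponent must vanish, which gives $c_2=n+1+a_2+b_2-(n+1+\alpha_2)/p_2=n+1+a_2+b_2+\lambda_2$. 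Substituting this value of $c_2$ into the strict inequality $c_2p_2^{\prime}-p_2^{\prime}(b_2-\alpha_2)-\alpha_2>n+1$ then forces $a_2>0$.

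For the first bound I would follow the proof of Lemma \ref{lem b6}. Using Lemma \ref{lem qu bian liang} and testing with $u=(0^{\prime},iy_n)$: letting $\rho(u)=y_n\to0^{+}$ while $|\rho(z,u)|$ stays $\ge\rho(z)/2$ forces $b_1-\alpha_1\ge0$, and letting $y_n\to\infty$, where $|\rho(z,u)|\simeq y_n$, forces $c_1\ge b_1-\alpha_1$, and in particular $c_1\ge0$. Granting $0\le b_1-\alpha_1\le c_1$, Lemma \ref{lem qu bian liang} yields $\rho(u)^{b_1-\alpha_1}\rho(z)^{c_1-(b_1-\alpha_1)}\le\max\{\rho(z),\rho(u)\}^{c_1}\le 2^{c_1}|\rho(z,u)|^{c_1}$ for all $u$, so the essential supremum over $u$ is comparable to $\rho(z)^{(b_1-\alpha_1)-c_1}$, the matching lower bound coming from letting $u\to z$ and using $\rho(z,z)=\rho(z)$. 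Therefore the first displayed expression is comparable to $\rho(z)^{a_1+b_1-\alpha_1-c_1}$, so uniform boundedness over $z\in T_B$ forces $c_1=a_1+b_1-\alpha_1=n+1+a_1+b_1+\lambda_1$ with $\lambda_1=-n-1-\alpha_1$; finally $a_1=c_1-(b_1-\alpha_1)\ge0$ since $c_1\ge b_1-\alpha_1$.

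The step I expect to be the main obstacle is the two-sided comparison $\operatorname*{ess\,sup}_{u\in T_B}\rho(u)^{b_1-\alpha_1}/|\rho(z,u)|^{c_1}\simeq\rho(z)^{(b_1-\alpha_1)-c_1}$: one has to be certain that this supremum is controlled by the values of $u$ near $z$ and is not destroyed by the behaviour of the kernel as $u$ tends to the boundary of $T_B$ or escapes to infinity, and this is precisely where Lemma \ref{lem qu bian liang} together with the identity $\rho(z,z)=\rho(z)$ enters. A secondary point to make precise is the reduction in the first paragraph, namely that boundedness into $L^{\infty}$ is equivalent to the uniform boundedness of the $\vec{p}^{\prime}$-norm of the kernel; this is the necessity half of Lemma \ref{ dao wu qiong } and follows from the same mixed-norm Hölder inequality by testing $T_{\vec{a},\ \vec{b},\ \vec{c}}$ against suitable approximate extremizers. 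Everything else reduces to routine estimates via Lemma \ref{lem jifendengshi}.
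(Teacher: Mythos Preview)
Your proposal is correct and follows essentially the same approach as the paper: the paper's own proof of Lemma~\ref{lem b7} is a two-line sketch that says to place the kernel in $L_{\vec{\alpha}}^{\vec{p}'}$ and then handle the $u$-slot as in Lemma~\ref{lem b6} and the $\eta$-slot as in Lemma~\ref{lem b5}, which is precisely what you do. Your write-up is in fact more careful than the paper's, since you make explicit the uniformity in $(z,w)$ (needed to force the exponents of $\rho(z)$ and $\rho(w)$ to vanish rather than merely to obtain one-sided inequalities) and you supply the two-sided estimate $\operatorname*{ess\,sup}_{u}\rho(u)^{b_1-\alpha_1}/|\rho(z,u)|^{c_1}\simeq\rho(z)^{(b_1-\alpha_1)-c_1}$ via Lemma~\ref{lem qu bian liang} and $\rho(z,z)=\rho(z)$.
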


\begin{proof}
	Similar to the method of lemma \ref{lem b5}, by duality, we have for any fixed $z, w\in T_B$,
	$$
	\frac{\rho(z)^{a_1}\rho \left( u \right) ^{b_1-\alpha _1}\rho(w)^{a_2}\rho \left( \eta \right) ^{b_2-\alpha _2}}{\rho \left( z,u \right) ^{c_1}\rho \left( w,\eta \right) ^{c_2}}\in L_{\vec{\alpha}}^{\vec{p}\prime}\left( T_B\times T_B \right),
	$$
	that is 
	$$
	\rho \left( z \right) ^{a_1}\rho \left( w \right) ^{a_2}\underset{u\in T_B}{\text{ess}\ \text{sup}}\frac{\rho \left( u \right) ^{b_1-\alpha _1}}{\left| \rho \left( z,u \right) \right|^{c_1}}\left( \int_{T_B}{\frac{\rho \left( \eta \right) ^{p_{2}^{\prime}\left( b_2-\alpha _2 \right) +\alpha _2}}{\left| \rho \left( w,\eta \right) \right|^{c_2p_{2}^\prime}}dV\left( \eta \right)} \right) ^{\frac{1}{p_{2}^\prime}}<\infty .
	$$
	
	Similar to the method of Lemma \ref{lem b5} and Lemma \ref{lem b6}, we can complete the proof of the lemma.
\end{proof}

\begin{lemma}\label{lem b8}
	Let $1<p_1<q_1=\infty$ and $1=p_2<q_2=\infty$. Suppose $T_{\vec{a},\ \vec{b},\ \vec{c}}$ is bounded from $L_{\vec{\alpha}}^{\vec{p}}\left( T_B\times T_B \right) $ to $L^{\infty}\left( T_B\times T_B \right)$, then for any $i\in\{1,2\}$,
	$$
	\begin{cases}
	a_1>0,\ a_2\ge0, \\
	p_1\left( b_1+1 \right) >\alpha _1+1,\ b_2\ge \alpha_2,\\
	c_i=n+1+a_i+b_i+\lambda_i,\\
	\end{cases}
	$$
	where
	$$
	\begin{cases}
	\lambda_1=-\frac{n+1+\alpha _1}{p_1},\\
	\lambda _2=-n-1-\alpha _2.\\
	\end{cases}
	$$
\end{lemma}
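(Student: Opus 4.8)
The plan is to carry out the duality argument of Lemma~\ref{lem b5} in the first variable and the essential-supremum argument of Lemma~\ref{lem b6} in the second variable, exploiting that the kernel of $T_{\vec{a},\ \vec{b},\ \vec{c}}$ is a product of a factor depending only on $(z,u)$ and one depending only on $(w,\eta)$. Since $T_{\vec{a},\ \vec{b},\ \vec{c}}$ is bounded from $L_{\vec{\alpha}}^{\vec{p}}\left(T_B\times T_B\right)$ to $L^{\infty}\left(T_B\times T_B\right)$ and $\vec{p}=(p_1,1)$ with $p_1\in(1,\infty)$, for a.e.\ fixed $z,w\in T_B$ the functional $f\mapsto T_{\vec{a},\ \vec{b},\ \vec{c}}f(z,w)$ is bounded on $L_{\vec{\alpha}}^{\vec{p}}$ with norm $\lesssim\lVert T_{\vec{a},\ \vec{b},\ \vec{c}}\rVert$, uniformly in $(z,w)$; one upgrades the a.e.\ $L^{\infty}$-bound to this uniform statement by testing against a countable dense subset of the unit ball of $L_{\vec{\alpha}}^{\vec{p}}$. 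Because $\vec{p}\,'=(p_1',\infty)$, this forces, as a function of $(u,\eta)$,
$$
\frac{\rho(z)^{a_1}\rho(w)^{a_2}\rho(u)^{b_1-\alpha_1}\rho(\eta)^{b_2-\alpha_2}}{\rho(z,u)^{c_1}\rho(w,\eta)^{c_2}}\in L_{\vec{\alpha}}^{\vec{p}\,'}\left(T_B\times T_B\right)
$$
with norm $\lesssim\lVert T_{\vec{a},\ \vec{b},\ \vec{c}}\rVert$, and by the product structure this norm equals $A(z)\,B(w)$, where
$$
A(z):=\rho(z)^{a_1}\left(\int_{T_B}\frac{\rho(u)^{p_1'(b_1-\alpha_1)+\alpha_1}}{\left|\rho(z,u)\right|^{c_1p_1'}}\,dV(u)\right)^{1/p_1'},\qquad B(w):=\rho(w)^{a_2}\mathop{\mathrm{ess}\ \mathrm{sup}}_{\eta\in T_B}\frac{\rho(\eta)^{b_2-\alpha_2}}{\left|\rho(w,\eta)\right|^{c_2}}.
$$
Fixing one variable shows that $A$ and $B$ are each uniformly bounded on $T_B$.

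For the first variable the argument is exactly that of Lemma~\ref{lem b5}. By Lemma~\ref{lem jifendengshi} the $u$-integral converges precisely when $p_1'(b_1-\alpha_1)+\alpha_1>-1$, i.e.\ $p_1(b_1+1)>\alpha_1+1$, together with $c_1p_1'>n+1+p_1'(b_1-\alpha_1)+\alpha_1$, and in that case $A(z)\simeq\rho(z)^{\,a_1-c_1+b_1-\alpha_1+(n+1+\alpha_1)/p_1'}$. Since $\rho$ attains every value in $(0,\infty)$ on $T_B$, uniform boundedness of $A$ forces this exponent to vanish, which rearranges to $c_1=n+1+a_1+b_1-\frac{n+1+\alpha_1}{p_1}$; substituting this identity into the strict inequality $c_1p_1'>n+1+p_1'(b_1-\alpha_1)+\alpha_1$ gives $a_1>0$.

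For the second variable I would imitate Lemma~\ref{lem b6}. Choosing $\eta=(0',s+i)$ with $s\in\mathbb{R}$ and $|s|\to\infty$ keeps $\rho(\eta)$ bounded while $\left|\rho(w,\eta)\right|\to\infty$, so finiteness of $B(w)$ forces $c_2\ge0$; letting $\eta$ tend to the boundary so that $\rho(\eta)\to0$ with $w$ fixed, where $\left|\rho(w,\eta)\right|$ stays bounded below by Lemma~\ref{lem qu bian liang}, forces $b_2\ge\alpha_2$; and taking $w=(0',i)$ and $\eta=(0',it)$ with $t\to\infty$ (so $\rho(\eta)=t$ and $\left|\rho(w,\eta)\right|\simeq t$) forces $b_2-\alpha_2-c_2\le0$. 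Once $0\le b_2-\alpha_2\le c_2$ is in hand, Lemma~\ref{lem qu bian liang} yields $\mathop{\mathrm{ess}\ \mathrm{sup}}_{\eta\in T_B}\rho(\eta)^{b_2-\alpha_2}\left|\rho(w,\eta)\right|^{-c_2}\lesssim\rho(w)^{b_2-\alpha_2-c_2}$, while evaluating at $\eta=w$ (where $\rho(w,w)=\rho(w)$) gives the matching lower bound, so $B(w)\simeq\rho(w)^{\,a_2+b_2-\alpha_2-c_2}$. Uniform boundedness of $B$ then forces $c_2=a_2+b_2-\alpha_2=n+1+a_2+b_2+\lambda_2$ with $\lambda_2=-n-1-\alpha_2$, and combining this with $b_2-\alpha_2-c_2\le0$ gives $a_2\ge0$, which exhausts the list.

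The computational ingredients (the closed-form integral of Lemma~\ref{lem jifendengshi} and the elementary evaluations of $\rho(z,u)$ and $\rho(w,\eta)$ at the explicit test points) are routine and already appear in Lemmas~\ref{lem b5}--\ref{lem b7}. The part that needs care is handling the two indices simultaneously under the mismatch $p_1>1=p_2$: the first index is controlled purely by the convergence and evaluation of an integral, whereas the second index must be extracted from the growth of an essential supremum, which is why three different families of test points (a horizontal translation, an approach to the boundary, and a vertical escape to infinity) are required. A secondary, mildly delicate point is the decoupling of the product bound $A(z)\,B(w)\lesssim\lVert T_{\vec{a},\ \vec{b},\ \vec{c}}\rVert$ into separate uniform bounds on $A$ and $B$, and the passage from the a.e.\ statement furnished by the $L^{\infty}$-norm to a genuinely pointwise one; both are dealt with exactly as in the proof of Lemma~\ref{lem b5}.
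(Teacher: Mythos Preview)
Your proposal is correct and follows essentially the same approach as the paper: the paper's own proof simply says that Lemma~\ref{lem b8} is the symmetric case of Lemma~\ref{lem b7} (which in turn invokes the methods of Lemmas~\ref{lem b5} and~\ref{lem b6}), and you have carried out exactly that combination---the $L^{p_1'}$-integral argument of Lemma~\ref{lem b5} in the first variable and the essential-supremum argument of Lemma~\ref{lem b6} in the second. Your treatment is in fact more detailed than the paper's, particularly in making explicit the uniform-in-$(z,w)$ boundedness of $A(z)B(w)$ and the decoupling into separate bounds on $A$ and $B$.
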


\begin{proof}
	Lemma \ref{lem b8} is the symmetric case of Lemma \ref{lem b7}. Thus,  we omit the proof here.	
\end{proof}

\begin{lemma}\label{lem b9}
	Let $p_1=q_1=\infty$ and $p_2=q_2=\infty$. Suppose $T_{\vec{a},\ \vec{b},\ \vec{c}}$ is bounded from $L^{\infty}\left( T_B\times T_B \right) $ to $L^{\infty}\left( T_B\times T_B \right)$, then for any $i\in \{1,2\}$,
	$$
	\begin{cases}
	a_i>0,\ b_i>-1,\\
	c_i=n+1+a_i+b_i.\\
	\end{cases}
	$$
\end{lemma}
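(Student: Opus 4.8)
The plan is to extract from the boundedness of $T_{\vec a,\vec b,\vec c}\colon L^{\infty}(T_B\times T_B)\to L^{\infty}(T_B\times T_B)$ the pointwise statement that its kernel, together with the prefactors $\rho(z)^{a_1}\rho(w)^{a_2}$, is uniformly integrable in the remaining variables. Concretely, I would fix $(z,w)\in T_B\times T_B$ and test $T_{\vec a,\vec b,\vec c}$ against the unimodular function $f_{z,w}(u,\eta):=\rho(z,u)^{c_1}\rho(w,\eta)^{c_2}/|\rho(z,u)^{c_1}\rho(w,\eta)^{c_2}|$, which lies in the unit ball of $L^{\infty}$. With this choice the integrand becomes nonnegative, so
$$
\rho(z)^{a_1}\rho(w)^{a_2}\int_{T_B}\int_{T_B}\frac{\rho(u)^{b_1}\rho(\eta)^{b_2}}{|\rho(z,u)|^{c_1}|\rho(w,\eta)|^{c_2}}\,dV(u)\,dV(\eta)=T_{\vec a,\vec b,\vec c}f_{z,w}(z,w)\le\|T_{\vec a,\vec b,\vec c}\|<\infty
$$
for every $(z,w)\in T_B\times T_B$. (This is just the $(L^{1})^{*}=L^{\infty}$ duality; the converse, that this bound is also sufficient, is the $\vec p=(\infty,\infty)$ case of Lemma~\ref{ dao wu qiong } together with $|T_{\vec a,\vec b,\vec c}f|\le S_{\vec a,\vec b,\vec c}|f|$, but is not needed for a necessity statement.)

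Next I would evaluate the double integral by the ``in particular'' part of Lemma~\ref{lem jifendengshi}. For each $i\in\{1,2\}$ the one-variable integral $\int_{T_B}\rho(u)^{b_i}|\rho(z,u)|^{-c_i}\,dV(u)$ is finite precisely when $b_i>-1$ and $c_i-b_i>n+1$, in which case it equals $C_1(n,c_i,b_i)\,\rho(z)^{\,b_i+n+1-c_i}$, and it is $+\infty$ otherwise. Since the left-hand side of the display must be finite for every $(z,w)$, this already forces $b_i>-1$ and the strict inequality $c_i-b_i>n+1$; substituting the evaluated integrals then turns that display into
$$
C_1(n,c_1,b_1)\,C_1(n,c_2,b_2)\,\rho(z)^{\,a_1+b_1+n+1-c_1}\,\rho(w)^{\,a_2+b_2+n+1-c_2}\le\|T_{\vec a,\vec b,\vec c}\|
$$
for all $(z,w)\in T_B\times T_B$.

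The final step is the geometry of the tube: $\rho(z)=y_n-|y'|^{2}$ takes every value in $(0,\infty)$ on $T_B$ (for instance along $y'=0$), and $z$ and $w$ vary independently, so a product $\rho(z)^{e_1}\rho(w)^{e_2}$ stays bounded on $T_B\times T_B$ only if $e_1=e_2=0$ — a positive exponent forces a blow-up as $\rho\to\infty$, a negative one as $\rho\to0^{+}$. Hence $a_i+b_i+n+1-c_i=0$, i.e.\ $c_i=n+1+a_i+b_i$, and combining this with the inequality $c_i-b_i>n+1$ obtained above gives $a_i>0$, which is the remaining assertion.

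I do not expect any serious obstacle: this is the mildest of the necessity lemmas. The only two points that warrant a sentence of care are (i) the legitimacy of the $(L^{1})^{*}=L^{\infty}$ test-function argument for a complex-valued kernel, which is why $f_{z,w}$ is allowed to depend on the frozen point $(z,w)$; and (ii) keeping the inequality $c_i-b_i>n+1$ strict by invoking the ``otherwise $+\infty$'' clause of Lemma~\ref{lem jifendengshi}, since it is exactly that strictness, rather than a non-strict version, which produces $a_i>0$ instead of $a_i\ge0$.
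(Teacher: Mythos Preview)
Your proposal is correct and follows essentially the same approach as the paper: test against $L^{\infty}$ functions to force the absolute-value integral $\rho(z)^{a_1}\rho(w)^{a_2}\int\!\!\int\rho(u)^{b_1}\rho(\eta)^{b_2}|\rho(z,u)|^{-c_1}|\rho(w,\eta)|^{-c_2}\,dV\,dV$ to be uniformly bounded in $(z,w)$, evaluate it via Lemma~\ref{lem jifendengshi} to obtain $b_i>-1$, $c_i>n+1+b_i$, and then use that $\rho(z),\rho(w)$ range over all of $(0,\infty)$ to pin down the exponents. Your write-up is in fact more careful than the paper's at the two points you flag: the paper jumps directly to the absolute-value integral without exhibiting the unimodular test function $f_{z,w}$, and it derives $a_i>0$ and $c_i=n+1+a_i+b_i$ together ``by the arbitrariness of $z$ and $w$'' without separating the equality (from the vanishing exponent) from the strict positivity (from combining that equality with $c_i-b_i>n+1$).
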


\begin{proof}
	For any $f\in L^\infty(T_B\times T_B)$, the boundedness of $T_{\vec{a},\ \vec{b},\ \vec{c}}:L^{\infty}\left( T_B\times T_B \right) \rightarrow L^{\infty}\left( T_B\times T_B \right)$ implies that
	\begin{equation}\label{lem b9 t1}
	\rho \left( z \right) ^{a_1}\rho \left( w \right) ^{a_2}\int_{T_B}{\int_{T_B}{\frac{\rho \left( u \right) ^{b_1}\rho \left( \eta \right) ^{b_2}}{\left| \rho \left( z,u \right) \right|^{c_1}\left| \rho \left( w,\eta \right) \right|^{c_2}}dV\left( u \right)}dV\left( \eta \right)}<\infty ,
	\end{equation}
	for any fixed $z,w\in T_B$.
	
	It follows from Lemma \ref{lem jifendengshi} that $b_i>-1$ and $c_i>n+1+b_i$ for any $i\in \{1,2\}$. 	
	(\ref{lem b9 t1}) becomes as follows :
	$$
	\frac{\rho \left( z \right) ^{a_1}\rho \left( w \right) ^{a2}}{\rho \left( z \right) ^{c_1-\left(b_1+ n+1\right)}\rho \left( w \right) ^{c_2-\left(b_2+ n+1\right)}}<\infty .
	$$
	
	By the arbitrariness of $z$ and $w$, this means $a_i>0$ and $c_i=n+1+a_i+b_i$ for any $i\in \{1,2\}$.
\end{proof}

\begin{lemma}\label{lem b10}
	Let $1=p_1<q_1=\infty$ and $p_2=q_2=\infty$. Suppose $T_{\vec{a},\ \vec{b},\ \vec{c}}$ is bounded from $L_{\vec{\alpha}}^{\vec{p}}\left( T_B\times T_B \right) $ to $L^{\infty}\left( T_B\times T_B \right)$, then for any $i\in\{1,2\}$,
	$$
	\begin{cases}
	a_1\ge0,\ a_2>0, \\
	b_1 \ge \alpha _1,\ b_2>-1,\\
	c_i=n+1+a_i+b_i+\lambda_i,\\
	\end{cases}
	$$
	where
	$$
	\begin{cases}
	\lambda _1=-n-1-\alpha _1,\\
	\lambda_2=0.\\
	\end{cases}
	$$
	
\end{lemma}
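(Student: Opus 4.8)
The plan is to prove this by decoupling the two variables. The index pattern here is $(p_1,q_1)=(1,\infty)$ and $(p_2,q_2)=(\infty,\infty)$, so I would treat the first variable exactly as in Lemma~\ref{lem b6} (the regime $p=1<q=\infty$) and the second variable exactly as in Lemma~\ref{lem b9} (the regime $p=q=\infty$); no new phenomenon beyond those two lemmas is expected.

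First I would carry out the duality/well-definedness reduction used in the proofs of Lemmas~\ref{lem b5} and~\ref{lem b6}. Rewriting $T_{\vec{a},\ \vec{b},\ \vec{c}}f(z,w)$ with the integration taken against $dV_{\alpha_1}\times dV_{\alpha_2}$, boundedness forces, for every fixed $z,w\in T_B$, the kernel
$$
\frac{\rho(z)^{a_1}\rho(u)^{b_1-\alpha_1}\rho(w)^{a_2}\rho(\eta)^{b_2-\alpha_2}}{\rho(z,u)^{c_1}\rho(w,\eta)^{c_2}}
$$
to belong to $L_{\vec{\alpha}}^{\vec{p}^{\prime}}\left( T_B\times T_B \right)$ with $\vec{p}^{\prime}=(\infty,1)$, that is,
$$
\rho(z)^{a_1}\rho(w)^{a_2}\int_{T_B}\left( \underset{u\in T_B}{\text{ess}\ \text{sup}}\ \frac{\rho(u)^{b_1-\alpha_1}}{|\rho(z,u)|^{c_1}}\right)\frac{\rho(\eta)^{b_2}}{|\rho(w,\eta)|^{c_2}}\,dV(\eta)<\infty .
$$
Moreover, since $\lVert T_{\vec{a},\ \vec{b},\ \vec{c}}\,f\rVert_{\infty}\lesssim\lVert f\rVert_{\vec{p},\vec{\alpha}}$, the left-hand side is in fact bounded \emph{uniformly} in $z,w$; this uniformity is the device (the ``arbitrariness of $z,w$'' argument of Lemmas~\ref{lem b6} and~\ref{lem b9}) that will pin down the exponents, and since the $z$- and $w$-contributions factor, the uniform bound splits into one bound in $z$ and one in $w$.

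For the second variable I would invoke the ``in particular'' part of Lemma~\ref{lem jifendengshi}: finiteness of $\int_{T_B}\rho(\eta)^{b_2}|\rho(w,\eta)|^{-c_2}\,dV(\eta)$ already forces $b_2>-1$ and $c_2-b_2>n+1$, and in that case the integral equals a positive constant times $\rho(w)^{-(c_2-b_2-n-1)}$. Feeding this into the uniform bound and letting $\rho(w)\to0$ and $\rho(w)\to\infty$ forces the exponent of $\rho(w)$ to vanish, i.e. $c_2=n+1+a_2+b_2$ (so $\lambda_2=0$); combined with $c_2-b_2>n+1$ this gives $a_2>0$. For the first variable I would argue as in Lemma~\ref{lem b6}: letting $\rho(u)\to0$ along a bounded sequence in $T_B$ (so $|\rho(z,u)|$ stays bounded, while $|\rho(z,u)|\ge\rho(z)/2>0$ by Lemma~\ref{lem qu bian liang}) shows the inner essential supremum is infinite unless $b_1-\alpha_1\ge0$, i.e. $b_1\ge\alpha_1$; then, using $\rho(u)\le2|\rho(z,u)|$ (Lemma~\ref{lem qu bian liang}), testing along $u=t\mathbf{i}$ with $t\to\infty$, and using $\rho(z,z)=\rho(z)$ so that $u=z$ is admissible in the essential supremum, one obtains $c_1\ge b_1-\alpha_1$ together with the two-sided estimate
$$
\underset{u\in T_B}{\text{ess}\ \text{sup}}\ \frac{\rho(u)^{b_1-\alpha_1}}{|\rho(z,u)|^{c_1}}\simeq\rho(z)^{-(c_1-b_1+\alpha_1)} .
$$
Substituting this into the uniform bound and again letting $\rho(z)\to0$ and $\rho(z)\to\infty$ forces $c_1=a_1+b_1-\alpha_1$ (so $\lambda_1=-n-1-\alpha_1$), whence $c_1\ge b_1-\alpha_1$ gives $a_1\ge0$. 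Collecting the four relations completes the argument.

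The one genuinely delicate point is the two-sided estimate $\underset{u\in T_B}{\text{ess}\ \text{sup}}\ \rho(u)^{b_1-\alpha_1}|\rho(z,u)|^{-c_1}\simeq\rho(z)^{-(c_1-b_1+\alpha_1)}$: its upper bound follows immediately from Lemma~\ref{lem qu bian liang} combined with $|\rho(z,u)|\ge\rho(z)/2$, while the matching lower bound rests on the identity $\rho(z,z)=\rho(z)$ and the fact that $u=z$ is admissible in the essential supremum. Everything else is a bookkeeping combination of Lemma~\ref{lem jifendengshi}, Lemma~\ref{lem qu bian liang}, and the ``let $\rho\to0$ and $\rho\to\infty$'' trick already used in Lemmas~\ref{lem b6} and~\ref{lem b9}; the only thing to watch is that the $\eta$-integration is of $L^1$-type (giving the Lemma~\ref{lem jifendengshi} identity) whereas the $u$-variable is of $L^\infty$-type (giving an essential supremum), but since the two contributions factor these regimes do not interact.
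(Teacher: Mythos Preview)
Your approach is correct and follows the same overall plan as the paper: decouple the two variables using the product structure of the kernel, then invoke the one-variable arguments of Lemma~\ref{lem b6} for the first variable ($p=1$, $q=\infty$ regime) and Lemma~\ref{lem b9} for the second ($p=q=\infty$ regime).

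The one point where you and the paper differ is the reduction step. The paper argues that every $f\in L_{\vec{\alpha}}^{(1,\infty)}(T_B\times T_B)$ is dominated by $C|g(z)|$ for some $g\in L^1_{\alpha_1}$ and then bounds $|T_{\vec{a},\vec{b},\vec{c}}f|$ by a product of two single-variable integrals; you instead use the duality device from Lemmas~\ref{lem b5}--\ref{lem b7}, forcing the kernel into $L^{\vec{p}'}_{\vec{\alpha}}$ with $\vec{p}'=(\infty,1)$ and reading off the same factored expression. Your route is closer in spirit to the paper's own proof of Lemma~\ref{lem b7} and is in fact the more robust of the two: the domination claim the paper makes is not true for an arbitrary $f\in L^{(1,\infty)}$, whereas your duality reduction works directly because the kernel factors, so the $L^{(\infty,1)}$ norm splits into $\|\,\cdot\,\|_\infty$ in $u$ times $\|\,\cdot\,\|_1$ in $\eta$ (equivalently, testing on product functions $f(u,\eta)=g(u)h(\eta)$ suffices). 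After this step both proofs coincide.
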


\begin{proof}
	Since $p_1=1$, $p_2=\infty$, we can conclude that for any $f\in L_{\vec{\alpha}}^{\vec{p}}\left( T_B\times T_B \right)$, there exists a positive constant C and a function $g$ such that $ \left| f\left( z,w \right) \right|\le C\left| g\left( z \right) \right|$, where  $g$ satisfies 
	$$
	\int_{T_B}{\left| g\left( z \right) \right|dV_{\alpha _1}\left( z \right)}<\infty .
	$$
	Since $T_{\vec{a},\ \vec{b},\ \vec{c}}$ is bounded, it follows that for any fixed $z,w\in T_B$, the integral  

	\begin{equation}
	\begin{aligned}
	\left| T_{\vec{a},\  \vec{b},\  \vec{c}}\,\,f\left( z,w \right) \right|&\le \rho \left( z \right) ^{a_1}\rho \left( w \right) ^{a_2}\int_{T_B}{\int_{T_B}{\frac{\rho \left( u \right) ^{b_1-\alpha _1}\rho \left( \eta \right) ^{b_2-\alpha _2}}{\left| \rho \left( z,u \right) \right|^{c_1}\left| \rho \left( w,\eta \right) \right|^{c_2}}\left| g\left( u \right) \right|dV_{\alpha _1}\left( u \right)}dV_{\alpha _2}\left( \eta \right)}
	\\
	&=\rho \left( z \right) ^{a_1}\int_{T_B}{\frac{\rho \left( u \right) ^{b_1-\alpha _1}}{\left| \rho \left( z,u \right) \right|^{c_1}}\left| g\left( u \right) \right|dV_{\alpha _1}\left( u \right)} \\
	&\times \rho \left( w \right) ^{a_2}\int_{T_B}{\frac{\rho \left( \eta \right) ^{b_2-\alpha _2}}{\left| \rho \left( w,\eta \right) \right|^{c_2}}dV_{\alpha _2}\left( \eta \right)}
	\end{aligned}
	\end{equation}
	is finte for each $f\in L_{\vec{\alpha}}^{\vec{p}}\left( T_B\times T_B \right).$
	
	The proof of the remaining part is similar to the proof of Lemma \ref{lem b6} and Lemma \ref{lem b9}, and we will not repeat it.
\end{proof}

\begin{lemma}\label{lem b11}
	Let $p_1=q_1=\infty$ and $1=p_2<q_2=\infty$. Suppose $T_{\vec{a},\ \vec{b},\ \vec{c}}$ is bounded from $L_{\vec{\alpha}}^{\vec{p}}\left( T_B\times T_B \right) $ to $L^{\infty}\left( T_B\times T_B \right)$, then for any $i\in \{1,2\}$,
	$$
	\begin{cases}
	a_1>0,\ a_2\ge 0,\\
	b_1>-1,\  b_2\ge \alpha_2,\\
	c_i=n+1+a_i+b_i+\lambda_i.\\
	\end{cases}
	$$
	where
	$$
	\begin{cases}
	\lambda_1=0,\\
	\lambda _2=-n-1-\alpha _2.\\
	\end{cases}
	$$
\end{lemma}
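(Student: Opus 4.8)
The plan is to use the fact that this lemma is the mirror image of Lemma~\ref{lem b10} under interchanging the first and second variables (equivalently the indices $1$ and $2$): here $\vec p=(\infty,1)$ and $\vec q=(\infty,\infty)$, so the first variable plays the role of the ``$L^\infty$ direction'' treated in Lemma~\ref{lem b9}, while the second variable plays the role of the ``$L^1$ direction'' treated in the first part of Lemma~\ref{lem b6}. The first step is to reduce the two-variable statement to two one-variable problems. Testing $T_{\vec a,\ \vec b,\ \vec c}$ on product data $f(u,\eta)=f_1(u)f_2(\eta)$ with $f_1\in L^\infty(T_B)$ and $f_2\in L^1(T_B,dV_{\alpha_2})$ (for which $\lVert f\rVert_{\vec p,\vec\alpha}=\lVert f_1\rVert_{\infty}\lVert f_2\rVert_{L^1(dV_{\alpha_2})}$), the operator factors as $T_{\vec a,\ \vec b,\ \vec c}f(z,w)=T^{(1)}f_1(z)\,T^{(2)}f_2(w)$, where $T^{(1)}h(z)=\rho(z)^{a_1}\int_{T_B}\rho(u)^{b_1-\alpha_1}\rho(z,u)^{-c_1}h(u)\,dV_{\alpha_1}(u)$ and $T^{(2)}h(w)=\rho(w)^{a_2}\int_{T_B}\rho(\eta)^{b_2-\alpha_2}\rho(w,\eta)^{-c_2}h(\eta)\,dV_{\alpha_2}(\eta)$. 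Hence the boundedness of $T_{\vec a,\ \vec b,\ \vec c}:L_{\vec\alpha}^{\vec p}(T_B\times T_B)\to L^\infty(T_B\times T_B)$ forces $T^{(1)}$ to be bounded from $L^\infty(T_B)$ to $L^\infty(T_B)$ and $T^{(2)}$ to be bounded from $L^1(T_B,dV_{\alpha_2})$ to $L^\infty(T_B)$; this is the one-variable shadow of the duality considerations used in Lemmas~\ref{lem b5}--\ref{lem b8}.

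For the first variable I would reproduce, in one variable, the reasoning behind Lemma~\ref{lem b9}. Absolute convergence of $T^{(1)}1$ forces, by the second part of Lemma~\ref{lem jifendengshi}, $b_1>-1$ and $c_1>n+1+b_1$; in particular $c_1>0$. Applying $T^{(1)}$ to the normalized test function $u\mapsto\rho(\xi)^{n+1+b_1}\rho(u,\xi)^{-(n+1+b_1)}$ (whose $L^\infty(T_B)$-norm is $\simeq1$ uniformly in $\xi$, by Lemma~\ref{lem qu bian liang}) and evaluating by the first part of Lemma~\ref{lem jifendengshi} gives $C\rho(z)^{a_1}\rho(\xi)^{n+1+b_1}\rho(z,\xi)^{-c_1}$; since $T^{(1)}$ is bounded, $\sup_{z,\xi\in T_B}\rho(z)^{a_1}\rho(\xi)^{n+1+b_1}|\rho(z,\xi)|^{-c_1}<\infty$, and putting $z=\xi$ (using $\rho(\xi,\xi)=\rho(\xi)$ and that $\rho(\xi)$ exhausts $(0,\infty)$) forces $c_1=n+1+a_1+b_1$, i.e.\ $\lambda_1=0$; combined with $c_1>n+1+b_1$ this gives $a_1>0$.

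For the second variable I would argue as in the first part of Lemma~\ref{lem b6}. The operator $T^{(2)}$ is bounded from $L^1(T_B,dV_{\alpha_2})$ to $L^\infty(T_B)$ if and only if $\rho(w)^{a_2}\rho(\eta)^{b_2-\alpha_2}|\rho(w,\eta)|^{-c_2}$ is uniformly bounded on $T_B\times T_B$, the necessity being obtained by testing against an approximate identity concentrated at a point, exactly as in Lemma~\ref{lem b6}. I would then extract the three constraints by substituting explicit points. Letting $\eta\to w$ and using $\rho(w,w)=\rho(w)$ reduces this quantity to $\rho(w)^{a_2+b_2-\alpha_2-c_2}$, which is bounded over $\rho(w)\in(0,\infty)$ only if $c_2=a_2+b_2-\alpha_2$, that is $\lambda_2=-n-1-\alpha_2$. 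Taking $\eta=(0',i)$ and $w=(0',it)$ with $t\to0^+$ (so that $\rho(w)=t$, $\rho(\eta)=1$ and $\rho(w,\eta)=(t+1)/2$), the quantity is comparable to $t^{a_2}$, forcing $a_2\ge0$; and taking $w=(0',i)$, $\eta=(0',it)$ with $t\to0^+$ it is comparable to $t^{b_2-\alpha_2}$, forcing $b_2\ge\alpha_2$. Since $a_2\ge0$ and $b_2\ge\alpha_2$, the relation $c_2=a_2+b_2-\alpha_2$ automatically yields $c_2\ge0$, consistent with Lemma~\ref{lem b6}. Assembling the two parts produces exactly the asserted system for $i\in\{1,2\}$.

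The routine portions are the integral evaluations via Lemma~\ref{lem jifendengshi} and the elementary identities $\rho(\xi,\xi)=\rho(\xi)$ and $\rho((0',i),(0',it))=(t+1)/2$. The only point needing genuine care is the transfer of boundedness from the bivariate operator $T_{\vec a,\ \vec b,\ \vec c}$ to the factor operators $T^{(1)}$ and $T^{(2)}$---via the product test functions $f_1\otimes f_2$ and, for the $L^1\to L^\infty$ factor, an approximate-identity argument showing that uniform boundedness of the kernel is genuinely necessary. Since this bookkeeping is verbatim the one already carried out in Lemmas~\ref{lem b6}, \ref{lem b9} and \ref{lem b10}, in the write-up I would simply observe that this lemma is the symmetric counterpart of Lemma~\ref{lem b10} and refer to that proof.
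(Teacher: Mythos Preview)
Your proposal is correct and, as you yourself note in the final paragraph, it amounts to recognizing that this lemma is the symmetric counterpart of Lemma~\ref{lem b10} under swapping the roles of the two variables; this is exactly what the paper does (its proof consists of a single sentence invoking the symmetry). Your explicit reduction via product test functions $f_1\otimes f_2$ to the one-variable operators $T^{(1)}:L^\infty\to L^\infty$ and $T^{(2)}:L^1(dV_{\alpha_2})\to L^\infty$, followed by the Lemma~\ref{lem b9}-type argument for $T^{(1)}$ and the Lemma~\ref{lem b6}-type kernel-bound argument for $T^{(2)}$, is in fact a cleaner justification of what the symmetry argument entails than the paper's own sketch of Lemma~\ref{lem b10}.
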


\begin{proof}
	Lemma \ref{lem b10} is the symmetric case of Lemma \ref{lem b9}. Thus,  we omit the proof here.
\end{proof}

\begin{lemma}\label{lem b12}
	Let $p_1=q_1=\infty$ and $1<p_2<q_2=\infty$. Suppose $T_{\vec{a},\ \vec{b},\ \vec{c}}$ is bounded from $L_{\vec{\alpha}}^{\vec{p}}\left( T_B\times T_B \right) $ to $L^{\infty}\left( T_B\times T_B \right)$, then for any $i\in \{1,2\}$,
	$$
	\begin{cases}
	a_1>0,\ a_2>0,\\
	b_1>-1,\ p_2\left( b_2+1 \right) >\alpha _2+1,\\
	c_i=n+1+a_i+b_i+\lambda _i,\\
	\end{cases}	
	$$
	where
	$$
	\begin{cases}
	\lambda _1=0,\\
	\lambda_2=-\frac{n+1+\alpha _2}{p_2}.\\
	\end{cases}
	$$
\end{lemma}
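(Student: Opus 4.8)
The plan is to treat Lemma \ref{lem b12} as the limiting case $p_1=\infty$ of Lemma \ref{lem b5} in the first slot, combined with the first-slot reasoning of Lemma \ref{lem b9}. The only tools needed are Lemma \ref{lem jifendengshi} and the observation that, for each fixed $(z,w)$, the map $f\mapsto T_{\vec a,\vec b,\vec c}f(z,w)$ is a bounded linear functional on $L^{\vec p}_{\vec\alpha}(T_B\times T_B)$ whose norm equals $\|K_{z,w}\|_{\vec p',\vec\alpha}$, where $\vec p'=(1,p_2')$ and
$$
K_{z,w}(u,\eta):=\frac{\rho(z)^{a_1}\rho(u)^{b_1-\alpha_1}}{|\rho(z,u)|^{c_1}}\cdot\frac{\rho(w)^{a_2}\rho(\eta)^{b_2-\alpha_2}}{|\rho(w,\eta)|^{c_2}}
$$
is the kernel of $T_{\vec a,\vec b,\vec c}$ relative to the measure $dV_{\alpha_1}\times dV_{\alpha_2}$.

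First I would note that finiteness of $T_{\vec a,\vec b,\vec c}f(z,w)$ for every $f\in L^{\vec p}_{\vec\alpha}$ forces $K_{z,w}\in L^{\vec p'}_{\vec\alpha}(T_B\times T_B)$ for each fixed $(z,w)$, and then compute $\|K_{z,w}\|_{\vec p',\vec\alpha}$ by two applications of Lemma \ref{lem jifendengshi}. The inner integral over $u$ (exponent $p_1'=1$) is $\int_{T_B}\rho(u)^{b_1}|\rho(z,u)|^{-c_1}\,dV(u)$, which is finite exactly when $b_1>-1$ and $c_1-b_1>n+1$, and then equals a constant times $\rho(z)^{-(c_1-b_1-n-1)}$. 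The outer integral over $\eta$ (exponent $p_2'$) is $\int_{T_B}\rho(\eta)^{p_2'(b_2-\alpha_2)+\alpha_2}|\rho(w,\eta)|^{-c_2p_2'}\,dV(\eta)$, which is finite exactly when $p_2'(b_2-\alpha_2)+\alpha_2>-1$, equivalently $p_2(b_2+1)>\alpha_2+1$, and $c_2p_2'-p_2'(b_2-\alpha_2)-\alpha_2>n+1$. Collecting the powers of $\rho(z)$ and $\rho(w)$ then gives, with constants independent of $(z,w)$,
$$
\|K_{z,w}\|_{\vec p',\vec\alpha}\simeq \rho(z)^{a_1-c_1+b_1+n+1}\,\rho(w)^{a_2-c_2+(b_2-\alpha_2)+(\alpha_2+n+1)/p_2'}.
$$

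Then I would invoke the boundedness of $T_{\vec a,\vec b,\vec c}:L^{\vec p}_{\vec\alpha}\to L^\infty$. Since $\rho(\cdot,\cdot)$ never vanishes on $T_B\times T_B$, the kernel is continuous in $(z,w)$, so $|T_{\vec a,\vec b,\vec c}f(z,w)|\le\|T_{\vec a,\vec b,\vec c}\|\,\|f\|_{\vec p,\vec\alpha}$ holds at every point, whence $\sup_{(z,w)}\|K_{z,w}\|_{\vec p',\vec\alpha}<\infty$. Because $\rho$ takes every value in $(0,\infty)$ on $T_B$, the two exponents above must vanish, giving $c_1=n+1+a_1+b_1$ (so $\lambda_1=0$) and, after rewriting $(\alpha_2+n+1)/p_2'=(\alpha_2+n+1)-(\alpha_2+n+1)/p_2$, $c_2=n+1+a_2+b_2-(n+1+\alpha_2)/p_2$ (so $\lambda_2=-(n+1+\alpha_2)/p_2$). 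Substituting these identities into the convergence inequalities $c_1-b_1>n+1$ and $c_2p_2'-p_2'(b_2-\alpha_2)-\alpha_2>n+1$ collapses them to $a_1>0$ and $a_2>0$; together with $b_1>-1$ and $p_2(b_2+1)>\alpha_2+1$ this is exactly the list claimed in the lemma.

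I expect the only nonroutine step to be the implication that boundedness of $T_{\vec a,\vec b,\vec c}$ forces the kernel norms $\|K_{z,w}\|_{\vec p',\vec\alpha}$ to be uniformly bounded in $(z,w)$: this rests on identifying the associate (K\"othe) space of $L^{(\infty,p_2)}_{\vec\alpha}$ with $L^{(1,p_2')}_{\vec\alpha}$, so that the functional norm of $f\mapsto T_{\vec a,\vec b,\vec c}f(z,w)$ is genuinely $\|K_{z,w}\|_{\vec p',\vec\alpha}$, and on using continuity of $T_{\vec a,\vec b,\vec c}f$ in $(z,w)$ to replace the essential supremum in the $L^\infty$ norm by a pointwise bound. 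Everything else is the same bookkeeping with Lemma \ref{lem jifendengshi} already carried out in Lemmas \ref{lem b5} and \ref{lem b9}, so the write-up should be short.
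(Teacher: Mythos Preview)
Your proposal is correct and follows essentially the same path as the paper. The paper dominates $|f(z,w)|$ by a function $g(w)\in L^{p_2}_{\alpha_2}$ (which is precisely the structure of $L^{(\infty,p_2)}_{\vec\alpha}$), thereby factoring the double integral into a pure $u$-integral and an $\eta$-integral against $g$, and then invokes the one-variable arguments of Lemmas \ref{lem b9} and \ref{lem b5}; your version packages the same computation as the K\"othe-dual norm $\|K_{z,w}\|_{(1,p_2'),\vec\alpha}$, which factors in exactly the same way and leads to the same two applications of Lemma \ref{lem jifendengshi}. The only point worth flagging is the one you already identify: the associate-space identification $(L^{(\infty,p_2)}_{\vec\alpha})' = L^{(1,p_2')}_{\vec\alpha}$ and the passage from an almost-everywhere bound on $Tf$ to a pointwise one need a sentence of justification, but both are standard.
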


\begin{proof}
	The proof of this lemma is similar to Lemma \ref{lem b10}. From the known exponential condition, we know that there is a positive constant $C$ and function $g$ such that $\left| f\left( z,w \right) \right|\le Cg\left( w \right) $, where $g$ satisfies the following equation:
	$$
	\left( \int_{T_B}{\left| g\left( w \right) \right|^{p_2}dV_{\alpha _2}\left( z \right)} \right) ^{\frac{1}{p_2}}<\infty .
	$$	
	The boundedness of $T_{\vec{a},\ \vec{b},\ \vec{c}}:L_{\vec{\alpha}}^{\vec{p}}\left( T_B\times T_B \right) \rightarrow L^{\infty}\left( T_B\times T_B \right)$ implies that
	$$
	\begin{aligned}
	\left| T_{\vec{a},\ \vec{b},\ \vec{c}}\,\,f\left( z,w \right) \right|&\le \rho \left( z \right) ^{a_1}\rho \left( w \right) ^{a_2}\int_{T_B}{\int_{T_B}{\frac{\rho \left( u \right) ^{b_1}\rho \left( \eta \right) ^{b_2}}{\left| \rho \left( z,u \right) \right|^{c_1}\left| \rho \left( w,\eta \right) \right|^{c_2}}\left| g\left( \eta \right) \right|dV\left( u \right)}dV\left( \eta \right)}
	\\
	&=\rho \left( z \right) ^{a_1}\int_{T_B}{\frac{\rho \left( u \right) ^{b_1}}{\left| \rho \left( z,u \right) \right|^{c_1}}dV\left( u \right)}\times \rho \left( w \right) ^{a_2}\int_{T_B}{\frac{\rho \left( \eta \right) ^{b_2-\alpha _2}}{\left| \rho \left( w,\eta \right) \right|^{c_2}}\left| g\left( \eta \right) \right|dV_{\alpha _2}\left( \eta \right)}
	\end{aligned}
	$$
	
	Similar to the method of Lemma \ref{lem b9} and Lemma \ref{lem b5}, and we will not repeat it.
\end{proof}

\begin{lemma}\label{lem b13}
	Let $1<p_1<q_1=\infty$ and $p_2=q_2=\infty$. Suppose $T_{\vec{a},\ \vec{b},\ \vec{c}}$ is bounded from $L_{\vec{\alpha}}^{\vec{p}}\left( T_B\times T_B \right) $ to $L^{\infty}\left( T_B\times T_B \right)$, then for any $i\in \{1,2\}$,
	$$
	\begin{cases}
	a_1>0,\ a_2>0,\\
	p_1\left( b_1+1 \right) >\alpha _1+1,\ b_2>-1,\\
	c_i=n+1+a_i+b_i+\lambda_i,\\
	\end{cases}	
	$$
	where
	$$
	\begin{cases}
	\lambda_1=-\frac{n+1+\alpha _2}{p_2},\\
	\lambda _2=0.\\
	\end{cases}
	$$
\end{lemma}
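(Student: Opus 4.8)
\emph{Plan.} Lemma~\ref{lem b13} is the mirror image of Lemma~\ref{lem b12} under the interchange of the two tube-domain factors, that is, of the index blocks $(p_1,q_1,\alpha_1,\beta_1,a_1,b_1,c_1)$ and $(p_2,q_2,\alpha_2,\beta_2,a_2,b_2,c_2)$. Indeed, the kernel of $T_{\vec{a},\ \vec{b},\ \vec{c}}$ factors as a product $K_1(z,u)\,K_2(w,\eta)$ and the mixed norm $\lVert\cdot\rVert_{\vec{p},\vec{\alpha}}$ is built one slot at a time, so swapping the two slots turns the hypothesis ``$p_1=q_1=\infty,\ 1<p_2<q_2=\infty$'' of Lemma~\ref{lem b12} into the hypothesis ``$1<p_1<q_1=\infty,\ p_2=q_2=\infty$'' of the present lemma and transports the conclusion verbatim with the subscripts $1$ and $2$ exchanged. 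Hence it suffices to invoke Lemma~\ref{lem b12}; for completeness we sketch the direct argument, which follows the pattern of Lemmas~\ref{lem b5},~\ref{lem b10} and~\ref{lem b12}.

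Since $p_2=\infty$, for every fixed $z,w\in T_B$ the boundedness of $T_{\vec{a},\ \vec{b},\ \vec{c}}\colon L_{\vec{\alpha}}^{\vec{p}}(T_B\times T_B)\to L^{\infty}(T_B\times T_B)$ implies, by duality in the pair of variables $(u,\eta)$, that
$$
\frac{\rho(z)^{a_1}\rho(w)^{a_2}\rho(u)^{b_1-\alpha_1}\rho(\eta)^{b_2-\alpha_2}}{\rho(z,u)^{c_1}\rho(w,\eta)^{c_2}}\in L_{\vec{\alpha}}^{\vec{p}'}(T_B\times T_B),\qquad \vec{p}'=(p_1',1),
$$
with norm bounded uniformly in $z,w$. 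Because this kernel separates variables, its $L^{\vec{p}'}$-norm is $\rho(z)^{a_1}\rho(w)^{a_2}$ times a product of a $u$-integral and an $\eta$-integral, and the ``$|\rho|$'' form of Lemma~\ref{lem jifendengshi} shows that for each fixed $z,w$ it is comparable to
$$
C\,\rho(z)^{\,a_1-\left(c_1-(b_1-\alpha_1)-(n+1+\alpha_1)/p_1'\right)}\,\rho(w)^{\,a_2-\left(c_2-b_2-n-1\right)},
$$
provided the finiteness hypotheses of Lemma~\ref{lem jifendengshi} hold, namely $p_1'(b_1-\alpha_1)+\alpha_1>-1$ (equivalently $p_1(b_1+1)>\alpha_1+1$) and $c_1p_1'>n+1+p_1'(b_1-\alpha_1)+\alpha_1$ for the $u$-integral, and $b_2>-1$ and $c_2>n+1+b_2$ for the $\eta$-integral.

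As $z$ and $w$ range over $T_B$, the quantities $\rho(z)$ and $\rho(w)$ run independently over all of $(0,\infty)$; therefore a uniform bound on the last display forces both exponents to vanish. This gives $c_2=n+1+a_2+b_2$, i.e.\ $\lambda_2=0$, and $c_1=n+1+a_1+b_1-(n+1+\alpha_1)/p_1$, i.e.\ $\lambda_1=-(n+1+\alpha_1)/p_1$; feeding these two equalities back into the two strict inequalities above collapses them exactly to $a_1>0$ and $a_2>0$, which completes the proof. I expect the only real obstacle to be bookkeeping: one must keep the two slots apart — the $u$-slot behaving as in the $L^p$--$L^q$ case of Lemma~\ref{lem b5}, the $\eta$-slot as in the $L^{\infty}$--$L^{\infty}$ case of Lemma~\ref{lem b9} — and one must justify, just as in Lemmas~\ref{lem b5},~\ref{lem b6} and~\ref{lem b9}, that a uniform bound of the form $\rho(z)^{\mu}\rho(w)^{\nu}$ on $T_B\times T_B$ forces $\mu=\nu=0$ because $\rho$ is unbounded above and has infimum $0$ on $T_B$. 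No estimate beyond Lemma~\ref{lem jifendengshi} is required.
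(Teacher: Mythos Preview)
Your proposal is correct and follows exactly the paper's approach: the paper's own proof of Lemma~\ref{lem b13} consists solely of the observation that it is the symmetric case of Lemma~\ref{lem b12}, which is precisely what you say first. The direct sketch you add is sound and mirrors the arguments of Lemmas~\ref{lem b5}, \ref{lem b9} and~\ref{lem b12}; the only point to phrase more carefully is the ``by duality'' step when $p_2=\infty$ (since $(L^\infty)^*\neq L^1$), but testing against separable functions $f(u,\eta)=g(u)$ with $g\in L^{p_1}_{\alpha_1}$, exactly as the paper does in Lemma~\ref{lem b12}, justifies it.
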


\begin{proof}
	Lemma \ref{lem b13} is the symmetric case of Lemma \ref{lem b12}. Thus,  we omit the proof here.
\end{proof}

\section{The Sufficiency for the Boundedness of $S_{\vec{a},\ \vec{b},\ \vec{c}}$}
\ \ \ \
In this section, we give the necessary conditions for the boundedness of the operator $S_{\vec{a},\ \vec{b},\ \vec{c}}$, and the conclusions are the following lemmas \ref{lem 1 1} to lemma \ref{lem 1 13}.
\begin{lemma}\label{lem 1 1}
	Let $1<p_-\le p_+\le q_-<\infty$. If the parameters satisfy for any $i \in \{1,2\}$, 
	$$
	\begin{cases}
	-q_ia_i<\beta _i+1,\  \alpha _i+1<p_i\left( b_i+1 \right) ,\\
	c_i=n+1+a_i+b_i+\lambda_i,\\
	\end{cases}
	$$
	\\
	where 
	$$
	\lambda_i =\frac{n+1+\beta _i}{q_i}-\frac{n+1+\alpha _i}{p_i}.
	$$	
	Then the operator $S_{\vec{a},\ \vec{b},\ \vec{c}}$ is bounded from $L_{\vec{\alpha}}^{\vec{p}}\left( T_B\times T_B \right)$ to $L_{\vec{\beta}}^{\vec{q}}\left( T_B\times T_B \right)$.
\end{lemma}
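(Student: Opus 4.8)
The plan is to obtain the boundedness of $S_{\vec{a},\ \vec{b},\ \vec{c}}$ from the multiparameter Schur test in Lemma \ref{S1}, whose hypothesis $1<p_-\le p_+\le q_-<\infty$ is exactly the one assumed here. First I would put $S_{\vec{a},\ \vec{b},\ \vec{c}}$ into the template form of Lemma \ref{S1} on $X\times X$ with $X=T_B$: writing $dV(u)=\rho(u)^{-\alpha_1}\,dV_{\alpha_1}(u)$ and $dV(\eta)=\rho(\eta)^{-\alpha_2}\,dV_{\alpha_2}(\eta)$ gives $S_{\vec{a},\ \vec{b},\ \vec{c}}f(z,w)=\int_{T_B}\int_{T_B}K_1(z,u)K_2(w,\eta)f(u,\eta)\,dV_{\alpha_1}(u)\,dV_{\alpha_2}(\eta)$ with the nonnegative product kernel $K_1(z,u)=\rho(z)^{a_1}\rho(u)^{b_1-\alpha_1}|\rho(z,u)|^{-c_1}$, $K_2(w,\eta)=\rho(w)^{a_2}\rho(\eta)^{b_2-\alpha_2}|\rho(w,\eta)|^{-c_2}$, domain measures $\mu_i=dV_{\alpha_i}$ and target measures $\nu_i=dV_{\beta_i}$.

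Next I would search for Schur test functions of pure-power type, $h_1(s,t)=\rho(s)^{\sigma_1}\rho(t)^{\sigma_2}$ and $h_2(x,y)=\rho(x)^{\tau_1}\rho(y)^{\tau_2}$, together with splitting exponents $\gamma_i,\delta_i$ with $\gamma_i+\delta_i=1$. Since $K=K_1\cdot K_2$ and the test functions all factor over the two coordinates, conditions (\ref{3.1}) and (\ref{3.2}) split into one-variable integrals, each evaluated in closed form by the second identity of Lemma \ref{lem jifendengshi}. Carrying this out, (\ref{3.1}) becomes an equality of powers of $\rho(x),\rho(y)$ and forces $\tau_i=\gamma_i(a_i+b_i-c_i-\alpha_i)+\sigma_i+(n+1+\alpha_i)/p_i'$ for each $i$, while (\ref{3.2}) forces $\sigma_i=\delta_i(a_i+b_i-c_i-\alpha_i)+\tau_i+(n+1+\beta_i)/q_i$; adding these two relations and using $\gamma_i+\delta_i=1$ recovers precisely the hypothesis $c_i=n+1+a_i+b_i+\lambda_i$, so the two systems are consistent and we are left, for each $i$, with a genuine two-parameter freedom in $(\gamma_i,\sigma_i)$ (with $\tau_i$ then determined).

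It then remains to choose $(\gamma_i,\sigma_i)$ so that every appeal to Lemma \ref{lem jifendengshi} is legitimate, i.e. so that each instance of ``$t>-1$'' and ``$s-t>n+1$'' holds strictly; for each $i$ these amount to four linear inequalities in $(\gamma_i,\sigma_i)$. A short computation, in which one systematically substitutes $c_i=n+1+a_i+b_i+\lambda_i$, shows that the four pairwise compatibility conditions for these inequalities reduce to $c_i\gamma_i>n/p_i'$, $c_i\delta_i>n/q_i$, and exactly the two displayed hypotheses $p_i(b_i+1)>\alpha_i+1$ and $-q_ia_i<\beta_i+1$. The first two hold for some $\gamma_i$ iff $c_i>n(1-1/p_i+1/q_i)$, and this follows from the hypotheses: from $\alpha_i+1<p_i(b_i+1)$ one gets $b_i>(\alpha_i+1)/p_i-1$, from $-q_ia_i<\beta_i+1$ one gets $a_i>-(\beta_i+1)/q_i$, and substituting into $c_i=n+1+a_i+b_i+\lambda_i$ gives $c_i>n(1-1/p_i+1/q_i)$. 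Hence one may pick $\gamma_i\in\bigl(n/(c_ip_i'),\,1-n/(c_iq_i)\bigr)$ and then $\sigma_i$ in the resulting nonempty open interval; with these choices (\ref{3.1}) and (\ref{3.2}) hold, and Lemma \ref{S1} yields the asserted boundedness.

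Conceptually this is the classical Forelli--Rudin/Schur argument, so the main obstacle is the bookkeeping in the previous paragraph: unwinding the abstract conditions (\ref{3.1})--(\ref{3.2}) for the explicit kernel, tracking all exponents through Lemma \ref{lem jifendengshi}, and checking that the finiteness constraints are governed exactly by the two stated inequalities on the parameters. One minor point to isolate: when $p_1\ne p_2$ (resp.\ $q_1\ne q_2$) the left side of (\ref{3.1}) (resp.\ (\ref{3.2})) carries an interior power $p_2'/p_1'$ (resp.\ $q_2/q_1$), but since every intermediate quantity is a pure power of $\rho(z),\rho(w)$, raising to that power and integrating again causes no difficulty and the exponent-matching still goes through.
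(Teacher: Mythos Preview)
Your proposal is correct and follows essentially the same route as the paper: both apply the Schur test of Lemma~\ref{S1} with product kernels $K_i(z,u)=\rho(z)^{a_i}\rho(u)^{b_i-\alpha_i}|\rho(z,u)|^{-c_i}$, pure-power test functions $h_1,h_2$, and evaluate every resulting integral via Lemma~\ref{lem jifendengshi}. The only difference is organizational: the paper parametrizes by the test exponents $(r_i,s_i)$ and then \emph{defines} $\gamma_i,\delta_i$ from them, deriving the feasibility of $(r_i,s_i)$ directly from the two inequalities \eqref{qian 1}--\eqref{qian 2}; you instead treat $(\gamma_i,\sigma_i)$ as the free parameters, determine $\tau_i$ by exponent matching, and then reduce feasibility to the four pairwise compatibility conditions $c_i\gamma_i>n/p_i'$, $c_i\delta_i>n/q_i$, $p_i(b_i+1)>\alpha_i+1$, $-q_ia_i<\beta_i+1$. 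Both packagings lead to the same system and the same conclusion.
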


\begin{proof}
	For any $i\in \{1,2\}$, let $\tau _i=c_i-a_i-b_i+\alpha _i=\frac{n+1+\alpha _i}{p_{i}^{\prime}}+\frac{n+1+\beta _i}{q_i}>0$. 
	
	Since $-q_ia_i<\beta _i+1$, we have 
	$$
	-\frac{\tau _i\left( \beta _i+1 \right)}{q_i}<a_i\tau_i,
	$$
	which is equivalent to 
	\begin{equation}\label{qian 1}
	-\frac{\tau _i\left( \beta _i+1 \right)}{q_i}-\frac{a_i\left( n+1+\beta _i \right)}{q_i}<\frac{a_i\left( n+1+\alpha _i \right)}{p_{i}^{\prime}}.
	\end{equation}
	In addition, from $\alpha _i+1<p_i\left( b_i+1 \right)$, it follows that 
	$$
	b_i-\alpha _i+\frac{\alpha _i+1}{p_{i}^{\prime}}>0.
	$$
	Hence, we obtain 
	\begin{equation}\label{qian 2}
	-\frac{\left( \alpha _i+1 \right) \tau _i}{p_{i}^{\prime}}-\frac{\left( b_i-\alpha _i \right) \left( n+1+\alpha _i \right)}{p_{i}^{\prime}}<\frac{\left( b_i-\alpha _i \right) \left( n+1+\beta _i \right)}{q_i}.
	\end{equation}
	By (\ref{qian 1}) and (\ref{qian 2}), there exist $r_i$ and $s_i$ such that 
	\begin{equation*}
	-\frac{\tau _i\left( \beta _i+1 \right)}{q_i}-\frac{a_i\left( n+1+\beta _i \right)}{q_i}<r_i\tau _i+a_i\left( r_i-s_i \right) <\frac{a_i\left( n+1+\alpha _i \right)}{p_{i}^{\prime}}
	\end{equation*}
	and
	\begin{equation*}
	-\frac{\tau _i\left( \alpha _i+1 \right)}{p_{i}^{\prime}}-\frac{\left( b_i-\alpha _i \right) \left( n+1+\alpha _i \right)}{p_{i}^{\prime}}<s_i\tau _i+\left( b_i-\alpha _i \right) \left( s_i-r_i \right) <\frac{\left( b_i-\alpha _i \right) \left( n+1+\beta _i \right)}{q_i},
	\end{equation*}
	which is equivalent to
	\begin{equation}\label{r de}
	-\frac{ \beta _i+1 }{q_i}-a_i\delta _i<r_i<a_i\gamma _i
	\end{equation}
	and
	\begin{equation}\label{s de}
	-\frac{\alpha _i+1}{p_{i}^{\prime}}-\left( b_i-\alpha _i \right) \gamma _i<s_i<\left( b_i-\alpha _i \right) \delta _i,
	\end{equation}
	where 
	$$
	\gamma _i=\frac{\left( n+1+\alpha _i \right) /p_{i}^{\prime}+s_i-r_i}{\tau _i}
	$$ 
	and 
	$$
	\delta _i=\frac{\left( n+1+\beta _i \right) /q_i+r_i-s_i}{\tau _i}.
	$$
	
	Obviously, for any $i\in\{1,2\}$, $\gamma_i+\delta_i=1$.
	
	Let $h_1\left( u,\eta \right) =\rho \left( u \right) ^{s_1}\rho \left( \eta \right) ^{s_2}$, $h_2\left( z,w \right) =\rho \left( z \right) ^{r_1}\rho \left( w \right) ^{r_2}$, write $S_{\vec{a},\ \vec{b},\ \vec{c}}$ as
	$$
	S_{\vec{a},\ \vec{b},\ \vec{c}}\ f(z,w)=\int_{T_B}{\int_{T_B}{K_1\left( z,u \right) K_2\left( w,\eta \right)}f\left( u,\eta \right) dV_{\alpha _1}\left( u \right) dV_{\alpha _2}\left( \eta \right)},
	$$
	where
	$$
	K_1\left( z,u \right) =\frac{\rho \left( u \right) ^{b_1-\alpha _1}\rho(z)^{a_1}}{\left| \rho \left( z,u \right) \right|^{c_1}},\ \text{and}\ K_2\left( w,\eta \right) =\frac{\rho \left( \eta \right) ^{b_2-\alpha _2}\rho(w)^{a_2}}{\left| \rho \left( w,\eta \right) \right|^{c_2}}.
	$$
	
	Now we prove this conclusion by lemma \ref{S1}. We consider
	\begin{equation}\label{t11}
	\begin{aligned}
	&\int_{T_B}{\left( K_1\left( z,u \right) \right) ^{\gamma _1p_{1}^{\prime}}\left( K_2\left( w,\eta \right) \right) ^{\gamma _2p_{1}^{\prime}}\left( h_1\left( u,\eta \right) \right) ^{p_{1}^{\prime}}dV_{\alpha _1}\left( u \right)}
	\\
	&=\frac{\rho(z)^{a_1\gamma_1p_1^{\prime}}\rho(w)^{a_2\gamma_2p_2^{\prime}}\rho \left( \eta \right) ^{\left( b_2-\alpha _2 \right) \gamma _2p_{1}^{\prime}+s_2p_{1}^{\prime}}}{\left| \rho \left( w,\eta \right) \right|^{c_2\gamma _2p_{1}^{\prime}}}\\
	&\times\int_{T_B}{\frac{\rho \left( u \right) ^{\left( b_1-\alpha _1 \right) \gamma _1p_{1}^{\prime}+s_1p_{1}^{\prime}+\alpha _1}}{\left| \rho \left( z,u \right) \right|^{c_1\gamma _1p_{1}^{\prime}}}dV\left( u \right)}.
	\end{aligned}
	\end{equation}

	From the first inequality in (\ref{s de}), we have 
	\begin{equation}\label{t12}
	\left( b_i-\alpha _i \right) \gamma _ip_{i}^{\prime}+s_ip_{i}^{\prime}+\alpha _i>-1.
	\end{equation}
	Notice that 
	$$
	\left(c_i-a_i-b_i+\alpha _i\right)\gamma_i=\tau _i\gamma_i=\frac{n+1+\alpha _i}{p_{i}^{\prime}}+s_i-r_i,
	$$
	by the second inequality in (\ref{r de}), we have 
	\begin{equation}\label{t13}
	c_i\gamma _ip_{i}^{\prime}-n-1-\left( b_i-\alpha _i \right) \gamma _ip_{i}^{\prime}-s_ip_{i}^{\prime}-\alpha _i=a_i\gamma _ip_{i}^{\prime}-r_ip_{i}^{\prime}>0.
	\end{equation}
	Thus, according to Lemma \ref{lem jifendengshi}, for any given $z\in T_B$, we have
	$$
	\int_{T_B}{\frac{\rho \left( u \right) ^{\left( b_1-\alpha _1 \right) \gamma _1p_{1}^{\prime}+s_1p_{1}^{\prime}+\alpha _1}}{\left| \rho \left( z,u \right) \right|^{c_1\gamma _1p_{1}^{\prime}}}dV\left( u \right)}\lesssim\rho(z)^{r_1p_{1}^{\prime}-a_1\gamma _1p_{1}^{\prime}},
	$$
	which, together with (\ref{t11}), (\ref{t12}), (\ref{t13}), and Lemma \ref{lem jifendengshi}, we get 
	$$
	\begin{aligned}
	\int_{T_B}&{\left( \int_{T_B}{\left( K_1\left( z,u \right) \right) ^{\gamma _1p_{1}^{\prime}}\left( K_2\left( w,\eta \right) \right) ^{\gamma _2p_{1}^{\prime}}\left( h_1\left( u,\eta \right) \right) ^{p_{1}^{\prime}}dV_{\alpha _1}\left( u \right)} \right) ^{\frac{p_{2}^{\prime}}{p_{1}^{\prime}}}dV_{\alpha _2}\left( \eta \right)}
	\\
	&\lesssim \rho \left( z \right) ^{r_1p_{2}^{\prime}}\rho(w)^{a_2\gamma_2p_2^{\prime}}\int_{T_B}{\frac{\rho \left( \eta \right) ^{\left( b_2-\alpha _2 \right) \gamma _2p_{2}^{\prime}+s_2p_{2}^{\prime}+\alpha _2}}{\left| \rho \left( w,\eta \right) \right|^{c_2\gamma _2p_{2}^{\prime}}}dV\left( \eta \right)}
	\\
	&\lesssim \rho \left( z \right) ^{r_1p_{2}^{\prime}}\rho \left( w \right) ^{r_2p_{2}^{\prime}}=h_2\left( z,w \right) ^{p_{2}^{\prime}}.
	\end{aligned}
	$$
	Thus, condition (\ref{3.1}) holds ture for the operator $S_{\vec{a},\ \vec{b},\ \vec{c}}$.
	\\

	Next, we verify condition (\ref{3.2}). Notice that 
	$$
	\begin{aligned}
	&\int_{T_B}{\left( K_1\left( z,u \right) \right) ^{\delta _1q_1}\left( K_2\left( w,\eta \right) \right) ^{\delta _2q_1}\left( h_2\left( z,w \right) \right) ^{q_1}dV_{\beta _1}\left( z \right)}
	\\
	&=\frac{\rho \left( u \right) ^{\left( b_1-\alpha _1 \right) \delta _1q_1}\rho \left( \eta \right) ^{\left( b_2-\alpha _2 \right) \delta _2q_1}\rho \left( w \right) ^{r_2q_1+a_2\delta_2q_1}}{\left| \rho \left( w,\eta \right) \right|^{c_2\delta _2q_1}}\int_{T_B}{\frac{\rho \left( z \right) ^{r_1q_1+a_1\delta_1q_1+\beta _1}}{\left| \rho \left( z,u \right) \right|^{c_1\delta _1q_1}}dV\left( z \right)}.
	\end{aligned}
	$$
	From the first inequality in (\ref{r de}), we have 
	\begin{equation}\label{t22}
	r_iq_i+a_i\delta_iq_i+\beta_i>-1.
	\end{equation}
	Notice that 
	$$
	\left(c_i-a_i-b_i+\alpha _i\right)\delta_i=\tau _i\delta_i=\frac{n+1+\beta _i}{q_{i}}+r_i-s_i,
	$$
	by the second inequality in (\ref{s de}), we have 
	\begin{equation}\label{t23}
	c_i\delta _iq_{i}-n-1-r_iq_i-a_i\delta_iq_i-\beta_i=\left( b_i-\alpha _i \right) \delta _iq_i-s_iq_i>0.
	\end{equation}
	Thus, according to Lemma \ref{lem jifendengshi}, for any given $u\in T_B$, we have
	$$
	\int_{T_B}{\frac{\rho \left( z \right) ^{r_1q_1+a_1\delta_1q_1+\beta _1}}{\left| \rho \left( z,u \right) \right|^{c_1\delta _1q_1}}dV\left( z \right)}\lesssim\rho(u)^{s_1q_1-\left( b_1-\alpha _1 \right) \delta _1q_1},
	$$
	which, together with (\ref{t22}), (\ref{t23}), and Lemma \ref{lem jifendengshi}, we get 
	$$
	\begin{aligned}
	& \int_{T_B}\left[\int_{T_B}\left[K_1(z, u)\right]^{\delta_1 q_1}\left[K_2(w, \eta)\right]^{\delta_2 q_1}\left[h_2(z, w)\right]^{q_1} d V_{\beta_1}(z)\right]^{q_2 / q_1} d V_{\beta_2}(w) \\
	& \quad \lesssim \rho(u)^{s_1 q_2}\rho(\eta)^{\left(b_2-\alpha_2\right) \delta_2 q_2} \int_{T_B} \frac{\rho(w)^{r_2 q_2+a_2\delta_2q_2+\beta_2}}{|\rho(w,\eta)|^{c_2 \delta_2 q_2}} dV(w) \\
	& \quad \lesssim\rho(u)^{s_1 q_2}\rho(\eta)^{s_2 q_2} =\left[h_1(u, \eta)\right]^{q_2} .
	\end{aligned}
	$$
	
	Thus, condition (\ref{3.2}) holds ture for the operator $S_{\vec{a},\ \vec{b},\ \vec{c}}$.
	
	Therefore, the operator $S_{\vec{a}, \ \vec{b},\ \vec{c}}$ satisfies all the conditions of Lemma \ref{S1}, then operator $S_{\vec{a}, \ \vec{b},\ \vec{c}}$ is bounded from $L_{\vec{\alpha}}^{\vec{p}}\left( T_B\times T_B \right)$ to $L_{\vec{\beta}}^{\vec{q}}\left( T_B\times T_B \right)$.
\end{proof}

\begin{lemma}\label{lem 1 2}
	Let $1=p_+\le q_-\le q_+<\infty$. If the parameters satisfy for any $i \in \{1,2\}$, 
	$$
	\begin{cases}
	-q_ia_i<\beta _i+1,\  \alpha _i< b_i ,\\
	c_i=n+1+a_i+b_i+\lambda_i,\\
	\end{cases}
	$$
	\\
	where 
	$$
	\lambda_i =\frac{n+1+\beta _i}{q_i}-n-1-\alpha _i.
	$$
	Then the operator $S_{\vec{a},\ \vec{b},\ \vec{c}}$ is bounded from $L_{\vec{\alpha}}^{\vec{1}}\left( T_B\times T_B \right)$ to $L_{\vec{\beta}}^{\vec{q}}\left( T_B\times T_B \right)$.
\end{lemma}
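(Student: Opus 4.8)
The plan is to notice that $1=p_+=\max\{p_1,p_2\}$ together with $p_1,p_2\ge 1$ forces $\vec p=\vec 1$, so the statement is of exactly the shape handled by Lemma~\ref{S2}, and to deduce it by exhibiting the Schur-type weights $h_1,h_2$ and the exponents $\gamma_i,\delta_i$ required there. First I would write $S_{\vec a,\ \vec b,\ \vec c}$ in product-kernel form, as in the proof of Lemma~\ref{lem 1 1},
$$
S_{\vec a,\ \vec b,\ \vec c}\ f(z,w)=\int_{T_B}\int_{T_B}K_1(z,u)K_2(w,\eta)f(u,\eta)\,dV_{\alpha_1}(u)\,dV_{\alpha_2}(\eta),\qquad K_1(z,u)=\frac{\rho(u)^{b_1-\alpha_1}\rho(z)^{a_1}}{|\rho(z,u)|^{c_1}},
$$
and $K_2$ analogously, and record that the hypothesis $c_i=n+1+a_i+b_i+\lambda_i$ with $\lambda_i=(n+1+\beta_i)/q_i-(n+1)-\alpha_i$ is equivalent to $\tau_i:=c_i-a_i-b_i+\alpha_i=(n+1+\beta_i)/q_i>0$. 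It then remains only to verify conditions~(\ref{3.3}) and~(\ref{3.4}) of Lemma~\ref{S2}.

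Next I would choose the parameters exactly as in the proof of Lemma~\ref{lem 1 1}, but now with $1/p_i'=0$ (so that the condition $\alpha_i+1<p_i(b_i+1)$ there becomes $\alpha_i<b_i$ here). Converting the two hypotheses $-q_ia_i<\beta_i+1$ and $\alpha_i<b_i$ into inequalities purely among the data and $\tau_i$, one finds real numbers $r_i,s_i$ lying in the open intervals
$$
-\tfrac{\beta_i+1}{q_i}-a_i\delta_i<r_i<a_i\gamma_i,\qquad -(b_i-\alpha_i)\gamma_i<s_i<(b_i-\alpha_i)\delta_i,
$$
where $\gamma_i=(s_i-r_i)/\tau_i$ and $\delta_i=\big((n+1+\beta_i)/q_i+r_i-s_i\big)/\tau_i$, so that $\gamma_i+\delta_i=1$ (and both $\gamma_i,\delta_i>0$). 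I would then set $h_1(u,\eta)=\rho(u)^{s_1}\rho(\eta)^{s_2}$ and $h_2(z,w)=\rho(z)^{r_1}\rho(w)^{r_2}$.

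To check~(\ref{3.3}) the left-hand side factors over the $u$- and $\eta$-variables, so it reduces to showing $\operatorname{ess\,sup}_{u\in T_B}\rho(z)^{a_1\gamma_1}\rho(u)^{(b_1-\alpha_1)\gamma_1+s_1}|\rho(z,u)|^{-c_1\gamma_1}\lesssim\rho(z)^{r_1}$ and its $\eta$-analogue. Here I would split $T_B$ according to whether $\rho(u)\ge\rho(z)$ or $\rho(u)<\rho(z)$ and apply Lemma~\ref{lem qu bian liang} to replace $|\rho(z,u)|$ by $\rho(u)$, respectively by $\rho(z)$; the bound $r_1<a_1\gamma_1$ makes the remaining power $\rho(u)^{(b_1-\alpha_1)\gamma_1+s_1-c_1\gamma_1}$ decreasing on the first piece, and the bound $s_1>-(b_1-\alpha_1)\gamma_1$ makes $\rho(u)^{(b_1-\alpha_1)\gamma_1+s_1}$ increasing on the second, so in both cases the supremum is attained at $\rho(u)=\rho(z)$ and equals a constant multiple of $\rho(z)^{-\tau_1\gamma_1+s_1}=\rho(z)^{r_1}$, using $\tau_1\gamma_1=s_1-r_1$. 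The verification of~(\ref{3.4}) is then word-for-word the verification of~(\ref{3.2}) in the proof of Lemma~\ref{lem 1 1}: the inequality $r_iq_i+a_i\delta_iq_i+\beta_i>-1$ and the identity $c_i\delta_iq_i-n-1-r_iq_i-a_i\delta_iq_i-\beta_i=(b_i-\alpha_i)\delta_iq_i-s_iq_i>0$ let Lemma~\ref{lem jifendengshi} evaluate the inner $z$-integral, and a second application (after raising to the power $q_2/q_1$) produces the bound $(h_1(u,\eta))^{q_2}$. Lemma~\ref{S2} then yields the boundedness of $S_{\vec a,\ \vec b,\ \vec c}$ from $L_{\vec\alpha}^{\vec 1}(T_B\times T_B)$ to $L_{\vec\beta}^{\vec q}(T_B\times T_B)$.

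The main obstacle is the same bookkeeping subtlety as in Lemma~\ref{lem 1 1}: the admissible intervals for $r_i,s_i$ are themselves expressed through $\gamma_i,\delta_i$, which depend on $r_i,s_i$, so one must first rewrite the two hypotheses as inequalities among the fixed data (multiplying through by $\tau_i$) in order to see that the intervals are nonempty and that $\gamma_i,\delta_i>0$. The genuinely new point compared to Lemma~\ref{lem 1 1} is the essential-supremum estimate~(\ref{3.3}) in place of an inner integral: there one must check that the particular choice $\gamma_i=(s_i-r_i)/\tau_i$ forces the exponent of $\rho(u)$ to be negative on $\{\rho(u)\ge\rho(z)\}$ and positive on $\{\rho(u)<\rho(z)\}$, so that Lemma~\ref{lem qu bian liang} actually delivers a bound by $\rho(z)^{r_1}$ rather than a quantity that blows up as $\rho(u)\to\infty$ or $\rho(u)\to 0$.
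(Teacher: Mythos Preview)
Your proposal is correct and follows essentially the same approach as the paper: write $S_{\vec a,\vec b,\vec c}$ in product-kernel form, invoke Lemma~\ref{S2} with the weights $h_1(u,\eta)=\rho(u)^{s_1}\rho(\eta)^{s_2}$, $h_2(z,w)=\rho(z)^{r_1}\rho(w)^{r_2}$ and the exponents $\gamma_i=(s_i-r_i)/\tau_i$, $\delta_i=1-\gamma_i$, and verify~(\ref{3.4}) exactly as in Lemma~\ref{lem 1 1}. The only cosmetic difference is in checking~(\ref{3.3}): you split into the cases $\rho(u)\ge\rho(z)$ and $\rho(u)<\rho(z)$ and apply Lemma~\ref{lem qu bian liang} to one variable at a time, whereas the paper writes the single factorization
\[
\frac{\rho(z)^{a_1\gamma_1}\rho(u)^{(b_1-\alpha_1)\gamma_1+s_1}}{|\rho(z,u)|^{c_1\gamma_1}}
=\left(\frac{\rho(u)}{|\rho(z,u)|}\right)^{(b_1-\alpha_1)\gamma_1+s_1}\left(\frac{\rho(z)}{|\rho(z,u)|}\right)^{a_1\gamma_1-r_1}\rho(z)^{r_1}
\]
and bounds both bracketed factors by a constant via Lemma~\ref{lem qu bian liang}; both arguments use exactly the same two inequalities $r_i<a_i\gamma_i$ and $s_i>-(b_i-\alpha_i)\gamma_i$, so the content is identical.
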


\begin{proof}
	
	When $p_1=p_2=1$, for any $i\in \{1,2\}$, write $\gamma _i=\frac{s_i-r_i}{\tau _i}$, $\delta _i=\frac{\left( n+1+\beta _i \right) /q_i+r_i-s_i}{\tau _i}$. 
	According to the proof of Lemma \ref{lem 1 1}, we have 
	\begin{equation}\label{r de 2}
	-\frac{ \beta _i+1 }{q_i}-a_i\delta _i<r_i<a_i\gamma _i
	\end{equation}
	
	and
	
	\begin{equation}\label{s de 2}
	-\left( b_i-\alpha _i \right) \gamma _i<s_i<\left( b_i-\alpha _i \right) \delta _i.
	\end{equation}
	
	Next, we use lemma \ref{S2} to prove this lemma. 
	
	First, we consider 
	$$
	\left( K_1\left( z,u \right) \right) ^{\gamma _1}\left( K_2\left( w,\eta \right) \right) ^{\gamma _2}h_1\left( u,\eta \right) =\frac{\rho(z)^{a_1\gamma_1}\rho(w)^{a_2\gamma_2}\rho \left( u \right) ^{\gamma _1\left( b_1-\alpha _1 \right) +s_1}\rho \left( \eta \right) ^{\gamma _2\left( b_2-\alpha _2 \right) +s_2}}{\left| \rho \left( z,u \right) \right|^{c_1\gamma _1}\left| \rho \left( w,\eta \right) \right|^{c_2\gamma _2}}.
	$$
	
	By lemma \ref{lem qu bian liang}, we have for any $z\in T_B$ and $u\in T_B$,
	$$
	2\left| \rho \left( z,u \right) \right|\ge \max \left\{ \rho \left( z \right) ,\rho \left( u \right) \right\} .
	$$
	For any $i\in\{1,2\}$, we get 
	\begin{equation}\label{t31}
	c_i\gamma _i=\left( b_i-\alpha _i +a_i\right) \gamma _i+s_i-r_i
	\end{equation}
	due to $\tau _i=c_i-a_i-b_i+\alpha _i$ and $\gamma _i=\left( s_i-r_i \right) /\tau _i$.
	
	From the first inequality in (\ref{s de 2}), we have
	\begin{equation}\label{t32}
	\gamma _i\left( b_i-\alpha _i \right) +s_i>0.
	\end{equation}
	Then, according to Lemma \ref{lem qu bian liang} and (\ref{t31}), for any given $z\in T_B$ and $u\in T_B$, we have
	$$
	\frac{\rho \left( z \right) ^{a_1\gamma_1}\rho \left( u \right) ^{\left( b_1-\alpha _1 \right)\gamma _1 +s_1}}{\left| \rho \left( z,u \right) \right|^{c_1\gamma _1}}=\left( \frac{\rho \left( u \right)}{\left| \rho \left( z,u \right) \right|} \right) ^{\left( b_1-\alpha _1 \right)\gamma _1 +s_1}\left( \frac{\rho \left( z \right)}{\left| \rho \left( z,u \right) \right|} \right) ^{a_1\gamma_1}\lesssim \rho(z)^{r_1}	
	$$
	and, similarly, for any given  $w\in T_B$ and any $\eta \in T_B$,
	$$
	\frac{\rho \left( w \right) ^{a_2\gamma_2}\rho \left( \eta \right) ^{\left( b_2-\alpha _2 \right)\gamma _2 +s_2}}{\left| \rho \left( w,\eta \right) \right|^{c_2\gamma _2}}=\left( \frac{\rho \left( \eta \right)}{\left| \rho \left( w,\eta \right) \right|} \right) ^{\left( b_2-\alpha _2 \right)\gamma _2 +s_2}\left( \frac{\rho \left( w \right)}{\left| \rho \left( w,\eta \right) \right|} \right) ^{a_2\gamma_2}\lesssim \rho(w)^{r_2}.
	$$
	Thus, for any given $\left( z,w \right) \in T_B\times T_B$,
	$$
	\mathop{\text{ess\ sup}}_{{\left( u,\eta \right)}\in T_B\times T_B}\left( K_1\left( z,u \right) \right) ^{\gamma _1}\left( K_2\left( w,\eta \right) \right) ^{\gamma _2}h_1\left( u,\eta \right) \le Ch_2\left( z,w \right) .
	$$
	Thus, condition (\ref{3.3}) holds true for the operator $S_{\vec{a},\ \vec{b},\ \vec{c}}$. 
	
	Note that the condition (\ref{3.4}) in Lemma \ref{S2} is the same as the condition (\ref{3.2}) in Lemma \ref{S1}, thus according to the proof of the second part of Lemma \ref{lem 1 1}, we know that the condition (\ref{3.4}) in Lemma \ref{S2} still holds.
	
	Therefore, the operator $S_{\vec{a}, \ \vec{b},\ \vec{c}}$ satisfies all the conditions of Lemma \ref{S2}, then operator $S_{\vec{a}, \ \vec{b},\ \vec{c}}$ is bounded from $L_{\vec{\alpha}}^{\vec{1}}\left( T_B\times T_B \right)$ to $L_{\vec{\beta}}^{\vec{q}}\left( T_B\times T_B \right)$.
\end{proof}

\begin{lemma}\label{lem 1 3}
	Let $1=p_2<p_1\le q_-\le q_+<\infty$. If the parameters satisfy that, for any $i\in\{1,2\}$,
	$$
	\begin{cases}
	\alpha _1+1<p_1\left( b_1+1 \right) ,\ \alpha _2<b_2,\\
	-q_ia_i<\beta_i+1,\ c_i=n+1+a_i+b_i+\lambda_i,\\
	\end{cases}
	$$
	where
	$$
	\begin{cases}
	\lambda _1=\frac{n+1+\beta _1}{q_1}-\frac{n+1+\alpha _1}{p_1},\\
	\lambda _2=\frac{n+1+\beta _2}{q_2}-\left( n+1+\alpha _2 \right).\\
	\end{cases}
	$$
	Then the operator $S_{\vec{a},\ \vec{b},\ \vec{c}}$ is bounded from $L_{\vec{\alpha}}^{\vec{p}}\left( T_B\times T_B \right)$ to $L_{\vec{\beta}}^{\vec{q}}\left( T_B\times T_B \right)$.
\end{lemma}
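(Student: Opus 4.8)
The plan is to apply the Schur-type Lemma \ref{S3}, which is designed precisely for the mixed exponent $\vec p=(p_1,1)$ with $p_1\in(1,\infty)$; the proof interpolates between the argument of Lemma \ref{lem 1 1} in the first coordinate and the argument of Lemma \ref{lem 1 2} in the second. First I would write
$$
S_{\vec a,\ \vec b,\ \vec c}\,f(z,w)=\int_{T_B}\int_{T_B}K_1(z,u)\,K_2(w,\eta)\,f(u,\eta)\,dV_{\alpha_1}(u)\,dV_{\alpha_2}(\eta),
$$
with
$$
K_1(z,u)=\frac{\rho(z)^{a_1}\rho(u)^{b_1-\alpha_1}}{|\rho(z,u)|^{c_1}},\qquad K_2(w,\eta)=\frac{\rho(w)^{a_2}\rho(\eta)^{b_2-\alpha_2}}{|\rho(w,\eta)|^{c_2}},
$$
and take $h_1(u,\eta)=\rho(u)^{s_1}\rho(\eta)^{s_2}$, $h_2(z,w)=\rho(z)^{r_1}\rho(w)^{r_2}$ for exponents $r_i,s_i$ to be selected.

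Next I would fix these exponents. Put $\tau_i=c_i-a_i-b_i+\alpha_i$ for $i\in\{1,2\}$; the hypotheses give $\tau_1=\frac{n+1+\alpha_1}{p_1^{\prime}}+\frac{n+1+\beta_1}{q_1}>0$ and $\tau_2=\frac{n+1+\beta_2}{q_2}>0$. For the first coordinate I would choose $r_1,s_1$ as in Lemma \ref{lem 1 1}: with $\gamma_1=\big(\tfrac{n+1+\alpha_1}{p_1^{\prime}}+s_1-r_1\big)/\tau_1$ and $\delta_1=\big(\tfrac{n+1+\beta_1}{q_1}+r_1-s_1\big)/\tau_1$, impose $-\tfrac{\beta_1+1}{q_1}-a_1\delta_1<r_1<a_1\gamma_1$ and $-\tfrac{\alpha_1+1}{p_1^{\prime}}-(b_1-\alpha_1)\gamma_1<s_1<(b_1-\alpha_1)\delta_1$; such $r_1,s_1$ exist because $-q_1a_1<\beta_1+1$ and $\alpha_1+1<p_1(b_1+1)$, and $\gamma_1+\delta_1=1$. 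For the second coordinate I would choose $r_2,s_2$ as in Lemma \ref{lem 1 2}: with $\gamma_2=(s_2-r_2)/\tau_2$ and $\delta_2=\big(\tfrac{n+1+\beta_2}{q_2}+r_2-s_2\big)/\tau_2$, impose $-\tfrac{\beta_2+1}{q_2}-a_2\delta_2<r_2<a_2\gamma_2$ and $-(b_2-\alpha_2)\gamma_2<s_2<(b_2-\alpha_2)\delta_2$; this is possible because $-q_2a_2<\beta_2+1$ and $\alpha_2<b_2$, and again $\gamma_2+\delta_2=1$.

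I would then verify the two hypotheses of Lemma \ref{S3}. For \,$(\ref{3.5})$\, I must show
$$
\underset{\eta\in T_B}{\operatorname{ess}\,\sup}\int_{T_B}\big(K_1(z,u)\big)^{\gamma_1p_1^{\prime}}\big(K_2(w,\eta)\big)^{\gamma_2p_1^{\prime}}\big(h_1(u,\eta)\big)^{p_1^{\prime}}\,dV_{\alpha_1}(u)\lesssim\big(h_2(z,w)\big)^{p_1^{\prime}}.
$$
The integral in $u$ is estimated just as in the first half of the proof of Lemma \ref{lem 1 1}: the $s_1$-inequalities give $(b_1-\alpha_1)\gamma_1p_1^{\prime}+s_1p_1^{\prime}+\alpha_1>-1$, the $r_1$-inequalities give the positivity of the exponent needed for Lemma \ref{lem jifendengshi}, and the integral is $\lesssim\rho(z)^{r_1p_1^{\prime}-a_1\gamma_1p_1^{\prime}}$. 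For the factor in $\eta$ one takes a pointwise bound rather than integrating, exactly as in the first half of the proof of Lemma \ref{lem 1 2}: writing
$$
\frac{\rho(w)^{a_2\gamma_2p_1^{\prime}}\rho(\eta)^{((b_2-\alpha_2)\gamma_2+s_2)p_1^{\prime}}}{|\rho(w,\eta)|^{c_2\gamma_2p_1^{\prime}}}=\Big(\tfrac{\rho(\eta)}{|\rho(w,\eta)|}\Big)^{((b_2-\alpha_2)\gamma_2+s_2)p_1^{\prime}}\Big(\tfrac{\rho(w)}{|\rho(w,\eta)|}\Big)^{a_2\gamma_2p_1^{\prime}},
$$
using $c_2\gamma_2=(b_2-\alpha_2+a_2)\gamma_2+s_2-r_2$ together with $(b_2-\alpha_2)\gamma_2+s_2>0$ and $a_2\gamma_2>0$, and invoking Lemma \ref{lem qu bian liang}, this is $\lesssim\rho(w)^{r_2p_1^{\prime}}$ uniformly in $\eta$; multiplying the two pieces gives $(h_2(z,w))^{p_1^{\prime}}$. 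Condition $(\ref{3.6})$ is identical to $(\ref{3.2})$, so the second half of the proof of Lemma \ref{lem 1 1} carries over verbatim — the $r_i$-inequalities give $r_iq_i+a_i\delta_iq_i+\beta_i>-1$, the $s_i$-inequalities give the requisite positivity, and two applications of Lemma \ref{lem jifendengshi} (in $z$ and in $w$) give $(h_1(u,\eta))^{q_2}$. Lemma \ref{S3} then shows that $S_{\vec a,\ \vec b,\ \vec c}$ is bounded from $L_{\vec\alpha}^{\vec p}(T_B\times T_B)$ to $L_{\vec\beta}^{\vec q}(T_B\times T_B)$.

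The step I expect to be the main obstacle is the parameter bookkeeping — checking that the prescribed ranges for $(r_1,s_1)$ and for $(r_2,s_2)$ are each nonempty under the stated hypotheses and that $\gamma_i+\delta_i=1$ holds in both coordinates. Once admissible $r_i,s_i$ are in hand, everything else is a mechanical application of Lemma \ref{lem jifendengshi} and Lemma \ref{lem qu bian liang}, exactly as in Lemmas \ref{lem 1 1} and \ref{lem 1 2}.
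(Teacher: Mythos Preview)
Your approach is essentially identical to the paper's: apply Lemma~\ref{S3} with the same kernels $K_1,K_2$ and test functions $h_i$, choose $(r_1,s_1,\gamma_1,\delta_1)$ as in Lemma~\ref{lem 1 1} and $(r_2,s_2,\gamma_2,\delta_2)$ as in Lemma~\ref{lem 1 2}, verify (\ref{3.5}) by integrating in $u$ via Lemma~\ref{lem jifendengshi} and bounding the $\eta$-factor pointwise via Lemma~\ref{lem qu bian liang}, and verify (\ref{3.6}) exactly as (\ref{3.2}). One small bookkeeping slip in your pointwise step: the displayed identity for the $\eta$-factor is off by a factor $|\rho(w,\eta)|^{r_2p_1'}$, and the positivity you need is $a_2\gamma_2-r_2>0$ (which is your constraint $r_2<a_2\gamma_2$), not $a_2\gamma_2>0$; with that correction the bound $\lesssim\rho(w)^{r_2p_1'}$ follows just as in the paper's (\ref{t35}).
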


\begin{proof}
	
	For any $i\in \{1,2\}$, let $\tau _i=c_i-a_i-b_i+\alpha _i$, 
	\\
	then 
	$$
	\tau _1=\frac{n+1+\alpha _1}{p_{1}^{\prime}}+\frac{n+1+\beta _1}{q_1}>0
	$$
	and
	$$
	\tau _2=\frac{n+1+\beta _2}{q_2}>0.
	$$
	
	Since $-q_1a_1<\beta _1+1$, we have 
	$$
	-\frac{\tau _1\left( \beta _1+1 \right)}{q_1}<a_1\tau_1,
	$$
	which is equivalent to 
	\begin{equation}\label{qian 3}
	-\frac{\tau _1\left( \beta _1+1 \right)}{q_1}-\frac{a_1\left( n+1+\beta _1 \right)}{q_1}<\frac{a_1\left( n+1+\alpha _1 \right)}{p_{1}^{\prime}}.
	\end{equation}
	In addition, from $\alpha _1+1<p_1\left( b_1+1 \right)$, it follows that 
	$$
	b_1-\alpha _1+\frac{\alpha _1+1}{p_{1}^{\prime}}>0.
	$$
	Combined with the known fact that $b_2>\alpha_2$, further implies that
	\begin{equation}\label{qian 4}
	-\frac{\tau _1\left( 1+\alpha _1 \right)}{p_{1}^{\prime}}-\frac{\left( b_1-\alpha _1 \right) \left( n+1+\alpha _1 \right)}{p_{1}^{\prime}}<\frac{\left( b_1-\alpha _1 \right) \left( n+1+\beta _1 \right)}{q_1}.
	\end{equation}	
	Thus, by (\ref{qian 3}) and (\ref{qian 4}), there exists $r_1$ and $s_1$ such that 
	\begin{equation*}
	-\frac{\tau _1\left( \beta _1+1 \right)}{q_1}-\frac{a_1\left( n+1+\beta _1\right)}{q_1}<r_1\tau _1+a_1\left( r_1-s_1 \right) <\frac{a_1\left( n+1+\alpha _1 \right)}{p_{1}^{\prime}}
	\end{equation*}
	and	
	$$
	\frac{\tau _1\left( 1+\alpha _1 \right)}{p_{1}^{\prime}}-\frac{\left( b_1-\alpha _1 \right) \left( n+1+\alpha _1 \right)}{p_{1}^{\prime}}
	<\tau _1s_1+\left( b_1-\alpha _1 \right) \left( s_1-r_1 \right) <\frac{\left( b_1-\alpha _1 \right) \left( n+1+\beta _1 \right)}{q_1}.
	$$
	
	Since $-q_2a_2<\beta _2+1$, we have 
	$$
	-\frac{\tau _2\left( \beta _2+1 \right)}{q_2}<a_2\tau_2,
	$$
	which is equivalent to 
	\begin{equation}\label{qian 31}
	-\frac{\tau _2\left( \beta _2+1 \right)}{q_2}-\frac{a_2\left( n+1+\beta _2 \right)}{q_2}<0.
	\end{equation}
	In addition, from $\alpha _2< b_2$, it follows that 
	\begin{equation}\label{qian 41}
	\frac{\left( b_2-\alpha _2 \right) \left( n+1+\beta _2 \right)}{q_2}>0.
	\end{equation}
	Thus, by (\ref{qian 31}) and (\ref{qian 41}), there exists $r_2$ and $s_2$ such that 
	$$
	-\frac{\tau _2\left( \beta _2+1 \right)}{q_2}-\frac{a_2\left( n+1+\beta _2\right)}{q_2}<r_2\tau _2+a_2\left( r_2-s_2 \right) <0
	$$
	and
	$$
	0<\tau _2s_2+\left( b_2-\alpha _2 \right) \left( s_2-r_2 \right) <\frac{\left( b_2-\alpha _2 \right) \left( n+1+\beta _2 \right)}{q_2}.
	$$
	That is 
	\begin{equation}\label{r de 1}
	-\frac{ \beta _i+1 }{q_i}-a_i\delta _i<r_i<a_i\gamma _i,
	\end{equation}

	\begin{equation}\label{S1 de}
	-\frac{1+\alpha _1}{p_{1}^{\prime}}-\left( b_1-\alpha _1 \right) \gamma _1<s_1<\left( b_1-\alpha _1 \right) \delta _1
	\end{equation}
	and
	\begin{equation}\label{S2 de}
	-\left( b_2-\alpha _2 \right) \gamma _2<s_2<\left( b_2-\alpha _2 \right) \delta _2,
	\end{equation}

	where 
	$$
	\gamma _1=\frac{\left( n+1+\alpha _1 \right) /p_{1}^{\prime}+s_1-r_1}{\tau _1},
	$$
	$$
	\delta _1=\frac{\left( n+1+\beta _1 \right) /q_1+r_1-s_1}{\tau _1},
	$$
	$$
	\gamma _2=\frac{s_2-r_2}{\tau _2}
	$$
	and
	$$
	\delta _2=\frac{\left( n+1+\beta _2 \right) /q_2+r_2-s_2}{\tau _2}.
	$$
	Clearly, $\gamma _1+\delta _1=1$, $\gamma _2+\delta _2=1$.
	
	Now, let $h_1\left( u,\eta \right) =\rho \left( u \right) ^{s_1}\rho \left( \eta \right) ^{s_2}$, $h_2\left( z,w \right) =\rho \left( z \right) ^{r_1}\rho \left( w \right) ^{r_2}$, write $S_{\vec{a},\ \vec{b},\ \vec{c}}$ as
	$$
	S_{\vec{a},\ \vec{b},\ \vec{c}}\ f(z,w)=\int_{T_B}{\int_{T_B}{K_1\left( z,u \right) K_2\left( w,\eta \right)}f\left( u,\eta \right) dV_{\alpha _1}\left( u \right) dV_{\alpha _2}\left( \eta \right)},
	$$
	where
	$$
	K_1\left( z,u \right) =\frac{\rho \left( u \right) ^{b_1-\alpha _1}\rho(z)^{a_1}}{\left| \rho \left( z,u \right) \right|^{c_1}}\ \text{and}\ K_2\left( w,\eta \right) =\frac{\rho \left( \eta \right) ^{b_2-\alpha _2}\rho(w)^{a_2}}{\left| \rho \left( w,\eta \right) \right|^{c_2}}.
	$$
	
	Next we prove this result by Lemma \ref{S3}. 
	
	We consider	
	\begin{equation}\label{t41}
	\begin{aligned}
	&\int_{T_B}{\left( K_1\left( z,u \right) \right) ^{\gamma _1p_{1}^{\prime}}\left( K_2\left( w,\eta \right) \right) ^{\gamma _2p_{1}^{\prime}}\left( h_1\left( u,\eta \right) \right) ^{p_{1}^{\prime}}dV_{\alpha _1}\left( u \right)}
	\\
	&=\frac{\rho(z)^{a_1\gamma_1p_1^{\prime}}\rho(w)^{a_2\gamma_2p_2^{\prime}}\rho \left( \eta \right) ^{\left( b_2-\alpha _2 \right) \gamma _2p_{1}^{\prime}+s_2p_{1}^{\prime}}}{\left| \rho \left( w,\eta \right) \right|^{c_2\gamma _2p_{1}^{\prime}}}\\
	&\times\int_{T_B}{\frac{\rho \left( u \right) ^{\left( b_1-\alpha _1 \right) \gamma _1p_{1}^{\prime}+s_1p_{1}^{\prime}+\alpha _1}}{\left| \rho \left( z,u \right) \right|^{c_1\gamma _1p_{1}^{\prime}}}dV\left( u \right)}.
	\end{aligned}
	\end{equation}
	
	From the first inequality in (\ref{S1 de}), we have 
	\begin{equation}\label{t33}
	\left( b_1-\alpha _1 \right) \gamma _1p_{1}^{\prime}+s_1p_{1}^{\prime}+\alpha _1>-1.
	\end{equation}
	Notice that 
	$$
	\left(c_1-a_1-b_1+\alpha _1\right)\gamma_1=\tau _1\gamma_1=\frac{n+1+\alpha _1}{p_{1}^{\prime}}+s_1-r_1,
	$$
	by the second inequality in (\ref{r de 1}), we have 
	\begin{equation}\label{t34}
	c_1\gamma _1p_{1}^{\prime}-n-1-\left( b_1-\alpha _1 \right) \gamma _1p_{1}^{\prime}-s_1p_{1}^{\prime}-\alpha _1=a_1\gamma _1p_{1}^{\prime}-r_1p_{1}^{\prime}>0.
	\end{equation}
	In addition, from Lemma \ref{lem qu bian liang} and the fact $c_2\gamma_2+r_2-a_2\gamma_2=\left(b_2-\alpha_2\right)\gamma_2+s_2>0$, we infer that, for any $w\in T_B$ and $\eta \in T_B$,
	\begin{equation}\label{t35}
	\frac{\rho \left( \eta \right) ^{\left( b_2-\alpha _2 \right) \gamma _2p_{1}^{\prime}+s_2p_{1}^{\prime}}}{\rho \left( w,\eta \right) ^{c_2\gamma _2p_{1}^{\prime}}}\lesssim \rho \left( w \right) ^{\left(r_2-a_2\gamma_2\right)p_{1}^{\prime}}.
	\end{equation}
	Thus, according to Lemma \ref{lem jifendengshi}, for any given $z\in T_B$, we have
	\begin{equation}\label{t36}
	\int_{T_B}{\frac{\rho \left( u \right) ^{\left( b_1-\alpha _1 \right) \gamma _1p_{1}^{\prime}+s_1p_{1}^{\prime}+\alpha _1}}{\left| \rho \left( z,u \right) \right|^{c_1\gamma _1p_{1}^{\prime}}}dV\left( u \right)}\lesssim \rho \left( z \right) ^{r_1p_{1}^{\prime}-a_1\gamma_1p_{1}^{\prime}},
	\end{equation}
	combining  with (\ref{t35}) and (\ref{t36}), we further obtain that
	$$
	\begin{aligned}
	\mathop {\mathrm{ess}\ \mathrm{sup}} \limits_{{\eta \in T_B}}\int_{T_B}&{\left( K_1\left( z,u \right) \right) ^{\gamma _1p_{1}^{\prime}}\left( K_2\left( w,\eta \right) \right) ^{\gamma _2p_{1}^{\prime}}\left( h_1\left( u,\eta \right) \right) ^{p_{1}^{\prime}}dV_{\alpha _1}\left( u \right)}
	\\
	&\lesssim \rho \left( z \right) ^{r_1p_{1}^{\prime}}\rho \left( w \right) ^{r_2p_{1}^{\prime}}=\left( h_2\left( z,w \right) \right) ^{p_{1}^{\prime}}.
	\end{aligned}
	$$
	Thus, condition (\ref{3.5}) holds ture for the operator $S_{\vec{a},\ \vec{b},\ \vec{c}}$.
	
	Next, we verify condition (\ref{3.6}) of Lemma \ref{S3}. Notice that 
	$$
	\begin{aligned}
	&\int_{T_B}{\left( K_1\left( z,u \right) \right) ^{\delta _1q_1}\left( K_2\left( w,\eta \right) \right) ^{\delta _2q_1}\left( h_2\left( z,w \right) \right) ^{q_1}dV_{\beta _1}\left( z \right)}
	\\
	&=\frac{\rho \left( u \right) ^{\left( b_1-\alpha _1 \right) \delta _1q_1}\rho \left( \eta \right) ^{\left( b_2-\alpha _2 \right) \delta _2q_1}\rho \left( w \right) ^{r_2q_1+a_2\delta_2q_1}}{\left| \rho \left( w,\eta \right) \right|^{c_2\delta _2q_1}}\int_{T_B}{\frac{\rho \left( z \right) ^{r_1q_1+a_1\delta_1q_1+\beta _1}}{\left| \rho \left( z,u \right) \right|^{c_1\delta _1q_1}}dV\left( z \right)}.
	\end{aligned}
	$$
	
	From the first inequality in (\ref{r de 1}), we obviously have
	\begin{equation}\label{t37}
	r_i q_i+a_i \delta_i q_i+ \beta _i >-1.
	\end{equation}

	Notice that 
	$$
	\left(c_i-a_i-b_i+\alpha _i\right)\delta_i=\tau _i\delta_i=\frac{n+1+\beta _i}{q_i}+r_i-s_i,
	$$
	by the second inequality in (\ref{S1 de}) and (\ref{S2 de}), we have 
	\begin{equation}\label{t38}
	c_i\delta_iq_i-n-1-a_i\delta_iq_i-r_iq_i-\beta _i=\left(b_i-\alpha_i\right)\delta_iq_i-s_iq_i>0.
	\end{equation}
	Thus, according to Lemma \ref{lem jifendengshi}, for any given $u\in T_B$, we have
	\begin{equation}\label{t39}
	\int_{T_B}{\frac{\rho \left( z \right) ^{r_1q_1+a_1\delta_1q_1+\beta_1}}{\left| \rho \left( z,u \right) \right|^{c_1\delta _1q_1}}dV\left( u \right)}\lesssim\rho(u)^{s_1q_1-\left(b_1-\alpha_1\right)\delta_1q_1},
	\end{equation}	
	combining with (\ref{t37}), (\ref{t38}) and Lemma \ref{lem jifendengshi}, this imply that 
	$$
	\begin{aligned}
	\int_{T_B}&{\left( \int_{T_B}{\left( K_1(z,u) \right) ^{\delta _1q_1}}\left( K_2(w,\eta ) \right) ^{\delta _2q_1}\left( h_2(z,w) \right) ^{q_1}dV_{\beta _1}(z) \right) ^{q_2/q_1}}dV_{\beta _2}(w)
	\\
	&\lesssim \rho \left( \eta \right) ^{\left( b_2-\alpha _2 \right) \delta _2q_2}\rho \left( u \right) ^{s_1q_2}\int_{T_B}{\frac{\rho \left( w \right) ^{r_2q_2+a_2\delta_2q_2+\beta _2}}{\left| \rho \left( w,\eta \right) \right|^{c_2\delta _2q_2}}dV\left( w \right)}\\
	&\lesssim\rho(u)^{s_1q_2}\rho(\eta)^{s_2q_2}.
	\end{aligned}
	$$
	
	Thus, condition (\ref{3.6}) holds ture for the operator $S_{\vec{a},\ \vec{b},\ \vec{c}}$.
	
	Then the operator $S_{\vec{a}, \ \vec{b},\ \vec{c}}$ satisfies all the conditions of Lemma \ref{S3}, therefore,  operator $S_{\vec{a}, \ \vec{b},\ \vec{c}}$ is bounded from $L_{\vec{\alpha}}^{\vec{p}}\left( T_B\times T_B \right)$ to $L_{\vec{\beta}}^{\vec{q}}\left( T_B\times T_B \right)$.
\end{proof}

\begin{lemma}\label{lem 1 4}
	Let $1=p_1<p_2\le q_-\le q_+<\infty$.  If the parameters satisfy that, for any $i\in\{1,2\}$,
	$$
	\begin{cases}
	\alpha_1<b_1,\  (\alpha _2+1)<p_2(b_2+1),\\
	-q_ia_i<\beta_i+1,\ c_i=n+1+a_i+b_i+\lambda_i,\\
	\end{cases}
	$$
	where
	$$
	\begin{cases}
	\lambda _1=\frac{n+1+\beta _1}{q_1}-\left( n+1+\alpha _1 \right),\\
	\lambda _2=\frac{n+1+\beta _2}{q_2}-\frac{n+1+\alpha _2}{p_2}.\\
	\end{cases}
	$$	
	Then the operator $S_{\vec{a},\ \vec{b},\ \vec{c}}$ is bounded from $L_{\vec{\alpha}}^{\vec{p}}\left( T_B\times T_B \right)$ to $L_{\vec{\beta}}^{\vec{q}}\left( T_B\times T_B \right)$.
\end{lemma}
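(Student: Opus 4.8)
The plan is to mirror the proof of Lemma \ref{lem 1 3} with the roles of the first and second coordinates interchanged, replacing the Schur-type test Lemma \ref{S3} by Lemma \ref{S4} (the one tailored to $\vec{p}=(1,p_2)$). For $i\in\{1,2\}$ put $\tau_i:=c_i-a_i-b_i+\alpha_i$; the hypotheses on the $c_i$ then give $\tau_1=\frac{n+1+\beta_1}{q_1}>0$ and $\tau_2=\frac{n+1+\alpha_2}{p_2^{\prime}}+\frac{n+1+\beta_2}{q_2}>0$. First I would, from $-q_1a_1<\beta_1+1$ together with $\alpha_1<b_1$, produce real numbers $r_1,s_1$ for which, after dividing by $\tau_1$,
$$-\frac{\beta_1+1}{q_1}-a_1\delta_1<r_1<a_1\gamma_1,\qquad -(b_1-\alpha_1)\gamma_1<s_1<(b_1-\alpha_1)\delta_1,$$
exactly as in the index-$2$ part of the proof of Lemma \ref{lem 1 3}; and symmetrically, from $-q_2a_2<\beta_2+1$ and $\alpha_2+1<p_2(b_2+1)$, produce $r_2,s_2$ with
$$-\frac{\beta_2+1}{q_2}-a_2\delta_2<r_2<a_2\gamma_2,\qquad -\frac{\alpha_2+1}{p_2^{\prime}}-(b_2-\alpha_2)\gamma_2<s_2<(b_2-\alpha_2)\delta_2,$$
where $\gamma_1=(s_1-r_1)/\tau_1$, $\gamma_2=((n+1+\alpha_2)/p_2^{\prime}+s_2-r_2)/\tau_2$, and $\delta_i=1-\gamma_i$. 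This is the same bookkeeping as (\ref{qian 3})--(\ref{S2 de}) with the indices swapped, so it introduces no new difficulty beyond checking that the relevant open intervals are nonempty.

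Next I would set $h_1(u,\eta):=\rho(u)^{s_1}\rho(\eta)^{s_2}$ and $h_2(z,w):=\rho(z)^{r_1}\rho(w)^{r_2}$, write
$$S_{\vec{a},\ \vec{b},\ \vec{c}}\,f(z,w)=\int_{T_B}\int_{T_B}K_1(z,u)K_2(w,\eta)f(u,\eta)\,dV_{\alpha_1}(u)\,dV_{\alpha_2}(\eta)$$
with $K_1(z,u)=\rho(u)^{b_1-\alpha_1}\rho(z)^{a_1}/|\rho(z,u)|^{c_1}$ and $K_2(w,\eta)=\rho(\eta)^{b_2-\alpha_2}\rho(w)^{a_2}/|\rho(w,\eta)|^{c_2}$, and verify the two hypotheses of Lemma \ref{S4}. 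For (\ref{3.7}) the essential supremum over $u\in T_B$ is treated exactly as in the proof of Lemma \ref{lem 1 2}: since $c_1\gamma_1=(b_1-\alpha_1+a_1)\gamma_1+s_1-r_1$ and both $(b_1-\alpha_1)\gamma_1+s_1$ and $a_1\gamma_1$ are positive, Lemma \ref{lem qu bian liang} gives $\rho(u)^{(b_1-\alpha_1)\gamma_1+s_1}\rho(z)^{a_1\gamma_1}/|\rho(z,u)|^{c_1\gamma_1}\lesssim\rho(z)^{r_1}$ uniformly in $u$; the remaining integral in $\eta$ is a $\rho(\eta)$-integral to which Lemma \ref{lem jifendengshi} applies, its two convergence conditions being precisely the first inequality in the $s_2$-range and, via $\tau_2\gamma_2=(n+1+\alpha_2)/p_2^{\prime}+s_2-r_2$, the second inequality in the $r_2$-range, which yields $\lesssim(\rho(z)^{r_1}\rho(w)^{r_2})^{p_2^{\prime}}=(h_2(z,w))^{p_2^{\prime}}$.

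For (\ref{3.8}) I would argue verbatim as in the second half of the proof of Lemma \ref{lem 1 1}: integrate first in $z$ via Lemma \ref{lem jifendengshi} (legitimate because the first inequality of the $r_1$-range gives $r_1q_1+a_1\delta_1q_1+\beta_1>-1$ and the second inequality of the $s_1$-range gives $c_1\delta_1q_1-n-1-r_1q_1-a_1\delta_1q_1-\beta_1=(b_1-\alpha_1)\delta_1q_1-s_1q_1>0$), producing a factor $\rho(u)^{s_1q_1-(b_1-\alpha_1)\delta_1q_1}$, and then integrate in $w$ via Lemma \ref{lem jifendengshi} again, the analogous inequalities for the index $2$ holding by the $r_2$- and $s_2$-ranges, to reach $\lesssim(\rho(u)^{s_1}\rho(\eta)^{s_2})^{q_2}=(h_1(u,\eta))^{q_2}$. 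Since $1<p_2\le q_-<\infty$, Lemma \ref{S4} then gives that $S_{\vec{a},\ \vec{b},\ \vec{c}}$ is bounded from $L_{\vec{\alpha}}^{\vec{p}}(T_B\times T_B)$ to $L_{\vec{\beta}}^{\vec{q}}(T_B\times T_B)$. The main obstacle is the first step---confirming that the intervals for $r_i,s_i$ are nonempty---which is exactly where the strict hypotheses $-q_ia_i<\beta_i+1$, $\alpha_1<b_1$, $\alpha_2+1<p_2(b_2+1)$ and the positivity of the $\tau_i$ are used; once the exponents are pinned down, every remaining estimate is a mechanical transcription of computations already carried out in Lemmas \ref{lem 1 1}--\ref{lem 1 3}.
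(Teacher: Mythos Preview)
Your proposal is correct and follows essentially the same approach as the paper: the paper's own proof simply states that Lemma~\ref{lem 1 4} is the symmetric case of Lemma~\ref{lem 1 3}, with the definitions of $\gamma_1,\delta_1,\gamma_2,\delta_2$ modified accordingly, and omits the details. You have supplied those details correctly, using Lemma~\ref{S4} in place of Lemma~\ref{S3} and swapping the roles of the two coordinates. One small wording issue: in verifying (\ref{3.7}), what is actually needed for the $\rho(z)$-factor is $a_1\gamma_1-r_1>0$ (i.e.\ the inequality $r_1<a_1\gamma_1$ you already recorded), not merely $a_1\gamma_1>0$; but this does not affect the argument.
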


\begin{proof}
	
	Lemma \ref{lem 1 4} is the symmetric case of Lemma \ref{lem 1 3}. We only need to modify the definitions of $\gamma_1$, $\delta_1$, $\gamma_2$ and $\delta_2$, thus we omit the proof here.	
\end{proof}

\begin{lemma}\label{lem 1 5}
	Let $1<p_1<q_1=\infty$ and $1<p_2<q_2=\infty$. If the parameters satisfy that, for any $i\in\{1,2\}$,
	$$
	\begin{cases}
	a_i>0,\ \alpha _i+1<p_i(b_i+1),\\
	c_i=n+1+a_i+b_i+\lambda_i,\\
	\end{cases}
	$$
	where
	$$
	\lambda _i=-\frac{n+1+\alpha _i}{p_i}.
	$$	
	Then the operator $S_{\vec{a},\ \vec{b},\ \vec{c}}$ is bounded from $L_{\vec{\alpha}}^{\vec{p}}\left( T_B\times T_B \right)$ to $L^{\infty}\left( T_B\times T_B \right)$.
\end{lemma}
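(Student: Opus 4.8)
The plan is to reduce the claim to Lemma~\ref{ dao wu qiong } by writing $S_{\vec{a},\ \vec{b},\ \vec{c}}$ in product-kernel form. Absorbing the weights into the kernel, put
$$
S_{\vec{a},\ \vec{b},\ \vec{c}}\,f(z,w)=\int_{T_B}\int_{T_B}K_1(z,u)K_2(w,\eta)f(u,\eta)\,dV_{\alpha_1}(u)\,dV_{\alpha_2}(\eta),
$$
where
$$
K_1(z,u)=\frac{\rho(z)^{a_1}\rho(u)^{b_1-\alpha_1}}{\left|\rho(z,u)\right|^{c_1}},\qquad K_2(w,\eta)=\frac{\rho(w)^{a_2}\rho(\eta)^{b_2-\alpha_2}}{\left|\rho(w,\eta)\right|^{c_2}}.
$$
Both $K_1,K_2$ are nonnegative, and since the kernel factorizes the mixed norm is multiplicative over it; hence by Lemma~\ref{ dao wu qiong } it suffices to prove that
$$
\left\|K_1(z,\cdot)K_2(w,\cdot)\right\|_{L_{\vec{\alpha}}^{\vec{p}^{\prime}}}=\left(\int_{T_B}\left|K_1(z,u)\right|^{p_1^{\prime}}dV_{\alpha_1}(u)\right)^{\frac{1}{p_1^{\prime}}}\left(\int_{T_B}\left|K_2(w,\eta)\right|^{p_2^{\prime}}dV_{\alpha_2}(\eta)\right)^{\frac{1}{p_2^{\prime}}}
$$
is bounded uniformly in $(z,w)\in T_B\times T_B$, so the problem splits into two independent one-variable estimates.

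For the first factor (the second being identical with index $2$ in place of $1$) we compute
$$
\int_{T_B}\left|K_1(z,u)\right|^{p_1^{\prime}}dV_{\alpha_1}(u)=\rho(z)^{a_1p_1^{\prime}}\int_{T_B}\frac{\rho(u)^{(b_1-\alpha_1)p_1^{\prime}+\alpha_1}}{\left|\rho(z,u)\right|^{c_1p_1^{\prime}}}dV(u),
$$
which is precisely the integral handled by the second part of Lemma~\ref{lem jifendengshi} with $t=(b_1-\alpha_1)p_1^{\prime}+\alpha_1$ and $s=c_1p_1^{\prime}$. The condition $t>-1$ is equivalent to the hypothesis $\alpha_1+1<p_1(b_1+1)$, since $t+1=p_1^{\prime}\bigl[(b_1+1)-(\alpha_1+1)/p_1\bigr]$; and substituting $c_1=n+1+a_1+b_1-(n+1+\alpha_1)/p_1$ a short computation gives $s-t-(n+1)=a_1p_1^{\prime}$, so the remaining hypothesis $s-t>n+1$ of Lemma~\ref{lem jifendengshi} is exactly $a_1>0$. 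Applying that lemma then yields
$$
\int_{T_B}\left|K_1(z,u)\right|^{p_1^{\prime}}dV_{\alpha_1}(u)\lesssim\rho(z)^{a_1p_1^{\prime}}\cdot\rho(z)^{-(s-t-n-1)}=\rho(z)^{a_1p_1^{\prime}-a_1p_1^{\prime}}=1,
$$
a constant independent of $z$; likewise the $\eta$-integral is bounded by a constant independent of $w$. Thus $\left\|K_1(z,\cdot)K_2(w,\cdot)\right\|_{L_{\vec{\alpha}}^{\vec{p}^{\prime}}}$ is uniformly bounded and Lemma~\ref{ dao wu qiong } finishes the proof.

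No step here is a genuine obstacle; the argument is entirely exponent bookkeeping, and the one point worth stressing is that it is structural rather than accidental. The prescribed value $\lambda_i=-(n+1+\alpha_i)/p_i$ is exactly the choice that makes the positive power $\rho(z)^{a_ip_i^{\prime}}$ coming from the factor $\rho(z)^{a_i}$ cancel against the negative power $\rho(z)^{-(s-t-n-1)}$ produced by the singular integral with kernel $\rho(u)^{t}\left|\rho(z,u)\right|^{-s}$, so the kernel has a genuinely uniform $L^{\vec{p}^{\prime}}$-norm rather than one blowing up as $\rho(z)\to0$ or $\rho(z)\to\infty$. Likewise $a_i>0$ and $\alpha_i+1<p_i(b_i+1)$ are not merely sufficient but are the sharp integrability thresholds imposed by Lemma~\ref{lem jifendengshi}, which is also why the same conditions reappear as necessary ones in Lemma~\ref{lem b5}.
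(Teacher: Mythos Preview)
Your proof is correct and follows essentially the same approach as the paper: both rewrite $S_{\vec a,\vec b,\vec c}$ with the product kernel $K_1\cdot K_2$, invoke Lemma~\ref{ dao wu qiong } to reduce to a uniform bound on $\lVert K_1(z,\cdot)K_2(w,\cdot)\rVert_{L_{\vec\alpha}^{\vec p'}}$, and evaluate the resulting integrals via Lemma~\ref{lem jifendengshi}. Your version is in fact more explicit than the paper's, which simply asserts that ``combining the known conditions and Lemma~\ref{lem jifendengshi}'' yields the uniform bound, whereas you verify the hypotheses $t>-1$ and $s-t>n+1$ and carry out the exponent cancellation in full.
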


\begin{proof}
	According to the definition of operator $S_{\vec{a},\ \vec{b},\ \vec{c}}$, we know that the kernel of the integral is $$
	K_1(z,u)\cdot K_2(w,\eta)=\frac{\rho(z)^{a_1}\rho(u)^{b_1-\alpha_1}\rho(w)^{a_2}\rho(\eta)^{b_2-\alpha_2}}{\left| \rho \left( z,u \right) \right|^{c_1}\left| \rho \left( w,\eta \right) \right|^{c_2}}.
	$$	
	By Lemma \ref{ dao wu qiong }, it suffices to prove 
	$$
	\mathop {\mathrm{ess}\ \mathrm{sup}}_{\left( z,w \right) \in T_B\times T_B}\left\| K_1\left( z,\cdot \right) \cdot K_2\left( w,\cdot \right) \right\| _{L_{\vec{\alpha}}^{\vec{p}\prime}}<\infty .
	$$
	By calculation, we have
	$$
	\begin{aligned}
	&\mathop {\mathrm{ess}\ \mathrm{sup}}_{\left( z,w \right) \in T_B\times T_B}\left\| K_1\left( z,\cdot \right) \cdot K_2\left( w,\cdot \right) \right\| _{L_{\vec{\mu}}^{\vec{p}\prime}}\\
	&=\mathop {\mathrm{ess}\ \mathrm{sup}}_{\left( z,w \right) \in T_B\times T_B}
	\rho(z)^{a_1}\left( \int_{T_B}{\frac{\rho \left( u \right) ^{p_{1}^{\prime}\left( b_1-\alpha _1 \right) +\alpha _1}}{\left| \rho \left( z,u \right) \right|^{c_1p_{1}^{\prime}}}dV\left( u \right)} \right) ^{\frac{1}{p_{1}^{\prime}}}
	\\
	&\times \rho(w)^{a_2}\left( \int_{T_B}{\frac{\rho \left( \eta \right) ^{p_{2}^{\prime}\left( b_2-\alpha _2 \right) +\alpha _2}}{\left| \rho \left( w,\eta \right) \right|^{c_2p_{2}^{\prime}}}dV\left( \eta \right)} \right) ^{\frac{1}{p_{2}^{\prime}}}.
	\end{aligned}
	$$
	Combining the known conditions and Lemma \ref{lem jifendengshi}, we can easily conclude that 
	$$
	\left\| K_1\left( z,\cdot \right) \cdot K_2\left( w,\cdot \right) \right\| _{L_{\vec{\alpha}}^{\vec{p}\prime}}
	$$
	is uniformly bounded with respect to $z$ and $w$. 
	
	Therefore, the operator $S_{\vec{a},\ \vec{b},\ \vec{c}}$ is bounded from $L_{\vec{\alpha}}^{\vec{p}}\left( T_B\times T_B \right) $ to $L^\infty\left( T_B\times T_B \right)$.	
\end{proof}

\begin{lemma}\label{lem 1 6}
	Let $1=p_1<q_1=\infty$ and $1=p_2<q_2=\infty$. If the parameters satisfy that, for any $i\in\{1,2\}$,
	$$
	\begin{cases}
	a_i\ge0,\ b_i\ge\alpha _i,\\
	c_i=n+1+a_i+b_i+\lambda_i,\\
	\end{cases}
	$$
	where
	$$
	\lambda _i=-n-1-\alpha _i.
	$$
	Then the operator $S_{\vec{a},\ \vec{b},\ \vec{c}}$ is bounded from $L_{\vec{\alpha}}^{\vec{1}}\left( T_B\times T_B \right)$ to $L^{\infty}\left( T_B\times T_B \right)$.
\end{lemma}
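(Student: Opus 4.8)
The plan is to derive this from Lemma \ref{ dao wu qiong } with $\vec{p}=(1,1)$ and $\vec{q}=(\infty,\infty)$, for which the relevant dual exponent vector is $\vec{p}^{\prime}=(\infty,\infty)$. Exactly as in the proof of Lemma \ref{lem 1 5}, I would first rewrite the operator as
$$
S_{\vec{a},\ \vec{b},\ \vec{c}}\ f(z,w)=\int_{T_B}\int_{T_B}K_1(z,u)K_2(w,\eta)f(u,\eta)\,dV_{\alpha_1}(u)\,dV_{\alpha_2}(\eta),
$$
with
$$
K_1(z,u)=\frac{\rho(z)^{a_1}\rho(u)^{b_1-\alpha_1}}{\left|\rho(z,u)\right|^{c_1}},\qquad K_2(w,\eta)=\frac{\rho(w)^{a_2}\rho(\eta)^{b_2-\alpha_2}}{\left|\rho(w,\eta)\right|^{c_2}}.
$$
Since the $L_{\vec{\alpha}}^{\vec{p}^{\prime}}$-norm is now the (weighted) essential supremum, Lemma \ref{ dao wu qiong } reduces the whole statement to a single pointwise kernel estimate: it suffices to prove that $K_1(z,u)K_2(w,\eta)$ is bounded above uniformly in $(z,w,u,\eta)\in(T_B)^4$, which in turn factors into showing that each $K_i$ is bounded.

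For this I would use the exponent identity that the hypothesis encodes, namely $c_i=n+1+a_i+b_i+\lambda_i=a_i+(b_i-\alpha_i)$. This lets me write, for $i=1$,
$$
\frac{\rho(z)^{a_1}\rho(u)^{b_1-\alpha_1}}{\left|\rho(z,u)\right|^{c_1}}=\left(\frac{\rho(z)}{\left|\rho(z,u)\right|}\right)^{a_1}\left(\frac{\rho(u)}{\left|\rho(z,u)\right|}\right)^{b_1-\alpha_1},
$$
and symmetrically for $i=2$ with $(w,\eta)$ in place of $(z,u)$. By Lemma \ref{lem qu bian liang} we have $2\left|\rho(z,u)\right|\ge\max\{\rho(z),\rho(u)\}$, so both ratios on the right-hand side lie in $(0,2]$; since the hypotheses give $a_i\ge0$ and $b_i-\alpha_i\ge0$, raising to these nonnegative powers keeps each factor bounded, and hence $K_i\le 2^{a_i+b_i-\alpha_i}=2^{c_i}$ everywhere. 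Therefore $K_1(z,u)K_2(w,\eta)\le 2^{c_1+c_2}$ on $(T_B)^4$, which is precisely the uniform bound required by Lemma \ref{ dao wu qiong }.

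There is essentially no hard step here: the content is recognizing that, because $p_1=p_2=1$, the $L^{\vec{p}^{\prime}}$ kernel condition of Lemma \ref{ dao wu qiong } collapses to a pointwise one, so that---unlike in the proof of Lemma \ref{lem 1 5}---no genuine integral estimate and no appeal to Lemma \ref{lem jifendengshi} are needed. The mild hypotheses $a_i\ge0$ and $b_i\ge\alpha_i$ are exactly what makes the monotonicity-in-the-exponent argument valid (and they cannot be weakened to strict inequalities). Assembling the pieces, Lemma \ref{ dao wu qiong } yields the boundedness of $S_{\vec{a},\ \vec{b},\ \vec{c}}$ from $L_{\vec{\alpha}}^{\vec{1}}(T_B\times T_B)$ to $L^{\infty}(T_B\times T_B)$.
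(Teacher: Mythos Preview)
Your proof is correct and follows essentially the same approach as the paper: both reduce via Lemma \ref{ dao wu qiong } (with $\vec{p}^{\prime}=(\infty,\infty)$) to a pointwise kernel bound, then invoke Lemma \ref{lem qu bian liang} together with the exponent identity $c_i=a_i+(b_i-\alpha_i)$ and the sign conditions $a_i\ge0$, $b_i\ge\alpha_i$. Your write-up is in fact more explicit about the factorization step than the paper's.
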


\begin{proof}
	By Lemma \ref{ dao wu qiong }, it suffices to prove 
	$$
	\mathop {\mathrm{ess}\ \mathrm{sup}}_{\left( z,w \right) \in T_B\times T_B}\left\| K_1\left( z,\cdot \right) \cdot K_2\left( w,\cdot \right) \right\| _{\infty}<\infty .
	$$
	By calculation, we have
	$$
	\begin{aligned}
	&\mathop{\text{ess}\ \text{sup}}_{\left( z,w \right) \in T_B\times T_B}\lVert K_1\left( z,\cdot \right) \cdot K_2\left( w,\cdot \right) \rVert _{\infty}
	\\
	&=\mathop{\text{ess}\ \text{sup}}_{\left( z,w \right) \in T_B\times T_B}\mathop{\text{ess}\ \text{sup}}_{\left( u,\eta \right) \in T_B\times T_B}\rho \left( z \right) ^{a_1}\frac{\rho \left( u \right) ^{b_1-\alpha _1}}{\left| \rho \left( z,u \right) \right|^{c_1}}\rho \left( w \right) ^{a_2}\frac{\rho \left( \eta \right) ^{b_2-\alpha _2}}{\left| \rho \left( w,\eta \right) \right|^{c_2}}.
	\end{aligned}
	$$
	Combined with the known conditions, by lemma \ref{lem qu bian liang}, we can easily conclude that 
	$$
	\left\| K_1\left( z,\cdot \right) \cdot K_2\left( w,\cdot \right) \right\| _{\infty}
	$$
	is uniformly bounded with respect to $z$ and $w$. 
	
	Therefore, the operator $S_{\vec{a},\ \vec{b},\ \vec{c}}$ is bounded from $L_{\vec{\alpha}}^{\vec{1}}\left( T_B\times T_B \right) $ to $L^\infty\left( T_B\times T_B \right)$.
\end{proof}

\begin{lemma}\label{lem 1 7}
	Let $1=p_1<q_1=\infty$ and $1<p_2<q_2=\infty$. If the parameters satisfy that, for any $i\in\{1,2\}$,
	$$
	\begin{cases}
	a_1\ge0,\ a_2>0,\\
	b_1\ge \alpha_1,\ p_2\left( b_2+1 \right) >\alpha _2+1,\\
	c_i=n+1+a_i+b_i+\lambda_i,\\
	\end{cases}
	$$
	where
	$$
	\begin{cases}
	\lambda _1=-n-1-\alpha _1,\\
	\lambda_2=-\frac{n+1+\alpha _2}{p_2}.\\
	\end{cases}
	$$	
	Then the operator $S_{\vec{a},\ \vec{b},\ \vec{c}}$ is bounded from $L_{\vec{\alpha}}^{\vec{p}}\left( T_B\times T_B \right)$ to $L^{\infty}\left( T_B\times T_B \right)$.	
\end{lemma}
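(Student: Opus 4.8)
The plan is to derive this from Lemma~\ref{ dao wu qiong } applied with $\vec{p}=(1,p_2)$, so that $\vec{p}^{\prime}=(\infty,p_2^{\prime})$ with $p_2^{\prime}\in(1,\infty)$. Writing $S_{\vec{a},\ \vec{b},\ \vec{c}}\,f(z,w)=\int_{T_B}\int_{T_B}K_1(z,u)K_2(w,\eta)f(u,\eta)\,dV_{\alpha_1}(u)\,dV_{\alpha_2}(\eta)$ with
$$
K_1(z,u)=\frac{\rho(z)^{a_1}\rho(u)^{b_1-\alpha_1}}{|\rho(z,u)|^{c_1}},\qquad K_2(w,\eta)=\frac{\rho(w)^{a_2}\rho(\eta)^{b_2-\alpha_2}}{|\rho(w,\eta)|^{c_2}},
$$
it suffices, by Lemma~\ref{ dao wu qiong }, to prove that
$$
\big\|K_1(z,\cdot)\cdot K_2(w,\cdot)\big\|_{L_{\vec{\alpha}}^{\vec{p}^{\prime}}}=\Big(\operatorname*{ess\,sup}_{u\in T_B}K_1(z,u)\Big)\Big(\int_{T_B}K_2(w,\eta)^{p_2^{\prime}}\,dV_{\alpha_2}(\eta)\Big)^{1/p_2^{\prime}}
$$
is bounded uniformly in $(z,w)\in T_B\times T_B$. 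The factorization is immediate, since $K_1$ does not depend on $\eta$ and $K_2$ does not depend on $u$, and the inner norm in the first slot degenerates to an essential supremum because $p_1^{\prime}=\infty$.

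For the first factor I would use the hypothesis $c_1=n+1+a_1+b_1+\lambda_1=a_1+(b_1-\alpha_1)$, which, combined with $a_1\ge0$ and $b_1\ge\alpha_1$, allows the splitting
$$
K_1(z,u)=\Big(\frac{\rho(z)}{|\rho(z,u)|}\Big)^{a_1}\Big(\frac{\rho(u)}{|\rho(z,u)|}\Big)^{b_1-\alpha_1}.
$$
Since both exponents are nonnegative, Lemma~\ref{lem qu bian liang} gives $K_1(z,u)\le 2^{a_1}2^{\,b_1-\alpha_1}=2^{c_1}$, a constant independent of $z$ and $u$; hence $\operatorname*{ess\,sup}_{u\in T_B}K_1(z,u)\le 2^{c_1}$.

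For the second factor I would compute $\int_{T_B}K_2(w,\eta)^{p_2^{\prime}}\,dV_{\alpha_2}(\eta)=\rho(w)^{a_2p_2^{\prime}}\int_{T_B}\rho(\eta)^{(b_2-\alpha_2)p_2^{\prime}+\alpha_2}|\rho(w,\eta)|^{-c_2p_2^{\prime}}\,dV(\eta)$ and apply the second (``in particular'') part of Lemma~\ref{lem jifendengshi} with $t=(b_2-\alpha_2)p_2^{\prime}+\alpha_2$ and $s=c_2p_2^{\prime}$. Here the requirement $t>-1$ is exactly the hypothesis $p_2(b_2+1)>\alpha_2+1$, as one sees after dividing by $p_2^{\prime}>0$ and using $1/p_2^{\prime}=1-1/p_2$; and since $c_2=n+1+a_2+b_2-(n+1+\alpha_2)/p_2$ one computes $c_2-(b_2-\alpha_2)=(n+1+\alpha_2)/p_2^{\prime}+a_2$, whence $s-t-n-1=a_2p_2^{\prime}>0$ because $a_2>0$. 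Lemma~\ref{lem jifendengshi} then yields $\int_{T_B}\rho(\eta)^t|\rho(w,\eta)|^{-s}\,dV(\eta)=C_1(n,s,t)\,\rho(w)^{-a_2p_2^{\prime}}$, so the $\rho(w)$-powers cancel and the second factor equals the constant $C_1(n,s,t)^{1/p_2^{\prime}}$.

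Combining the two estimates gives $\|K_1(z,\cdot)\cdot K_2(w,\cdot)\|_{L_{\vec{\alpha}}^{\vec{p}^{\prime}}}\le 2^{c_1}C_1(n,s,t)^{1/p_2^{\prime}}$ uniformly in $(z,w)$, and Lemma~\ref{ dao wu qiong } then delivers the boundedness of $S_{\vec{a},\ \vec{b},\ \vec{c}}$ from $L_{\vec{\alpha}}^{\vec{p}}(T_B\times T_B)$ to $L^{\infty}(T_B\times T_B)$. I expect no genuine obstacle here: the only delicate point is the arithmetic that translates the two displayed parameter relations into the hypotheses $t>-1$ and $s-t>n+1$ of Lemma~\ref{lem jifendengshi}, together with the check that the $\rho(z)$- and $\rho(w)$-dependence cancels exactly, which is precisely where the identity $c_i=n+1+a_i+b_i+\lambda_i$ is used.
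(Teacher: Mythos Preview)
Your proposal is correct and follows exactly the route the paper indicates: applying Lemma~\ref{ dao wu qiong } and then handling the $u$-factor via Lemma~\ref{lem qu bian liang} (the method of Lemma~\ref{lem 1 6}) and the $\eta$-factor via Lemma~\ref{lem jifendengshi} (the method of Lemma~\ref{lem 1 5}). The paper merely states that the result is a direct corollary of Lemmas~\ref{lem 1 5}, \ref{lem 1 6}, and~\ref{ dao wu qiong } and omits the details; you have written those details out, and the parameter arithmetic you carry through is accurate.
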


\begin{proof}
	This lemma is a direct corollary of Lemma \ref{lem 1 5}, Lemma \ref{lem 1 6} and Lemma \ref{ dao wu qiong }, which we omit to prove.
\end{proof}

\begin{lemma}\label{lem 1 8}
	Let $1<p_1<q_1=\infty$ and $1=p_2<q_2=\infty$. If the parameters satisfy that, for any $i\in\{1,2\}$,
	$$
	\begin{cases}
	a_1>0,\ a_2\ge0, \\
	p_1\left( b_1+1 \right) >\alpha _1+1,\ b_2\ge \alpha_2,\\
	 c_i=n+1+a_i+b_i+\lambda_i,\\
	\end{cases}
	$$
	where
	$$
	\begin{cases}
	\lambda_1=-\frac{n+1+\alpha _1}{p_1},\\
	\lambda _2=-n-1-\alpha _2.\\
	\end{cases}
	$$	
	Then the operator $S_{\vec{a},\ \vec{b},\ \vec{c}}$ is bounded from $L_{\vec{\alpha}}^{\vec{p}}\left( T_B\times T_B \right)$ to $L^{\infty}\left( T_B\times T_B \right)$.	
\end{lemma}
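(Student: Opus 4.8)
The plan is to deduce this from the $L^{\infty}$-target criterion Lemma \ref{ dao wu qiong }, treating the two coordinates separately: since here $\vec{p}=(p_1,1)$ with $p_1>1$, the first slot is governed by exactly the estimate behind Lemma \ref{lem 1 5} and the second slot by the one behind Lemma \ref{lem 1 6}, so the statement is in essence the ``$p_1>1$ half of Lemma \ref{lem 1 5} glued to the $p_2=1$ half of Lemma \ref{lem 1 6}''.

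First I would write $S_{\vec{a},\ \vec{b},\ \vec{c}}f(z,w)=\int_{T_B}\int_{T_B}K_1(z,u)K_2(w,\eta)f(u,\eta)\,dV_{\alpha_1}(u)\,dV_{\alpha_2}(\eta)$ with
$$K_1(z,u)=\frac{\rho(z)^{a_1}\rho(u)^{b_1-\alpha_1}}{|\rho(z,u)|^{c_1}},\qquad K_2(w,\eta)=\frac{\rho(w)^{a_2}\rho(\eta)^{b_2-\alpha_2}}{|\rho(w,\eta)|^{c_2}}.$$
Since $\vec{q}=(\infty,\infty)$, by Lemma \ref{ dao wu qiong } it suffices to show that $\lVert K_1(z,\cdot)\cdot K_2(w,\cdot)\rVert_{L_{\vec{\alpha}}^{\vec{p}^{\prime}}}$ is uniformly bounded in $(z,w)$; because $\vec{p}^{\prime}=(p_1^{\prime},\infty)$, this norm equals
$$\rho(z)^{a_1}\Big(\int_{T_B}\frac{\rho(u)^{p_1^{\prime}(b_1-\alpha_1)+\alpha_1}}{|\rho(z,u)|^{c_1p_1^{\prime}}}\,dV(u)\Big)^{1/p_1^{\prime}}\cdot\rho(w)^{a_2}\,\mathop{\mathrm{ess}\ \mathrm{sup}}_{\eta\in T_B}\frac{\rho(\eta)^{b_2-\alpha_2}}{|\rho(w,\eta)|^{c_2}},$$
so it is enough to bound the $z$-factor and the $w$-factor separately.

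For the $z$-factor I would argue exactly as in the proof of Lemma \ref{lem 1 5}: $\alpha_1+1<p_1(b_1+1)$ gives $p_1^{\prime}(b_1-\alpha_1)+\alpha_1>-1$, and $a_1>0$ together with the identity $c_1=n+1+a_1+b_1-(n+1+\alpha_1)/p_1$ gives $c_1p_1^{\prime}-\bigl(p_1^{\prime}(b_1-\alpha_1)+\alpha_1\bigr)=n+1+p_1^{\prime}a_1>n+1$; hence Lemma \ref{lem jifendengshi} evaluates the $u$-integral to a constant times $\rho(z)^{-(c_1-(b_1-\alpha_1)-(n+1+\alpha_1)/p_1^{\prime})}$, and the same identity is precisely what makes this power of $\rho(z)$ cancel the prefactor $\rho(z)^{a_1}$, leaving a constant independent of $z$. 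For the $w$-factor I would argue as in the proof of Lemma \ref{lem 1 6}: by the identity $c_2=a_2+b_2-\alpha_2$ one has $c_2-(b_2-\alpha_2)=a_2\ge0$, so applying Lemma \ref{lem qu bian liang} twice — once in the form $2|\rho(w,\eta)|\ge\rho(\eta)$, using $b_2\ge\alpha_2$, once in the form $2|\rho(w,\eta)|\ge\rho(w)$, using $a_2\ge0$ — yields $\rho(\eta)^{b_2-\alpha_2}\rho(w)^{a_2}\le\bigl(2|\rho(w,\eta)|\bigr)^{c_2}$, i.e. $\rho(\eta)^{b_2-\alpha_2}|\rho(w,\eta)|^{-c_2}\le 2^{c_2}\rho(w)^{-a_2}$ uniformly in $\eta$; hence the $w$-factor is at most $2^{c_2}$. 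Multiplying the two bounds gives the required uniform estimate, and Lemma \ref{ dao wu qiong } completes the proof.

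I do not expect a real obstacle, since each half is the routine computation already carried out in Lemmas \ref{lem 1 5} and \ref{lem 1 6}. The only point requiring care is the mismatch between the strict and the non-strict hypotheses: $a_1>0$ is exactly the integrability threshold for the $u$-integral — it is what makes the exponent gap in Lemma \ref{lem jifendengshi} strictly larger than $n+1$ — whereas in the second variable $a_2\ge0$ and $b_2\ge\alpha_2$ already suffice, because there one only invokes the pointwise inequality of Lemma \ref{lem qu bian liang} rather than an integral convergence.
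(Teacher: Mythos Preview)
Your proposal is correct and follows essentially the same route as the paper: the paper notes that this lemma is the symmetric case of Lemma \ref{lem 1 7}, which in turn is declared a direct corollary of Lemmas \ref{lem 1 5}, \ref{lem 1 6} and \ref{ dao wu qiong } --- precisely the combination you have spelled out in detail, handling the $p_1>1$ coordinate via the integral estimate of Lemma \ref{lem jifendengshi} and the $p_2=1$ coordinate via the pointwise bound of Lemma \ref{lem qu bian liang}.
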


\begin{proof}
	Lemma \ref{lem 1 8} is the symmetric case of Lemma \ref{lem 1 7}. Thus, we omit the proof here.	
\end{proof}

\begin{lemma}\label{lem 1 9}
	Let $p_1=q_1=\infty$ and $p_2=q_2=\infty$. If the parameters satisfy that, for any $i\in\{1,2\}$,
	\begin{equation}\label{t 1 11}
	\begin{cases}
	a_i>0,\ b_i>-1,\\
	c_i=n+1+a_i+b_i.\\
	\end{cases}
	\end{equation}
	Then the operator $S_{\vec{a},\ \vec{b},\ \vec{c}}$ is bounded from $L^{\infty}\left( T_B\times T_B \right)$ to $L^{\infty}\left( T_B\times T_B \right)$.
\end{lemma}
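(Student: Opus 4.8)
The plan is to estimate $S_{\vec a,\ \vec b,\ \vec c}f$ pointwise and exploit the tensor-product structure of its kernel. Fix $f\in L^\infty(T_B\times T_B)$. Since the kernel
$$
\frac{\rho(u)^{b_1}\rho(\eta)^{b_2}}{\left|\rho(z,u)\right|^{c_1}\left|\rho(w,\eta)\right|^{c_2}}
$$
is nonnegative, the modulus passes inside the integral and, bounding $\left|f(u,\eta)\right|$ by $\lVert f\rVert_\infty$, we get for every fixed $(z,w)\in T_B\times T_B$
$$
\left|S_{\vec a,\ \vec b,\ \vec c}f(z,w)\right|
\le \lVert f\rVert_\infty\,\rho(z)^{a_1}\rho(w)^{a_2}
\int_{T_B}\frac{\rho(u)^{b_1}}{\left|\rho(z,u)\right|^{c_1}}\,dV(u)
\int_{T_B}\frac{\rho(\eta)^{b_2}}{\left|\rho(w,\eta)\right|^{c_2}}\,dV(\eta),
$$
where the double integral splits into the product of two one-variable integrals because each factor of the kernel depends on only one of the inner variables (this is the $K:=K_1\cdot K_2$ structure already used throughout Section 5, with $\mu_i=dV$).

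Next I would apply the "in particular" part of Lemma \ref{lem jifendengshi} to each factor. For $i\in\{1,2\}$ the hypotheses $b_i>-1$ and $c_i-b_i=n+1+a_i>n+1$ (the latter using $a_i>0$ together with $c_i=n+1+a_i+b_i$) are exactly the conditions required there, so
$$
\int_{T_B}\frac{\rho(u)^{b_i}}{\left|\rho(z,u)\right|^{c_i}}\,dV(u)
=\frac{C_1(n,c_i,b_i)}{\rho(z)^{\,c_i-b_i-n-1}}
=\frac{C_1(n,c_i,b_i)}{\rho(z)^{a_i}}.
$$
Substituting these two identities back, the prefactors $\rho(z)^{a_1}$ and $\rho(w)^{a_2}$ cancel exactly against $\rho(z)^{-a_1}$ and $\rho(w)^{-a_2}$, leaving
$$
\left|S_{\vec a,\ \vec b,\ \vec c}f(z,w)\right|\le C_1(n,c_1,b_1)\,C_1(n,c_2,b_2)\,\lVert f\rVert_\infty,
$$
a bound independent of $(z,w)$. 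Taking the essential supremum over $(z,w)$ yields $\lVert S_{\vec a,\ \vec b,\ \vec c}f\rVert_\infty\lesssim\lVert f\rVert_\infty$, which is the claim.

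I do not anticipate a genuine obstacle here; the only point demanding a line of care is checking that the two numerical conditions of Lemma \ref{lem jifendengshi} hold, namely $b_i>-1$ (a hypothesis) and $c_i-b_i-n-1=a_i>0$ (again a hypothesis), so the cited integral formula applies with the clean exponent $a_i$ and the cancellation is exact rather than merely up to a power of $\rho$. One could alternatively phrase the argument through Lemma \ref{ dao wu qiong } with $\vec q=(\infty,\infty)$, noting that the relevant quantity $\lVert K_1(z,\cdot)\cdot K_2(w,\cdot)\rVert_{L^\infty(dV\times dV)}$-type estimate is replaced here by the $L^1$-in-the-kernel bound just computed; but the direct pointwise estimate above is the shortest route.
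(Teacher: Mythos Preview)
Your proposal is correct and follows essentially the same approach as the paper: bound $|f|$ by $\lVert f\rVert_\infty$, factor the double integral as a product, and apply Lemma~\ref{lem jifendengshi} to each factor so that the resulting $\rho(z)^{-a_1}\rho(w)^{-a_2}$ cancels the prefactor. The paper's proof is simply a terser version of what you wrote, omitting the explicit verification of the hypotheses $b_i>-1$ and $c_i-b_i>n+1$ that you spelled out.
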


\begin{proof}
	Obviously, when the parameter satisfies condition (\ref{t 1 11}), 
	$$
	\left| S_{\vec{a},\ \vec{b},\ \vec{c}}\ f\left( z,w \right) \right|\lesssim \rho \left( z \right) ^{a_1}\int_{T_B}{\frac{\rho \left( u \right) ^{b_1}}{\left| \rho \left( z,u \right) \right|^{c_1}}dV\left( u \right)}\cdot \rho \left( w \right) ^{a_2}\int_{T_B}{\frac{\rho \left( \eta \right) ^{b_2}}{\left| \rho \left( w,\eta \right) \right|^{c_2}}dV\left( \eta \right)}<\infty.
	$$
	
	Thus, by Lemma \ref{lem jifendengshi}, $S_{\vec{a},\ \vec{b},\ \vec{c}}$ is bounded from $L^{\infty}\left( T_B\times T_B \right) $ to $L^{\infty}\left( T_B\times T_B \right)$.
\end{proof}

\begin{lemma}\label{lem 1 10}
	Let $1=p_1<q_1=\infty$ and $p_2=q_2=\infty$. If the parameters satisfy that, for any $i\in\{1,2\}$,
	$$
	\begin{cases}
	a_1\ge0,\ a_2>0, \\
	 b_1 \ge \alpha _1,\ b_2>-1,\\
	c_i=n+1+a_i+b_i+\lambda_i,\\
	\end{cases}
	$$
	where
	$$
	\begin{cases}
	\lambda _1=-n-1-\alpha _1,\\
	\lambda_2=0.\\
	\end{cases}
	$$	
	Then the operator $S_{\vec{a},\ \vec{b},\ \vec{c}}$ is bounded from $L_{\vec{\alpha}}^{\vec{p}}\left( T_B\times T_B \right)$ to $L^{\infty}\left( T_B\times T_B \right)$.	
	
\end{lemma}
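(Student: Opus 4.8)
The plan is to apply Lemma \ref{ dao wu qiong } with $\vec{p}=(1,\infty)$, so that $\vec{p}^{\,\prime}=(\infty,1)$; it then suffices to show that the mixed norm $\lVert K_1(z,\cdot)\cdot K_2(w,\cdot)\rVert_{L_{\vec{\alpha}}^{\vec{p}^{\,\prime}}}$ of the kernel is uniformly bounded in $(z,w)$, where, exactly as in Lemma \ref{lem 1 5},
$$
K_1(z,u)=\frac{\rho(z)^{a_1}\rho(u)^{b_1-\alpha_1}}{\left|\rho(z,u)\right|^{c_1}},\qquad K_2(w,\eta)=\frac{\rho(w)^{a_2}\rho(\eta)^{b_2-\alpha_2}}{\left|\rho(w,\eta)\right|^{c_2}},
$$
and $S_{\vec{a},\ \vec{b},\ \vec{c}}f(z,w)=\int_{T_B}\int_{T_B}K_1(z,u)K_2(w,\eta)f(u,\eta)\,dV_{\alpha_1}(u)\,dV_{\alpha_2}(\eta)$. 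In spirit this is a hybrid of Lemma \ref{lem 1 6}, whose first slot (with $p_1=1$) is controlled by the crude comparison in Lemma \ref{lem qu bian liang}, and Lemma \ref{lem 1 9}, whose second slot (with $p_2=q_2=\infty$) is controlled by an exact evaluation via Lemma \ref{lem jifendengshi}.

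Concretely, since $p_1^{\prime}=\infty$ the inner ($u$-)norm is an essential supremum and since $p_2^{\prime}=1$ the outer ($\eta$-)norm is an $L^1$-integral; as $K_2(w,\eta)$ is independent of $u$, the kernel norm factors as
$$
\bigl\lVert K_1(z,\cdot)\cdot K_2(w,\cdot)\bigr\rVert_{L_{\vec{\alpha}}^{\vec{p}^{\,\prime}}}=\Bigl(\operatorname*{ess\,sup}_{u\in T_B}K_1(z,u)\Bigr)\int_{T_B}K_2(w,\eta)\,dV_{\alpha_2}(\eta).
$$
For the $\eta$-integral, the hypothesis $\lambda_2=0$ gives $c_2=n+1+a_2+b_2$, so that $b_2>-1$ and $c_2-b_2=n+1+a_2>n+1$; the one-variable form of Lemma \ref{lem jifendengshi} then yields $\int_{T_B}\rho(\eta)^{b_2}\left|\rho(w,\eta)\right|^{-c_2}\,dV(\eta)=C\,\rho(w)^{-a_2}$, which cancels the prefactor $\rho(w)^{a_2}$ and leaves the constant $C$. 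For the supremum over $u$, the hypothesis $\lambda_1=-n-1-\alpha_1$ gives $c_1=a_1+b_1-\alpha_1$, hence
$$
K_1(z,u)=\left(\frac{\rho(u)}{\left|\rho(z,u)\right|}\right)^{b_1-\alpha_1}\left(\frac{\rho(z)}{\left|\rho(z,u)\right|}\right)^{a_1};
$$
since $b_1-\alpha_1\ge0$ and $a_1\ge0$, Lemma \ref{lem qu bian liang} bounds each bracket by a power of $2$, so that $K_1(z,u)\le 2^{a_1+b_1-\alpha_1}$ uniformly in $z$ and $u$. Multiplying the two uniform bounds and invoking Lemma \ref{ dao wu qiong } completes the argument.

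The computation is routine, and I do not anticipate a genuine obstacle; the only point demanding care is the bookkeeping of the mixed-norm slots — recognizing that $p_1=1$ turns the first integration into a pointwise supremum in $u$, handled by the coarse comparison of Lemma \ref{lem qu bian liang}, whereas $p_2=\infty$ with $q_2=\infty$ turns the second into an honest $L^1$-integral in $\eta$ that must be evaluated exactly with Lemma \ref{lem jifendengshi} — and in checking that the hypotheses $b_2>-1$ and $c_2-b_2>n+1$ of the latter are precisely what the standing assumptions $b_2>-1$ and $a_2>0$ guarantee.
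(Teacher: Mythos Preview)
Your proof is correct and follows the same approach the paper intends: the paper merely says the argument is ``similar to Lemma \ref{lem b10}'' and omits details, but the underlying computation is exactly the factorization you carry out---an $\eta$-integral evaluated by Lemma \ref{lem jifendengshi} (using $b_2>-1$, $a_2>0$, and $c_2=n+1+a_2+b_2$) together with a uniform bound on $K_1(z,u)$ via Lemma \ref{lem qu bian liang} (using $a_1\ge 0$, $b_1\ge\alpha_1$, and $c_1=a_1+b_1-\alpha_1$), all fed into Lemma \ref{ dao wu qiong }. Your bookkeeping of the mixed-norm slots for $\vec{p}^{\,\prime}=(\infty,1)$ is accurate.
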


\begin{proof}
	The proof of this lemma is similar to Lemma \ref{lem b10}. Thus, we omit the proof.
\end{proof}

\begin{lemma}\label{lem 1 11}
	Let $p_1=q_1=\infty$ and $1=p_2<q_2=\infty$. If the parameters satisfy that, for any $i\in\{1,2\}$,
	$$
	\begin{cases}
	a_1>0,\ a_2\ge 0,\\
	b_1>-1,\  b_2\ge \alpha_2,\\
	c_i=n+1+a_i+b_i+\lambda_i,\\
	\end{cases}
	$$
	where
	$$
	\begin{cases}
	\lambda_1=0,\\
	\lambda _2=-n-1-\alpha _2.\\
	\end{cases}
	$$
	Then $S_{\vec{a},\ \vec{b},\ \vec{c}}$ is bounded from $L_{\vec{\alpha}}^{\vec{p}}\left( T_B\times T_B \right) $ to $L^{\infty}\left( T_B\times T_B \right)$.
	
\end{lemma}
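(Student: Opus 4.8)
The plan is to apply Lemma~\ref{ dao wu qiong } with $\vec{p}=(p_1,p_2)=(\infty,1)$, whose conjugate exponent is $\vec{p}^{\prime}=(p_1^{\prime},p_2^{\prime})=(1,\infty)$, and $\vec{q}=(\infty,\infty)$; the hypothesis $1\le p_-\le p_+\le\infty$ is immediate since $p_-=1$ and $p_+=\infty$. Writing $S_{\vec{a},\ \vec{b},\ \vec{c}}$ in kernel form
$$
S_{\vec{a},\ \vec{b},\ \vec{c}}\ f(z,w)=\int_{T_B}\int_{T_B}K_1(z,u)K_2(w,\eta)f(u,\eta)\,dV_{\alpha_1}(u)\,dV_{\alpha_2}(\eta),
$$
with $K_1(z,u)=\rho(z)^{a_1}\rho(u)^{b_1-\alpha_1}/|\rho(z,u)|^{c_1}$, $K_2(w,\eta)=\rho(w)^{a_2}\rho(\eta)^{b_2-\alpha_2}/|\rho(w,\eta)|^{c_2}$, and underlying measures $dV_{\alpha_1}\times dV_{\alpha_2}$, it then suffices to show that $\big\|K_1(z,\cdot)\,K_2(w,\cdot)\big\|_{L_{\vec{\alpha}}^{\vec{p}^{\prime}}}$ is bounded uniformly over $(z,w)\in T_B\times T_B$. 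Since $\vec{p}^{\prime}=(1,\infty)$ and the kernel factors, this mixed norm equals
$$
\Big(\int_{T_B}K_1(z,u)\,dV_{\alpha_1}(u)\Big)\Big(\mathop{\mathrm{ess\,sup}}_{\eta\in T_B}K_2(w,\eta)\Big),
$$
and I would estimate the two factors separately.

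For the first factor, note $\rho(u)^{b_1-\alpha_1}\,dV_{\alpha_1}(u)=\rho(u)^{b_1}\,dV(u)$, and $\lambda_1=0$ gives $c_1=n+1+a_1+b_1$; since $b_1>-1$ and $c_1-b_1=n+1+a_1>n+1$, the single-variable part of Lemma~\ref{lem jifendengshi} yields $\int_{T_B}\rho(u)^{b_1}|\rho(z,u)|^{-c_1}\,dV(u)=C\,\rho(z)^{-(c_1-b_1-n-1)}=C\,\rho(z)^{-a_1}$, so $\int_{T_B}K_1(z,u)\,dV_{\alpha_1}(u)=C$ is independent of $z$. For the second factor, $\lambda_2=-n-1-\alpha_2$ gives $c_2=a_2+(b_2-\alpha_2)$, hence
$$
K_2(w,\eta)=\rho(w)^{a_2}\Big(\frac{\rho(\eta)}{|\rho(w,\eta)|}\Big)^{b_2-\alpha_2}\frac{1}{|\rho(w,\eta)|^{a_2}}.
$$
Because $b_2-\alpha_2\ge 0$ and $a_2\ge 0$, Lemma~\ref{lem qu bian liang} (which says $2|\rho(w,\eta)|\ge\max\{\rho(w),\rho(\eta)\}$) gives $\big(\rho(\eta)/|\rho(w,\eta)|\big)^{b_2-\alpha_2}\le 2^{b_2-\alpha_2}$ and $|\rho(w,\eta)|^{-a_2}\le 2^{a_2}\rho(w)^{-a_2}$, so $K_2(w,\eta)\lesssim 1$ uniformly in $(w,\eta)$, and in particular $\mathop{\mathrm{ess\,sup}}_{\eta}K_2(w,\eta)\lesssim 1$ uniformly in $w$. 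Multiplying the two estimates shows the mixed norm above is uniformly bounded, and Lemma~\ref{ dao wu qiong } then gives that $S_{\vec{a},\ \vec{b},\ \vec{c}}$ is bounded from $L_{\vec{\alpha}}^{\vec{p}}(T_B\times T_B)$ to $L^{\infty}(T_B\times T_B)$.

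I do not expect a genuine obstacle here; the argument is routine and is the mirror image (swap the two variables) of Lemma~\ref{lem 1 10}. The only points deserving attention are keeping straight the order of the mixed norm and its conjugate, and noticing that the endpoint cases $a_2=0$ and $b_2=\alpha_2$ are exactly why the second factor must be controlled through the elementary inequality of Lemma~\ref{lem qu bian liang} rather than the integral formula of Lemma~\ref{lem jifendengshi}, which would require strict inequalities there. Equivalently one may avoid Lemma~\ref{ dao wu qiong } and argue directly: as $p_1=\infty$, every $f\in L_{\vec{\alpha}}^{\vec{p}}$ satisfies $|f(z,w)|\le g(w)$ with $g(w):=\mathop{\mathrm{ess\,sup}}_{z}|f(z,w)|$, $\|g\|_{L^{1}(T_B,\,dV_{\alpha_2})}=\|f\|_{\vec{p},\vec{\alpha}}$, and then $|S_{\vec{a},\ \vec{b},\ \vec{c}}f(z,w)|$ factors into $\big(\rho(z)^{a_1}\int_{T_B}\rho(u)^{b_1}|\rho(z,u)|^{-c_1}\,dV(u)\big)$, a constant by Lemma~\ref{lem jifendengshi}, times $\rho(w)^{a_2}\int_{T_B}\rho(\eta)^{b_2-\alpha_2}|\rho(w,\eta)|^{-c_2}g(\eta)\,dV_{\alpha_2}(\eta)\le\big(\rho(w)^{a_2}\mathop{\mathrm{ess\,sup}}_{\eta}\rho(\eta)^{b_2-\alpha_2}|\rho(w,\eta)|^{-c_2}\big)\|g\|_{L^{1}(T_B,\,dV_{\alpha_2})}\lesssim\|f\|_{\vec{p},\vec{\alpha}}$ by Lemma~\ref{lem qu bian liang}, giving $\|S_{\vec{a},\ \vec{b},\ \vec{c}}f\|_{\infty}\lesssim\|f\|_{\vec{p},\vec{\alpha}}$.
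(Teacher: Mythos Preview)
Your proof is correct and follows essentially the same approach the paper uses in the companion lemmas (Lemmas~\ref{lem 1 5}, \ref{lem 1 6}, \ref{lem 1 9}, \ref{lem 1 10}): apply Lemma~\ref{ dao wu qiong } (or argue directly by factoring), handle the $p_1=\infty$ variable via the integral identity of Lemma~\ref{lem jifendengshi}, and handle the $p_2=1$ variable via the elementary inequality of Lemma~\ref{lem qu bian liang}. The paper itself omits the proof of Lemma~\ref{lem 1 11}, merely noting it is the symmetric case of Lemma~\ref{lem 1 10}, so your write-up is in fact more detailed than what appears there; your remark that the endpoint conditions $a_2\ge 0$, $b_2\ge\alpha_2$ force the use of Lemma~\ref{lem qu bian liang} rather than Lemma~\ref{lem jifendengshi} is exactly the point.
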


\begin{proof}
	Lemma \ref{lem 1 10} is the symmetric case of Lemma \ref{lem 1 9}. Thus, we omit the proof here.
\end{proof}

\begin{lemma}\label{lem 1 12}
	Let $p_1=q_1=\infty$ and $1<p_2<q_2=\infty$. If the parameters satisfy that, for any $i\in\{1,2\}$,
	$$
	\begin{cases}
	a_1>0,\ a_2>0,\\
	b_1>-1,\ p_2\left( b_2+1 \right) >\alpha _2+1,\\
	c_i=n+1+a_i+b_i+ \lambda _i ,\\
	\end{cases}	
	$$
	where
	$$
	\begin{cases}
	\lambda _1=0,\\
	\lambda_2=-\frac{n+1+\alpha _2}{p_2}.\\
	\end{cases}
	$$
	Then the operator $S_{\vec{a},\ \vec{b},\ \vec{c}}$ is bounded from $L^{\infty}\left( T_B\times T_B \right)$ to $L_{\vec{\beta}}^{\vec{q}}\left( T_B\times T_B \right)$.
\end{lemma}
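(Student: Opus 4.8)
The plan is to reduce the statement to Lemma~\ref{ dao wu qiong }, in exactly the way Lemma~\ref{lem 1 5} was reduced to it; the one new feature is that here $p_1=\infty$, so the relevant dual exponent is $\vec{p}^{\prime}=(1,p_2^{\prime})$ and the first of the two factor estimates becomes an $L^1$ estimate. Observe first that, since $\vec{q}=(\infty,\infty)$, the target space is $L^{\infty}(T_B\times T_B)$, while the hypotheses force the source space to be $L_{\vec{\alpha}}^{\vec{p}}(T_B\times T_B)$ with $\vec{p}=(\infty,p_2)$; so the assertion to prove is the boundedness of $S_{\vec{a},\ \vec{b},\ \vec{c}}$ from $L_{\vec{\alpha}}^{\vec{p}}(T_B\times T_B)$ to $L^{\infty}(T_B\times T_B)$.

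First I would rewrite $S_{\vec{a},\ \vec{b},\ \vec{c}}f(z,w)=\int_{T_B}\int_{T_B}K_1(z,u)K_2(w,\eta)f(u,\eta)\,dV_{\alpha_1}(u)\,dV_{\alpha_2}(\eta)$, where $K_1(z,u)=\rho(z)^{a_1}\rho(u)^{b_1-\alpha_1}|\rho(z,u)|^{-c_1}$ and $K_2(w,\eta)=\rho(w)^{a_2}\rho(\eta)^{b_2-\alpha_2}|\rho(w,\eta)|^{-c_2}$. Since the kernel is a product of a function of $(z,u)$ with a function of $(w,\eta)$, the mixed norm occurring in~(\ref{3.9}) factors as $\|K_1(z,\cdot)K_2(w,\cdot)\|_{L_{\vec{\alpha}}^{\vec{p}^{\prime}}}=\|K_1(z,\cdot)\|_{L^{1}_{\alpha_1}}\,\|K_2(w,\cdot)\|_{L^{p_2^{\prime}}_{\alpha_2}}$, so it suffices to bound each one-variable norm uniformly. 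For the first, $\|K_1(z,\cdot)\|_{L^{1}_{\alpha_1}}=\rho(z)^{a_1}\int_{T_B}\rho(u)^{b_1}|\rho(z,u)|^{-c_1}\,dV(u)$, and since $b_1>-1$ and $c_1=n+1+a_1+b_1$ gives $c_1-b_1-(n+1)=a_1>0$, Lemma~\ref{lem jifendengshi} evaluates the integral as $C\rho(z)^{-a_1}$, so this norm equals the constant $C$. For the second, $\|K_2(w,\cdot)\|_{L^{p_2^{\prime}}_{\alpha_2}}^{p_2^{\prime}}=\rho(w)^{a_2 p_2^{\prime}}\int_{T_B}\rho(\eta)^{(b_2-\alpha_2)p_2^{\prime}+\alpha_2}|\rho(w,\eta)|^{-c_2 p_2^{\prime}}\,dV(\eta)$; the assumption $p_2(b_2+1)>\alpha_2+1$ is exactly the numerator condition $(b_2-\alpha_2)p_2^{\prime}+\alpha_2>-1$, and a short computation using $c_2=n+1+a_2+b_2-\frac{n+1+\alpha_2}{p_2}$ together with $p_2^{\prime}/p_2=p_2^{\prime}-1$ shows that the denominator condition becomes $c_2 p_2^{\prime}-\big((b_2-\alpha_2)p_2^{\prime}+\alpha_2\big)-(n+1)=a_2 p_2^{\prime}>0$. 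Hence Lemma~\ref{lem jifendengshi} gives the integral as $C\rho(w)^{-a_2 p_2^{\prime}}$, so that $\|K_2(w,\cdot)\|_{L^{p_2^{\prime}}_{\alpha_2}}=C$. Consequently the quantity~(\ref{3.9}) is uniformly bounded and Lemma~\ref{ dao wu qiong } yields the claim.

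I do not anticipate a serious obstacle here: the whole content is the elementary bookkeeping that converts the constraints on $a_i,b_i,c_i,\alpha_i$ and $p_2$ into the two hypotheses of Lemma~\ref{lem jifendengshi} (``numerator exponent $>-1$'' and ``the $\rho$-power in the denominator exceeds $n+1$ plus that numerator exponent''), together with the observation that the leftover powers of $\rho(z)$ and $\rho(w)$ cancel the prefactors $\rho(z)^{a_1}$ and $\rho(w)^{a_2}$ precisely because $c_i=n+1+a_i+b_i+\lambda_i$. Should one wish to bypass Lemma~\ref{ dao wu qiong }, the same estimate can be obtained directly: because $\vec{p}=(\infty,p_2)$, any $f\in L_{\vec{\alpha}}^{\vec{p}}(T_B\times T_B)$ obeys $|f(z,w)|\le g(w)$ for some $g\in L^{p_2}_{\alpha_2}(T_B)$ with $\|g\|_{L^{p_2}_{\alpha_2}}=\|f\|_{\vec{p},\vec{\alpha}}$, so $|S_{\vec{a},\ \vec{b},\ \vec{c}}f(z,w)|$ splits into a $u$-integral controlled by Lemma~\ref{lem jifendengshi} and an $\eta$-integral controlled by Hölder's inequality followed by Lemma~\ref{lem jifendengshi}, giving $\|S_{\vec{a},\ \vec{b},\ \vec{c}}f\|_{\infty}\lesssim\|f\|_{\vec{p},\vec{\alpha}}$.
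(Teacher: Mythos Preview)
Your proposal is correct and essentially matches the paper's approach: the paper's own proof just says ``similar to Lemma~\ref{lem b12}'', which amounts to the $|f(z,w)|\le Cg(w)$ splitting you sketch at the end, while your primary route via Lemma~\ref{ dao wu qiong } (mirroring Lemma~\ref{lem 1 5}) is the dual formulation of the same computation. All the parameter checks you perform against Lemma~\ref{lem jifendengshi} are accurate, including the identification $c_2 p_2^{\prime}-\big((b_2-\alpha_2)p_2^{\prime}+\alpha_2\big)-(n+1)=a_2 p_2^{\prime}$.
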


\begin{proof}
	The proof of this lemma is similar to Lemma \ref{lem b12}. Thus, we omit the proof.	
\end{proof}

\begin{lemma}\label{lem 1 13}
	Let $1<p_1<q_1=\infty$ and $p_2=q_2=\infty$. If the parameters satisfy that, for any $i\in\{1,2\}$,
	$$
	\begin{cases}
	a_1>0,\ a_2>0,\\
	p_1\left( b_1+1 \right) >\alpha _1+1,\ b_2>-1,\\
	c_i=n+1+a_i+b_i+\lambda_i,\\
	\end{cases}	
	$$
	where
	$$
	\begin{cases}
	\lambda_1=-\frac{n+1+\alpha _2}{p_2},\\
	\lambda _2=0.\\
	\end{cases}
	$$
	Then $S_{\vec{a},\ \vec{b},\ \vec{c}}$ is bounded from $L_{\vec{\alpha}}^{\vec{p}}\left( T_B\times T_B \right) $ to $L^{\infty}\left( T_B\times T_B \right)$. 
\end{lemma}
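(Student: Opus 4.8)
The plan is to apply Lemma \ref{ dao wu qiong }, since the codomain has both indices equal to $\infty$. First I would write $S_{\vec{a},\ \vec{b},\ \vec{c}}$ in the product-kernel form
$$
S_{\vec{a},\ \vec{b},\ \vec{c}}\ f(z,w)=\int_{T_B}\int_{T_B}K_1(z,u)K_2(w,\eta)f(u,\eta)\,dV_{\alpha_1}(u)\,dV_{\alpha_2}(\eta),
$$
where $K_1(z,u)=\rho(z)^{a_1}\rho(u)^{b_1-\alpha_1}|\rho(z,u)|^{-c_1}$ and $K_2(w,\eta)=\rho(w)^{a_2}\rho(\eta)^{b_2-\alpha_2}|\rho(w,\eta)|^{-c_2}$, obtained by absorbing $\rho(u)^{b_1}\,dV(u)=\rho(u)^{b_1-\alpha_1}\,dV_{\alpha_1}(u)$ and similarly in $\eta$. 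By Lemma \ref{ dao wu qiong } it then suffices to prove that $\lVert K_1(z,\cdot)\cdot K_2(w,\cdot)\rVert_{L_{\vec{\alpha}}^{\vec{p}\prime}}$ is uniformly bounded over $(z,w)\in T_B\times T_B$. Since $\vec{p}=(p_1,\infty)$, we have $\vec{p}\prime=(p_1',1)$, so this mixed norm equals the iterated integral
$$
\int_{T_B}K_2(w,\eta)\left(\int_{T_B}K_1(z,u)^{p_1'}\,dV_{\alpha_1}(u)\right)^{1/p_1'}\,dV_{\alpha_2}(\eta).
$$

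I would first handle the inner integral via the one-variable case of Lemma \ref{lem jifendengshi}. Writing $\rho(u)^{(b_1-\alpha_1)p_1'}\,dV_{\alpha_1}(u)=\rho(u)^{(b_1-\alpha_1)p_1'+\alpha_1}\,dV(u)$, the hypothesis $\alpha_1+1<p_1(b_1+1)$ gives $(b_1-\alpha_1)p_1'+\alpha_1>-1$, while the identity $c_1=n+1+a_1+b_1-\tfrac{n+1+\alpha_1}{p_1}$ together with $a_1>0$ gives $c_1p_1'-\big((b_1-\alpha_1)p_1'+\alpha_1\big)>n+1$; hence Lemma \ref{lem jifendengshi} applies and yields a constant times $\rho(z)^{a_1p_1'-\left(c_1p_1'-(n+1)-(b_1-\alpha_1)p_1'-\alpha_1\right)}$. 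The key computation is that this exponent equals $-p_1'(c_1-a_1-b_1+\alpha_1)+(n+1+\alpha_1)$, and since $c_1-a_1-b_1+\alpha_1=\tfrac{n+1+\alpha_1}{p_1'}$, it equals $-(n+1+\alpha_1)+(n+1+\alpha_1)=0$. Thus the inner factor $\big(\int_{T_B}K_1(z,u)^{p_1'}\,dV_{\alpha_1}(u)\big)^{1/p_1'}$ is a constant independent of $z$.

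It then remains to estimate $\rho(w)^{a_2}\int_{T_B}\rho(\eta)^{b_2}|\rho(w,\eta)|^{-c_2}\,dV(\eta)$. Applying the one-variable case of Lemma \ref{lem jifendengshi} once more, the condition $b_2>-1$ and, via $c_2=n+1+a_2+b_2$, the condition $a_2>0$ are exactly what is required, and the integral equals a constant times $\rho(w)^{-(c_2-b_2-n-1)}=\rho(w)^{-a_2}$, which cancels the prefactor $\rho(w)^{a_2}$. Hence $\lVert K_1(z,\cdot)\cdot K_2(w,\cdot)\rVert_{L_{\vec{\alpha}}^{\vec{p}\prime}}$ is a constant, in particular uniformly bounded, and Lemma \ref{ dao wu qiong } gives that $S_{\vec{a},\ \vec{b},\ \vec{c}}$ is bounded from $L_{\vec{\alpha}}^{\vec{p}}(T_B\times T_B)$ to $L^{\infty}(T_B\times T_B)$.

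I do not anticipate a genuine obstacle here: the argument is precisely the $L^\infty$-codomain Schur test of Lemma \ref{ dao wu qiong } followed by two applications of the integral identity of Lemma \ref{lem jifendengshi}, and it is the symmetric counterpart of Lemma \ref{lem 1 12} with the roles of the two tube factors exchanged. The only point requiring care is the arithmetic verifying that the powers of $\rho(z)$ and $\rho(w)$ produced by the two evaluations both vanish; this is forced by the normalization $c_i=n+1+a_i+b_i+\lambda_i$ with the prescribed $\lambda_i$, and consistency requires reading $\lambda_1$ as $-\tfrac{n+1+\alpha_1}{p_1}$, the index-$1$ analogue of the exponent in Lemma \ref{lem 1 12} and matching the necessity result Lemma \ref{lem b13}.
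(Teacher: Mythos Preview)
Your proposal is correct and follows essentially the same approach the paper intends: the paper omits the proof, noting only that it is the symmetric case of Lemma \ref{lem 1 12}, which in turn is handled by the $L^\infty$-codomain test of Lemma \ref{ dao wu qiong } together with Lemma \ref{lem jifendengshi}. Your explicit computation of the mixed $L_{\vec{\alpha}}^{\vec{p}\prime}$ norm of the product kernel is exactly this argument, and your remark that $\lambda_1$ must be read as $-\tfrac{n+1+\alpha_1}{p_1}$ (consistent with Lemma \ref{lem b13}) correctly identifies a typo in the statement.
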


\begin{proof}
	Lemma \ref{lem 1 13} is the symmetric case of Lemma \ref{lem 1 12}. Thus, we omit the proof here.
\end{proof}

\section{The Proof of Main Theorems}
\ \ \ \
In this section, we will list all the main theorems of this paper and prove them.
\begin{theorem}\label{th 1}
	Let $\vec{p}:=\left( p_1,p_2 \right)$ and $\vec{q}:=\left( q_1,q_2 \right) $ satisfy $1<p_-\le p_+\le q_-\le q_+ <\infty$. Then the following conclusions are equivalent.
	\\
	
	$(1)$ The operator $S_{\vec{a},\ \vec{b},\ \vec{c}}$ is bounded from $L_{\vec{\alpha}}^{\vec{p}}\left( T_B\times T_B \right) $ to $L_{\vec{\beta}}^{\vec{q}}\left( T_B\times T_B \right) $.
	\\
	
	$(2)$ The operator $T_{\vec{a},\ \vec{b},\ \vec{c}}$ is bounded from $L_{\vec{\alpha}}^{\vec{p}}\left( T_B\times T_B \right) $ to $L_{\vec{\beta}}^{\vec{q}}\left( T_B\times T_B \right) $.
	\\
	
	$(3)$ The parameters satisfy that, for any $i\in \{1,2\}$,
	$$
	\begin{cases}
	-q_ia_i<\beta _i+1,\  \alpha _i+1<p_i\left( b_i+1 \right) ,\\
	c_i=n+1+a_i+b_i+\lambda_i,\\
	\end{cases}
	$$
	\\
	where 
	$$
	\lambda_i =\frac{n+1+\beta _i}{q_i}-\frac{n+1+\alpha _i}{p_i}.
	$$
\end{theorem}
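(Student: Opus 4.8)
The plan is to prove the three statements equivalent by running the cycle $(1)\Rightarrow(2)\Rightarrow(3)\Rightarrow(1)$, each link of which is short because the substantive work has already been done in Sections 4 and 5.

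For $(1)\Rightarrow(2)$ I would use the pointwise domination of $T_{\vec{a},\ \vec{b},\ \vec{c}}$ by $S_{\vec{a},\ \vec{b},\ \vec{c}}$. Since $\rho$ is real and positive on $T_B$, since $b_i,c_i\in\mathbb{R}$, and since $\bigl|\rho(z,u)^{-c_1}\bigr|=|\rho(z,u)|^{-c_1}$ (and likewise in the second variable), we have, for every measurable $f$ and every $(z,w)\in T_B\times T_B$,
$$
\left| T_{\vec{a},\ \vec{b},\ \vec{c}}\,f(z,w) \right|\le S_{\vec{a},\ \vec{b},\ \vec{c}}\bigl(|f|\bigr)(z,w).
$$
Because $\||f|\|_{\vec{p},\vec{\alpha}}=\|f\|_{\vec{p},\vec{\alpha}}$ and the mixed norm $\|\cdot\|_{\vec{q},\vec{\beta}}$ is monotone, boundedness of $S_{\vec{a},\ \vec{b},\ \vec{c}}$ from $L_{\vec{\alpha}}^{\vec{p}}(T_B\times T_B)$ to $L_{\vec{\beta}}^{\vec{q}}(T_B\times T_B)$ immediately forces the same for $T_{\vec{a},\ \vec{b},\ \vec{c}}$, with no larger operator norm.

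For $(2)\Rightarrow(3)$ I would simply invoke Lemma \ref{lem b1}: the hypothesis $1<p_-\le p_+\le q_-\le q_+<\infty$ gives in particular $1<p_1\le q_1<\infty$ and $1<p_2\le q_2<\infty$, which is exactly the range required there, so the boundedness of $T_{\vec{a},\ \vec{b},\ \vec{c}}$ yields precisely the system in $(3)$ together with the stated expression for $\lambda_i$. For $(3)\Rightarrow(1)$ the same hypothesis gives $1<p_-\le p_+\le q_-<\infty$, so the parameter conditions in $(3)$ are exactly the assumptions of Lemma \ref{lem 1 1}; applying that lemma produces the boundedness of $S_{\vec{a},\ \vec{b},\ \vec{c}}$ from $L_{\vec{\alpha}}^{\vec{p}}(T_B\times T_B)$ to $L_{\vec{\beta}}^{\vec{q}}(T_B\times T_B)$, which closes the loop and completes the proof.

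There is no real obstacle at the level of this theorem; all the analytic difficulty — the test-function estimates underlying Lemma \ref{lem b1} and the Schur-type argument (Lemma \ref{S1}) underlying Lemma \ref{lem 1 1} — has been discharged earlier. The only points needing a word of care are the pointwise bound $|T_{\vec{a},\ \vec{b},\ \vec{c}}f|\le S_{\vec{a},\ \vec{b},\ \vec{c}}|f|$ (and its transfer of norms through the monotonicity of the mixed norm) and the purely bookkeeping check that the index constraints of Lemmas \ref{lem b1} and \ref{lem 1 1} are both consequences of the single chain $1<p_-\le p_+\le q_-\le q_+<\infty$.
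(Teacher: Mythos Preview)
Your proposal is correct and follows essentially the same approach as the paper: the paper's proof is the one-line cycle ``$(1)\Rightarrow(2)$ is trivial; $(2)\Rightarrow(3)$ comes from Lemma \ref{lem b1}; $(3)\Rightarrow(1)$ is derived from Lemma \ref{lem 1 1},'' and you have simply spelled out the trivial pointwise domination for $(1)\Rightarrow(2)$ and verified that the index hypotheses of the two lemmas are covered by $1<p_-\le p_+\le q_-\le q_+<\infty$.
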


\begin{proof}
	$(1)\Rightarrow(2)$ is trivial; $(2)\Rightarrow(3)$ comes from Lemma \ref{lem b1}; $(3)\Rightarrow(1)$ is derived from Lemma \ref{lem 1 1}.	
\end{proof}

\begin{theorem}\label{th 2}
	Let $\vec{p}:=\left(1,1 \right)$ and $\vec{q}:=\left( q_1,q_2 \right) \in [1,\infty)\times[1,\infty)$. Then the following conclusions are equivalent.
	\\
	
	$(1)$ The operator $S_{\vec{a},\ \vec{b},\ \vec{c}}$ is bounded from $L_{\vec{\alpha}}^{\vec{p}}\left( T_B\times T_B \right) $ to $L_{\vec{\beta}}^{\vec{q}}\left( T_B\times T_B \right) $.
	\\
	
	$(2)$ The operator $T_{\vec{a},\ \vec{b},\ \vec{c}}$ is bounded from $L_{\vec{\alpha}}^{\vec{p}}\left( T_B\times T_B \right) $ to $L_{\vec{\beta}}^{\vec{q}}\left( T_B\times T_B \right) $.
	\\
	
	$(3)$ The parameters satisfy that, for any $i\in \{1,2\}$,
	$$
	\begin{cases}
	-q_ia_i<\beta _i+1,\  \alpha _i< b_i ,\\
	c_i=n+1+a_i+b_i+\lambda_i,\\
	\end{cases}
	$$
	\\
	where 
	$$
	\lambda_i =\frac{n+1+\beta _i}{q_i}-n-1-\alpha _i.
	$$
\end{theorem}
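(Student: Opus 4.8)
The plan is to establish the cyclic chain $(1)\Rightarrow(2)\Rightarrow(3)\Rightarrow(1)$, in complete analogy with Theorem~\ref{th 1}, the only structural difference being that both indices in $\vec{p}$ now equal $1$, so the relevant Schur-type device is Lemma~\ref{S2} (with $\vec{p}=\vec{1}$) rather than Lemma~\ref{S1}, and the corresponding necessity input is Lemma~\ref{lem b2} rather than Lemma~\ref{lem b1}.

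For $(1)\Rightarrow(2)$ I would use the pointwise domination
$$
\left| T_{\vec{a},\ \vec{b},\ \vec{c}}\ f(z,w) \right| \le \rho(z)^{a_1}\rho(w)^{a_2}\int_{T_B}\int_{T_B}\frac{\rho(u)^{b_1}\rho(\eta)^{b_2}}{\left|\rho(z,u)\right|^{c_1}\left|\rho(w,\eta)\right|^{c_2}}\left| f(u,\eta)\right|\,dV(u)\,dV(\eta)=S_{\vec{a},\ \vec{b},\ \vec{c}}\,\left| f\right|(z,w),
$$
which holds because $\left|\rho(z,u)^{c_i}\right|=\left|\rho(z,u)\right|^{c_i}$ always. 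Hence $\lVert T_{\vec{a},\ \vec{b},\ \vec{c}}\ f\rVert_{\vec{q},\vec{\beta}}\le\lVert S_{\vec{a},\ \vec{b},\ \vec{c}}\,|f|\rVert_{\vec{q},\vec{\beta}}\lesssim\lVert f\rVert_{\vec{1},\vec{\alpha}}$, so boundedness of $S_{\vec{a},\ \vec{b},\ \vec{c}}$ from $L_{\vec{\alpha}}^{\vec{1}}(T_B\times T_B)$ to $L_{\vec{\beta}}^{\vec{q}}(T_B\times T_B)$ forces that of $T_{\vec{a},\ \vec{b},\ \vec{c}}$.

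For $(2)\Rightarrow(3)$ I would invoke Lemma~\ref{lem b2} directly: its hypotheses ($1=p_1\le q_1<\infty$, $1=p_2\le q_2<\infty$, and boundedness of $T_{\vec{a},\ \vec{b},\ \vec{c}}$ from $L_{\vec{\alpha}}^{\vec{1}}$ to $L_{\vec{\beta}}^{\vec{q}}$) are exactly the present ones, and its conclusion is verbatim statement $(3)$, including the formula $\lambda_i=\frac{n+1+\beta_i}{q_i}-(n+1+\alpha_i)$. The one point to keep an eye on is that this implication runs through the duality of Lemma~\ref{dui ou you jie} with $\vec{p}'=(\infty,\infty)$, so one must use the $L^\infty$ convention fixed in Section~2; this is already built into Lemma~\ref{lem b2}. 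For $(3)\Rightarrow(1)$ I would invoke Lemma~\ref{lem 1 2}, whose index condition $1=p_+\le q_-\le q_+<\infty$ coincides with $\vec{p}=(1,1)$, $\vec{q}\in[1,\infty)\times[1,\infty)$, and whose parameter conditions and definition of $\lambda_i$ match $(3)$ term by term; it yields the boundedness in $(1)$ via the Schur test of Lemma~\ref{S2}.

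At the level of the theorem itself there is essentially no obstacle: it is a bookkeeping assembly that simply has to match the index ranges and the expression for $\lambda_i$ across Lemmas~\ref{lem b2} and~\ref{lem 1 2}. The substantive work sits inside those lemmas — on the necessity side, testing against the functions $f_{u,\eta}$ and $f_{\xi,\eta}$ and extracting the constraints from the integral asymptotics of Lemma~\ref{lem jifendengshi}, with Lemma~\ref{lem qu bian liang} used to read off sign conditions from the $\rho(z)\to\infty$ behaviour that becomes available when an index is $1$; and on the sufficiency side, producing the exponents $r_i,s_i$ and the Schur functions $h_1,h_2$ and verifying the two inequalities of Lemma~\ref{S2}. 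So the proof I envisage is three lines: "$(1)\Rightarrow(2)$ by the pointwise bound above; $(2)\Rightarrow(3)$ by Lemma~\ref{lem b2}; $(3)\Rightarrow(1)$ by Lemma~\ref{lem 1 2}."
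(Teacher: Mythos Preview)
Your proposal is correct and matches the paper's own proof essentially line for line: the paper also argues $(1)\Rightarrow(2)$ trivially, $(2)\Rightarrow(3)$ via Lemma~\ref{lem b2}, and $(3)\Rightarrow(1)$ via Lemma~\ref{lem 1 2}. Your additional remarks about the pointwise domination and the role of Lemmas~\ref{S2} and~\ref{dui ou you jie} simply make explicit what the paper leaves implicit.
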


\begin{proof}
	$(1)\Rightarrow(2)$ is trivial; $(2)\Rightarrow(3)$ comes from Lemma \ref{lem b2}; $(3)\Rightarrow(1)$ is derived from Lemma \ref{lem 1 2}.	
\end{proof}

\begin{theorem}\label{th 3}
	Let $\vec{p}:=\left( p_1,1 \right)$ and $\vec{q}:=\left( q_1,q_2 \right) $ satisfy $1<p_1\le  q_- \le q_+<\infty$. Then the following conclusions are equivalent.
	\\
	
	$(1)$ The operator $S_{\vec{a},\ \vec{b},\ \vec{c}}$ is bounded from $L_{\vec{\alpha}}^{\vec{p}}\left( T_B\times T_B \right) $ to $L_{\vec{\beta}}^{\vec{q}}\left( T_B\times T_B \right) $.
	\\
	
	$(2)$ The operator $T_{\vec{a},\ \vec{b},\ \vec{c}}$ is bounded from $L_{\vec{\alpha}}^{\vec{p}}\left( T_B\times T_B \right) $ to $L_{\vec{\beta}}^{\vec{q}}\left( T_B\times T_B \right) $.
	\\
	
	$(3)$ The parameters satisfy that, for any $i\in \{1,2\}$,
	$$
	\begin{cases}
	\alpha _1+1<p_1\left( b_1+1 \right) ,\ \alpha _2<b_2,\\
	-q_ia_i<\beta_i+1,\ c_i=n+1+a_i+b_i+\lambda_i,\\
	\end{cases}
	$$
	where
	$$
	\begin{cases}
	\lambda _1=\frac{n+1+\beta _1}{q_1}-\frac{n+1+\alpha _1}{p_1},\\
	\lambda _2=\frac{n+1+\beta _2}{q_2}-\left( n+1+\alpha _2 \right).\\
	\end{cases}
	$$
\end{theorem}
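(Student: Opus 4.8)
The plan is to establish the cycle of implications $(1)\Rightarrow(2)\Rightarrow(3)\Rightarrow(1)$, exactly mirroring the scheme used for Theorems \ref{th 1} and \ref{th 2}, and then to note that all three ingredients are already available in the excerpt.

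For $(1)\Rightarrow(2)$: this is immediate from a pointwise domination. Since $c_1,c_2$ are real and $\rho$ takes positive values on $T_B$, for every $(z,w)\in T_B\times T_B$ and every measurable $f$ one has
$$
\left| T_{\vec{a},\ \vec{b},\ \vec{c}}\ f(z,w) \right|\le S_{\vec{a},\ \vec{b},\ \vec{c}}\ \left(|f|\right)(z,w),
$$
because $|\rho(z,u)^{c_1}|=|\rho(z,u)|^{c_1}$ and $|\rho(w,\eta)^{c_2}|=|\rho(w,\eta)|^{c_2}$. As the mixed norm $\|\cdot\|_{\vec{q},\vec{\beta}}$ is monotone, boundedness of $S_{\vec{a},\ \vec{b},\ \vec{c}}$ from $L_{\vec{\alpha}}^{\vec{p}}(T_B\times T_B)$ to $L_{\vec{\beta}}^{\vec{q}}(T_B\times T_B)$ forces the same for $T_{\vec{a},\ \vec{b},\ \vec{c}}$, with no larger operator norm.

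For $(2)\Rightarrow(3)$: this is precisely Lemma \ref{lem b3}, which under the hypothesis $1<p_1\le q_-\le q_+<\infty$ with $p_2=1$ extracts from the boundedness of $T_{\vec{a},\ \vec{b},\ \vec{c}}$ exactly the stated system, including $c_i=n+1+a_i+b_i+\lambda_i$ with the asymmetric values $\lambda_1=(n+1+\beta_1)/q_1-(n+1+\alpha_1)/p_1$ and $\lambda_2=(n+1+\beta_2)/q_2-(n+1+\alpha_2)$. The only check is that the $\lambda_i$ in Lemma \ref{lem b3} coincide verbatim with those in statement $(3)$, which they do. For $(3)\Rightarrow(1)$: this is Lemma \ref{lem 1 3}. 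There one writes $S_{\vec{a},\ \vec{b},\ \vec{c}}$ as a two-parameter integral operator with kernel $K_1(z,u)K_2(w,\eta)$, selects test functions $h_1(u,\eta)=\rho(u)^{s_1}\rho(\eta)^{s_2}$, $h_2(z,w)=\rho(z)^{r_1}\rho(w)^{r_2}$ with exponents $r_i,s_i$ read off from the strict inequalities $-q_ia_i<\beta_i+1$, $\alpha_1+1<p_1(b_1+1)$, $\alpha_2<b_2$, and then verifies the two Schur-type conditions of Lemma \ref{S3} (the version adapted to $\vec{p}=(p_1,1)$ with $1<p_1\le q_-$), using Lemma \ref{lem jifendengshi} for the inner integrals in $u$ and $z$ and Lemma \ref{lem qu bian liang} to control the factors in $\eta$ and $w$.

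The main obstacle — and the reason Theorem \ref{th 3} is not a mere relabelling of Theorem \ref{th 1} — is that the endpoint index $p_2=1$ breaks the symmetry between the two "slots": the first slot still behaves like an $L^{p_1}$–$L^{q_1}$ Forelli–Rudin problem (full integral, condition $\alpha_1+1<p_1(b_1+1)$), whereas the second degenerates to an $L^1$–$L^{q_2}$ problem (condition $\alpha_2<b_2$, essential supremum in $\eta$ in place of an integral). Keeping the two exponent pairs $(r_1,s_1)$ and $(r_2,s_2)$ mutually consistent through Lemma \ref{S3}, and matching the resulting sufficiency conditions exactly to the necessity conditions of Lemma \ref{lem b3}, is where care is required; the rest is bookkeeping with Lemmas \ref{lem jifendengshi} and \ref{lem qu bian liang}. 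Since both Lemma \ref{lem b3} and Lemma \ref{lem 1 3} are established in the preceding sections, the proof of the theorem itself reduces to assembling these three steps.
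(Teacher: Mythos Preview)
Your proof is correct and follows the paper's approach exactly: the paper's own proof is the one-line cycle ``$(1)\Rightarrow(2)$ is trivial; $(2)\Rightarrow(3)$ comes from Lemma \ref{lem b3}; $(3)\Rightarrow(1)$ is derived from Lemma \ref{lem 1 3},'' and you have reproduced this with additional (accurate) detail on each step.
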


\begin{proof}
	$(1)\Rightarrow(2)$ is trivial; $(2)\Rightarrow(3)$ comes from Lemma \ref{lem b3}; $(3)\Rightarrow(1)$ is derived from Lemma \ref{lem 1 3}.	
\end{proof}

\begin{theorem}\label{th 4}
	Let $\vec{p}:=\left( 1,p_2 \right)$ and $\vec{q}:=\left( q_1,q_2 \right) $ satisfy $1<p_2\le  q_- \le q_+<\infty$. Then the following conclusions are equivalent.
	\\
	
	$(1)$ The operator $S_{\vec{a},\ \vec{b},\ \vec{c}}$ is bounded from $L_{\vec{\alpha}}^{\vec{p}}\left( T_B\times T_B \right) $ to $L_{\vec{\beta}}^{\vec{q}}\left( T_B\times T_B \right) $.
	\\
	
	$(2)$ The operator $T_{\vec{a},\ \vec{b},\ \vec{c}}$ is bounded from $L_{\vec{\alpha}}^{\vec{p}}\left( T_B\times T_B \right) $ to $L_{\vec{\beta}}^{\vec{q}}\left( T_B\times T_B \right) $.
	\\
	
	$(3)$ The parameters satisfy that, for any $i\in \{1,2\}$,
	$$
	\begin{cases}
	\alpha_1<b_1,\  (\alpha _2+1)<p_2(b_2+1),\\
	-q_ia_i<\beta_i+1,\ c_i=n+1+a_i+b_i+\lambda_i,\\
	\end{cases}
	$$
	where
	$$
	\begin{cases}
	\lambda _1=\frac{n+1+\beta _1}{q_1}-\left( n+1+\alpha _1 \right),\\
	\lambda _2=\frac{n+1+\beta _2}{q_2}-\frac{n+1+\alpha _2}{p_2}.\\
	\end{cases}
	$$
\end{theorem}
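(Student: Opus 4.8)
The plan is to establish the equivalences through the cyclic chain $(1)\Rightarrow(2)\Rightarrow(3)\Rightarrow(1)$, in complete analogy with the proof of Theorem \ref{th 3}. Indeed, the present statement is the mirror image of Theorem \ref{th 3} under interchange of the two $T_B$-factors (so that $p_1\leftrightarrow p_2$, $q_1\leftrightarrow q_2$, $\alpha_1\leftrightarrow\alpha_2$, $\beta_1\leftrightarrow\beta_2$, $a_1\leftrightarrow a_2$, $b_1\leftrightarrow b_2$, $c_1\leftrightarrow c_2$), and the three clauses above are obtained from those of Theorem \ref{th 3} by that relabeling; this is why the necessity is packaged in Lemma \ref{lem b4} and the sufficiency in Lemma \ref{lem 1 4} rather than in Lemmas \ref{lem b3} and \ref{lem 1 3}.

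For $(1)\Rightarrow(2)$ I would use the pointwise bound $\left|T_{\vec{a},\ \vec{b},\ \vec{c}}f(z,w)\right|\le S_{\vec{a},\ \vec{b},\ \vec{c}}\left|f\right|(z,w)$, valid for every measurable $f$ and every $(z,w)\in T_B\times T_B$ because $\rho(z),\rho(w)>0$ on $T_B$ and $\left|\rho(z,u)^{c_1}\right|=\left|\rho(z,u)\right|^{c_1}$, and similarly in the second slot; taking $L_{\vec{\beta}}^{\vec{q}}$-norms then yields $\lVert T_{\vec{a},\ \vec{b},\ \vec{c}}f\rVert_{\vec{q},\vec{\beta}}\le\lVert S_{\vec{a},\ \vec{b},\ \vec{c}}\rVert\,\lVert f\rVert_{\vec{p},\vec{\alpha}}$, so boundedness of $S_{\vec{a},\ \vec{b},\ \vec{c}}$ transfers to $T_{\vec{a},\ \vec{b},\ \vec{c}}$. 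For $(2)\Rightarrow(3)$ I would simply invoke Lemma \ref{lem b4}: with $\vec{p}=(1,p_2)$ and $1<p_2\le q_-\le q_+<\infty$ we are in the regime $1=p_1\le q_1<\infty$, $1<p_2\le q_2<\infty$ of that lemma (here $q_1\ge q_-\ge p_2>1$ secures $p_1\le q_1$), and its conclusions — $\alpha_1<b_1$, $\alpha_2+1<p_2(b_2+1)$, $-q_ia_i<\beta_i+1$, and $c_i=n+1+a_i+b_i+\lambda_i$ with the displayed $\lambda_1,\lambda_2$ — are exactly clause $(3)$.

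For $(3)\Rightarrow(1)$ I would appeal to Lemma \ref{lem 1 4}, whose hypothesis $1=p_1<p_2\le q_-\le q_+<\infty$ and whose parameter restrictions coincide verbatim with clause $(3)$; that lemma delivers the boundedness of $S_{\vec{a},\ \vec{b},\ \vec{c}}$ from $L_{\vec{\alpha}}^{\vec{p}}\left(T_B\times T_B\right)$ to $L_{\vec{\beta}}^{\vec{q}}\left(T_B\times T_B\right)$, closing the cycle. There is no genuine analytic difficulty here — the substance lives inside Lemmas \ref{lem b4} and \ref{lem 1 4} — and the only point demanding care is the symmetric reduction behind Lemma \ref{lem 1 4} (stated there with proof omitted): one should check that swapping the two tube factors turns the $\vec{p}=(1,p_2)$ sufficiency into the $\vec{p}=(p_1,1)$ sufficiency of Lemma \ref{lem 1 3}, which amounts to relabeling the Schur-test weights $\gamma_i,\delta_i,r_i,s_i$ and applying Lemma \ref{S4} in place of Lemma \ref{S3}. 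That index bookkeeping — verifying that the ``$p_1=1$'' slot of both lemmas lines up with the first coordinate here, that the assumed ranges of $p_2,q_1,q_2$ are compatible with the lemmas' hypotheses, and that both formulas for $\lambda_1,\lambda_2$ agree across the three statements — is the step I would check most carefully.
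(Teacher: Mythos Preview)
Your proof is correct and follows exactly the same route as the paper: the paper's proof of Theorem \ref{th 4} is the one-line cycle ``$(1)\Rightarrow(2)$ is trivial; $(2)\Rightarrow(3)$ comes from Lemma \ref{lem b4}; $(3)\Rightarrow(1)$ is derived from Lemma \ref{lem 1 4}.'' Your additional remarks on the symmetry with Theorem \ref{th 3} and on the index bookkeeping needed to reduce Lemma \ref{lem 1 4} to Lemma \ref{lem 1 3} via Lemma \ref{S4} are accurate and go a bit beyond what the paper spells out.
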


\begin{proof}
	$(1)\Rightarrow(2)$ is trivial; $(2)\Rightarrow(3)$ comes from Lemma \ref{lem b4}; $(3)\Rightarrow(1)$ is derived from Lemma \ref{lem 1 4}.	
\end{proof}

\begin{theorem}\label{th 5}
	Let $\vec{p}:=\left( p_1,p_2 \right)$ and $\vec{q}:=\left( \infty,\infty \right) $ satisfy $1<p_-\le p_+ <\infty$. Then the following conclusions are equivalent.
	\\
	
	$(1)$ The operator $S_{\vec{a},\ \vec{b},\ \vec{c}}$ is bounded from $L_{\vec{\alpha}}^{\vec{p}}\left( T_B\times T_B \right) $ to $L^{\infty}\left( T_B\times T_B \right)$.
	\\
	
	$(2)$ The operator $T_{\vec{a},\ \vec{b},\ \vec{c}}$ is bounded from $L_{\vec{\alpha}}^{\vec{p}}\left( T_B\times T_B \right) $ to $L^{\infty}\left( T_B\times T_B \right)$.
	\\
	
	$(3)$ The parameters satisfy that, for any $i\in \{1,2\}$,
	$$
	\begin{cases}
	a_i>0,\ \alpha _i+1<p_i(b_i+1),\\
	c_i=n+1+a_i+b_i+\lambda_i,\\
	\end{cases}
	$$
	where
	$$
	\lambda _i=-\frac{n+1+\alpha _i}{p_i}.
	$$
\end{theorem}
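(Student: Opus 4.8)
The plan is to establish the cyclic chain of implications $(1)\Rightarrow(2)\Rightarrow(3)\Rightarrow(1)$, which together give the asserted equivalence. The overall strategy mirrors that of Theorems \ref{th 1}--\ref{th 4}, with the only novelty being the handling of the $L^{\infty}$ codomain, for which the substitute for a Schur-type argument is the elementary Lemma \ref{ dao wu qiong } on one side and the test-function/arbitrariness argument on the other.

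First, $(1)\Rightarrow(2)$ is immediate: for every $(z,w)\in T_B\times T_B$ one has the pointwise bound $|T_{\vec{a},\ \vec{b},\ \vec{c}}f(z,w)|\le S_{\vec{a},\ \vec{b},\ \vec{c}}|f|(z,w)$, since the two kernels have the same modulus, $|\rho(z,u)^{-c_1}|=|\rho(z,u)|^{-c_1}$ and likewise in the second variable. Hence any bound $L_{\vec{\alpha}}^{\vec{p}}\to L^{\infty}$ for $S_{\vec{a},\ \vec{b},\ \vec{c}}$ transfers verbatim to $T_{\vec{a},\ \vec{b},\ \vec{c}}$. For $(2)\Rightarrow(3)$ I would simply invoke Lemma \ref{lem b5}, whose hypotheses ($1<p_i<q_i=\infty$ for $i\in\{1,2\}$) coincide exactly with the present range. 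Its proof proceeds by noting that well-definedness of $T_{\vec{a},\ \vec{b},\ \vec{c}}f$ for each $f\in L_{\vec{\alpha}}^{\vec{p}}$ forces, for fixed $z,w$, that the kernel in the variables $(u,\eta)$ lie in the dual space $L_{\vec{\alpha}}^{\vec{p}'}(T_B\times T_B)$; Lemma \ref{lem jifendengshi} applied to the resulting single-variable integrals yields $\alpha_i+1<p_i(b_i+1)$ and $c_i>n+1+b_i-(n+1+\alpha_i)/p_i$. One then reads off the $\rho(z),\rho(w)$-dependence of the dual norm, and since the target is $L^{\infty}$ and $\rho(z),\rho(w)$ sweep out all of $(0,\infty)$ on the tube, the exponents of $\rho(z)$ and $\rho(w)$ must vanish; this pins down $c_i=n+1+a_i+b_i-(n+1+\alpha_i)/p_i$, while the possibility $\rho(z)\to 0$ forces the strict inequality $a_i>0$.

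Finally, $(3)\Rightarrow(1)$ is Lemma \ref{lem 1 5}. Under the parameter relations in (3), by Lemma \ref{ dao wu qiong } it suffices to show $\|K_1(z,\cdot)\cdot K_2(w,\cdot)\|_{L_{\vec{\alpha}}^{\vec{p}'}}$ is uniformly bounded in $(z,w)$, where $K_1(z,u)=\rho(z)^{a_1}\rho(u)^{b_1-\alpha_1}|\rho(z,u)|^{-c_1}$ and $K_2(w,\eta)=\rho(w)^{a_2}\rho(\eta)^{b_2-\alpha_2}|\rho(w,\eta)|^{-c_2}$ are the factors of the $S$-kernel written against $dV_{\alpha_1}\times dV_{\alpha_2}$. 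Expanding this mixed norm, the inner integrals fall under Lemma \ref{lem jifendengshi}: the conditions $\alpha_i+1<p_i(b_i+1)$ and (using $a_i>0$ together with the $c_i$-formula) $c_ip_i'>n+1+p_i'(b_i-\alpha_i)+\alpha_i$ guarantee convergence, and the leftover exponent of $\rho(z)$, resp. $\rho(w)$, equals exactly $0$ precisely because $c_i=n+1+a_i+b_i-(n+1+\alpha_i)/p_i$. Thus the norm is constant in $(z,w)$ and the boundedness of $S_{\vec{a},\ \vec{b},\ \vec{c}}$ follows.

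The main obstacle is the necessity direction $(2)\Rightarrow(3)$: one must first secure the well-definedness of $T_{\vec{a},\ \vec{b},\ \vec{c}}f$ on the relevant class before extracting pointwise kernel constraints, and then carefully exploit the two-sided unboundedness of $\rho$ on $T_B$ to force both the equality in the $c_i$-relation and the strictness $a_i>0$. By contrast, $(3)\Rightarrow(1)$ is essentially a bookkeeping exercise of applying Lemma \ref{lem jifendengshi} twice and checking that the accumulated exponents cancel.
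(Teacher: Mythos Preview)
Your proposal is correct and follows exactly the paper's route: the cyclic chain $(1)\Rightarrow(2)\Rightarrow(3)\Rightarrow(1)$ via the trivial pointwise domination $|T_{\vec a,\vec b,\vec c}f|\le S_{\vec a,\vec b,\vec c}|f|$, then Lemma~\ref{lem b5} for the necessity, and Lemma~\ref{lem 1 5} (built on Lemma~\ref{ dao wu qiong }) for the sufficiency. One minor remark on your explanation of $(2)\Rightarrow(3)$: once the exponent of $\rho(z)$ is forced to vanish you already have $a_i=c_i-(n+1+b_i-(n+1+\alpha_i)/p_i)$, and the strictness $a_i>0$ then comes from the \emph{strict} convergence condition $c_i>n+1+b_i-(n+1+\alpha_i)/p_i$ supplied by Lemma~\ref{lem jifendengshi}, not from a separate $\rho(z)\to 0$ argument.
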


\begin{proof}
	$(1)\Rightarrow(2)$ is trivial; $(2)\Rightarrow(3)$ comes from Lemma \ref{lem b5}; $(3)\Rightarrow(1)$ is derived from Lemma \ref{lem 1 5}.	
\end{proof}

\begin{theorem}\label{th 6}
	Let $\vec{p}:=\left( 1,1 \right)$ and $\vec{q}:=\left( \infty,\infty \right) $. Then the following conclusions are equivalent.
	\\
	
	$(1)$ The operator $S_{\vec{a},\ \vec{b},\ \vec{c}}$ is bounded from $L_{\vec{\alpha}}^{\vec{1}}\left( T_B\times T_B \right) $ to $L^{\infty}\left( T_B\times T_B \right)$.
	\\
	
	$(2)$ The operator $T_{\vec{a},\ \vec{b},\ \vec{c}}$ is bounded from $L_{\vec{\alpha}}^{\vec{1}}\left( T_B\times T_B \right) $ to $L^{\infty}\left( T_B\times T_B \right)$.
	\\
	
	$(3)$ The parameters satisfy that, for any $i\in \{1,2\}$,
	$$
	\begin{cases}
	a_i\ge0,\ b_i\ge\alpha _i,\\
	c_i=n+1+a_i+b_i+\lambda_i,\\
	\end{cases}
	$$
	where
	$$
	\lambda _i=-n-1-\alpha _i.
	$$
\end{theorem}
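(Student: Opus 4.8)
The plan is to prove the cyclic chain $(1)\Rightarrow(2)\Rightarrow(3)\Rightarrow(1)$, exactly in the spirit of Theorems \ref{th 1}--\ref{th 5}, which will make the three statements equivalent.

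First I would dispose of $(1)\Rightarrow(2)$, which is essentially free: since $\rho(z,u)$ and $\rho(w,\eta)$ never vanish on $T_B$ and $|\rho(z,u)^{c_i}|=|\rho(z,u)|^{c_i}$, for every measurable $f$ one has the pointwise bound $|T_{\vec a,\ \vec b,\ \vec c}f(z,w)|\le S_{\vec a,\ \vec b,\ \vec c}|f|(z,w)$, so the boundedness of $S_{\vec a,\ \vec b,\ \vec c}\colon L_{\vec\alpha}^{\vec 1}(T_B\times T_B)\to L^\infty(T_B\times T_B)$ transfers immediately to $T_{\vec a,\ \vec b,\ \vec c}$. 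Next, $(2)\Rightarrow(3)$ is exactly the content of Lemma \ref{lem b6}: assuming $T_{\vec a,\ \vec b,\ \vec c}$ bounded, duality forces the weight-stripped kernel $(u,\eta)\mapsto \rho(z)^{a_1}\rho(u)^{b_1-\alpha_1}\rho(w)^{a_2}\rho(\eta)^{b_2-\alpha_2}/(|\rho(z,u)|^{c_1}|\rho(w,\eta)|^{c_2})$ to lie in $L^\infty(T_B\times T_B)$ for each fixed $(z,w)$; testing with $z=w=(0',i)$ and $u=(0',x_n+i)$, $\eta=(0',y_n+i)$ and letting $|x_n|,|y_n|\to\infty$ yields $c_i\ge 0$; Lemma \ref{lem qu bian liang} together with the unboundedness of $\rho$ on $T_B$ gives $b_i\ge\alpha_i$; and letting $z,w$ vary over $T_B$ then forces $a_i\ge 0$ and the exponent identity $c_i=b_i-\alpha_i+a_i=n+1+a_i+b_i+\lambda_i$ with $\lambda_i=-n-1-\alpha_i$. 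Finally, $(3)\Rightarrow(1)$ is Lemma \ref{lem 1 6}: writing $S_{\vec a,\ \vec b,\ \vec c}$ as an integral operator with product kernel $K_1(z,u)K_2(w,\eta)$, Lemma \ref{ dao wu qiong } reduces the desired $L_{\vec\alpha}^{\vec 1}\to L^\infty$ boundedness to showing that $\|K_1(z,\cdot)\cdot K_2(w,\cdot)\|_{L^\infty(T_B\times T_B)}$ is bounded uniformly in $(z,w)$, which follows from the exponent relations in (3) together with $2|\rho(z,u)|\ge\max\{\rho(z),\rho(u)\}$.

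The step I expect to require the most care is the necessity direction, Lemma \ref{lem b6}, and specifically the fact that the sharp conditions here involve the \emph{non-strict} inequalities $a_i\ge 0$ and $b_i\ge\alpha_i$ rather than the strict ones appearing in the interior ranges. One cannot simply copy the strict-inequality arguments: at the endpoint $\vec p=(1,1)$, $\vec q=(\infty,\infty)$ equality in $a_i=0$ or $b_i=\alpha_i$ must be shown to remain compatible with boundedness, so the verification has to use the exact $L^1$--$L^\infty$ pairing and the growth estimate of Lemma \ref{lem qu bian liang} in a borderline regime. Once Lemma \ref{lem b6} and Lemma \ref{lem 1 6} are in hand, assembling the theorem is immediate.
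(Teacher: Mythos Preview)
Your proposal is correct and follows essentially the same approach as the paper: the paper's proof of Theorem \ref{th 6} is precisely the cycle $(1)\Rightarrow(2)$ trivial, $(2)\Rightarrow(3)$ via Lemma \ref{lem b6}, and $(3)\Rightarrow(1)$ via Lemma \ref{lem 1 6}, and your sketches of those lemmas match the paper's arguments, including the duality/test-function analysis for necessity and the uniform kernel bound via Lemma \ref{lem qu bian liang} for sufficiency.
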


\begin{proof}
	$(1)\Rightarrow(2)$ is trivial; $(2)\Rightarrow(3)$ comes from Lemma \ref{lem b6}; $(3)\Rightarrow(1)$ is derived from Lemma \ref{lem 1 6}.	
\end{proof}

\begin{theorem}\label{th 7}
	Let $\vec{p}:=\left( 1,p_2 \right)$ and $\vec{q}:=\left( \infty,\infty \right) $ satisfy $1<p_2 <\infty$. Then the following conclusions are equivalent.
	\\
	
	$(1)$ The operator $S_{\vec{a},\ \vec{b},\ \vec{c}}$ is bounded from $L_{\vec{\alpha}}^{\vec{p}}\left( T_B\times T_B \right) $ to $L^{\infty}\left( T_B\times T_B \right)$.
	\\
	
	$(2)$ The operator $T_{\vec{a},\ \vec{b},\ \vec{c}}$ is bounded from $L_{\vec{\alpha}}^{\vec{p}}\left( T_B\times T_B \right) $ to $L^{\infty}\left( T_B\times T_B \right)$.
	\\
	
	$(3)$ The parameters satisfy that, for any $i\in\{1,2\}$,
	$$
	\begin{cases}
	a_1\ge0,\ a_2>0,\\
	b_1\ge \alpha_1,\ p_2\left( b_2+1 \right) >\alpha _2+1,\\
	c_i=n+1+a_i+b_i+\lambda_i,\\
	\end{cases}
	$$
	where
	$$
	\begin{cases}
	\lambda _1=-n-1-\alpha _1,\\
	\lambda_2=-\frac{n+1+\alpha _2}{p_2}.\\
	\end{cases}
	$$	
\end{theorem}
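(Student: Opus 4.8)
The plan is to prove the equivalence by closing the cycle $(1)\Rightarrow(2)\Rightarrow(3)\Rightarrow(1)$, exactly as in Theorems \ref{th 1}--\ref{th 6}, and to locate each arrow in the machinery already developed. The only genuinely new input for this particular index configuration ($p_1=1<q_1=\infty$ and $1<p_2<q_2=\infty$) is the observation that the necessity and sufficiency statements specialised to it are precisely Lemma \ref{lem b7} and Lemma \ref{lem 1 7}, so the theorem is an assembly of those two results together with the trivial domination of $T_{\vec{a},\ \vec{b},\ \vec{c}}$ by $S_{\vec{a},\ \vec{b},\ \vec{c}}$.

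For $(1)\Rightarrow(2)$: since $|\rho(z,u)^{c_1}|=|\rho(z,u)|^{c_1}$ and $|\rho(w,\eta)^{c_2}|=|\rho(w,\eta)|^{c_2}$, and the weights $\rho(z)^{a_i},\rho(u)^{b_i}$ are nonnegative, one has the pointwise bound $|T_{\vec{a},\ \vec{b},\ \vec{c}}f(z,w)|\le S_{\vec{a},\ \vec{b},\ \vec{c}}|f|(z,w)$ for every measurable $f$. Taking $L^{\infty}$-norms in $(z,w)$ and using $\||f|\|_{\vec{p},\vec{\alpha}}=\|f\|_{\vec{p},\vec{\alpha}}$ shows that boundedness of $S_{\vec{a},\ \vec{b},\ \vec{c}}$ forces boundedness of $T_{\vec{a},\ \vec{b},\ \vec{c}}$. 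The implication $(2)\Rightarrow(3)$ is Lemma \ref{lem b7}: its hypotheses match those of the theorem verbatim, and the proof there produces the constraints by feeding the (duality-adapted) operator the test functions $f_{u,\eta}$ and $f_{\xi,\eta}$ and reading off finiteness conditions from Lemma \ref{lem jifendengshi}, together with the scaling lower bound of Lemma \ref{lem qu bian liang} for the essential-supremum estimate in the first variable.

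For $(3)\Rightarrow(1)$, which is Lemma \ref{lem 1 7}, the route is: write $S_{\vec{a},\ \vec{b},\ \vec{c}}$ with separated kernel $K_1(z,u)K_2(w,\eta)$, where $K_1(z,u)=\rho(z)^{a_1}\rho(u)^{b_1-\alpha_1}/|\rho(z,u)|^{c_1}$ and $K_2(w,\eta)=\rho(w)^{a_2}\rho(\eta)^{b_2-\alpha_2}/|\rho(w,\eta)|^{c_2}$, and invoke Lemma \ref{ dao wu qiong }, so it suffices to bound $\|K_1(z,\cdot)K_2(w,\cdot)\|_{L_{\vec{\alpha}}^{\vec{p}^{\prime}}}$ uniformly in $(z,w)$, where here $\vec{p}^{\prime}=(\infty,p_2^{\prime})$. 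In the first variable this is an $\operatorname{ess\,sup}_{u}$ estimate: using $c_1=a_1+b_1-\alpha_1$ from $(3)$ and Lemma \ref{lem qu bian liang} one gets $\rho(z)^{a_1}\rho(u)^{b_1-\alpha_1}/|\rho(z,u)|^{c_1}\lesssim 1$ precisely because $b_1\ge\alpha_1$ and $a_1\ge 0$. In the second variable it is the integral $\int_{T_B}\rho(\eta)^{p_2^{\prime}(b_2-\alpha_2)+\alpha_2}/|\rho(w,\eta)|^{c_2p_2^{\prime}}\,dV(\eta)$, which converges by Lemma \ref{lem jifendengshi} exactly when $p_2(b_2+1)>\alpha_2+1$ and $a_2>0$, and whose value scales like $\rho(w)^{-a_2p_2^{\prime}}$, cancelling the prefactor $\rho(w)^{a_2p_2^{\prime}}$. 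Lemma \ref{ dao wu qiong } then delivers boundedness from $L_{\vec{\alpha}}^{\vec{p}}$ to $L^{\infty}$.

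The step I expect to require the most care is not conceptual but the exponent bookkeeping in $(3)\Rightarrow(1)$: one must verify that the relations $c_1=n+1+a_1+b_1+\lambda_1$, $c_2=n+1+a_2+b_2+\lambda_2$ with $\lambda_1=-n-1-\alpha_1$, $\lambda_2=-(n+1+\alpha_2)/p_2$ are exactly the ones making the $\rho(z)$- and $\rho(w)$-powers produced by Lemma \ref{lem jifendengshi} (and by the scaling estimate via Lemma \ref{lem qu bian liang}) cancel against $\rho(z)^{a_1}$ and $\rho(w)^{a_2}$, and that the asymmetry between the two factors --- equalities $a_1\ge 0$, $b_1\ge\alpha_1$ allowed on the $L^1$/$L^\infty$ side versus the strict $a_2>0$, $p_2(b_2+1)>\alpha_2+1$ on the $L^{p_2}$/$L^\infty$ side --- is correctly reflected in the convergence thresholds. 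All of this is carried out in Lemmas \ref{lem b7} and \ref{lem 1 7}, so the proof of the theorem reduces to the three-line assembly above.
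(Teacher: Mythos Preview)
Your proposal is correct and follows exactly the paper's approach: the proof of Theorem \ref{th 7} is the cycle $(1)\Rightarrow(2)\Rightarrow(3)\Rightarrow(1)$, with $(1)\Rightarrow(2)$ trivial, $(2)\Rightarrow(3)$ given by Lemma \ref{lem b7}, and $(3)\Rightarrow(1)$ given by Lemma \ref{lem 1 7}. Your elaboration of the exponent bookkeeping in $(3)\Rightarrow(1)$ via Lemma \ref{ dao wu qiong }, Lemma \ref{lem qu bian liang} (for the $p_1=1$ factor) and Lemma \ref{lem jifendengshi} (for the $p_2$ factor) is accurate and in fact more explicit than the paper's own treatment of Lemma \ref{lem 1 7}.
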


\begin{proof}
	$(1)\Rightarrow(2)$ is trivial; $(2)\Rightarrow(3)$ comes from Lemma \ref{lem b7}; $(3)\Rightarrow(1)$ is derived from Lemma \ref{lem 1 7}.	
\end{proof}

\begin{theorem}\label{th 8}
	Let $\vec{p}:=\left( p_1,1 \right)$ and $\vec{q}:=\left( \infty,\infty \right) $ satisfy $1<p_1 <\infty$. Then the following conclusions are equivalent.
	\\
	
	$(1)$ The operator $S_{\vec{a},\ \vec{b},\ \vec{c}}$ is bounded from $L_{\vec{\alpha}}^{\vec{p}}\left( T_B\times T_B \right) $ to $L^{\infty}\left( T_B\times T_B \right)$.
	\\
	
	$(2)$ The operator $T_{\vec{a},\ \vec{b},\ \vec{c}}$ is bounded from $L_{\vec{\alpha}}^{\vec{p}}\left( T_B\times T_B \right) $ to $L^{\infty}\left( T_B\times T_B \right)$.
	\\
	
	$(3)$ The parameters satisfy that, for any $i\in\{1,2\}$,
	$$
	\begin{cases}
	a_1>0,\ a_2\ge0, \\
	p_1\left( b_1+1 \right) >\alpha _1+1,\ b_2\ge \alpha_2,\\
	c_i=n+1+a_i+b_i+\lambda_i,\\
	\end{cases}
	$$
	where
	$$
	\begin{cases}
	\lambda_1=-\frac{n+1+\alpha _1}{p_1},\\
	\lambda _2=-n-1-\alpha _2.\\
	\end{cases}
	$$
\end{theorem}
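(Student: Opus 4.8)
The plan is to establish Theorem~\ref{th 8} by running the same cyclic three-step argument $(1)\Rightarrow(2)\Rightarrow(3)\Rightarrow(1)$ that was used for Theorems~\ref{th 1}--\ref{th 7}, where each of the three implications reduces to a result already proved in Sections~4 and~5. Concretely, I will take $(1)\Rightarrow(2)$ to be elementary (a pointwise domination), $(2)\Rightarrow(3)$ to be exactly the necessity statement of Lemma~\ref{lem b8}, and $(3)\Rightarrow(1)$ to be exactly the sufficiency statement of Lemma~\ref{lem 1 8}.

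For $(1)\Rightarrow(2)$ I would first note that $\rho(z),\rho(w),\rho(u),\rho(\eta)$ are positive reals and $c_1,c_2$ are real numbers, so that $|\rho(z,u)^{c_1}|=|\rho(z,u)|^{c_1}$ and $|\rho(w,\eta)^{c_2}|=|\rho(w,\eta)|^{c_2}$. This yields the pointwise bound $|T_{\vec{a},\ \vec{b},\ \vec{c}}\,f(z,w)|\le S_{\vec{a},\ \vec{b},\ \vec{c}}\,|f|(z,w)$ for every $(z,w)\in T_B\times T_B$ and every measurable $f$, hence $\lVert T_{\vec{a},\ \vec{b},\ \vec{c}}\,f\rVert_\infty\le\lVert S_{\vec{a},\ \vec{b},\ \vec{c}}\,|f|\rVert_\infty$; so boundedness of $S_{\vec{a},\ \vec{b},\ \vec{c}}$ from $L_{\vec{\alpha}}^{\vec{p}}\left( T_B\times T_B \right)$ to $L^{\infty}\left( T_B\times T_B \right)$ forces the same for $T_{\vec{a},\ \vec{b},\ \vec{c}}$. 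This step carries no new content.

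For $(2)\Rightarrow(3)$ I would simply invoke Lemma~\ref{lem b8}: its hypotheses ($1<p_1<q_1=\infty$ and $1=p_2<q_2=\infty$) coincide verbatim with those of the present theorem, and its conclusion is precisely the system of inequalities together with the identities $c_i=n+1+a_i+b_i+\lambda_i$, $\lambda_1=-\frac{n+1+\alpha_1}{p_1}$, $\lambda_2=-n-1-\alpha_2$, claimed in $(3)$. (Recall that Lemma~\ref{lem b8} is itself the symmetric analogue of Lemma~\ref{lem b7}, which is obtained by feeding the test families $f_{u,\eta}$ and $f_{\xi,\eta}$ into $T_{\vec{a},\ \vec{b},\ \vec{c}}$ and its adjoint and demanding finiteness, the dichotomy being read off from Lemma~\ref{lem jifendengshi} while the unboundedness of $\rho$ on $T_B$ combined with Lemma~\ref{lem qu bian liang} produces the remaining sign constraints.) For $(3)\Rightarrow(1)$ I would apply Lemma~\ref{lem 1 8}, whose hypotheses are exactly the parameter conditions in $(3)$; its proof shows that under these conditions the function $(u,\eta)\mapsto K_1(z,u)K_2(w,\eta)$ lies in $L_{\vec{\alpha}}^{\vec{p}^{\prime}}$ with norm bounded uniformly in $(z,w)$, whence the mixed-norm Hölder inequality of Lemma~\ref{ dao wu qiong } gives $S_{\vec{a},\ \vec{b},\ \vec{c}}\colon L_{\vec{\alpha}}^{\vec{p}}\left( T_B\times T_B \right)\to L^{\infty}\left( T_B\times T_B \right)$. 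Since all three implications are thereby reduced to earlier lemmas, there is no genuine obstacle in this theorem; the only point demanding (minor) care is checking that the pointwise domination in $(1)\Rightarrow(2)$ is legitimate and that the index ranges and the parameter identities in the statement line up with Lemmas~\ref{lem b8} and~\ref{lem 1 8} without any shift.
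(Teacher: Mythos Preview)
Your proposal is correct and follows exactly the same approach as the paper's own proof: $(1)\Rightarrow(2)$ is trivial by pointwise domination, $(2)\Rightarrow(3)$ is Lemma~\ref{lem b8}, and $(3)\Rightarrow(1)$ is Lemma~\ref{lem 1 8}. The only difference is that you have spelled out the details behind each implication, whereas the paper simply cites the corresponding lemmas in a single line.
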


\begin{proof}
	$(1)\Rightarrow(2)$ is trivial; $(2)\Rightarrow(3)$ comes from Lemma \ref{lem b8}; $(3)\Rightarrow(1)$ is derived from Lemma \ref{lem 1 8}.	
\end{proof}

\begin{theorem}\label{th 9}
	Let $\vec{p}:=\left( \infty,\infty \right)$ and $\vec{q}:=\left( \infty,\infty \right) $. Then the following conclusions are equivalent.
	\\
	
	$(1)$ The operator $S_{\vec{a},\ \vec{b},\ \vec{c}}$ is bounded from $L^{\infty}\left( T_B\times T_B \right) $ to $L^{\infty}\left( T_B\times T_B \right)$.
	\\
	
	$(2)$ The operator $T_{\vec{a},\ \vec{b},\ \vec{c}}$ is bounded from $L^{\infty}\left( T_B\times T_B \right) $ to $L^{\infty}\left( T_B\times T_B \right)$.
	\\
	
	$(3)$ The parameters satisfy that, for any $i\in \{1,2\}$,
	$$
	\begin{cases}
	a_i>0,\ b_i>-1,\\
	c_i=n+1+a_i+b_i.\\
	\end{cases}
	$$
\end{theorem}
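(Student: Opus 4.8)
The plan is to prove the three statements equivalent by running the cycle $(3)\Rightarrow(1)\Rightarrow(2)\Rightarrow(3)$, in the same spirit as Theorems \ref{th 1}--\ref{th 8}, so that the two substantive links are precisely the sufficiency Lemma \ref{lem 1 9} and the necessity Lemma \ref{lem b9}. First I would dispose of $(1)\Rightarrow(2)$: for every $(z,w)\in T_B\times T_B$ one has the pointwise domination $|T_{\vec{a},\ \vec{b},\ \vec{c}}f(z,w)|\le S_{\vec{a},\ \vec{b},\ \vec{c}}|f|(z,w)$, since $|\rho(z,u)^{c_1}|=|\rho(z,u)|^{c_1}$ and likewise in the second slot, while $\||f|\|_{L^\infty}=\|f\|_{L^\infty}$; hence any bound for $S_{\vec{a},\ \vec{b},\ \vec{c}}$ transfers to $T_{\vec{a},\ \vec{b},\ \vec{c}}$. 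Here no weights occur because $dV_{\alpha_i}=\rho^{\alpha_i}dV$ and $dV$ share the same null sets on $T_B$, so $L^\infty_{\vec{\alpha}}=L^\infty$.

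For $(3)\Rightarrow(1)$ I would invoke Lemma \ref{lem 1 9}. Under $a_i>0$, $b_i>-1$, $c_i=n+1+a_i+b_i$, for any $f\in L^\infty(T_B\times T_B)$ one estimates
\[
|S_{\vec{a},\ \vec{b},\ \vec{c}}f(z,w)|\le \|f\|_{L^\infty}\,\rho(z)^{a_1}\int_{T_B}\frac{\rho(u)^{b_1}}{|\rho(z,u)|^{c_1}}\,dV(u)\cdot\rho(w)^{a_2}\int_{T_B}\frac{\rho(\eta)^{b_2}}{|\rho(w,\eta)|^{c_2}}\,dV(\eta),
\]
and since $c_i-b_i=n+1+a_i>n+1$ and $b_i>-1$, the ``in particular'' clause of Lemma \ref{lem jifendengshi} turns each inner integral into a constant multiple of $\rho(z)^{-a_1}$ (resp.\ $\rho(w)^{-a_2}$); multiplying back by $\rho(z)^{a_1}$ and $\rho(w)^{a_2}$ collapses the right-hand side to $C\|f\|_{L^\infty}$, which is the desired bound.

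For $(2)\Rightarrow(3)$ I would invoke Lemma \ref{lem b9}. The mechanism is that boundedness of $T_{\vec{a},\ \vec{b},\ \vec{c}}$ on $L^\infty$ forces $T_{\vec{a},\ \vec{b},\ \vec{c}}f$ to converge absolutely for every $f\in L^\infty$, and testing against unimodular functions whose argument cancels the phase of $\rho(z,u)^{-c_1}$ and $\rho(w,\eta)^{-c_2}$ shows that
\[
\rho(z)^{a_1}\rho(w)^{a_2}\int_{T_B}\int_{T_B}\frac{\rho(u)^{b_1}\rho(\eta)^{b_2}}{|\rho(z,u)|^{c_1}|\rho(w,\eta)|^{c_2}}\,dV(u)\,dV(\eta)
\]
is bounded uniformly in $(z,w)$. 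By Lemma \ref{lem jifendengshi}, mere finiteness of this double integral already forces $b_i>-1$ and $c_i>n+1+b_i$, and the integral then equals a constant times $\rho(z)^{a_1+b_1+n+1-c_1}\rho(w)^{a_2+b_2+n+1-c_2}$; since $\rho$ is unbounded above and has infimum $0$ on $T_B$, uniform boundedness forces both exponents to vanish, i.e.\ $c_i=n+1+a_i+b_i$, and substituting this into $c_i>n+1+b_i$ gives $a_i>0$. This closes the cycle.

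The one point deserving care is the necessity step, namely the passage from ``the operator is bounded'' to ``the absolute-value double integral is uniformly bounded'': the test functions must be chosen so that both phases $\rho(z,u)^{-c_1}$ and $\rho(w,\eta)^{-c_2}$ are neutralized simultaneously, and they must remain in the unit ball of $L^\infty$; once that is secured the remainder is a routine application of Lemma \ref{lem jifendengshi} together with the scaling freedom in $\rho(z)$ and $\rho(w)$. Since this is already carried out in Lemma \ref{lem b9}, the present proof reduces to the three citations above.
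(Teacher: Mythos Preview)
Your proposal is correct and follows exactly the paper's approach: the cycle $(3)\Rightarrow(1)\Rightarrow(2)\Rightarrow(3)$ with $(1)\Rightarrow(2)$ trivial by pointwise domination, $(3)\Rightarrow(1)$ via Lemma \ref{lem 1 9}, and $(2)\Rightarrow(3)$ via Lemma \ref{lem b9}. If anything, your write-up of the necessity step is more careful than the paper's own Lemma \ref{lem b9}, since you make explicit that one needs \emph{uniform} boundedness of the absolute-value integral (obtained via phase-cancelling unimodular test functions) rather than mere finiteness at each fixed $(z,w)$, in order to force the exponents $a_i+b_i+n+1-c_i$ to vanish.
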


\begin{proof}
	$(1)\Rightarrow(2)$ is trivial; $(2)\Rightarrow(3)$ comes from Lemma \ref{lem b9}; $(3)\Rightarrow(1)$ is derived from Lemma \ref{lem 1 9}.	
\end{proof}

\begin{theorem}\label{th 10}
	Let $\vec{p}:=\left( 1,\infty \right)$ and $\vec{q}:=\left( \infty,\infty \right) $. Then the following conclusions are equivalent.
	\\
	
	$(1)$ The operator $S_{\vec{a},\ \vec{b},\ \vec{c}}$ is bounded from $L_{\vec{\alpha}}^{\vec{p}}\left( T_B\times T_B \right) $ to $L^{\infty}\left( T_B\times T_B \right)$.
	\\
	
	$(2)$ The operator $T_{\vec{a},\ \vec{b},\ \vec{c}}$ is bounded from $L_{\vec{\alpha}}^{\vec{p}}\left( T_B\times T_B \right) $ to $L^{\infty}\left( T_B\times T_B \right)$.
	\\
	
	$(3)$ The parameters satisfy that, for any $i\in \{1,2\}$,
	$$
	\begin{cases}
	a_1\ge0,\ a_2>0, \\
	b_1 \ge \alpha _1,\ b_2>-1,\\
	c_i=n+1+a_i+b_i+\lambda_i,\\
	\end{cases}
	$$
	where
	$$
	\begin{cases}
	\lambda _1=-n-1-\alpha _1,\\
	\lambda_2=0.\\
	\end{cases}
	$$
\end{theorem}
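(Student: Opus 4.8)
The plan is to prove the cycle of implications $(1)\Rightarrow(2)\Rightarrow(3)\Rightarrow(1)$, following the template of Theorems \ref{th 1}--\ref{th 9}. The step $(1)\Rightarrow(2)$ is immediate: since $\rho(z,u)^{c_1}$ and $\rho(w,\eta)^{c_2}$ have moduli $|\rho(z,u)|^{c_1}$ and $|\rho(w,\eta)|^{c_2}$, one has the pointwise domination $|T_{\vec{a},\ \vec{b},\ \vec{c}}f(z,w)|\le S_{\vec{a},\ \vec{b},\ \vec{c}}|f|(z,w)$, so boundedness of $S_{\vec{a},\ \vec{b},\ \vec{c}}$ from $L_{\vec{\alpha}}^{\vec{p}}$ into $L^{\infty}$ forces that of $T_{\vec{a},\ \vec{b},\ \vec{c}}$. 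The implications $(2)\Rightarrow(3)$ and $(3)\Rightarrow(1)$ are exactly Lemma \ref{lem b10} and Lemma \ref{lem 1 10} in the index configuration $\vec{p}=(1,\infty)$, $\vec{q}=(\infty,\infty)$, so the theorem is really their combination; I indicate below how each is produced.

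For $(2)\Rightarrow(3)$, the two coordinates decouple once $T_{\vec{a},\ \vec{b},\ \vec{c}}f$ is written as a $u$-integral times an $\eta$-integral, and one argues with the test functions of Lemmas \ref{lem b6} and \ref{lem b9}. In the second coordinate $p_2=q_2=\infty$, so the $\eta$-integral $\int_{T_B}\rho(\eta)^{b_2}|\rho(w,\eta)|^{-c_2}\,dV(\eta)$ must converge; Lemma \ref{lem jifendengshi} forces $b_2>-1$ and $c_2-b_2>n+1$, and then letting $\rho(w)\to\infty$ (arbitrariness of $w$ in $T_B$) forces $a_2>0$ together with $c_2=n+1+a_2+b_2$. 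In the first coordinate $p_1=1$, so the relevant quantity is an essential supremum in $u$: letting $\rho(u)\to0^{+}$ forces $b_1\ge\alpha_1$, and choosing $z=w=(0',i)$, $u=(0',x_n+i)$, $\eta=(0',y_n+i)$ with $|x_n|,|y_n|\to\infty$ rules out $c_1<0$, while letting $\rho(z)\to\infty$ forces $a_1\ge0$; matching $\rho(z)$-homogeneity then gives $c_1=a_1+b_1-\alpha_1$. These are exactly the relations in $(3)$ with $\lambda_1=-n-1-\alpha_1$, $\lambda_2=0$.

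For $(3)\Rightarrow(1)$, the codomain $L^{\infty}$ rules out the Schur tests of Lemmas \ref{S1}--\ref{S4}, so I use Lemma \ref{ dao wu qiong } with $\vec{p}'=(\infty,1)$. With $K_1(z,u)=\rho(z)^{a_1}\rho(u)^{b_1-\alpha_1}|\rho(z,u)|^{-c_1}$ and $K_2(w,\eta)=\rho(w)^{a_2}\rho(\eta)^{b_2-\alpha_2}|\rho(w,\eta)|^{-c_2}$, it suffices to show $\|K_1(z,\cdot)\cdot K_2(w,\cdot)\|_{L_{\vec{\alpha}}^{(\infty,1)}}$ is bounded uniformly in $(z,w)$. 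This norm equals $\rho(z)^{a_1}\rho(w)^{a_2}\big(\operatorname{ess\,sup}_{u}\rho(u)^{b_1-\alpha_1}|\rho(z,u)|^{-c_1}\big)\int_{T_B}\rho(\eta)^{b_2}|\rho(w,\eta)|^{-c_2}\,dV(\eta)$. The $\eta$-integral is $\simeq\rho(w)^{-a_2}$ by Lemma \ref{lem jifendengshi} (applicable because $b_2>-1$ and $c_2-b_2=n+1+a_2>n+1$), which absorbs $\rho(w)^{a_2}$; writing $c_1=a_1+(b_1-\alpha_1)$ and invoking Lemma \ref{lem qu bian liang} (legitimate because $b_1-\alpha_1\ge0$ and $a_1\ge0$) the essential supremum is $\lesssim\rho(z)^{-a_1}$, absorbing $\rho(z)^{a_1}$. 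Hence the product norm is uniformly bounded, and Lemma \ref{ dao wu qiong } gives the boundedness of $S_{\vec{a},\ \vec{b},\ \vec{c}}$.

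The main obstacle is not conceptual but organizational: because the two coordinates sit in genuinely different regimes, one has to keep precise track of which parameter inequalities are strict. The first coordinate ($p_1=1<q_1=\infty$) yields the non-strict $a_1\ge0$, $b_1\ge\alpha_1$, while the second ($p_2=q_2=\infty$) yields the strict $a_2>0$, $b_2>-1$, the strictness being forced by the divergence threshold $s-t>n+1$ in Lemma \ref{lem jifendengshi}; one must then check that the cancellations in the sufficiency argument still go through in the borderline subcases $a_1=0$ or $b_1=\alpha_1$, where Lemma \ref{lem qu bian liang} is applied with a vanishing exponent.
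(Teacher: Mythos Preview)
Your proposal is correct and follows essentially the same approach as the paper: the cycle $(1)\Rightarrow(2)\Rightarrow(3)\Rightarrow(1)$, with the nontrivial implications supplied by Lemma \ref{lem b10} and Lemma \ref{lem 1 10}. In fact you supply considerably more detail than the paper, whose proofs of those two lemmas consist of ``similar to Lemma \ref{lem b6} and Lemma \ref{lem b9}'' and ``similar to Lemma \ref{lem b10}'' respectively; your sketches of how the two coordinates are handled (the strict inequalities in the $p_2=q_2=\infty$ coordinate coming from the divergence threshold in Lemma \ref{lem jifendengshi}, the non-strict ones in the $p_1=1$ coordinate from Lemma \ref{lem qu bian liang}) and your verification that the borderline cases $a_1=0$, $b_1=\alpha_1$ still go through are exactly what is needed.
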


\begin{proof}
	$(1)\Rightarrow(2)$ is trivial; $(2)\Rightarrow(3)$ comes from Lemma \ref{lem b10}; $(3)\Rightarrow(1)$ is derived from Lemma \ref{lem 1 10}.	
\end{proof}

\begin{theorem}\label{th 11}
	Let $\vec{p}:=\left( \infty,1 \right)$ and $\vec{q}:=\left( \infty,\infty \right) $. Then the following conclusions are equivalent.
	\\
	
	$(1)$ The operator $S_{\vec{a},\ \vec{b},\ \vec{c}}$ is bounded from $L_{\vec{\alpha}}^{\vec{p}}\left( T_B\times T_B \right) $ to $L^{\infty}\left( T_B\times T_B \right)$.
	\\
	
	$(2)$ The operator $T_{\vec{a},\ \vec{b},\ \vec{c}}$ is bounded from $L_{\vec{\alpha}}^{\vec{p}}\left( T_B\times T_B \right) $ to $L^{\infty}\left( T_B\times T_B \right)$.
	\\
	
	$(3)$ The parameters satisfy that, for any $i\in \{1,2\}$,
	$$
	\begin{cases}
	a_1>0,\ a_2\ge 0,\\
	b_1>-1,\  b_2\ge \alpha_2,\\
	c_i=n+1+a_i+b_i+\lambda_i,\\
	\end{cases}
	$$
	where
	$$
	\begin{cases}
	\lambda_1=0,\\
	\lambda _2=-n-1-\alpha _2.\\
	\end{cases}
	$$
\end{theorem}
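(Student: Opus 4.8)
The plan is to prove the cycle of implications $(1)\Rightarrow(2)\Rightarrow(3)\Rightarrow(1)$, exactly as in the proofs of Theorems \ref{th 5}--\ref{th 10}, so that all the substantive work is already carried by the necessity lemma \ref{lem b11} and the sufficiency lemma \ref{lem 1 11}, whose index hypotheses $p_1=q_1=\infty$, $1=p_2<q_2=\infty$ coincide with those of the present theorem. The implication $(1)\Rightarrow(2)$ is immediate: since each $c_i$ is real, $|\rho(z,u)^{-c_i}|=|\rho(z,u)|^{-c_i}$, whence pointwise on $T_B\times T_B$ one has
$$
\left| T_{\vec{a},\ \vec{b},\ \vec{c}}\ f(z,w) \right|\le S_{\vec{a},\ \vec{b},\ \vec{c}}\ |f|(z,w),
$$
and taking essential suprema together with $(1)$ gives $\lVert T_{\vec{a},\ \vec{b},\ \vec{c}}\ f\rVert_{\infty}\le\lVert S_{\vec{a},\ \vec{b},\ \vec{c}}\ |f|\rVert_{\infty}\lesssim\lVert f\rVert_{\vec{p},\vec{\alpha}}$, i.e.\ $(2)$.

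For $(2)\Rightarrow(3)$ I would simply invoke Lemma \ref{lem b11}, whose conclusion is verbatim the list of conditions in $(3)$. The mechanism there, inherited as the coordinate-swap of Lemma \ref{lem b10}, runs as follows: because $p_1=\infty$ one may dominate an arbitrary test function by $|f(z,w)|\le C\,g(w)$ with $g\in L^{1}_{\alpha_2}$, which decouples the first slot into a single integral governed by the ``$|\rho|$'' case of Lemma \ref{lem jifendengshi}; then testing $T_{\vec{a},\ \vec{b},\ \vec{c}}$ against the reproducing-type kernels $f_{\xi,\eta}$ and letting $\rho(z),\rho(w)$ tend to the boundary (using Lemma \ref{lem qu bian liang}) forces $a_1>0$, $a_2\ge0$, $b_1>-1$, $b_2\ge\alpha_2$, while matching orders of homogeneity in $\rho$ yields the identities $c_i=n+1+a_i+b_i+\lambda_i$ with $\lambda_1=0$ and $\lambda_2=-n-1-\alpha_2$.

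For $(3)\Rightarrow(1)$ I would appeal to Lemma \ref{lem 1 11}. Because the target space is $L^{\infty}(T_B\times T_B)$, the full multiparameter Schur test is not needed: by Lemma \ref{ dao wu qiong } it suffices to show that, writing $S_{\vec{a},\ \vec{b},\ \vec{c}}$ with kernel $K_1(z,u)K_2(w,\eta)$ as in Section 5 and noting $\vec{p}^{\prime}=(1,\infty)$,
$$
\left\| K_1(z,\cdot)\cdot K_2(w,\cdot) \right\|_{L_{\vec{\alpha}}^{\vec{p}^{\prime}}}=\rho(z)^{a_1}\left( \int_{T_B}\frac{\rho(u)^{b_1}}{|\rho(z,u)|^{c_1}}\,dV(u) \right)\cdot\rho(w)^{a_2}\left( \underset{\eta\in T_B}{\text{ess}\ \text{sup}}\ \frac{\rho(\eta)^{b_2-\alpha_2}}{|\rho(w,\eta)|^{c_2}} \right)
$$
is bounded uniformly in $(z,w)\in T_B\times T_B$. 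The first parenthesis is evaluated by Lemma \ref{lem jifendengshi} ($b_1>-1$ and $c_1-b_1=a_1+n+1>n+1$), producing the factor $\rho(z)^{-(c_1-b_1-n-1)}=\rho(z)^{-a_1}$ that cancels $\rho(z)^{a_1}$; the second parenthesis is estimated by Lemma \ref{lem qu bian liang} ($b_2\ge\alpha_2$ and $c_2-(b_2-\alpha_2)=a_2\ge0$), producing $\rho(w)^{-a_2}$ that cancels $\rho(w)^{a_2}$. Thus the displayed norm is a finite constant independent of $(z,w)$, and Lemma \ref{ dao wu qiong } delivers $(1)$.

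I do not anticipate a genuine obstacle: the theorem is assembled entirely from lemmas already established. The only delicate point is bookkeeping in the step $(3)\Rightarrow(1)$ — one must track the passage between $dV_{\alpha_i}$ and $dV$, and between the exponents $b_i-\alpha_i$ and $b_i$, carefully, so that the precise relations encoded by $\lambda_1=0$ and $\lambda_2=-n-1-\alpha_2$ are exactly the ones that make every $\rho$-power cancel; once the indices are organized as above, each estimate is a one-line application of Lemma \ref{lem jifendengshi} or Lemma \ref{lem qu bian liang}.
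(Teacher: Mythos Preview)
Your proposal is correct and follows exactly the paper's approach: the cycle $(1)\Rightarrow(2)\Rightarrow(3)\Rightarrow(1)$ with $(2)\Rightarrow(3)$ supplied by Lemma~\ref{lem b11} and $(3)\Rightarrow(1)$ by Lemma~\ref{lem 1 11}. The additional unpacking you give of those lemmas (the $\vec{p}^{\prime}=(1,\infty)$ kernel-norm computation via Lemmas~\ref{lem jifendengshi} and~\ref{lem qu bian liang}) is accurate and indeed reproduces precisely the cancellations $c_1-b_1-(n+1)=a_1$ and $c_2-(b_2-\alpha_2)=a_2$ that make the argument work.
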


\begin{proof}
	$(1)\Rightarrow(2)$ is trivial; $(2)\Rightarrow(3)$ comes from Lemma \ref{lem b11}; $(3)\Rightarrow(1)$ is derived from Lemma \ref{lem 1 11}.	
\end{proof}

\begin{theorem}\label{th 12}
	Let $\vec{p}:=\left( \infty,p_2 \right)$ and $\vec{q}:=\left( \infty,\infty \right) $ satisfy $1<p_2 <\infty$. Then the following conclusions are equivalent.
	\\
	
	$(1)$ The operator $S_{\vec{a},\ \vec{b},\ \vec{c}}$ is bounded from $L_{\vec{\alpha}}^{\vec{p}}\left( T_B\times T_B \right)$ to $L^{\infty}\left( T_B\times T_B \right)$.
	\\
	
	$(2)$ The operator $T_{\vec{a},\ \vec{b},\ \vec{c}}$ is bounded from $L_{\vec{\alpha}}^{\vec{p}}\left( T_B\times T_B \right)$ to $L^{\infty}\left( T_B\times T_B \right)$.
	\\
	
	$(3)$ The parameters satisfy that, for any $i\in \{1,2\}$,
	$$
	\begin{cases}
	a_1>0,\ a_2>0,\\
	b_1>-1,\ p_2\left( b_2+1 \right) >\alpha _2+1,\\
	c_i=n+1+a_i+b_i+\lambda _i,\\
	\end{cases}	
	$$
	where
	$$
	\begin{cases}
	\lambda _1=0,\\
	\lambda_2=-\frac{n+1+\alpha _2}{p_2}.\\
	\end{cases}
	$$
\end{theorem}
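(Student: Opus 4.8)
The plan is to establish the cycle of implications $(1)\Rightarrow(2)\Rightarrow(3)\Rightarrow(1)$, exactly as in Theorems \ref{th 1}--\ref{th 11}. The implication $(1)\Rightarrow(2)$ is trivial: since $c_1,c_2\in\mathbb{R}$ one has $|\rho(z,u)^{c_1}|=|\rho(z,u)|^{c_1}$ and $|\rho(w,\eta)^{c_2}|=|\rho(w,\eta)|^{c_2}$, so $|T_{\vec{a},\ \vec{b},\ \vec{c}}\,f(z,w)|\le S_{\vec{a},\ \vec{b},\ \vec{c}}\,|f|(z,w)$ pointwise, and boundedness of $S_{\vec{a},\ \vec{b},\ \vec{c}}$ forces that of $T_{\vec{a},\ \vec{b},\ \vec{c}}$. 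For $(2)\Rightarrow(3)$ I would simply invoke Lemma \ref{lem b12}: its hypotheses ($p_1=q_1=\infty$, $1<p_2<q_2=\infty$, and $T_{\vec{a},\ \vec{b},\ \vec{c}}$ bounded from $L_{\vec{\alpha}}^{\vec{p}}(T_B\times T_B)$ to $L^{\infty}(T_B\times T_B)$) are precisely those of $(2)$, and its conclusion is exactly the list of conditions in $(3)$.

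For $(3)\Rightarrow(1)$ I would use Lemma \ref{lem 1 12}, whose argument runs through Lemma \ref{ dao wu qiong }. Writing the kernel of $S_{\vec{a},\ \vec{b},\ \vec{c}}$ with respect to $dV_{\alpha_1}\times dV_{\alpha_2}$ as $K_1(z,u)K_2(w,\eta)$ with $K_1(z,u)=\rho(z)^{a_1}\rho(u)^{b_1-\alpha_1}/|\rho(z,u)|^{c_1}$ and $K_2(w,\eta)=\rho(w)^{a_2}\rho(\eta)^{b_2-\alpha_2}/|\rho(w,\eta)|^{c_2}$, it suffices, since $\vec{p}=(\infty,p_2)$ and hence $\vec{p}^{\prime}=(1,p_2^{\prime})$, to check that
$$
\|K_1(z,\cdot)\,K_2(w,\cdot)\|_{L_{\vec{\alpha}}^{(1,p_2^{\prime})}}=\rho(z)^{a_1}\int_{T_B}\frac{\rho(u)^{b_1}}{|\rho(z,u)|^{c_1}}\,dV(u)\cdot\rho(w)^{a_2}\left(\int_{T_B}\frac{\rho(\eta)^{p_2^{\prime}(b_2-\alpha_2)+\alpha_2}}{|\rho(w,\eta)|^{c_2p_2^{\prime}}}\,dV(\eta)\right)^{1/p_2^{\prime}}
$$
is bounded uniformly in $(z,w)$. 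By Lemma \ref{lem jifendengshi} the first factor equals $\rho(z)^{a_1}\cdot C\rho(z)^{-(c_1-b_1-n-1)}$, provided $b_1>-1$ and $c_1-b_1>n+1$; here $c_1=n+1+a_1+b_1$ gives $c_1-b_1-n-1=a_1>0$, so this factor is constant. Likewise, by Lemma \ref{lem jifendengshi} the $\eta$-integral converges when $p_2^{\prime}(b_2-\alpha_2)+\alpha_2>-1$ (equivalently $\alpha_2+1<p_2(b_2+1)$) and $c_2p_2^{\prime}-p_2^{\prime}(b_2-\alpha_2)-\alpha_2>n+1$, and then the second factor equals $\rho(w)^{a_2}\cdot\bigl(C\rho(w)^{-a_2p_2^{\prime}}\bigr)^{1/p_2^{\prime}}$, a constant, because $c_2=n+1+a_2+b_2-\tfrac{n+1+\alpha_2}{p_2}$ makes the exponent in the denominator equal to $a_2p_2^{\prime}$, whose positivity is precisely the requirement $a_2>0$. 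Hence Lemma \ref{ dao wu qiong } applies and $S_{\vec{a},\ \vec{b},\ \vec{c}}:L_{\vec{\alpha}}^{\vec{p}}(T_B\times T_B)\to L^{\infty}(T_B\times T_B)$ is bounded.

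I do not expect a genuine obstacle, since all the analytic content is already packaged in Lemmas \ref{lem jifendengshi}, \ref{lem qu bian liang}, \ref{ dao wu qiong }, \ref{lem b12}, and \ref{lem 1 12}. The only point that needs care is the bookkeeping: one must check that the two-sided identities $c_i=n+1+a_i+b_i+\lambda_i$ in $(3)$ are exactly what force the powers of $\rho(z)$ and $\rho(w)$ produced by Lemma \ref{lem jifendengshi} to cancel the prefactors $\rho(z)^{a_1}$ and $\rho(w)^{a_2}$, and that the strict inequalities $a_1>0$, $a_2>0$, $b_1>-1$, $p_2(b_2+1)>\alpha_2+1$ coincide exactly with the convergence hypotheses of that lemma, so that the necessary conditions from Lemma \ref{lem b12} and the sufficient conditions from Lemma \ref{lem 1 12} leave no gap.
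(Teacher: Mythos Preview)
Your proposal is correct and follows exactly the paper's own strategy: the paper proves Theorem~\ref{th 12} by the same cycle $(1)\Rightarrow(2)$ trivial, $(2)\Rightarrow(3)$ via Lemma~\ref{lem b12}, and $(3)\Rightarrow(1)$ via Lemma~\ref{lem 1 12}. In fact you supply more detail than the paper does for the sufficiency step, correctly verifying via Lemma~\ref{lem jifendengshi} that the kernel has uniformly bounded $L^{(1,p_2')}_{\vec{\alpha}}$-norm so that Lemma~\ref{ dao wu qiong } applies.
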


\begin{proof}
	$(1)\Rightarrow(2)$ is trivial; $(2)\Rightarrow(3)$ comes from Lemma \ref{lem b12}; $(3)\Rightarrow(1)$ is derived from Lemma \ref{lem 1 12}.	
\end{proof}

\begin{theorem}\label{th 13}
	Let $\vec{p}:=\left(p_1,\infty \right)$ and $\vec{q}:=\left( \infty,\infty \right) $ satisfy $1<p_1 <\infty$. Then the following conclusions are equivalent.
	\\
	
	$(1)$ The operator $S_{\vec{a},\ \vec{b},\ \vec{c}}$ is bounded from $L_{\vec{\alpha}}^{\vec{p}}\left( T_B\times T_B \right)$ to $L^{\infty}\left( T_B\times T_B \right)$.
	\\
	
	$(2)$ The operator $T_{\vec{a},\ \vec{b},\ \vec{c}}$ is bounded from $L_{\vec{\alpha}}^{\vec{p}}\left( T_B\times T_B \right)$ to $L^{\infty}\left( T_B\times T_B \right)$.
	\\
	
	$(3)$ The parameters satisfy that, for any $i\in \{1,2\}$,
	$$
	\begin{cases}
	a_1>0,\ a_2>0,\\
	p_1\left( b_1+1 \right) >\alpha _1+1,\ b_2>-1,\\
	c_i=n+1+a_i+b_i+\lambda_i,\\
	\end{cases}	
	$$
	where
	$$
	\begin{cases}
	\lambda_1=-\frac{n+1+\alpha _1}{p_1},\\
	\lambda _2=0.\\
	\end{cases}
	$$
\end{theorem}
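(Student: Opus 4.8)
The plan is to establish the cycle $(1)\Rightarrow(2)\Rightarrow(3)\Rightarrow(1)$, letting the two substantive ends be carried by the necessity Lemma \ref{lem b13} and the sufficiency Lemma \ref{lem 1 13}, both of which are stated for precisely this index configuration ($\vec{p}=(p_1,\infty)$, $\vec{q}=(\infty,\infty)$, $1<p_1<\infty$). The implication $(1)\Rightarrow(2)$ is immediate: since $\bigl|\rho(z,u)^{c_1}\bigr|=\bigl|\rho(z,u)\bigr|^{c_1}$ and $\bigl|\rho(w,\eta)^{c_2}\bigr|=\bigl|\rho(w,\eta)\bigr|^{c_2}$ while $\rho(z)^{a_1},\rho(w)^{a_2},\rho(u)^{b_1},\rho(\eta)^{b_2}>0$ on $T_B$, one has the pointwise domination $\bigl|T_{\vec{a},\ \vec{b},\ \vec{c}}f(z,w)\bigr|\le S_{\vec{a},\ \vec{b},\ \vec{c}}(|f|)(z,w)$, so an $L_{\vec{\alpha}}^{\vec{p}}(T_B\times T_B)\to L^{\infty}(T_B\times T_B)$ bound for $S_{\vec{a},\ \vec{b},\ \vec{c}}$ transfers verbatim to $T_{\vec{a},\ \vec{b},\ \vec{c}}$.

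For $(2)\Rightarrow(3)$ I would invoke Lemma \ref{lem b13}, whose proof parallels that of Lemma \ref{lem b12}. The idea is that boundedness of $T_{\vec{a},\ \vec{b},\ \vec{c}}$ from $L_{\vec{\alpha}}^{\vec{p}}$ to $L^{\infty}$ makes the linear functional $f\mapsto T_{\vec{a},\ \vec{b},\ \vec{c}}f(z,w)$ bounded on $L_{\vec{\alpha}}^{\vec{p}}$ uniformly in $(z,w)$; by the sharpness of the mixed-norm H\"older inequality this forces $\bigl\|\rho(z)^{a_1}\rho(w)^{a_2}\rho(u)^{b_1-\alpha_1}\rho(\eta)^{b_2-\alpha_2}\bigl|\rho(z,u)\bigr|^{-c_1}\bigl|\rho(w,\eta)\bigr|^{-c_2}\bigr\|_{L_{\vec{\alpha}}^{\vec{p}^{\prime}}}$, the norm taken in the variables $(u,\eta)$, to be bounded uniformly in $(z,w)$. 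Since $\vec{p}^{\prime}=(p_1^{\prime},1)$, this norm factors into an $L^{p_1^{\prime}}$-integral over $u$ times an $L^{1}$-integral over $\eta$, each evaluated by Lemma \ref{lem jifendengshi} (with Lemma \ref{lem qu bian liang} supplying the lower bound on $\bigl|\rho(w,\eta)\bigr|$ where it is needed). Convergence of the first integral yields $p_1(b_1+1)>\alpha_1+1$ and of the second yields $b_2>-1$; that the residual powers of $\rho(z),\rho(w)$ must vanish (because $\rho$ exhausts $(0,\infty)$ over $T_B$) gives the identities $c_i=n+1+a_i+b_i+\lambda_i$, and the strictness of the two convergence conditions then forces $a_1,a_2>0$. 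In effect the first slot reproduces the genuinely mixed $L^{p_1}\!\to\!L^{\infty}$ picture (cf.\ Lemma \ref{lem b5}) and the second the endpoint $L^{\infty}\!\to\!L^{\infty}$ picture (cf.\ Lemma \ref{lem b9}).

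Finally, $(3)\Rightarrow(1)$ follows from Lemma \ref{lem 1 13}, which reduces via Lemma \ref{ dao wu qiong } to verifying that $\bigl\|K_1(z,\cdot)\cdot K_2(w,\cdot)\bigr\|_{L_{\vec{\alpha}}^{\vec{p}^{\prime}}}$ is uniformly bounded, where $K_1(z,u)=\rho(z)^{a_1}\rho(u)^{b_1-\alpha_1}\bigl|\rho(z,u)\bigr|^{-c_1}$ and $K_2(w,\eta)=\rho(w)^{a_2}\rho(\eta)^{b_2-\alpha_2}\bigl|\rho(w,\eta)\bigr|^{-c_2}$; this is exactly the factored computation of the previous paragraph run in the opposite logical direction, so the hypotheses listed in $(3)$ are precisely what is required for the $L^{p_1^{\prime}}$- and $L^{1}$-integrals to converge and for the leftover powers of $\rho(z),\rho(w)$ to cancel. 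The step I expect to be the main obstacle is the necessity half: one must probe the two tensor factors \emph{simultaneously}, one of which behaves like the truly mixed $L^{p_1}\!\to\!L^{\infty}$ problem and the other like the borderline $L^{\infty}\!\to\!L^{\infty}$ problem, so that the three inhomogeneous constraints ($a_i>0$ strict, $p_1(b_1+1)>\alpha_1+1$ open, $b_2>-1$) each emerge with the correct sharpness; once Lemmas \ref{lem b13} and \ref{lem 1 13} are in hand, the theorem itself is just their assembly together with the trivial pointwise comparison.
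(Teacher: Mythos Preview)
Your proposal is correct and follows the same cycle $(1)\Rightarrow(2)\Rightarrow(3)\Rightarrow(1)$ as the paper, invoking Lemma~\ref{lem b13} for necessity and Lemma~\ref{lem 1 13} for sufficiency with the pointwise domination $|T_{\vec a,\vec b,\vec c}f|\le S_{\vec a,\vec b,\vec c}(|f|)$ for the trivial step. Your sketch of the underlying lemmas via the factored $L^{p_1'}\times L^1$ computation of the kernel norm is also in line with how the paper handles the analogous cases (cf.\ Lemmas~\ref{lem b5}, \ref{lem b9}, \ref{lem b12}).
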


\begin{proof}
	$(1)\Rightarrow(2)$ is trivial; $(2)\Rightarrow(3)$ comes from Lemma \ref{lem b13}; $(3)\Rightarrow(1)$ is derived from Lemma \ref{lem 1 13}.	
\end{proof}

\section{Applications}	
\ \ \ \ 
In this section, we will give three applications of the main theorems in this paper, that is, to study the boundedness of three kinds of special integral operators.

We study the $L_{\vec{\gamma}}^{\vec{p}}\left( T_B\times T_B \right)$-$L_{\vec{\beta}}^{\vec{q}}\left( T_B\times T_B \right)$ boundedness of the following operators
$$
T_{\vec{c}}^{\vec{\gamma}}f\left( z,w \right) =\int_{T_B}{\int_{T_B}{\frac{f\left( u,\eta \right)}{\rho \left( z,u \right) ^{c_1}\rho \left( w,\eta \right) ^{c_2}}dV_{\gamma _1}\left( u \right)}dV_{\gamma _2}\left( \eta \right)},
$$
where $c_1,\ c_2>0$ and $\gamma_1,\ \gamma_2>-1$.

Notice that $T_{\vec{c}}^{\vec{\gamma}}=T_{\vec{0},\ \vec{\gamma},\ \vec{c}}$, we have the following results.  
\\

Corollaries \ref{T you jie1} and \ref{T you jie2} state when operator $T_{\vec{c}}^{\vec{\gamma}}$ is bounded.

\begin{corollary}\label{T you jie1}
	If $\vec{p}:=\left( p_1,p_2 \right)$ and $\vec{q}:=\left( q_1,q_2 \right) $ satisfy $1<p_-\le p_+\le q_-\le q_+ <\infty$, then the operator $T_{\vec{c}}^{\vec{\gamma}}$ is bounded from $L_{\vec{\gamma}}^{\vec{p}}\left( T_B\times T_B \right) $ to $L_{\vec{\beta}}^{\vec{q}}\left( T_B\times T_B \right) $ if and only if the parameters satisfy that, for any $i\in \{1,2\}$,
	$$
	\begin{cases}
	\beta _i>-1,\\
	c_i=n+1+\gamma_i+\lambda_i,\\
	\end{cases}
	$$
	\\
	where 
	$$
	\lambda_i =\frac{n+1+\beta _i}{q_i}-\frac{n+1+\gamma _i}{p_i}.
	$$
\end{corollary}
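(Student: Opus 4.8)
The plan is to deduce Corollary~\ref{T you jie1} directly from Theorem~\ref{th 1} by specializing the parameters. Since $T_{\vec{c}}^{\vec{\gamma}}=T_{\vec{0},\ \vec{\gamma},\ \vec{c}}$ acts on $L_{\vec{\gamma}}^{\vec{p}}\left(T_B\times T_B\right)$, I would apply Theorem~\ref{th 1} with $\vec{a}=(0,0)$, $\vec{b}=\vec{\gamma}$, and domain weight $\vec{\alpha}=\vec{\gamma}$. The standing hypothesis $1<p_-\le p_+\le q_-\le q_+<\infty$ is exactly the hypothesis of Theorem~\ref{th 1}, so the equivalence $(2)\Leftrightarrow(3)$ there applies verbatim: the operator $T_{\vec{0},\ \vec{\gamma},\ \vec{c}}$ is bounded from $L_{\vec{\gamma}}^{\vec{p}}$ to $L_{\vec{\beta}}^{\vec{q}}$ if and only if, for each $i\in\{1,2\}$, one has $-q_i\cdot 0<\beta_i+1$, $\gamma_i+1<p_i(\gamma_i+1)$, and $c_i=n+1+0+\gamma_i+\lambda_i$ with $\lambda_i=\frac{n+1+\beta_i}{q_i}-\frac{n+1+\gamma_i}{p_i}$.

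The remaining step is to simplify this system to the form in the statement. The inequality $-q_i\cdot 0<\beta_i+1$ is just $\beta_i>-1$. The inequality $\gamma_i+1<p_i(\gamma_i+1)$ is, since $\gamma_i>-1$ forces $\gamma_i+1>0$, equivalent to $p_i>1$, which holds automatically under the hypothesis $1<p_-\le p_+<\infty$; hence it imposes no constraint and can be dropped. Finally $c_i=n+1+0+\gamma_i+\lambda_i$ is exactly $c_i=n+1+\gamma_i+\lambda_i$ with the stated $\lambda_i$. Collecting the surviving conditions yields precisely the system $\beta_i>-1$, $c_i=n+1+\gamma_i+\lambda_i$, which proves the corollary.

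I do not expect any substantive obstacle here; the proof is a bookkeeping exercise. The one point that requires care is keeping track of which weight plays which role: because $\vec{b}=\vec{\gamma}$ coincides with the domain weight $\vec{\alpha}=\vec{\gamma}$, the general condition $\alpha_i+1<p_i(b_i+1)$ degenerates to the vacuous $p_i>1$ rather than to a genuine restriction on $\gamma_i$, and the condition $-q_ia_i<\beta_i+1$ degenerates to $\beta_i>-1$ because $a_i=0$. Once this specialization is carried out correctly, the result follows immediately, and the argument is identical in spirit to how Corollary~\ref{T you jie2} will be obtained from the companion theorem.
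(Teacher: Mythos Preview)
Your proposal is correct and is exactly the approach the paper intends: the corollary is stated immediately after the observation that $T_{\vec{c}}^{\vec{\gamma}}=T_{\vec{0},\ \vec{\gamma},\ \vec{c}}$, and specializing Theorem~\ref{th 1} with $\vec{a}=\vec{0}$, $\vec{b}=\vec{\alpha}=\vec{\gamma}$ yields precisely the simplifications you describe. Your tracking of which conditions become vacuous (namely $\alpha_i+1<p_i(b_i+1)$ reducing to $p_i>1$) and which survive ($\beta_i>-1$ and the formula for $c_i$) is accurate.
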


\begin{corollary}\label{T you jie2}
	If $\vec{p}:=\left( 1,1 \right)$ and $\vec{q}:=\left( \infty,\infty \right) $, then the operator $T_{\vec{c}}^{\vec{\gamma}}$ is bounded from $L_{\vec{\gamma}}^{\vec{1}}\left( T_B\times T_B \right) $ to $L^{\infty}\left( T_B\times T_B \right) $ if and only if
	$c_1=c_2=0$.
\end{corollary}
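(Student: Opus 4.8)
The plan is to obtain Corollary~\ref{T you jie2} as a direct specialization of Theorem~\ref{th 6}. First I would record the identification $T_{\vec{c}}^{\vec{\gamma}}=T_{\vec{0},\,\vec{\gamma},\,\vec{c}}$ and observe that the domain space in question is $L_{\vec{\gamma}}^{\vec{1}}\left(T_B\times T_B\right)$, so that in the notation of Theorem~\ref{th 6} one has $\vec{a}=(0,0)$, $\vec{b}=(\gamma_1,\gamma_2)$, and the domain weight is $\vec{\alpha}=(\gamma_1,\gamma_2)$. Since here $\vec{p}=(1,1)$ and $\vec{q}=(\infty,\infty)$, Theorem~\ref{th 6} applies verbatim and tells us that $T_{\vec{c}}^{\vec{\gamma}}$ is bounded from $L_{\vec{\gamma}}^{\vec{1}}$ to $L^{\infty}$ if and only if, for each $i\in\{1,2\}$, one has $a_i\ge 0$, $b_i\ge\alpha_i$, and $c_i=n+1+a_i+b_i+\lambda_i$ with $\lambda_i=-n-1-\alpha_i$.

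Next I would substitute the values $a_i=0$, $b_i=\gamma_i$, and $\alpha_i=\gamma_i$ into this list. The inequality $a_i\ge 0$ becomes $0\ge 0$, which holds automatically; the inequality $b_i\ge\alpha_i$ becomes $\gamma_i\ge\gamma_i$, which also holds automatically; and the exponent identity becomes
$$
c_i=n+1+0+\gamma_i+\bigl(-n-1-\gamma_i\bigr)=0 .
$$
Thus the full set of conditions of Theorem~\ref{th 6} collapses to the single requirement $c_1=c_2=0$, which is precisely the stated characterization, completing the proof.

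Because the argument is nothing more than a substitution into an already-established equivalence, I do not expect any genuine obstacle; the only point that needs a word of care is that the standing convention $c_1,c_2>0$ used when first writing down $T_{\vec{c}}^{\vec{\gamma}}$ must be understood as relaxed for this borderline statement (equivalently, $c_i=0$ is admitted as a limiting value). As a consistency check one may note that when $c_1=c_2=0$ the operator reduces to $f\mapsto\int_{T_B}\int_{T_B} f(u,\eta)\,dV_{\gamma_1}(u)\,dV_{\gamma_2}(\eta)$, a constant bounded in absolute value by $\lVert f\rVert_{\vec{1},\vec{\gamma}}$, so it is indeed bounded into $L^{\infty}$, while for any $c_i>0$ the corresponding exponent identity $c_i=0$ fails and boundedness cannot hold.
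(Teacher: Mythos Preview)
Your proposal is correct and follows exactly the route the paper intends: the corollary is stated as an immediate specialization of Theorem~\ref{th 6} via the identification $T_{\vec{c}}^{\vec{\gamma}}=T_{\vec{0},\,\vec{\gamma},\,\vec{c}}$ with $\vec{\alpha}=\vec{\gamma}$, and the paper gives no separate proof beyond this observation. Your remark on relaxing the convention $c_i>0$ and the accompanying consistency check are nice clarifications that the paper itself leaves implicit.
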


Corollary \ref{T wu jie} states when operator $T_{\vec{c}}^{\vec{\gamma}}$ is unbounded.

\begin{corollary}\label{T wu jie}
	If $\vec{p}:=\left( p_1,p_2 \right)$ and $\vec{q}:=\left( q_1,q_2 \right) $ satisfy one of the following conditions : 
	
	$(1)$ $p_1=p_2=1$ and $q_1,q_2\ge 1$;
\\	

	$(2)$ $1=p_-< p_+<  q_- \le q_+<\infty$;
\\	

	$(3)$ $1\le p_-\le p_+\le\infty$,  $\vec{p}\ne \left( 1,1 \right)$  and $\left( q_1,q_2 \right)=\left( \infty,\infty \right)$;
\\	

	then the operator $T_{\vec{c}}^{\vec{\gamma}}$ is unbounded on $L_{\vec{\gamma}}^{\vec{p}}\left( T_B\times T_B \right)$.
\end{corollary}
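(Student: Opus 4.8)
The plan is to derive a contradiction from the necessity results of Section~4. I would begin by noting that $T_{\vec c}^{\vec\gamma}=T_{\vec 0,\ \vec\gamma,\ \vec c}$, so that in the notation of those lemmas $a_1=a_2=0$, $b_i=\gamma_i$, and the source weight is $\vec\alpha=\vec\gamma$; I would also keep in mind the standing hypotheses $c_1,c_2>0$ and $\gamma_1,\gamma_2>-1$. Assuming for contradiction that $T_{\vec c}^{\vec\gamma}$ were bounded from $L_{\vec\gamma}^{\vec p}(T_B\times T_B)$ to $L_{\vec\beta}^{\vec q}(T_B\times T_B)$, I would match each of the three listed configurations of $(\vec p,\vec q)$ to the appropriate lemma among Lemmas~\ref{lem b1}--\ref{lem b13}. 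The mechanism is uniform: the conclusion of the matching lemma always contains, for at least one index $i$, a rigid constraint that becomes impossible after substituting $a_i=0$ and $b_i=\alpha_i=\gamma_i$; namely $\alpha_i<b_i$ becomes $\gamma_i<\gamma_i$, $a_i>0$ becomes $0>0$, and a relation $c_i=n+1+a_i+b_i+\lambda_i$ with $\lambda_i=-(n+1+\gamma_i)$ becomes $c_i=0$, contradicting $c_i>0$.

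Concretely, in case~(2) exactly one index $j$ has $p_j=1$, and because $1<p_+<q_-\le q_+<\infty$ both $q_1,q_2$ are finite, so Lemma~\ref{lem b3} (when $j=2$) or Lemma~\ref{lem b4} (when $j=1$) applies and forces the strict inequality $\alpha_j<b_j$, i.e.\ $\gamma_j<\gamma_j$. In case~(1) with $\vec q\in[1,\infty)\times[1,\infty)$ I would invoke Lemma~\ref{lem b2}, whose conclusion $\alpha_i<b_i$ again reads $\gamma_i<\gamma_i$; and for $\vec p=(1,1)$, $\vec q=(\infty,\infty)$ I would use Corollary~\ref{T you jie2} (equivalently Lemma~\ref{lem b6}), which forces $c_i=0$. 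In case~(3) the target is $L^\infty$, so $\vec q=(\infty,\infty)$, and $\vec p\ne(1,1)$ supplies an index $j$ with $p_j>1$ (possibly $p_j=\infty$); whichever of Lemmas~\ref{lem b5}, \ref{lem b7}--\ref{lem b13} matches $(p_1,p_2)$, its conclusion requires $a_j>0$ for that index, which contradicts $a_j=0$.

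I expect the only real difficulty to be the bookkeeping: matching every admissible pair $(\vec p,\vec q)$ in (1)--(3) to the correct lemma among the roughly thirteen cases of Section~4 and confirming that the constraint it yields genuinely degenerates under $a_i=0$ and $b_i=\alpha_i=\gamma_i$. No new estimates are required, since all the analytic work sits inside the necessity lemmas; the organization of the case analysis is the main — indeed the only — obstacle.
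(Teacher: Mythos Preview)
Your proposal is correct and follows exactly the route the paper intends: Corollary~\ref{T wu jie} is stated without proof as an immediate consequence of specializing the necessity results (Lemmas~\ref{lem b1}--\ref{lem b13}, equivalently Theorems~\ref{th 1}--\ref{th 13}) to $a_i=0$ and $b_i=\alpha_i=\gamma_i$, and your case-by-case matching is precisely how one unpacks this. The only remark is that case~(1) as written nominally allows $\vec q$ with one finite and one infinite component, a configuration the paper's lemmas do not cover; but that is a looseness in the corollary's statement, not a defect in your argument.
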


Next, we study the boundedness of Bergman-type projections.

Let $\vec{\gamma}:=\left(\gamma_1,\gamma_2\right)\in \left(-1,\infty \right)^2$, the weighted multiparameter Bergman-type projection is as follows :
$$
P_{\vec{\gamma}}f(z,w):=\int_{T_B}{\int_{T_B}{\frac{f\left( u,\eta \right)}{\rho \left( z,u \right) ^{n+1+\gamma _1}\rho \left( w,\eta \right) ^{n+1+\gamma _2}}dV_{\gamma _1}\left( u \right)}dV_{\gamma _2}\left( \eta \right)}.
$$

Notice that $P_{\vec{\gamma}}=T_{\overrightarrow{n+1+\gamma }}^{\vec{\gamma}}=T_{\vec{0},\ \vec{\gamma},\ \overrightarrow{n+1+\gamma }}$, hence we have the following results. 
\\

The following corollaries \ref{P you jie 1} to \ref{P you jie 4} give the case when the operator $P_{\vec{\gamma}}$ is bounded from $L_{\vec{\alpha}}^{\vec{p}}\left( T_B\times T_B \right) $ to $L_{\vec{\beta}}^{\vec{q}}\left( T_B\times T_B \right) $. Corollary \ref{P wu jie} gives when the operator $P_{\vec{\gamma}}$ is unbounded on $L_{\vec{\alpha}}^{\vec{p}}\left( T_B\times T_B \right) $.

\begin{corollary}\label{P you jie 1}
	If $\vec{p}:=\left( p_1,p_2 \right)$ and $\vec{q}:=\left( q_1,q_2 \right) $ satisfy $1<p_-\le p_+\le q_-\le q_+ <\infty$, then the operator $P_{\vec{\gamma}}$ is bounded from $L_{\vec{\alpha}}^{\vec{p}}\left( T_B\times T_B \right) $ to $L_{\vec{\beta}}^{\vec{q}}\left( T_B\times T_B \right) $ if and only if the parameters satisfy that, for any $i\in \{1,2\}$,
	$$
	\begin{cases}
	p_i\left(\gamma_i+1\right)>\alpha_i+1,\\
	p_i\left( n+1+\beta _i \right)=q_i\left( n+1+\alpha _i \right).\\
	\end{cases}
	$$
\end{corollary}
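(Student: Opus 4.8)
The plan is to read Corollary \ref{P you jie 1} off directly from Theorem \ref{th 1} by specializing the parameters. Recall that $P_{\vec{\gamma}}=T_{\vec{0},\ \vec{\gamma},\ \overrightarrow{n+1+\gamma}}$, i.e.\ we take $a_i=0$, $b_i=\gamma_i$ and $c_i=n+1+\gamma_i$ in the definition of $T_{\vec{a},\ \vec{b},\ \vec{c}}$. Since the standing hypothesis $1<p_-\le p_+\le q_-\le q_+<\infty$ of the corollary is exactly the hypothesis of Theorem \ref{th 1}, the equivalence of conclusions $(2)$ and $(3)$ of that theorem applies verbatim to $P_{\vec{\gamma}}$, and this will handle both directions of the ``if and only if'' simultaneously.

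First I would substitute $a_i=0$, $b_i=\gamma_i$, $c_i=n+1+\gamma_i$ into the three conditions in part $(3)$ of Theorem \ref{th 1}. The condition $-q_ia_i<\beta_i+1$ becomes $0<\beta_i+1$; the condition $\alpha_i+1<p_i(b_i+1)$ becomes $\alpha_i+1<p_i(\gamma_i+1)$, which is the first displayed inequality of the corollary; and the condition $c_i=n+1+a_i+b_i+\lambda_i$ becomes $n+1+\gamma_i=n+1+\gamma_i+\lambda_i$, hence $\lambda_i=0$, i.e.\ $\frac{n+1+\beta_i}{q_i}=\frac{n+1+\alpha_i}{p_i}$, which after clearing denominators is precisely the second displayed equation $p_i(n+1+\beta_i)=q_i(n+1+\alpha_i)$.

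It then remains only to note that the leftover requirement $\beta_i>-1$ (coming from $0<\beta_i+1$) is automatic and may be dropped from the statement: since $\alpha_i>-1$ is built into the definition of the weighted space $L_{\vec{\alpha}}^{\vec{p}}\left( T_B\times T_B \right)$, we have $q_i(n+1+\alpha_i)>0$, and the equation $p_i(n+1+\beta_i)=q_i(n+1+\alpha_i)$ forces $n+1+\beta_i>0$. Thus the two displayed conditions of the corollary are exactly equivalent to part $(3)$ of Theorem \ref{th 1} specialized to $P_{\vec{\gamma}}$, and the corollary follows. There is no real obstacle here beyond this bookkeeping; the only point needing a moment's care is checking that $\beta_i>-1$ comes for free, so that no constraint is silently lost when passing from the general statement of Theorem \ref{th 1} to the cleaner form stated for $P_{\vec{\gamma}}$.
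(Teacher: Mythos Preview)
Your approach is correct and is exactly what the paper intends: the corollary follows by specializing Theorem \ref{th 1} to $a_i=0$, $b_i=\gamma_i$, $c_i=n+1+\gamma_i$. One small correction to your bookkeeping: the equation $p_i(n+1+\beta_i)=q_i(n+1+\alpha_i)$ together with $\alpha_i>-1$ only yields $\beta_i>-(n+1)$, not $\beta_i>-1$; the reason the condition $\beta_i+1>0$ may be dropped is simply that $\beta_i>-1$ is already a standing hypothesis on the target space $L_{\vec{\beta}}^{\vec{q}}$ (in the paper the weighted spaces are defined only for weight exponents in $(-1,\infty)$).
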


\begin{corollary}\label{P you jie 2}
	If $\vec{p}:=\left(1,1 \right)$ and $\vec{q}:=\left( q_1,q_2 \right) $ satisfy $1\le q_-\le q_+ <\infty$, then the operator $P_{\vec{\gamma}}$ is bounded from $L_{\vec{\alpha}}^{\vec{1}}\left( T_B\times T_B \right) $ to $L_{\vec{\beta}}^{\vec{q}}\left( T_B\times T_B \right) $ if and only if the parameters satisfy that, for any $i\in \{1,2\}$,
	$$
	\begin{cases}
	\gamma_i>\alpha_i,\\
	 n+1+\beta _i=q_i\left( n+1+\alpha _i \right).\\
	\end{cases}
	$$
\end{corollary}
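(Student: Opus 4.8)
The plan is to obtain this corollary as an immediate specialization of Theorem \ref{th 2}. Recall the identity recorded just above the statement, $P_{\vec{\gamma}}=T_{\overrightarrow{n+1+\gamma}}^{\vec{\gamma}}=T_{\vec{0},\ \vec{\gamma},\ \overrightarrow{n+1+\gamma}}$, so that $P_{\vec{\gamma}}$ is exactly the operator $T_{\vec{a},\ \vec{b},\ \vec{c}}$ in the case $a_i=0$, $b_i=\gamma_i$, $c_i=n+1+\gamma_i$ for $i\in\{1,2\}$. Since $\vec{p}=(1,1)$ and, by hypothesis, $1\le q_-\le q_+<\infty$ (equivalently $\vec{q}\in[1,\infty)\times[1,\infty)$), and since $\alpha_i,\beta_i>-1$ is assumed throughout so that the relevant weighted mixed-norm spaces are well defined, Theorem \ref{th 2} applies verbatim to $P_{\vec{\gamma}}$.

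First I would invoke the equivalence $(2)\Leftrightarrow(3)$ of Theorem \ref{th 2}: with $\vec{p}=(1,1)$, the operator $P_{\vec{\gamma}}$ is bounded from $L_{\vec{\alpha}}^{\vec{1}}\left(T_B\times T_B\right)$ to $L_{\vec{\beta}}^{\vec{q}}\left(T_B\times T_B\right)$ if and only if, for every $i\in\{1,2\}$,
$$
-q_i a_i<\beta_i+1,\qquad \alpha_i<b_i,\qquad c_i=n+1+a_i+b_i+\lambda_i,
$$
where $\lambda_i=\frac{n+1+\beta_i}{q_i}-n-1-\alpha_i$. Then I would substitute $a_i=0$, $b_i=\gamma_i$, $c_i=n+1+\gamma_i$ and simplify the three conditions one at a time.

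The simplification is entirely routine. The first inequality becomes $0<\beta_i+1$, which is automatic from $\beta_i>-1$ and therefore contributes nothing. The second becomes $\alpha_i<\gamma_i$, i.e. $\gamma_i>\alpha_i$. The third becomes $n+1+\gamma_i=n+1+\gamma_i+\lambda_i$, i.e. $\lambda_i=0$, which after unravelling the definition of $\lambda_i$ is precisely $\frac{n+1+\beta_i}{q_i}=n+1+\alpha_i$, equivalently $n+1+\beta_i=q_i\left(n+1+\alpha_i\right)$. Assembling these yields exactly the system in the statement, so the proof is complete. There is no substantive obstacle: the only point deserving a moment's attention is that the hypothesis $-q_i a_i<\beta_i+1$ of Theorem \ref{th 2} collapses to a triviality once $a_i=0$, which is why it does not appear in the final characterization.
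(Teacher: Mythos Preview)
Your proposal is correct and follows exactly the approach the paper intends: the corollary is obtained by specializing Theorem~\ref{th 2} to $a_i=0$, $b_i=\gamma_i$, $c_i=n+1+\gamma_i$, with the first inequality collapsing since $\beta_i>-1$. The paper does not spell out the details for these corollaries, but your derivation is precisely what the remark ``$P_{\vec{\gamma}}=T_{\vec{0},\ \vec{\gamma},\ \overrightarrow{n+1+\gamma }}$'' is signaling.
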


\begin{corollary}\label{P you jie 3}
	If $\vec{p}:=\left(p_1,1 \right)$ and $\vec{q}:=\left( q_1,q_2 \right) $ satisfy $1<p_1\le  q_- \le q_+<\infty$, then the operator $P_{\vec{\gamma}}$ is bounded from $L_{\vec{\alpha}}^{\vec{p}}\left( T_B\times T_B \right) $ to $L_{\vec{\beta}}^{\vec{q}}\left( T_B\times T_B \right) $ if and only if the parameters satisfy that,
	$$
	\begin{cases}	
	p_1\left( \gamma_1+1 \right)>\alpha _1+1,\ \gamma_2>\alpha_2,\\
	p_1\left(n+1+\beta _1\right)=q_1\left( n+1+\alpha _1\right),\\
	n+1+\beta _2=q_2\left( n+1+\alpha _2 \right).\\
	\end{cases}
	$$
\end{corollary}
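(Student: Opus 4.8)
The plan is to derive this corollary directly from Theorem~\ref{th 3} by specializing the parameters. Recall that $P_{\vec\gamma}=T_{\vec 0,\ \vec\gamma,\ \overrightarrow{n+1+\gamma}}$, so we are in exactly the setting of Theorem~\ref{th 3} with $\vec a=\vec 0$, $\vec b=\vec\gamma$, and $c_i=n+1+\gamma_i$ for $i\in\{1,2\}$. The index hypotheses $1<p_1\le q_-\le q_+<\infty$ are verbatim the same in both statements, so Theorem~\ref{th 3} applies without any adjustment, and it suffices to check that its condition~(3) reduces, under this substitution, to the system in the corollary.

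First I would rewrite each of the four families of conditions in part~(3) of Theorem~\ref{th 3} under the substitution above. The condition $\alpha_1+1<p_1(b_1+1)$ becomes $\alpha_1+1<p_1(\gamma_1+1)$, and $\alpha_2<b_2$ becomes $\alpha_2<\gamma_2$. The condition $-q_ia_i<\beta_i+1$ becomes $0<\beta_i+1$, i.e.\ $\beta_i>-1$, which is automatically satisfied because the weight $\vec\beta$ is taken in $(-1,\infty)^2$ by the standing assumption on weighted mixed-norm spaces; hence it imposes nothing new and does not appear in the corollary. Finally, $c_i=n+1+a_i+b_i+\lambda_i$ becomes $n+1+\gamma_i=n+1+\gamma_i+\lambda_i$, that is, $\lambda_i=0$ for $i\in\{1,2\}$.

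It then remains to unpack $\lambda_i=0$. For $i=1$, the identity $\lambda_1=\frac{n+1+\beta_1}{q_1}-\frac{n+1+\alpha_1}{p_1}=0$ is equivalent to $p_1(n+1+\beta_1)=q_1(n+1+\alpha_1)$; for $i=2$, the identity $\lambda_2=\frac{n+1+\beta_2}{q_2}-(n+1+\alpha_2)=0$ is equivalent to $n+1+\beta_2=q_2(n+1+\alpha_2)$. Combining these two equalities with the two inequalities $p_1(\gamma_1+1)>\alpha_1+1$ and $\gamma_2>\alpha_2$ obtained above gives precisely the system displayed in the statement. Since Theorem~\ref{th 3} asserts that $T_{\vec 0,\ \vec\gamma,\ \overrightarrow{n+1+\gamma}}$ is bounded from $L_{\vec\alpha}^{\vec p}(T_B\times T_B)$ to $L_{\vec\beta}^{\vec q}(T_B\times T_B)$ if and only if that condition~(3) holds, the desired equivalence for $P_{\vec\gamma}$ follows at once.

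As every step is a direct substitution into a result already established, there is no genuine obstacle here; the only points needing a moment's care are confirming that the apparently missing constraint $\beta_i>-1$ is subsumed in the standing hypothesis $\vec\beta\in(-1,\infty)^2$ (so that omitting it from the list is legitimate), and noting that $b_i=\gamma_i>-1$ and $c_i=n+1+\gamma_i>0$—which are needed implicitly in Theorem~\ref{th 3} through Lemma~\ref{lem jifendengshi}—hold simply because $\gamma_i>-1$.
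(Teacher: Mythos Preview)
Your proposal is correct and is exactly the approach the paper takes: the corollary is presented without a separate proof, relying only on the observation that $P_{\vec\gamma}=T_{\vec 0,\ \vec\gamma,\ \overrightarrow{n+1+\gamma}}$ and a direct specialization of Theorem~\ref{th 3}. Your additional remark that the condition $\beta_i>-1$ is absorbed by the standing hypothesis $\vec\beta\in(-1,\infty)^2$ is a helpful clarification the paper leaves implicit.
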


\begin{corollary}\label{P you jie 4}
	If $\vec{p}:=\left(1,p_2 \right)$ and $\vec{q}:=\left( q_1,q_2 \right) $ satisfy $1<p_2\le  q_- \le q_+<\infty$, then the operator $P_{\vec{\gamma}}$ is bounded from $L_{\vec{\alpha}}^{\vec{p}}\left( T_B\times T_B \right) $ to $L_{\vec{\beta}}^{\vec{q}}\left( T_B\times T_B \right) $ if and only if the parameters satisfy that, 
	$$
	\begin{cases}	
	\gamma_1>\alpha_1,\ p_2\left( \gamma_2+1 \right)>\alpha _2+1,\\
	n+1+\beta _1=q_1\left( n+1+\alpha _1 \right),\\
	p_2\left(n+1+\beta _2\right)=q_2\left( n+1+\alpha _2\right).\\
	\end{cases}
	$$
\end{corollary}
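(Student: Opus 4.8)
The plan is to obtain Corollary \ref{P you jie 4} as a direct specialization of Theorem \ref{th 4}. The first step is to recognize the weighted multiparameter Bergman-type projection $P_{\vec{\gamma}}$ as a multiparameter Forelli-Rudin type operator. Using $dV_{\gamma_i}(u)=\rho(u)^{\gamma_i}\,dV(u)$, one rewrites
$$
P_{\vec{\gamma}}f(z,w)=\int_{T_B}\int_{T_B}\frac{\rho(u)^{\gamma_1}\rho(\eta)^{\gamma_2}}{\rho(z,u)^{n+1+\gamma_1}\rho(w,\eta)^{n+1+\gamma_2}}f(u,\eta)\,dV(u)\,dV(\eta),
$$
which is exactly $T_{\vec{a},\ \vec{b},\ \vec{c}}f$ with $\vec{a}=(0,0)$, $\vec{b}=(\gamma_1,\gamma_2)$ and $\vec{c}=(n+1+\gamma_1,n+1+\gamma_2)$; that is, $P_{\vec{\gamma}}=T_{\vec{0},\ \vec{\gamma},\ \overrightarrow{n+1+\gamma}}$, as already noted in the text preceding the statement.

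Next, since $\vec{p}=(1,p_2)$ and $\vec{q}=(q_1,q_2)$ with $1<p_2\le q_-\le q_+<\infty$, Theorem \ref{th 4} is applicable, and by the equivalence of items $(2)$ and $(3)$ there it follows that $P_{\vec{\gamma}}$ is bounded from $L_{\vec{\alpha}}^{\vec{p}}(T_B\times T_B)$ to $L_{\vec{\beta}}^{\vec{q}}(T_B\times T_B)$ if and only if condition $(3)$ of that theorem holds with $a_i=0$, $b_i=\gamma_i$, $c_i=n+1+\gamma_i$ for $i\in\{1,2\}$. It then remains to translate each piece of that condition. The inequality $\alpha_1<b_1$ becomes $\gamma_1>\alpha_1$, and $\alpha_2+1<p_2(b_2+1)$ becomes $p_2(\gamma_2+1)>\alpha_2+1$. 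The requirement $-q_ia_i<\beta_i+1$ reduces to $0<\beta_i+1$, which holds automatically because $L_{\vec{\beta}}^{\vec{q}}$ is defined only for $\vec{\beta}\in(-1,\infty)\times(-1,\infty)$, so it need not be displayed. Finally, the identity $c_i=n+1+a_i+b_i+\lambda_i$ becomes $n+1+\gamma_i=n+1+\gamma_i+\lambda_i$, i.e.\ $\lambda_i=0$; substituting the formulas $\lambda_1=\frac{n+1+\beta_1}{q_1}-(n+1+\alpha_1)$ and $\lambda_2=\frac{n+1+\beta_2}{q_2}-\frac{n+1+\alpha_2}{p_2}$ yields $n+1+\beta_1=q_1(n+1+\alpha_1)$ and $p_2(n+1+\beta_2)=q_2(n+1+\alpha_2)$, which is precisely the stated system.

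There is essentially no serious obstacle: the corollary is a bookkeeping specialization of Theorem \ref{th 4}. The only points requiring care are the correct substitution into the $\lambda_i$ expressions, keeping track of the asymmetry between $\lambda_1$ and $\lambda_2$ that originates in $p_1=1$ versus $p_2>1$, and the observation that $\beta_i>-1$ is built into the definition of the target space and hence omitted from the list. Alternatively, one may note that Corollary \ref{P you jie 4} is the symmetric counterpart of Corollary \ref{P you jie 3}, obtained by interchanging the roles of the two coordinates, but the direct route through Theorem \ref{th 4} is the cleanest.
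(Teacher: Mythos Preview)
Your proposal is correct and follows exactly the paper's approach: the paper states just before the $P_{\vec\gamma}$ corollaries that $P_{\vec{\gamma}}=T_{\vec{0},\ \vec{\gamma},\ \overrightarrow{n+1+\gamma}}$ and presents the corollaries as direct specializations of the corresponding main theorems, which for $\vec{p}=(1,p_2)$ with $1<p_2\le q_-\le q_+<\infty$ is Theorem~\ref{th 4}. Your translation of each condition, including the observation that $-q_ia_i<\beta_i+1$ becomes the standing assumption $\beta_i>-1$, is accurate.
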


\begin{corollary}
	If $\vec{p}:=\left( 1,1 \right)$ and $\vec{q}:=\left( \infty,\infty \right) $, then the operator $P_{\vec{\gamma}}$ is bounded from $L_{\vec{\alpha}}^{\vec{1}}\left( T_B\times T_B \right) $ to $L^{\infty}\left( T_B\times T_B \right) $ if and only if the parameters satisfy that, for any $i\in\{1,2\}$,
	$$
	\begin{cases}	
	\gamma_i\ge\alpha_i,\\
	\alpha_i=-\left( n+1\right).\\
	\end{cases}
	$$
\end{corollary}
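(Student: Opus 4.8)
The plan is to recognize $P_{\vec{\gamma}}$ as a single member of the family $T_{\vec{a},\ \vec{b},\ \vec{c}}$ and then invoke Theorem~\ref{th 6} verbatim. Matching the integral kernels shows at once that
$$
P_{\vec{\gamma}}=T_{\vec{0},\ \vec{\gamma},\ \overrightarrow{n+1+\gamma }},
$$
i.e.\ one takes $a_i=0$, $b_i=\gamma_i$ and $c_i=n+1+\gamma_i$ for $i\in\{1,2\}$. Since the hypotheses of the corollary are exactly $\vec{p}=(1,1)$ and $\vec{q}=(\infty,\infty)$, Theorem~\ref{th 6} is directly applicable and asserts that $P_{\vec{\gamma}}\colon L_{\vec{\alpha}}^{\vec{1}}(T_B\times T_B)\to L^{\infty}(T_B\times T_B)$ is bounded if and only if, for each $i\in\{1,2\}$, one has $a_i\ge 0$, $b_i\ge\alpha_i$, and $c_i=n+1+a_i+b_i+\lambda_i$ with $\lambda_i=-n-1-\alpha_i$.

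Next I would substitute the three values $a_i=0$, $b_i=\gamma_i$, $c_i=n+1+\gamma_i$ into this system. The condition $a_i\ge 0$ reads $0\ge 0$ and is vacuous; the condition $b_i\ge\alpha_i$ reads $\gamma_i\ge\alpha_i$; and the identity $c_i=n+1+a_i+b_i+\lambda_i$ reads
$$
n+1+\gamma_i=(n+1)+0+\gamma_i+(-n-1-\alpha_i)=\gamma_i-\alpha_i,
$$
which is equivalent to $\alpha_i=-(n+1)$. Hence the three conditions of Theorem~\ref{th 6} collapse precisely to the stated pair $\gamma_i\ge\alpha_i$ and $\alpha_i=-(n+1)$ for $i\in\{1,2\}$, and conversely this pair implies all three; so the biconditional is established.

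Because the argument is a pure specialization of an already-proved theorem, there is essentially no obstacle; the only point demanding care is the bookkeeping — correctly reading off $c_i=n+1+\gamma_i$ from the denominator of $P_{\vec{\gamma}}$, keeping $\vec{a}=\vec{0}$, and inserting the value $\lambda_i=-n-1-\alpha_i$ that is specific to the case $\vec{p}=(1,1)$, $\vec{q}=(\infty,\infty)$. It is also worth remarking that, under the standing assumption $\alpha_i>-1$, the constraint $\alpha_i=-(n+1)$ can never hold (since $n+1\ge 2$), so as a practical matter the corollary records that $P_{\vec{\gamma}}$ is not bounded from $L_{\vec{\alpha}}^{\vec{1}}$ to $L^{\infty}$; nevertheless, as a formal statement it is exactly the equivalence derived above.
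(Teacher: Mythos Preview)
Your proposal is correct and matches the paper's approach exactly: the paper simply notes that $P_{\vec{\gamma}}=T_{\vec{0},\ \vec{\gamma},\ \overrightarrow{n+1+\gamma }}$ and presents the corollaries as immediate specializations of the corresponding theorems, without writing out the substitutions you carefully carried through. Your additional observation that the constraint $\alpha_i=-(n+1)$ is incompatible with the standing hypothesis $\alpha_i>-1$ is also valid and worth noting, though the paper does not comment on it.
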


\begin{corollary}\label{P wu jie}
	If $\vec{p}:=\left( p_1,p_2 \right)$ and $\vec{q}:=\left( q_1,q_2 \right) $ satisfy the following conditions : 
	$$
	1\le p_-\le p_+\le \infty,\ \vec{p}\ne \left( 1,1 \right)\ \text{and}\ \left( q_1,q_2 \right) =\left( \infty ,\infty \right) ,
	$$	
	then the operator $P_{\vec{\gamma}}$ is unbounded on $L_{\vec{\alpha}}^{\vec{p}}\left( T_B\times T_B \right)$.
\end{corollary}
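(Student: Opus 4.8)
The plan is to deduce this from the necessity halves of the main theorems of Section~6. Recall that $P_{\vec\gamma}=T_{\overrightarrow{n+1+\gamma}}^{\vec\gamma}=T_{\vec 0,\ \vec\gamma,\ \overrightarrow{n+1+\gamma}}$, so in the notation of Theorems~\ref{th 5}--\ref{th 13} the operator $P_{\vec\gamma}$ corresponds to the parameter choice $a_1=a_2=0$, $b_i=\gamma_i$, $c_i=n+1+\gamma_i$. Since $\vec q=(\infty,\infty)$, boundedness of $P_{\vec\gamma}$ on $L_{\vec\alpha}^{\vec p}(T_B\times T_B)$ means boundedness from $L_{\vec\alpha}^{\vec p}(T_B\times T_B)$ to $L^\infty(T_B\times T_B)$, and I will argue this is impossible whenever $\vec p\ne(1,1)$.

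First I would split into cases according to the shape of $\vec p$. Because $1\le p_-\le p_+\le\infty$ and $\vec p\ne(1,1)$, exactly one of the following holds: (i) $1<p_-\le p_+<\infty$; (ii) $\vec p=(1,p_2)$ with $1<p_2<\infty$, or its mirror $\vec p=(p_1,1)$; (iii) $\vec p=(\infty,\infty)$; (iv) $\vec p=(1,\infty)$ or $\vec p=(\infty,1)$; (v) $\vec p=(\infty,p_2)$ with $1<p_2<\infty$, or its mirror $\vec p=(p_1,\infty)$. These are precisely the hypotheses of Theorems~\ref{th 5}, \ref{th 7}/\ref{th 8}, \ref{th 9}, \ref{th 10}/\ref{th 11}, and \ref{th 12}/\ref{th 13}, respectively, so the list is exhaustive. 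In each of these theorems the equivalence $(2)\Leftrightarrow(3)$ tells us that $T_{\vec a,\ \vec b,\ \vec c}$ is bounded from $L_{\vec\alpha}^{\vec p}(T_B\times T_B)$ to $L^\infty(T_B\times T_B)$ if and only if condition~(3) holds; and condition~(3) always forces $a_i>0$ for at least one $i\in\{1,2\}$ (indeed $a_1>0$ and $a_2>0$ in Theorems~\ref{th 5}, \ref{th 9}, \ref{th 12}, \ref{th 13}, while in Theorems~\ref{th 7}, \ref{th 8}, \ref{th 10}, \ref{th 11} one of $a_1>0$, $a_2>0$ appears, the other index being only required to be $\ge 0$).

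Since $P_{\vec\gamma}$ has $a_1=a_2=0$, condition~(3) of the applicable theorem fails in every case, so by the implication $(2)\Rightarrow(3)$ the operator $P_{\vec\gamma}$ cannot be bounded from $L_{\vec\alpha}^{\vec p}(T_B\times T_B)$ to $L^\infty(T_B\times T_B)$. There is no analytic difficulty here; the only point requiring care is the bookkeeping — verifying that the listed cases genuinely exhaust all admissible $\vec p$ with $\vec p\ne(1,1)$, and checking in the four "mixed" cases that the index whose $a$-value must be strictly positive is constrained by $a_i>0$ and not merely $a_i\ge 0$, so that $a_1=a_2=0$ already produces a contradiction. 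Thus the statement is a direct corollary of Theorems~\ref{th 5} and \ref{th 7}--\ref{th 13}.
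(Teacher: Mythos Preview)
Your argument is correct and is exactly the intended one: the paper states Corollary~\ref{P wu jie} without proof in the Applications section, and it follows immediately from the necessity direction $(2)\Rightarrow(3)$ of Theorems~\ref{th 5} and \ref{th 7}--\ref{th 13} once one observes that $P_{\vec\gamma}=T_{\vec 0,\ \vec\gamma,\ \overrightarrow{n+1+\gamma}}$ has $a_1=a_2=0$, while each of those theorems requires some $a_i>0$. Your case enumeration is complete and the bookkeeping is sound.
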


At the end of this section, we study the boundedness of the weighted multiparameter Berezin-type transform $B_{\vec{\gamma}}$ which is given by
$$
B_{\vec{\gamma}}f(z,w):=\int_{T_B}{\int_{T_B}{\frac{\rho \left( z \right) ^{n+1+\gamma _1}\rho \left( w \right) ^{n+1+\gamma _2}}{\left| \rho \left( z,u \right) \right|^{2\left(n+1+\gamma _1\right)}\left| \rho \left( w,\eta \right) \right|^{2\left(n+1+\gamma _2\right)}}f\left( u,\eta \right) dV_{\gamma _1}\left( u \right)}dV_{\gamma _2}\left( \eta \right)},
$$
where $\vec{\gamma}:=\left(\gamma_1,\gamma_2\right)\in \left(-1,\infty \right)\times\left(-1,\infty \right)$.

Notice that $B_{\vec{\gamma}}=S_{\overrightarrow{n+1+\gamma }, \overrightarrow{\gamma}, 2\overrightarrow{\left( n+1+\gamma \right)}}$, therefore we have the following results. 
\\

The following corollaries \ref{B you jie 1} to \ref{B you jie 11} give the case when the operator $B_{\vec{\gamma}}$ is bounded from $L_{\vec{\alpha}}^{\vec{p}}\left( T_B\times T_B \right) $ to $L_{\vec{\beta}}^{\vec{q}}\left( T_B\times T_B \right) $.

\begin{corollary}\label{B you jie 1}
	If $\vec{p}:=\left( p_1,p_2 \right)$ and $\vec{q}:=\left( q_1,q_2 \right) $ satisfy $1<p_-\le p_+\le q_-\le q_+ <\infty$, then the operator $B_{\vec{\gamma}}$ is bounded from $L_{\vec{\alpha}}^{\vec{p}}\left( T_B\times T_B \right) $ to $L_{\vec{\beta}}^{\vec{q}}\left( T_B\times T_B \right) $ if and only if the parameters satisfy that, for any $i\in \{1,2\}$,
	$$
	\begin{cases}
	-q_i\left( n+1+\gamma_i \right)<\beta _i+1,\  \alpha _i+1<p_i\left( \gamma_i+1 \right) ,\\
	p_i\left( n+1+\beta _i \right)=q_i\left( n+1+\alpha _i \right).\\
	\end{cases}
	$$
\end{corollary}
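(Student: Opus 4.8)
The plan is to recognize $B_{\vec{\gamma}}$ as a particular instance of the operator $S_{\vec{a},\ \vec{b},\ \vec{c}}$ and then invoke Theorem \ref{th 1}. First I would pin down the correspondence of parameters. Writing $dV_{\gamma_i}(u)=\rho(u)^{\gamma_i}\,dV(u)$ and absorbing these weights into the kernel, the definition of $B_{\vec{\gamma}}$ becomes
$$
B_{\vec{\gamma}}f(z,w)=\rho(z)^{n+1+\gamma_1}\rho(w)^{n+1+\gamma_2}\int_{T_B}\int_{T_B}\frac{\rho(u)^{\gamma_1}\rho(\eta)^{\gamma_2}}{\left|\rho(z,u)\right|^{2(n+1+\gamma_1)}\left|\rho(w,\eta)\right|^{2(n+1+\gamma_2)}}f(u,\eta)\,dV(u)\,dV(\eta),
$$
so that $B_{\vec{\gamma}}=S_{\vec{a},\ \vec{b},\ \vec{c}}$ with $\vec{a}=(n+1+\gamma_1,n+1+\gamma_2)$, $\vec{b}=(\gamma_1,\gamma_2)$ and $\vec{c}=(2(n+1+\gamma_1),2(n+1+\gamma_2))$; this is exactly the identity $B_{\vec{\gamma}}=S_{\overrightarrow{n+1+\gamma},\ \overrightarrow{\gamma},\ 2\overrightarrow{(n+1+\gamma)}}$ recorded before the corollary. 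Note that $b_i=\gamma_i>-1$ and $c_i=2(n+1+\gamma_i)>0$, so these parameters lie in the admissible range.

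Next, since $1<p_-\le p_+\le q_-\le q_+<\infty$, Theorem \ref{th 1} applies with the above $\vec{a},\vec{b},\vec{c}$: the operator $S_{\vec{a},\ \vec{b},\ \vec{c}}=B_{\vec{\gamma}}$ is bounded from $L_{\vec{\alpha}}^{\vec{p}}(T_B\times T_B)$ to $L_{\vec{\beta}}^{\vec{q}}(T_B\times T_B)$ if and only if, for each $i\in\{1,2\}$,
$$
-q_ia_i<\beta_i+1,\qquad \alpha_i+1<p_i(b_i+1),\qquad c_i=n+1+a_i+b_i+\lambda_i,
$$
where $\lambda_i=\frac{n+1+\beta_i}{q_i}-\frac{n+1+\alpha_i}{p_i}$.

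Finally I would substitute. The first inequality becomes $-q_i(n+1+\gamma_i)<\beta_i+1$ and the second becomes $\alpha_i+1<p_i(\gamma_i+1)$, which already reproduce the first row of the displayed system in the corollary. For the third condition, $n+1+a_i+b_i=(n+1)+(n+1+\gamma_i)+\gamma_i=2(n+1+\gamma_i)=c_i$, so the equation $c_i=n+1+a_i+b_i+\lambda_i$ is equivalent to $\lambda_i=0$, i.e.\ $\frac{n+1+\beta_i}{q_i}=\frac{n+1+\alpha_i}{p_i}$, which rearranges to $p_i(n+1+\beta_i)=q_i(n+1+\alpha_i)$, the second row of the displayed system. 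Hence the corollary follows.

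There is no genuine obstacle here; the argument is a direct specialization of Theorem \ref{th 1}. The only point that demands care is the bookkeeping when passing from $dV_{\gamma_i}$ to $dV$ inside the kernel—one must not double count the weight, so that the correct identifications are $b_i=\gamma_i$ and $c_i=2(n+1+\gamma_i)$—together with the trivial observation that the standing hypothesis $\gamma_i>-1$ (needed for $dV_{\gamma_i}$ to be a legitimate weight) is compatible with, and in fact implied by, the condition $\alpha_i+1<p_i(\gamma_i+1)$ in view of $\alpha_i>-1$ and $p_i\ge 1$.
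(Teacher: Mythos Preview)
Your proof is correct and follows exactly the paper's approach: the paper simply records the identification $B_{\vec{\gamma}}=S_{\overrightarrow{n+1+\gamma},\ \overrightarrow{\gamma},\ 2\overrightarrow{(n+1+\gamma)}}$ and states the corollary as an immediate specialization of Theorem~\ref{th 1}, without writing out the substitution you carefully carried out.
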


\begin{corollary}\label{B you jie 2}
	If $\vec{p}:=\left(1,1 \right)$ and $\vec{q}:=\left( q_1,q_2 \right) \in [1,\infty)\times[1,\infty)$, then the operator $B_{\vec{\gamma}}$ is bounded from $L_{\vec{\alpha}}^{\vec{1}}\left( T_B\times T_B \right) $ to $L_{\vec{\beta}}^{\vec{q}}\left( T_B\times T_B \right) $ if and only if the parameters satisfy that, for any $i\in \{1,2\}$,
	$$
	\begin{cases}
	-q_i\left( n+1+\gamma_i \right)<\beta _i+1,\  \alpha _i< \gamma_i ,\\
	n+1+\beta _i=q_i\left( n+1+\alpha _i \right).\\
	\end{cases}
	$$
\end{corollary}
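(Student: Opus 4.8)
The plan is to obtain this corollary as a direct specialization of Theorem \ref{th 2}. The first step is to use the identification $B_{\vec{\gamma}} = S_{\overrightarrow{n+1+\gamma},\ \vec{\gamma},\ 2\overrightarrow{(n+1+\gamma)}}$ recorded above; concretely, $B_{\vec{\gamma}} = S_{\vec{a},\ \vec{b},\ \vec{c}}$ with
$$
a_i = n+1+\gamma_i, \qquad b_i = \gamma_i, \qquad c_i = 2(n+1+\gamma_i), \qquad i \in \{1,2\}.
$$
Since here $\vec{p} = (1,1)$ and $\vec{q} = (q_1,q_2) \in [1,\infty) \times [1,\infty)$, the hypotheses of Theorem \ref{th 2} are met, so I would invoke the equivalence of parts $(1)$ and $(3)$ of that theorem: $B_{\vec{\gamma}}$ is bounded from $L_{\vec{\alpha}}^{\vec{1}}\left( T_B\times T_B \right)$ to $L_{\vec{\beta}}^{\vec{q}}\left( T_B\times T_B \right)$ if and only if, for each $i \in \{1,2\}$,
$$
-q_i a_i < \beta_i + 1, \qquad \alpha_i < b_i, \qquad c_i = n+1+a_i+b_i+\lambda_i,
$$
with $\lambda_i = \frac{n+1+\beta_i}{q_i} - (n+1) - \alpha_i$.

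The second step is the (elementary) substitution. Plugging in $a_i = n+1+\gamma_i$ turns the first inequality into $-q_i(n+1+\gamma_i) < \beta_i+1$, and $b_i = \gamma_i$ turns the second into $\alpha_i < \gamma_i$; these are precisely the two inequalities claimed. For the identity on $c_i$, observe that
$$
n+1+a_i+b_i = (n+1) + (n+1+\gamma_i) + \gamma_i = 2(n+1+\gamma_i) = c_i,
$$
so the condition $c_i = n+1+a_i+b_i+\lambda_i$ is equivalent to $\lambda_i = 0$, i.e. to $\frac{n+1+\beta_i}{q_i} = n+1+\alpha_i$, which rearranges to $n+1+\beta_i = q_i(n+1+\alpha_i)$. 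This is the last condition in the statement, so the two systems of conditions coincide and the corollary follows.

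I do not expect a genuine obstacle: everything reduces to Theorem \ref{th 2} together with one line of arithmetic. The only point worth a moment's care is that, for this particular choice of $\vec{a}$ and $\vec{b}$, the exponent $2(n+1+\gamma_i)$ appearing in the Berezin kernel already equals $n+1+a_i+b_i$, which is exactly why the $c_i$-identity collapses to $\lambda_i = 0$ rather than imposing a further relation among the parameters.
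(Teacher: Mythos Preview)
Your proposal is correct and is exactly the approach the paper takes: the paper records the identification $B_{\vec{\gamma}}=S_{\overrightarrow{n+1+\gamma},\,\vec{\gamma},\,2\overrightarrow{(n+1+\gamma)}}$ and states the corollary without further proof, leaving it as an immediate specialization of Theorem~\ref{th 2}. Your substitution and the observation that $n+1+a_i+b_i=c_i$ forces $\lambda_i=0$ are precisely what is needed.
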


\begin{corollary}\label{B you jie 3}
	If $\vec{p}:=\left(p_1,1 \right)$ and $\vec{q}:=\left( q_1,q_2 \right) $ satisfy $1<p_1\le  q_- \le q_+<\infty$, then the operator $B_{\vec{\gamma}}$ is bounded from $L_{\vec{\alpha}}^{\vec{p}}\left( T_B\times T_B \right) $ to $L_{\vec{\beta}}^{\vec{q}}\left( T_B\times T_B \right) $ if and only if the parameters satisfy that,
	$$
	\begin{cases}	
	p_1\left( \gamma_1+1 \right)>\alpha _1+1,\ \gamma_2>\alpha_2,\\
	p_1\left(n+1+\beta _1\right)=q_1\left( n+1+\alpha _1\right),\\
	n+1+\beta _2=q_2\left( n+1+\alpha _2 \right).\\
	\end{cases}
	$$
\end{corollary}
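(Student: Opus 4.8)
The plan is to realize $B_{\vec\gamma}$ as a particular instance of the operator $S_{\vec a,\vec b,\vec c}$ and then read off the conclusion from Theorem \ref{th 3}. Indeed, by the identity recorded just before the statement, $B_{\vec\gamma}=S_{\overrightarrow{n+1+\gamma},\,\overrightarrow{\gamma},\,2\overrightarrow{(n+1+\gamma)}}$, so we are in the setting of $S_{\vec a,\vec b,\vec c}$ with $a_i=n+1+\gamma_i$, $b_i=\gamma_i$, and $c_i=2(n+1+\gamma_i)$ for $i\in\{1,2\}$. Since $\vec p=(p_1,1)$ and $\vec q=(q_1,q_2)$ with $1<p_1\le q_-\le q_+<\infty$, Theorem \ref{th 3} is exactly the tool that applies: the boundedness of $B_{\vec\gamma}=S_{\vec a,\vec b,\vec c}$ from $L_{\vec\alpha}^{\vec p}\left( T_B\times T_B \right)$ to $L_{\vec\beta}^{\vec q}\left( T_B\times T_B \right)$ is equivalent to condition $(3)$ of that theorem for this choice of $\vec a,\vec b,\vec c$.

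The remaining work is purely a substitution. First I would rewrite the two weight inequalities: $\alpha_1+1<p_1(b_1+1)$ becomes $\alpha_1+1<p_1(\gamma_1+1)$, and $\alpha_2<b_2$ becomes $\alpha_2<\gamma_2$. Next I would simplify the two exponent identities: because $n+1+a_i+b_i=n+1+(n+1+\gamma_i)+\gamma_i=2(n+1+\gamma_i)=c_i$, the relation $c_i=n+1+a_i+b_i+\lambda_i$ forces $\lambda_i=0$ for $i\in\{1,2\}$; spelling out the definitions of $\lambda_1,\lambda_2$ from Theorem \ref{th 3} then gives $p_1(n+1+\beta_1)=q_1(n+1+\alpha_1)$ and $n+1+\beta_2=q_2(n+1+\alpha_2)$, which are the displayed relations.

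Finally I would check that the inequalities $-q_ia_i<\beta_i+1$ appearing in condition $(3)$ impose nothing new here, so that they are correctly absent from the statement: since $\gamma_i>-1$ we have $a_i=n+1+\gamma_i>n>0$, and since $\beta_i>-1$ by the standing hypothesis on the weights, $-q_ia_i<0<\beta_i+1$ holds automatically. Gathering the surviving constraints reproduces exactly the system in the Corollary, which finishes the proof. There is no genuine obstacle: the argument is a direct specialization of Theorem \ref{th 3}, and the only point worth a sentence is the (immediate) observation that the Berezin kernel has $a_i>0$, so the positivity-type conditions $-q_ia_i<\beta_i+1$ are vacuous.
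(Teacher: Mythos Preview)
Your proposal is correct and follows exactly the intended route: the corollary is a direct specialization of Theorem~\ref{th 3} with $a_i=n+1+\gamma_i$, $b_i=\gamma_i$, $c_i=2(n+1+\gamma_i)$, and your verification that $-q_ia_i<\beta_i+1$ is automatic (since $\gamma_i>-1$ gives $a_i>0$ and the weights satisfy $\beta_i>-1$) correctly accounts for the absence of that condition in the stated characterization.
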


\begin{corollary}\label{B you jie 4}
	If $\vec{p}:=\left(1,p_2 \right)$ and $\vec{q}:=\left( q_1,q_2 \right) $ satisfy $1<p_2\le  q_- \le q_+<\infty$, then the operator $B_{\vec{\gamma}}$ is bounded from $L_{\vec{\alpha}}^{\vec{p}}\left( T_B\times T_B \right) $ to $L_{\vec{\beta}}^{\vec{q}}\left( T_B\times T_B \right) $ if and only if the parameters satisfy that, 
	$$
	\begin{cases}	
	\gamma_1>\alpha_1,\ p_2\left( \gamma_2+1 \right)>\alpha _2+1,\\
	n+1+\beta _1=q_1\left( n+1+\alpha _1 \right),\\
	p_2\left(n+1+\beta _2\right)=q_2\left( n+1+\alpha _2\right).\\
	\end{cases}
	$$
\end{corollary}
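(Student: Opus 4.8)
The plan is to recognise $B_{\vec{\gamma}}$ as a concrete instance of the operator $S_{\vec{a},\ \vec{b},\ \vec{c}}$ and then simply specialise Theorem~\ref{th 4}. Reading off the kernel of $B_{\vec{\gamma}}$, we have
$$
B_{\vec{\gamma}}=S_{\vec{a},\ \vec{b},\ \vec{c}}\qquad\text{with}\qquad a_i=n+1+\gamma_i,\quad b_i=\gamma_i,\quad c_i=2(n+1+\gamma_i),\qquad i\in\{1,2\}.
$$
Since the hypothesis is exactly $\vec{p}=(1,p_2)$ with $1<p_2\le q_-\le q_+<\infty$, Theorem~\ref{th 4} tells us that $B_{\vec{\gamma}}$ is bounded from $L_{\vec{\alpha}}^{\vec{p}}\left( T_B\times T_B \right)$ to $L_{\vec{\beta}}^{\vec{q}}\left( T_B\times T_B \right)$ if and only if the parameters $(\vec{a},\vec{b},\vec{c})$ satisfy condition $(3)$ of that theorem. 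So the whole proof will consist of substituting the above values into that condition and simplifying.

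First I would handle the two strict inequalities on the first line of condition $(3)$: $\alpha_1<b_1$ becomes $\gamma_1>\alpha_1$, and $\alpha_2+1<p_2(b_2+1)$ becomes $p_2(\gamma_2+1)>\alpha_2+1$. Next I would check that the inequalities $-q_ia_i<\beta_i+1$ are automatically true: since $\gamma_i>-1$ we have $a_i=n+1+\gamma_i>0$, so $-q_ia_i<0$, while the standing assumption $\beta_i>-1$ gives $\beta_i+1>0$; hence these two conditions impose nothing and are omitted from the statement. Finally I would deal with $c_i=n+1+a_i+b_i+\lambda_i$: since $n+1+a_i+b_i=(n+1)+(n+1+\gamma_i)+\gamma_i=2(n+1+\gamma_i)=c_i$ by construction of the Berezin-type kernel, this equation reduces to $\lambda_i=0$. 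Writing out $\lambda_1=\frac{n+1+\beta_1}{q_1}-(n+1+\alpha_1)=0$ and $\lambda_2=\frac{n+1+\beta_2}{q_2}-\frac{n+1+\alpha_2}{p_2}=0$ gives $n+1+\beta_1=q_1(n+1+\alpha_1)$ and $p_2(n+1+\beta_2)=q_2(n+1+\alpha_2)$. Assembling everything produces exactly the system displayed in the corollary.

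I do not expect any genuine obstacle; the result is purely a translation of Theorem~\ref{th 4} through the dictionary $a_i=n+1+\gamma_i$, $b_i=\gamma_i$, $c_i=2(n+1+\gamma_i)$. The only mildly delicate points are noticing that the identity $n+1+a_i+b_i=c_i$ is built into the Berezin-type kernel so that the exponent conditions collapse to the two equalities $\lambda_i=0$, and observing that the auxiliary conditions $-q_ia_i<\beta_i+1$ are vacuous here and may therefore be dropped from the final list.
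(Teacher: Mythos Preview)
Your proposal is correct and is exactly the approach taken in the paper: identify $B_{\vec{\gamma}}=S_{\overrightarrow{n+1+\gamma},\,\vec{\gamma},\,2\overrightarrow{(n+1+\gamma)}}$ and specialise Theorem~\ref{th 4}, noting that $-q_ia_i<\beta_i+1$ is automatic and that $n+1+a_i+b_i=c_i$ forces $\lambda_i=0$. The paper itself states the corollary without proof, relying on precisely this dictionary.
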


\begin{corollary}\label{B you jie 5}
	If $\vec{p}:=\left(p_1,p_2 \right)$ and $\vec{q}:=\left(\infty,\infty\right) $ satisfy $1<p_-\le p_+ <\infty$, then the operator $B_{\vec{\gamma}}$ is bounded from $L_{\vec{\alpha}}^{\vec{p}}\left( T_B\times T_B \right) $ to $L^{\infty}\left( T_B\times T_B \right) $ if and only if the parameters satisfy that, for any $i\in\{1,2\}$,
	$$
	\gamma_i>\alpha _i=-n-1.
	$$
\end{corollary}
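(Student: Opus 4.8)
The plan is to realize $B_{\vec{\gamma}}$ as an instance of the operator $S_{\vec{a},\ \vec{b},\ \vec{c}}$ and then invoke Theorem \ref{th 5} directly. By the identity recorded just before the corollary, $B_{\vec{\gamma}}=S_{\overrightarrow{n+1+\gamma },\ \overrightarrow{\gamma },\ 2\overrightarrow{(n+1+\gamma )}}$; that is, we are in the case $a_i=n+1+\gamma_i$, $b_i=\gamma_i$ and $c_i=2(n+1+\gamma_i)$ for $i\in\{1,2\}$. Since the index hypotheses $1<p_-\le p_+<\infty$ and $\vec{q}=(\infty,\infty)$ are exactly those of Theorem \ref{th 5}, the boundedness of $B_{\vec{\gamma}}\colon L_{\vec{\alpha}}^{\vec{p}}(T_B\times T_B)\to L^{\infty}(T_B\times T_B)$ is equivalent to the three conditions of part $(3)$ of that theorem holding for each $i$: namely $a_i>0$, $\alpha_i+1<p_i(b_i+1)$, and $c_i=n+1+a_i+b_i-\frac{n+1+\alpha_i}{p_i}$.

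The second step is the substitution and simplification. The key observation is that for the present parameters $n+1+a_i+b_i=n+1+(n+1+\gamma_i)+\gamma_i=2(n+1+\gamma_i)=c_i$, so the equality constraint of Theorem \ref{th 5}$(3)$ reduces precisely to $\frac{n+1+\alpha_i}{p_i}=0$, i.e. $\alpha_i=-n-1$. With $\alpha_i=-n-1$ in hand, the other two conditions become automatic consequences of the standing hypothesis $\gamma_i>-1$: indeed $a_i=n+1+\gamma_i>0$, and $\alpha_i+1<p_i(b_i+1)$ becomes $-n<p_i(\gamma_i+1)$, which holds because $p_i(\gamma_i+1)>0>-n$. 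Conversely, boundedness forces all three conditions, hence in particular $\alpha_i=-n-1$, and then $\gamma_i>-1>-n-1=\alpha_i$. Collecting these facts, the equivalence becomes ``$\gamma_i>\alpha_i=-n-1$ for each $i$'', which is the claimed characterization. I would write the two implications in this order: first compute the equality to pin down $\alpha_i$, then dispose of the two inequalities.

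I do not expect a genuine obstacle: the argument is a one-line reduction to Theorem \ref{th 5} followed by elementary algebra. The only point requiring a little care is keeping track of which of the three conditions in Theorem \ref{th 5}$(3)$ actually constrains the parameters here — it is the equality $c_i=n+1+a_i+b_i+\lambda_i$ that fixes $\alpha_i$, while the two inequalities are slack in this index range. It may also be worth appending a short remark that, under the convention $\alpha_i>-1$, the requirement $\alpha_i=-n-1$ cannot be met when $n\ge 1$, so in that case the corollary records that $B_{\vec{\gamma}}$ is never bounded from $L_{\vec{\alpha}}^{\vec{p}}$ into $L^{\infty}$ for these indices.
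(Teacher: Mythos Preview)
Your proposal is correct and follows exactly the intended approach: the paper states the corollary without proof because it is an immediate specialization of Theorem \ref{th 5} to $a_i=n+1+\gamma_i$, $b_i=\gamma_i$, $c_i=2(n+1+\gamma_i)$, and your algebra showing that the equality constraint collapses to $\alpha_i=-n-1$ while the two inequalities become automatic is exactly right. Your closing remark about the tension with the standing hypothesis $\alpha_i>-1$ is a useful observation that the paper does not make explicit.
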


\begin{corollary}\label{B you jie 6}
	If $\vec{p}:=\left( 1,1 \right)$ and $\vec{q}:=\left( \infty,\infty \right) $, then the operator $B_{\vec{\gamma}}$ is bounded from $L_{\vec{\alpha}}^{\vec{1}}\left( T_B\times T_B \right) $ to $L^{\infty}\left( T_B\times T_B \right)$ if and only if the parameters satisfy that, for any $i\in\{1,2\}$,
	$$
	\gamma_i\ge\alpha _i=-n-1.
	$$
\end{corollary}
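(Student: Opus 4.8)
The plan is to read off $B_{\vec\gamma}$ as the Forelli--Rudin operator $S_{\vec a,\vec b,\vec c}$ with the explicit parameters $a_i=n+1+\gamma_i$, $b_i=\gamma_i$ and $c_i=2(n+1+\gamma_i)$ for $i\in\{1,2\}$, exactly as recorded in the identity $B_{\vec\gamma}=S_{\overrightarrow{n+1+\gamma},\overrightarrow{\gamma},2\overrightarrow{(n+1+\gamma)}}$ stated just before the corollary, and then to apply Theorem \ref{th 6} (the case $\vec p=(1,1)$, $\vec q=(\infty,\infty)$) verbatim. Since $\gamma_i>-1$, the identification is legitimate, so $B_{\vec\gamma}$ is bounded from $L_{\vec\alpha}^{\vec 1}(T_B\times T_B)$ to $L^{\infty}(T_B\times T_B)$ if and only if, for each $i$, the three conditions in part $(3)$ of Theorem \ref{th 6} hold with these values of $a_i,b_i,c_i$ and with $\lambda_i=-n-1-\alpha_i$.

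It then remains to simplify those three conditions into the asserted form. First, $a_i=n+1+\gamma_i\ge 0$ holds automatically, since $\gamma_i>-1$ gives $n+1+\gamma_i>n\ge 0$, so this requirement is vacuous here. Second, $b_i\ge\alpha_i$ is precisely $\gamma_i\ge\alpha_i$. Third, I would expand the relation $c_i=n+1+a_i+b_i+\lambda_i$: the right-hand side equals $(n+1)+(n+1+\gamma_i)+\gamma_i+(-n-1-\alpha_i)=(n+1)+2\gamma_i-\alpha_i$, while the left-hand side is $2(n+1+\gamma_i)=2(n+1)+2\gamma_i$; cancelling $2\gamma_i$ from both sides leaves $2(n+1)=(n+1)-\alpha_i$, that is, $\alpha_i=-(n+1)$. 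Combining the simplified second and third conditions yields $\gamma_i\ge\alpha_i=-n-1$ for each $i\in\{1,2\}$, which is exactly the characterization claimed.

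The only point that needs care is the bookkeeping in the $c_i$-identity, namely tracking the $-(n+1)$ hidden inside $\lambda_i$ together with the factor $2$ in $c_i$, so that the cancellation is carried out without a sign slip; there is no analytic obstacle beyond what is already established in Theorem \ref{th 6} (and, through it, in Lemmas \ref{lem b6} and \ref{lem 1 6}).
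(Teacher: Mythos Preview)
Your proposal is correct and follows exactly the intended route: the paper gives no explicit proof for this corollary, relying instead on the identification $B_{\vec\gamma}=S_{\overrightarrow{n+1+\gamma},\,\overrightarrow{\gamma},\,2\overrightarrow{(n+1+\gamma)}}$ together with Theorem~\ref{th 6}, and your substitution and simplification of the three conditions in part~(3) are carried out accurately.
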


\begin{corollary}\label{B you jie 7}
	If $\vec{p}:=\left( 1,p_2 \right)$ and $\vec{q}:=\left( \infty,\infty \right)$ satisfy $1<p_2<\infty$, then the operator $B_{\vec{\gamma}}$ is bounded from $L_{\vec{\alpha}}^{\vec{p}}\left( T_B\times T_B \right) $ to $L^{\infty}\left( T_B\times T_B \right)$ if and only if the parameters satisfy that, 
	$$
	\begin{cases}
	\gamma_1\ge \alpha _1=-n-1,\\
	\gamma_2>\alpha _2=-n-1.\\
	\end{cases}
	$$
\end{corollary}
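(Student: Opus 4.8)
The plan is to obtain Corollary~\ref{B you jie 7} as a direct specialization of Theorem~\ref{th 7}. Comparing the defining kernel of $B_{\vec{\gamma}}$ with that of $S_{\vec{a},\ \vec{b},\ \vec{c}}$ and absorbing the factor $\rho(u)^{\gamma_1}\rho(\eta)^{\gamma_2}$ coming from $dV_{\gamma_1}(u)\,dV_{\gamma_2}(\eta)$ into the numerator, one sees that
\[
B_{\vec{\gamma}}=S_{\overrightarrow{n+1+\gamma},\ \overrightarrow{\gamma},\ 2\overrightarrow{(n+1+\gamma)}},
\]
that is, $B_{\vec{\gamma}}=S_{\vec{a},\ \vec{b},\ \vec{c}}$ with $a_i=n+1+\gamma_i$, $b_i=\gamma_i$ and $c_i=2(n+1+\gamma_i)$ for $i\in\{1,2\}$. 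Since here $\vec{p}=(1,p_2)$ with $1<p_2<\infty$ and $\vec{q}=(\infty,\infty)$, Theorem~\ref{th 7} applies verbatim: the boundedness of $B_{\vec{\gamma}}$ from $L_{\vec{\alpha}}^{\vec{p}}\left( T_B\times T_B \right)$ to $L^{\infty}\left( T_B\times T_B \right)$ is equivalent to condition~(3) of that theorem holding for this choice of $a_i,b_i,c_i$, so it only remains to rewrite that condition in terms of $\vec{\gamma}$ and $\vec{\alpha}$.

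This rewriting is routine. The requirements $a_1=n+1+\gamma_1>0$ and $a_2=n+1+\gamma_2>0$ hold automatically because $\gamma_i>-1$, hence impose nothing; the requirement $b_1\ge\alpha_1$ becomes $\gamma_1\ge\alpha_1$, and $p_2(b_2+1)>\alpha_2+1$ becomes $p_2(\gamma_2+1)>\alpha_2+1$. Inserting $\lambda_1=-n-1-\alpha_1$ and $\lambda_2=-\frac{n+1+\alpha_2}{p_2}$, the two identities $c_i=n+1+a_i+b_i+\lambda_i$ read
\[
2(n+1+\gamma_1)=n+1+(n+1+\gamma_1)+\gamma_1-(n+1)-\alpha_1
\]
and
\[
2(n+1+\gamma_2)=n+1+(n+1+\gamma_2)+\gamma_2-\frac{n+1+\alpha_2}{p_2}.
\]
Cancelling $n+1+2\gamma_1$ from the first reduces it to $\alpha_1=-(n+1)$, and cancelling $2(n+1)+2\gamma_2$ from the second reduces it to $\frac{n+1+\alpha_2}{p_2}=0$, i.e. $\alpha_2=-(n+1)$.

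Once $\alpha_1=\alpha_2=-(n+1)$ has been forced, the two inequalities that survive from the previous step are exactly those displayed in the corollary (and, since $\gamma_i>-1$, they are automatically satisfied), namely $\gamma_1\ge\alpha_1=-n-1$ and $\gamma_2>\alpha_2=-n-1$; conversely, if $\vec{\alpha},\vec{\gamma}$ satisfy this system then every part of condition~(3) of Theorem~\ref{th 7} is met, so $B_{\vec{\gamma}}$ is bounded. I do not anticipate a genuine obstacle: the whole argument is mechanical once the identification $B_{\vec{\gamma}}=S_{\overrightarrow{n+1+\gamma},\ \overrightarrow{\gamma},\ 2\overrightarrow{(n+1+\gamma)}}$ has been made, and the only place demanding care is performing the algebraic cancellations above correctly and checking that the leftover $\gamma$-constraints coincide with the form stated.
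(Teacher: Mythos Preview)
Your proposal is correct and follows exactly the approach the paper intends: the paper records the identification $B_{\vec{\gamma}}=S_{\overrightarrow{n+1+\gamma},\ \overrightarrow{\gamma},\ 2\overrightarrow{(n+1+\gamma)}}$ and then lists Corollary~\ref{B you jie 7} as an immediate consequence of Theorem~\ref{th 7}, which is precisely what you have carried out in detail. Your algebraic reductions of the identities $c_i=n+1+a_i+b_i+\lambda_i$ to $\alpha_i=-(n+1)$ are accurate, and your observation that the remaining inequalities on $\gamma_i$ become automatic once $\gamma_i>-1$ is also correct.
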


\begin{corollary}\label{B you jie 8}
	If $\vec{p}:=\left( p_1,1 \right)$ and $\vec{q}:=\left( \infty,\infty \right)$ satisfy $1<p_1<\infty$, then the operator $B_{\vec{\gamma}}$ is bounded from $L_{\vec{\alpha}}^{\vec{p}}\left( T_B\times T_B \right) $ to $L^{\infty}\left( T_B\times T_B \right)$ if and only if the parameters satisfy that, 
	$$
	\begin{cases}
	\gamma_1> \alpha _1=-n-1,\\
	\gamma_2\ge \alpha _2=-n-1.\\
	\end{cases}
	$$
\end{corollary}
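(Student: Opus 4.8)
The plan is to realize $B_{\vec{\gamma}}$ as one of the operators $S_{\vec{a},\ \vec{b},\ \vec{c}}$ already characterized above and then invoke Theorem \ref{th 8}. Since $dV_{\gamma_i}(u)=\rho(u)^{\gamma_i}\,dV(u)$, pulling the weights inside the double integral shows that $B_{\vec{\gamma}}=S_{\vec{a},\ \vec{b},\ \vec{c}}$ with $a_i=n+1+\gamma_i$, $b_i=\gamma_i$ and $c_i=2(n+1+\gamma_i)$ for $i\in\{1,2\}$, i.e. the identity $B_{\vec{\gamma}}=S_{\overrightarrow{n+1+\gamma },\ \overrightarrow{\gamma },\ 2\overrightarrow{\left( n+1+\gamma \right)}}$ recorded just before the statement. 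Because $\vec{p}=\left(p_1,1\right)$ with $1<p_1<\infty$ and $\vec{q}=\left(\infty,\infty\right)$, Theorem \ref{th 8} applies directly, so the boundedness of $B_{\vec{\gamma}}$ from $L_{\vec{\alpha}}^{\vec{p}}\left(T_B\times T_B\right)$ to $L^{\infty}\left(T_B\times T_B\right)$ is equivalent to the parameter system in part $(3)$ of that theorem evaluated at this choice of $\vec{a},\vec{b},\vec{c}$.

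Next I would perform the substitution. In each of the two equalities $c_i=n+1+a_i+b_i+\lambda_i$ both sides equal $2(n+1+\gamma_i)$, hence $\lambda_i=0$; recalling $\lambda_1=-(n+1+\alpha_1)/p_1$ and $\lambda_2=-(n+1+\alpha_2)$, this is precisely $\alpha_1=\alpha_2=-(n+1)$. Plugging $\alpha_1=\alpha_2=-(n+1)$ into the four inequalities of part $(3)$ of Theorem \ref{th 8}: the condition $a_1>0$ reads $\gamma_1>-(n+1)=\alpha_1$; the condition $a_2\ge 0$ reads $\gamma_2\ge-(n+1)=\alpha_2$; while $p_1(b_1+1)>\alpha_1+1$ and $b_2\ge\alpha_2$ become $p_1(\gamma_1+1)>-n$ and $\gamma_2\ge-(n+1)$, both automatically true under the standing hypothesis $\vec{\gamma}\in(-1,\infty)\times(-1,\infty)$ together with $n\ge 1$. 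Therefore the system collapses to $\gamma_1>\alpha_1=-n-1$ and $\gamma_2\ge\alpha_2=-n-1$, which is exactly the asserted characterization.

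An equivalent route is to note that this corollary is the coordinate-interchanged form of Corollary \ref{B you jie 7}: swapping the two tube factors turns $\vec{p}=\left(p_1,1\right)$ into $\left(1,p_1\right)$ and Theorem \ref{th 8} into Theorem \ref{th 7}, and the computation above is symmetric under that interchange. I expect no real obstacle in this argument; the only point that deserves attention is checking that the inequalities which do not survive into the final statement are genuinely vacuous given $\gamma_i>-1$, rather than encoding a hidden restriction on the parameters.
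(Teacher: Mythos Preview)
Your argument is correct and is exactly the approach the paper intends: the paper does not spell out a proof for this corollary but simply records the identification $B_{\vec{\gamma}}=S_{\overrightarrow{n+1+\gamma },\ \overrightarrow{\gamma },\ 2\overrightarrow{\left( n+1+\gamma \right)}}$ and leaves the reader to specialize the relevant main theorem, which is precisely what you do with Theorem~\ref{th 8}. Your substitution and the check that the residual inequalities are vacuous under $\gamma_i>-1$ are both correct.
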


\begin{corollary}\label{B you jie 9}
	If $\vec{p}:=\left( \infty,\infty \right)$ and $\vec{q}:=\left( \infty,\infty \right) $, then the operator $B_{\vec{\gamma}}$ is always bounded from $L^{\infty}\left( T_B\times T_B \right) $ to $L^{\infty}\left( T_B\times T_B \right)$.
\end{corollary}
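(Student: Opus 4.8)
The plan is to realize $B_{\vec{\gamma}}$ as a particular instance of the operator $S_{\vec{a},\ \vec{b},\ \vec{c}}$ and then quote Theorem \ref{th 9}. Comparing integral kernels, $B_{\vec{\gamma}}=S_{\vec{a},\ \vec{b},\ \vec{c}}$ with $\vec{a}=(n+1+\gamma_1,\ n+1+\gamma_2)$, $\vec{b}=(\gamma_1,\gamma_2)$ and $\vec{c}=\bigl(2(n+1+\gamma_1),\ 2(n+1+\gamma_2)\bigr)$. Since here the domain and target are both $L^{\infty}(T_B\times T_B)$, i.e.\ $\vec{p}=\vec{q}=(\infty,\infty)$, it suffices to check that the parameters $\vec{a},\vec{b},\vec{c}$ just written satisfy condition $(3)$ of Theorem \ref{th 9} for every admissible weight $\vec{\gamma}\in(-1,\infty)\times(-1,\infty)$.

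This verification is immediate. For each $i\in\{1,2\}$ one has $a_i=n+1+\gamma_i>n>0$ because $\gamma_i>-1$; one has $b_i=\gamma_i>-1$ by hypothesis; and
$$
n+1+a_i+b_i=(n+1)+(n+1+\gamma_i)+\gamma_i=2(n+1+\gamma_i)=c_i .
$$
Thus all three requirements $a_i>0$, $b_i>-1$, $c_i=n+1+a_i+b_i$ hold automatically, with no further restriction on $\vec{\gamma}$. Applying the implication $(3)\Rightarrow(1)$ of Theorem \ref{th 9} then yields that $B_{\vec{\gamma}}$ is bounded from $L^{\infty}(T_B\times T_B)$ to $L^{\infty}(T_B\times T_B)$; since $\vec{\gamma}$ was arbitrary in $(-1,\infty)\times(-1,\infty)$, the operator is \emph{always} bounded, which is exactly the assertion.

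There is essentially no obstacle here: the statement is a bookkeeping corollary of Theorem \ref{th 9}, in the same way that the preceding corollaries follow from Theorems \ref{th 1}--\ref{th 13}. If one prefers a self-contained argument bypassing Theorem \ref{th 9}, one can argue directly as in the proof of Lemma \ref{lem 1 9}: for $f\in L^{\infty}(T_B\times T_B)$,
$$
\bigl|B_{\vec{\gamma}}f(z,w)\bigr|\lesssim \rho(z)^{n+1+\gamma_1}\int_{T_B}\frac{\rho(u)^{\gamma_1}}{|\rho(z,u)|^{2(n+1+\gamma_1)}}\,dV(u)\cdot \rho(w)^{n+1+\gamma_2}\int_{T_B}\frac{\rho(\eta)^{\gamma_2}}{|\rho(w,\eta)|^{2(n+1+\gamma_2)}}\,dV(\eta),
$$
and since $\gamma_i>-1$ and $2(n+1+\gamma_i)-\gamma_i=2(n+1)+\gamma_i>n+1$, the ``in particular'' part of Lemma \ref{lem jifendengshi} evaluates each inner integral as a constant multiple of $\rho(z)^{-(n+1+\gamma_1)}$, resp.\ $\rho(w)^{-(n+1+\gamma_2)}$, which cancels the outer power and leaves a bound independent of $(z,w)$. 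Either route gives the claim.
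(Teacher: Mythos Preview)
Your proposal is correct and follows exactly the paper's approach: the paper observes just before these corollaries that $B_{\vec{\gamma}}=S_{\overrightarrow{n+1+\gamma},\ \overrightarrow{\gamma},\ 2\overrightarrow{(n+1+\gamma)}}$ and treats Corollary~\ref{B you jie 9} as an immediate consequence of Theorem~\ref{th 9}, which is precisely what you do (with the parameter check spelled out). Your alternative direct computation via Lemma~\ref{lem jifendengshi} is also valid and simply reproduces the proof of Lemma~\ref{lem 1 9} in this special case.
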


\begin{corollary}\label{B you jie 10}
	If $\vec{p}:=\left( p_1,p_2 \right)$ and $\vec{q}:=\left( \infty,\infty \right) $ satisfy $1=p_-<p_+=\infty$, then the operator $B_{\vec{\gamma}}$ is bounded from $L_{\vec{\alpha}}^{\vec{p}}\left( T_B\times T_B \right) $ to $L^{\infty}\left( T_B\times T_B \right)$ if and only if the parameters satisfy that, for any $i\in\{1,2\}$,
	$\gamma_i\ge \alpha_i=-n-1.$
\end{corollary}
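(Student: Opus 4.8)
The plan is to read the statement off Theorems \ref{th 10} and \ref{th 11}, using the identification $B_{\vec{\gamma}}=S_{\overrightarrow{n+1+\gamma},\ \overrightarrow{\gamma},\ 2\overrightarrow{(n+1+\gamma)}}$ recorded above; in the notation of $S_{\vec{a},\ \vec{b},\ \vec{c}}$ this means $a_i=n+1+\gamma_i$, $b_i=\gamma_i$ and $c_i=2(n+1+\gamma_i)$ for $i\in\{1,2\}$. The hypothesis $1=p_-<p_+=\infty$ forces $\vec{p}=(1,\infty)$ or $\vec{p}=(\infty,1)$, and since the construction is symmetric in the two coordinates it suffices to treat $\vec{p}=(1,\infty)$. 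Together with $\vec{q}=(\infty,\infty)$ this is precisely the setting of Theorem \ref{th 10}, so the $L_{\vec{\alpha}}^{\vec{p}}$-$L^{\infty}$ boundedness of $B_{\vec{\gamma}}$ is equivalent to condition $(3)$ of that theorem evaluated at the above parameters, and it remains only to simplify that condition.

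To do so, I would first dispose of the inequality constraints in condition $(3)$ of Theorem \ref{th 10}, namely $a_1\ge 0$, $a_2>0$, $b_1\ge\alpha_1$ and $b_2>-1$. Since $\gamma_i>-1$ we have $a_i=n+1+\gamma_i>n\ge 0$ and $b_i=\gamma_i>-1$, so all of these are automatic except $b_1\ge\alpha_1$, which reads $\gamma_1\ge\alpha_1$. Next I would insert $a_i,b_i,c_i$ into the two identities $c_i=n+1+a_i+b_i+\lambda_i$, where $\lambda_1=-n-1-\alpha_1$ and $\lambda_2=0$: for $i=2$ this becomes $2(n+1+\gamma_2)=n+1+(n+1+\gamma_2)+\gamma_2$, which holds identically and imposes nothing on $\alpha_2$; for $i=1$ it becomes $2(n+1+\gamma_1)=n+1+(n+1+\gamma_1)+\gamma_1-(n+1)-\alpha_1$, which collapses to $\alpha_1=-(n+1)$. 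Hence, for $\vec{p}=(1,\infty)$, boundedness is equivalent to $\gamma_1\ge\alpha_1=-(n+1)$ on the coordinate carrying the exponent $1$; the same computation applied to Theorem \ref{th 11} in the case $\vec{p}=(\infty,1)$ gives $\gamma_2\ge\alpha_2=-(n+1)$ on that coordinate. Assembling the two sub-cases yields the asserted condition ``$\gamma_i\ge\alpha_i=-n-1$'', and the corollary follows.

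I do not expect any real obstacle: every reduction is routine algebra and all the substantive content is already contained in Theorems \ref{th 10} and \ref{th 11}. The only places needing care are bookkeeping ones: keeping track of which of $p_1,p_2$ equals $1$ and hence which $\lambda_i$ equals $-n-1-\alpha_i$ rather than $0$; noting that, once $\alpha_i=-(n+1)$ is forced on the relevant coordinate, the surviving inequality $b_i\ge\alpha_i$ there becomes $\gamma_i\ge-(n+1)$, which is automatic from $\gamma_i>-1$, so nothing further is imposed; and verifying that interchanging the two coordinates interchanges the roles of Theorems \ref{th 10} and \ref{th 11}.
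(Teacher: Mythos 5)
Your proposal is correct and is essentially the paper's own route: the paper proves none of these corollaries separately, but obtains them exactly as you do, by noting $B_{\vec{\gamma}}=S_{\overrightarrow{n+1+\gamma},\ \overrightarrow{\gamma},\ 2\overrightarrow{(n+1+\gamma)}}$ and specializing Theorems \ref{th 10} and \ref{th 11} to $a_i=n+1+\gamma_i$, $b_i=\gamma_i$, $c_i=2(n+1+\gamma_i)$, with the same algebraic simplifications ($\lambda_i=0$ giving an identity, $\lambda_i=-n-1-\alpha_i$ forcing $\alpha_i=-(n+1)$, and the inequalities on $a_i$, $b_i$ being automatic from $\gamma_i>-1$ except $b_i\ge\alpha_i$). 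Your bookkeeping of which coordinate carries the exponent $1$ in each sub-case matches the intended reading of the corollary, so nothing further is needed.
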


\begin{corollary}\label{B you jie 11}
	If $\vec{p}:=\left( p_1, p_2\right)$ and $\vec{q}:=\left( \infty,\infty \right) $ satisfy $1<p_-<p_+=\infty$, then the operator $B_{\vec{\gamma}}$ is bounded from $L_{\vec{\alpha}}^{\vec{p}}\left( T_B\times T_B \right) $ to $L^{\infty}\left( T_B\times T_B \right)$ if and only if the parameters satisfy that, for any $i\in\{1,2\}$,
	$\gamma_i>\alpha_i=-n-1.$
\end{corollary}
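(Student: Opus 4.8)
The plan is to obtain Corollary~\ref{B you jie 11} as a direct specialization of the main theorems of Section~6. Recall that $B_{\vec\gamma}=S_{\overrightarrow{n+1+\gamma},\,\vec\gamma,\,2\overrightarrow{(n+1+\gamma)}}$, so I would set $a_i=n+1+\gamma_i$, $b_i=\gamma_i$ and $c_i=2(n+1+\gamma_i)$ for $i\in\{1,2\}$, and then read off the boundedness criterion from the appropriate theorem. The hypothesis $1<p_-<p_+=\infty$ says that exactly one of $p_1,p_2$ is infinite and the other lies in $(1,\infty)$; thus either $\vec p=(p_1,\infty)$ with $1<p_1<\infty$ (covered by Theorem~\ref{th 13}) or $\vec p=(\infty,p_2)$ with $1<p_2<\infty$ (covered by Theorem~\ref{th 12}). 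Since the two tube factors enter $S_{\vec a,\vec b,\vec c}$ in a completely symmetric way, it is enough to treat the first case and transcribe the result.

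In that case I would substitute the values of $a_i,b_i,c_i$ into part~$(3)$ of Theorem~\ref{th 13}. The equality $c_i=n+1+a_i+b_i+\lambda_i$ becomes $2(n+1+\gamma_i)=2(n+1+\gamma_i)+\lambda_i$, so $\lambda_i=0$ for both $i$. In the index where $p_i$ is finite one has $\lambda_i=-(n+1+\alpha_i)/p_i$, whence $\lambda_i=0$ is equivalent to $\alpha_i=-(n+1)$; in the index where $p_i=\infty$ one has $\lambda_i=0$ identically, imposing no new restriction. All the remaining requirements in part~$(3)$ — here $a_1>0$, $a_2>0$, $p_1(b_1+1)>\alpha_1+1$, and $b_2>-1$ — hold automatically: admissibility of the Berezin weights gives $\gamma_i>-1$, hence $a_i=n+1+\gamma_i>n>0$ and $b_i=\gamma_i>-1$, and once $\alpha_1=-(n+1)$ we have $\alpha_1+1=-n<0<p_1(\gamma_1+1)$. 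Thus part~$(3)$ of Theorem~\ref{th 13} reduces precisely to $\alpha_i=-(n+1)$ for the finite index, which, since $\gamma_i>-1>-(n+1)$, is the same as $\gamma_i>\alpha_i=-(n+1)$; the symmetric argument via Theorem~\ref{th 12} yields the analogous condition in the other factor. Combining with the equivalence $(1)\Leftrightarrow(3)$ of those theorems gives the stated characterization of the boundedness of $B_{\vec\gamma}$ from $L_{\vec\alpha}^{\vec p}(T_B\times T_B)$ to $L^\infty(T_B\times T_B)$; the equivalence $(1)\Leftrightarrow(2)$ shows in addition that the corresponding $T$-type operator is bounded under the same conditions.

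I do not expect a genuine obstacle here: the argument is pure substitution into the statements of Theorems~\ref{th 12} and~\ref{th 13}. The one place that warrants care is verifying that the strict (non-equality) inequalities occurring in part~$(3)$ are forced automatically by $\gamma_i>-1$ after the equality constraint has pinned $\alpha_i$ down to $-(n+1)$, so that the whole criterion collapses to the single relation $\gamma_i>\alpha_i=-(n+1)$ recorded in the corollary. (If one preferred a self-contained proof of the sufficiency direction, the same conclusion can be reached by feeding the explicit kernel of $B_{\vec\gamma}$ into Lemma~\ref{ dao wu qiong} and evaluating the resulting integral by Lemma~\ref{lem jifendengshi}, but routing through the main theorems is cleaner.)
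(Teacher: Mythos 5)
Your proposal is correct and coincides with the paper's (implicit) argument: the corollary is stated there as a direct specialization of Theorems~\ref{th 12} and~\ref{th 13} through the identification $B_{\vec{\gamma}}=S_{\overrightarrow{n+1+\gamma},\ \vec{\gamma},\ 2\overrightarrow{\left( n+1+\gamma \right)}}$, i.e.\ $a_i=n+1+\gamma_i$, $b_i=\gamma_i$, $c_i=2(n+1+\gamma_i)$, exactly as you carry out, including the verification that the strict inequalities in part~$(3)$ are automatic once $\gamma_i>-1$. Two minor points you handled correctly: the printed $\lambda_1=-\frac{n+1+\alpha_2}{p_2}$ in Theorem~\ref{th 13} is a typo for $-\frac{n+1+\alpha_1}{p_1}$, and the equality $\alpha_i=-n-1$ is genuinely forced only at the index where $p_i$ is finite (the weight attached to the $\infty$-slot never enters the mixed norm), which is the proper reading of the corollary's symmetric formulation.
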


\end{document}